\authors \else \@setauthors \fi
\authors \else \@setauthors \fi
\DeclareMathOperator*{\mystar}{*}
\renewcommand{\tocsection}[3]{%
  \indentlabel{\@ifnotempty{#2}{\bfseries\ignorespaces#1 #2\quad}}\bfseries#3}
\renewcommand{\tocsubsection}[3]{%
  \indentlabel{\@ifnotempty{#2}{\ignorespaces#1 #2\quad}}#3}
\newcommand\@dotsep{4.5}
\def\@tocline#1#2#3#4#5#6#7{\relax
  \ifnum #1>\c@tocdepth 
  \else
    \par \addpenalty\@secpenalty\addvspace{#2}%
    \begingroup \hyphenpenalty\@M
    \@ifempty{#4}{%
      \@tempdima\csname r@tocindent\number#1\endcsname\relax
    }{%
      \@tempdima#4\relax
    }%
    \parindent\z@ \leftskip#3\relax \advance\leftskip\@tempdima\relax
    \rightskip\@pnumwidth plus1em \parfillskip-\@pnumwidth
    #5\leavevmode\hskip-\@tempdima{#6}\nobreak
    \leaders\hbox{$\m@th\mkern \@dotsep mu\hbox{.}\mkern \@dotsep mu$}\hfill
    \nobreak
    \hbox to\@pnumwidth{\@tocpagenum{\ifnum#1=1\fi#7}}\par
    \nobreak
    \endgroup
  \fi}
\renewcommand\csname r@tocindent0\endcsname{0pt}
\def\l@subsection{\@tocline{2}{0pt}{2.5pc}{5pc}{}}
\renewcommand{\@evenhead}{\hfill\textsc{\small On the quasi-isometric classification of permutational wreath products} \hfill \small{\thepage}}
\renewcommand{\@oddhead}{\hfill \small \textsc{Vincent Dumoncel} \hfill \small{\thepage}}
\newcommand*{\defeq}{\mathrel{\rlap{%
                     \raisebox{0.3ex}{$\m@th\cdot$}}%
                     \raisebox{-0.3ex}{$\m@th\cdot$}}%
                     =}
\theoremstyle{plain}
\newtheorem{theorem}{Theorem}[section]
\newtheorem{corollary}[theorem]{Corollary}
\newtheorem{lemma}[theorem]{Lemma}
\newtheorem{proposition}[theorem]{Proposition}
\newtheorem{claim}[theorem]{Claim}
\theoremstyle{definition}
\newtheorem{definition}[theorem]{Definition}
\newtheorem{example}[theorem]{Example}
\newtheorem{question}[theorem]{Question}
\newtheorem{remark}[theorem]{Remark}
\numberwithin{equation}{section}
\begin{document}
\newcommand{\R}{\mathbb{R}}
\newcommand{\dis}{\displaystyle} 
\newcommand{\Si}{\mathbb{S}}
\newcommand{\intB}{\overset{\circ}{B}}
\newcommand{\Po}{\mathcal{P}}
\newcommand{\B}{\mathcal{B}}
\newcommand{\la}{\langle}
\newcommand{\ra}{\rangle}

\newcommand{\Hi}{\mathcal{H}}
\newcommand{\ind}{\mathbf{1}}

\newcommand{\Q}{\mathbb{Q}}	
\newcommand{\Z}{\mathbb{Z}}	
\newcommand{\N}{\mathbb{N}}	
\newcommand{\M}{\mathcal{M}}	
\newcommand{\F}{\mathcal{F}}	
\newcommand{\El}{\mathcal{L}}	

\newcommand{\pr}{\mathbb{P}}
\newcommand{\E}{\mathbb{E}}	
\newcommand{\var}{\text{Var}}	
\newcommand{\cov}{\text{Cov}}	
\newcommand{\corr}{\text{Corr}}	

\newcommand{\shuf}[1]{\mathsf{Shuffler}(#1)}

\newcommand{\rem}{\backslash\backslash}

\newcommand{\tth}{\text{th}}	
\newcommand{\Oh}{\mathcal{O}}	

\newcommand{\Address}{{
		\bigskip 
		\small
            \textsc{Vincent Dumoncel - Université Paris-Cité}\par\nopagebreak
		\textsc{Institut de Mathématiques de Jussieu-Paris Rive Gauche} \par\nopagebreak
            \textsc{8, Place Aurélie Nemours} \par\nopagebreak
            \textsc{75013 Paris, France.}\par\nopagebreak
		\textit{E-mail address}: \texttt{vincent.dumoncel@imj-prg.fr}
\medskip
		
}}

\begin{titlepage}
\setcounter{page}{1}
\title{On the quasi-isometric classification of permutational \\ wreath products}
\author{\small{Vincent Dumoncel}}
\address{Institut de Mathématiques de Jussieu-Paris Rive Gauche \\ \texttt{vincent.dumoncel@imj-prg.fr}}

\date{\today}
\maketitle

\textsc{Abstract.} In this article, we initiate the study of the large-scale geometry of permutational wreath products of the form $F\wr_{H/N}H$, where $H$ is finitely presented and where $N$ is a normal subgroup of $H$ satisfying a certain assumption of non coarse separation. The main result is a complete classification of such permutational wreath products up to quasi-isometry, building up on previous works from Genevois and Tessera. For instance, we show that, for $d\ge k\ge 2$, $\Z_{n}\wr_{\Z^{k}} \Z^d$ and $\Z_{m}\wr_{\Z^{k}}\Z^d$ are quasi-isometric if and only if $n$ and $m$ are powers of a common number. We also discuss biLipschitz equivalences between permutational wreath products, their scaling groups, as well as the quasi-isometric classification of other halo products built out of such permutational lamplighters.


\begin{spacing}{1.3}
\tableofcontents 
\end{spacing}

\section{Introduction}\label{section1}

\smallskip

\sloppy In his groundbreaking work in the 80's and 90's~\cite{Gro81, Gro93}, Gromov initiated the program of classifying finitely generated groups up to quasi-isometries. The motivation for this program is that the large-scale geometry of such a group may be in fact deeply related to its algebraic structure, as shown for instance by Gromov's celebrated theorem on groups of polynomial growth~\cite{Gro81} or Stalling's theorem about multi-ended groups~\cite{Sta68}. 

\smallskip

\sloppy Many progresses have been made since then towards the understanding of the quasi-isometries of various classes of groups, and some groups are even known to be \textit{quasi-isometrically rigid}. Here, a group $G$ is quasi-isometrically rigid if any group which is quasi-isometric to $G$ is in fact isomorphic to a finite-index subgroup of $G$, to a quotient of $G$ by a finite normal subgroup, or contains $G$ as a finite-index subgroup. In this case as well, the connection between the geometry of the group and its algebraic structure is particularly strong and explicit. For instance, abelian and non-abelian free groups are quasi-isometrically rigid~\cite{Dun85}, as well as mapping class groups~\cite{Beh+12}, or solvable Baumslag-Solitar groups $\text{BS}(1,n), n\ge 1$~\cite{FM98}.

\smallskip

On the other hand, there are also many known invariants that allow to distinguish groups up to quasi-isometries, including the volume growth, amenability, hyperbolicity, the number of ends, or having a finite presentation. However, for many groups of interest, these invariants are insufficient, and other methods must be developed. 

\smallskip

One such class, whose quasi-isometries are hard to tackle, consists of \textit{permutational wreath products}. Recall that given two groups $G,H$ and an action of $H$ on a set $X$, the permutational wreath product $G\wr_{X}H$ is defined as 
\begin{equation*}
    G\wr_{X}H \defeq \left(\bigoplus_{X}G\right)\rtimes H
\end{equation*}
where $H$ acts on the direct sum permuting the coordinates through its initial action on $X$. In the particular case $X=H$ and $H$ acts on itself by left-multiplication, we only write $G\wr H$ and we refer to this group as a \textit{(standard) wreath product}, or a \textit{lamplighter group} when $G$ is finite. Such groups are well-known and of interest in group theory for several reasons, the main one being that on the one hand the explicit definition makes computations possible, and on the other hand the definition is sufficiently elaborate to reflect unexpected and interesting behaviours. They have been studied extensively in relation with many topics of interests in geometric group theory, for instance amenability and isoperimetric profiles~\cite{Ers03, MO10}, Haagerup property~\cite{CSV08, CSV12}, random walks~\cite{LPP96, PSC02, BE17}, subgroup distortion~\cite{DO11, BLP15, Ril22}, bounded cohomology~\cite{Mon22}, fixed-point properties~\cite{CK11, LS22, LS24} or coarse embeddability into Hilbert spaces and the related compressions~\cite{Li10, Gen22a, AT19}.

\smallskip

For the large-scale geometric analysis of lamplighter groups, a first question that arises naturally is then:
\begin{question}\label{question1.1}
Let $F_{1}, F_{2}$ be finite groups and let $H_{1}, H_{2}$ be finitely generated groups. When are $F_{1}\wr H_{1}$ and $F_{2}\wr H_{2}$ quasi-isometric?
\end{question}

An important first piece of answer have been brought by Eskin-Fisher-Whyte in~\cite{EFW12, EFW13}, where they prove that if $H_{1}$ is two-ended, then $F_{1}\wr H_{1}$ and $F_{2}\wr H_{2}$ are quasi-isometric if and only if $H_{1}, H_{2}$ are quasi-isometric and $|F_{1}|, |F_{2}|$ are powers of a common number. However, the proof of this result heavily relies on the two-endedness assumption, and does not seem to be replicable outside of this field. 

\smallskip

In a recent work~\cite{GT24b}, Genevois and Tessera obtained a complete classification of lamplighters over one-ended finitely presented groups. More precisely:
\begin{theorem}\label{thm1.2}
Let $F_{1}, F_{2}$ be non-trivial finite groups and $H_{1}, H_{2}$ be two finitely presented groups. Suppose that $H_{1}$ is one-ended. 
\vspace{0.15cm}
\begin{itemize}
    \item If $H_{1}$ is amenable, then $F_{1}\wr H_{1}$ and $F_{2}\wr H_{2}$ are quasi-isometric if and only if there exists $k,n_{1},n_{2}\ge 1$ such that $|F_{1}|=k^{n_{1}}$, $|F_{2}|=k^{n_{2}}$ and there exists a quasi-$\frac{n_{2}}{n_{1}}$-to-one quasi-isometry $H_{1}\longrightarrow H_{2}$.
    \vspace{0.25cm}
    \item If $H_{1}$ is not amenable, then $F_{1}\wr H_{1}$ and $F_{2}\wr H_{2}$ are quasi-isometric if and only if $H_{1}$ and $H_{2}$ are quasi-isometric and $|F_{1}|,|F_{2}|$ have the same prime divisors. 
\end{itemize}
\end{theorem}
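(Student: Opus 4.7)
The plan is to analyze $F\wr H$ through its sheet structure. Writing elements as pairs $(\varphi,h)\in \bigoplus_{H}F\rtimes H$, for each configuration $\varphi$ the set $\{(\varphi,h):h\in H\}$ is an isometric copy of $H$ in $F\wr H$, called a sheet. The sheets partition $F\wr H$, and moving between adjacent sheets corresponds to flipping the value of one lamp. The core strategy is to show that any quasi-isometry $\Phi:F_1\wr H_1\to F_2\wr H_2$ coarsely sends sheets to sheets; once this is established, the classification reduces to comparing the induced quasi-isometries $H_1\to H_2$ and to counting configurations fiberwise.

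To prove sheet preservation, the idea is to characterize sheets intrinsically in coarse language, as the maximal coarse copies of a one-ended finitely presented space of the appropriate growth sitting inside $F\wr H$. Finite presentability of $H_1$ feeds into a coarse simple-connectedness statement for sheets, while one-endedness gives a coarse separation à la Papasoglu: the complement of a large ball in a sheet splits into deep components as soon as one crosses a lamp-flip edge, producing a coarse wall that $\Phi$ must respect. Making this rigorous, presumably by transferring arguments from the Diestel--Leader type model of $F\wr \Z$ to the general wreath product, is the main technical obstacle of the argument.

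With sheet preservation in hand, $\Phi$ descends to a coarse bijection between configuration spaces and, restricted to any sheet, yields a quasi-isometry $H_1\to H_2$ with uniform constants. In the amenable case, a Følner-set counting argument compares the number $|F_i|^{|B_r^{H_i}|}$ of configurations supported on balls of radius $r$; matching these counts through $\Phi$ forces $|F_1|=k^{n_1}$ and $|F_2|=k^{n_2}$ for a common $k$, and forces the induced map $H_1\to H_2$ to be quasi-$(n_2/n_1)$-to-one. The converse is constructive: from a quasi-$(n_2/n_1)$-to-one quasi-isometry $H_1\to H_2$ and a common base $k$, one builds the desired quasi-isometry by identifying $n_1$ copies of $F_2$-lamps with $n_2$ copies of $F_1$-lamps via the $k^{n_1 n_2}$-element common set.

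In the non-amenable case, the Følner comparison is vacuous, but non-amenability unlocks Whyte's bi-Lipschitz rigidity theorem: a quasi-isometry between non-amenable uniformly discrete spaces of bounded geometry is within bounded distance of a bi-Lipschitz equivalence. Combined with a Hall-type matching argument, one can re-label lamps from $F_1$ to $F_2$ provided the prime factorizations of $|F_1|$ and $|F_2|$ allow repeated block-regroupings, which is precisely the "same prime divisors" condition. Throughout, the hardest step remains the sheet preservation, which is the geometric heart linking the algebraic structure of lamplighters to their large-scale geometry; the amenable/non-amenable dichotomy then dictates whether the induced coarse bijection between configuration spaces is rigid (a precise multiplicity) or flexible (merely a matching).
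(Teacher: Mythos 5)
This theorem is not proved in the present paper at all: it is quoted from Genevois--Tessera~\cite{GT24b}, and the paper only \emph{uses} it (together with the embedding theorem from~\cite{GT24b} and the scaling-group machinery from~\cite{GT22, Dym10}). So the comparison has to be against the approach of~\cite{GT24b}, which the paper describes explicitly.

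Your high-level framework is in the right ballpark: identify the $H$-cosets as the coarsely rigid pieces (what the paper calls \emph{leaves}), show every quasi-isometry preserves them up to finite error, deduce aptolicity, then split into the amenable case (configuration-count comparison forcing $|F_i|=k^{n_i}$ and the quasi-$\frac{n_2}{n_1}$-to-one condition) and the non-amenable case (Whyte's theorem plus a re-labelling of lamps, giving only the same-prime-divisors constraint). Those are all genuinely the right ingredients, and the converse constructions you sketch match what Proposition~\ref{prop5.6} and Corollary~\ref{cor5.4} do in the permutational setting. However, the place you flag as ``the main technical obstacle'' is exactly where your proposed route diverges from what actually works. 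You suggest establishing leaf-preservation by ``transferring arguments from the Diestel--Leader type model of $F\wr\Z$ to the general wreath product,'' i.e.\ generalising the Eskin--Fisher--Whyte machinery. The paper states, pointedly, that the proof of Theorem~\ref{thm1.2} is ``completely different from that of \cite{EFW12, EFW13}'' and ``relies crucially on quasi-median geometry'': leaf-preservation comes from an \emph{embedding theorem} (the model for Theorem~\ref{thm1.11} of this paper), obtained by factoring a coarse embedding of a coarsely simply connected, one-ended space through a finitely presented truncation of $F\wr H$ that is a graph product of finite groups, and then exploiting hyperplanes and gated convexity in the associated quasi-median graph. Your intrinsic characterisation of sheets via ``maximal coarse copies of a one-ended finitely presented space of the appropriate growth'' also isn't what is used; growth plays no role, and the Papasoglu-style ``coarse wall'' heuristic does not by itself supply the uniform quantitative control over the size of the neighbourhood that the embedding theorem delivers. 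In short: right scaffold, wrong engine for the decisive step, and the engine you propose is the one the paper explicitly rules out.
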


We refer the reader to Definition \ref{def2.2} for quasi-$k$-to-one quasi-isometries and the related scaling groups $\text{Sc}(\cdot)$.

\smallskip

The strategy employed for the proof of Theorem \ref{thm1.2} in \cite{GT24b} is completely different from that of \cite{EFW12, EFW13}, and relies crucially on quasi-median geometry and the assumption that groups over which we consider wreath products are finitely presented and one-ended. 

\smallskip

More recently, Genevois and Tessera showed in~\cite{GT24a} that their techniques can also be applied to a wider class of groups generalizing lamplighters groups, and exhibiting a common \textit{halo structure}. Such a class encompasses standard wreath products, but also lampshufflers, lampdesigners, nilpotent wreath products and many other variations. This allows to extend, at least partially, the classification provided by Theorem~\ref{thm1.2}. In an other work with Bensaid~\cite{BGT24}, they also extended their methods to some standard lamplighters with infinite lamp groups, putting into the picture the notion of "coarse separation" that, roughly speaking, generalizes (non-)one-endedness. 

\smallskip

However, one classical variation of standard wreath products remains out of the field covered until now, that of permutational wreath products with infinite stabilizers (the finite stabilizers case is contained in~\cite{GT24b} as a consequence of Theorem~\ref{thm1.2}). Indeed, methods from~\cite{GT24a} do not apply to these groups, according to~\cite[Corollary~4.26]{GT24a}. Our main result is a complete classification of some of these permutational wreath products up to quasi-isometry:

\begin{theorem}\label{thm1.3}
Let $E$, $F$ be two non-trivial finite groups. Let $G$, $H$ be finitely presented groups, with finitely generated normal infinite subgroups $M\lhd G$, $N\lhd H$. Suppose that $M$ has infinite index in $G$, and that there is no subspace of $G$ (resp. $H$) that coarsely separates $G$ and that coarsely embeds into $M$ (resp. $N$). The following claims hold.
\begin{itemize}
    \item If $M$ is co-amenable in $G$, then $E\wr_{G/M}G$ and $F\wr_{H/N}H$ are quasi-isometric if and only if \;$|E|=n^{r}$, $|F|=n^{s}$ for some $n,r,s\ge 1$ and there exists a quasi-isometry of pairs $(G,M)\longrightarrow (H,N)$ inducing a quasi-$\frac{s}{r}$-to-one quasi-isometry $G/M\longrightarrow H/N$.
    \item If $M$ is not co-amenable in $G$, then $E\wr_{G/M}G$ and $F\wr_{H/N}H$ are quasi-isometric if and only if \;$|E|$ and $|F|$ have the same prime divisors and there exists a quasi-isometry of pairs $(G,M)\longrightarrow (H,N)$. 
\end{itemize}
\end{theorem}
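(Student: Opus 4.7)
The strategy is to treat $E \wr_{G/M} G$ as a halo-type structure whose lamp configurations live over the coset space $G/M$, and to adapt the quasi-median and coarse-separation techniques of \cite{GT24b} and \cite{BGT24} to this permutational setting. The key conceptual observation motivating everything is that, since $M$ is normal in $G$, it acts trivially on $G/M$; the subgroup $(\bigoplus_{G/M} E) \rtimes M$ of $E \wr_{G/M} G$ is therefore the direct product $(\bigoplus_{G/M} E) \times M$, and its left cosets furnish a canonical family of \emph{thick sheets} indexed by $G/M$. The non-coarse-separation hypothesis is designed precisely to make these sheets the intrinsically detectable coarse objects inside the wreath product, playing the part that one-endedness of $H_1$ plays in Theorem~\ref{thm1.2}.

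For the \emph{if} direction, given a quasi-isometry of pairs $\varphi : (G,M) \longrightarrow (H,N)$ with the appropriate induced map on quotients, I would construct a quasi-isometry of the wreath products by resampling configurations along $\bar\varphi : G/M \longrightarrow H/N$ and matching $E$-lamps with $F$-lamps via the identification coming from $|E|=n^{r}$, $|F|=n^{s}$ (respectively, via a bijection compatible with prime divisors in the non-co-amenable case, as in \cite{GT24b}). The verification reduces to a word-length computation expressing the displacement in $E \wr_{G/M} G$ as the base displacement plus a travelling-salesman term over the support of the configuration. For the \emph{only if} direction, which contains the real work, I would show that any quasi-isometry $\Phi : E \wr_{G/M} G \longrightarrow F \wr_{H/N} H$ sends each thick sheet of the source within bounded Hausdorff distance of a thick sheet of the target, thereby inducing a map $G/M \longrightarrow H/N$ that extends to a quasi-isometry of pairs $(G,M) \longrightarrow (H,N)$. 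Once this preservation is in hand, the relation on the orders of $E$ and $F$ would be extracted from the local behaviour of $\Phi$ transverse to the sheets: in the co-amenable case, comparing the growth of balls inside a sheet forces $|E|^{r'}=|F|^{s'}$ with exponents given by the quasi-to-one constant, while in the non-co-amenable case a F{\o}lner-type argument following \cite{GT24b} recovers only the coincidence of prime divisors.

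The principal obstacle I anticipate is the sheet-preservation step. In the standard wreath product case, the analogous base copies are detected as maximal unbounded subsets on which the relative geometry is cheap; here, thick sheets are neither finitely generated nor of bounded geometry, so detection must proceed entirely through coarse separation. The plan is to argue by contradiction: if $\Phi$ sent a thick sheet far from every thick sheet of the target, its image would be transverse to the sheet structure of $F \wr_{H/N} H$, and this transversality should produce a subspace coarsely embedding into a translate of $N$ which coarsely separates the image of $G$, contradicting the hypothesis on $N$. Making this intuition quantitative with uniform constants, and carefully handling the infinite lamp-fiber part of each sheet, is where I expect all the hypotheses of the theorem to interact.
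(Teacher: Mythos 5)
There is a genuine gap, and it sits exactly where you expect the work to be: the detection step. First, your plan never explains how sheet-preservation would actually be proved, and it never uses the finite presentation of $G$ and $H$, which is essential. In the paper the detected objects are not the cosets of $(\bigoplus_{G/M}E)\times M$ but the \emph{leaves}, i.e. the $G$-cosets $\lbrace (c,p):p\in G\rbrace$, and their preservation (Corollary~\ref{cor4.5}) is deduced from the embedding theorem (Theorem~\ref{thm1.11}): a coarse embedding of a \emph{coarsely simply connected} graph whose image is not coarsely separated by coarsely embedded subspaces of $N$ must land near an $H$-coset. The proof factors the embedding through a finitely presented truncation $\Gamma F\rtimes H$ of the (infinitely presented) permutational wreath product and then runs a hyperplane argument in the quasi-median geometry of the graph product; coarse simple connectedness is what makes the factorization possible, and the coarse-separation hypothesis is what rules out the image clinging to a hyperplane neighbourhood (which coarsely embeds into $N$). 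Your "transversality should produce a separating subspace" heuristic has no substitute for this mechanism. Second, even granting sheet-preservation, it would be insufficient: cosets of $(\bigoplus_{G/M}E)\times M$ only remember the quotient $G/M$, so at best you obtain a map $G/M\longrightarrow H/N$, and such a map does not "extend" to a quasi-isometry of pairs $(G,M)\longrightarrow(H,N)$ — the implication goes the other way (Proposition~\ref{prop2.9}). The theorem requires a genuine quasi-isometry $G\longrightarrow H$ quasi-preserving $M$-cosets; the paper gets it from leaf-preservation (each leaf is a copy of $G$ sent near a copy of $H$) upgraded by Proposition~\ref{prop4.12}, which uses normality of $M$, $N$ and the parallelism of their cosets.

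The extraction of the arithmetic conditions is also not as you describe. Comparing ball growth "transverse to the sheets" will not produce the key conclusion in the co-amenable case, namely that the induced map $G/M\longrightarrow H/N$ is quasi-$\frac{s}{r}$-to-one: this is a measure-scaling statement, and the obstruction is that, unlike the standard case, quasi-isometries of permutational wreath products need \emph{not} be aptolic (Proposition~\ref{prop4.10}), so one cannot read off a coupling of configurations directly. The paper circumvents this by coning off the zones (Lemma~\ref{lm4.21}, Proposition~\ref{prop4.3}, Corollary~\ref{cor4.20}) to convert the leaf- and zone-preserving quasi-isometry into a quasi-isometry of \emph{standard} lamplighters $\mathcal{L}_{|E|}(G/M)\to\mathcal{L}_{|F|}(H/N)$, which is then at finite distance from an aptolic one, and only at that point quotes the scaling results of~\cite{GT24b}. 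Your "if" direction is closer in spirit to the paper's Section~\ref{section5}, but note that in the co-amenable case the "resampling along $\bar\varphi$" must first pass through Theorem~\ref{thm2.3} (partitions into pieces of sizes $r$ and $s$ and a map respecting them) before the lamp identification $|E|=n^{r}$, $|F|=n^{s}$ can be implemented blockwise, and in the non-co-amenable case one needs the $n$-to-one self-map of $H/N$ at bounded distance from the identity furnished by non-amenability (Proposition~\ref{prop5.1}).
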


Here, a quasi-isometry $f\colon G\longrightarrow H$ is a \textit{quasi-isometry of pairs} if it sends any $M-$coset at finite Hausdorff distance from an $N-$coset, and we write $f\colon (G,M)\longrightarrow (H,N)$. Moreover, such a map always induces a quasi-isometry between the quotients $G/M$ and $H/N$, unique up to bounded distance. See Definition~\ref{def2.7} and Proposition~\ref{prop2.9} for more details.

\smallskip

Next, notice that in this statement, the assumption on the index of $M$ in $G$ is not restrictive. Indeed, if $M$ has finite index in $G$, then $E\wr_{G/M}G$ is quasi-isometric to $G$, so $E\wr_{G/M}G$ and $F\wr_{H/N}H$ are quasi-isometric if and only if $G$ and $H$ are. 

\smallskip

The other assumptions in the statement of Theorem~\ref{thm1.3} seem to be, at least at first glance, very restrictive. In particular, the systematic study of coarse separation of spaces, although the notion already appeared at various places in the literature under different formulations (see e.g.~\cite[Section~9.7]{DK18},~\cite{FS96, Mar21}), seems to have been initiated only very recently in~\cite{BGT24}. Moreover, the latter focuses essentially on coarse separation by subspaces of subexponential growth, and the global picture remains to be established. We refer to~\cite[Section~7]{BGT24} for a deeper discussion and several open questions on the subject. 

\smallskip

However, assumptions of Theorem~\ref{thm1.3} are satisfied in a number of classical cases, and combining it with results from~\cite{GT24a, BGT24} already allow us to derive many applications. The following example is the one that motivated the search for a general criterion.

\begin{corollary}\label{cor1.4}
Let $E$, $F$ be two non-trivial finite groups, and let $m,m',n,n'$ be four integers such that $m\ge n\ge 2$, $m'\ge n'\ge 2$. Then $E\wr_{\Z^n}\Z^m$ and $F\wr_{\Z^{n'}}\Z^{m'}$ are quasi-isometric if and only if $m=m'$, $n=n'$ and $|E|$, $|F|$ are powers of a common number. 
\end{corollary}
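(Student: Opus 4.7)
The strategy is to deduce Corollary~\ref{cor1.4} from Theorem~\ref{thm1.3} by choosing $G=\Z^m$ together with the subgroup $M=\Z^{m-n}$ (embedded, say, as the first $m-n$ coordinates), so that the quotient $G/M$ identifies canonically with $\Z^n$ endowed with the left-translation action of $\Z^m$, giving $E\wr_{\Z^n}\Z^m = E\wr_{G/M}G$; analogously for $F\wr_{\Z^{n'}}\Z^{m'}$. I will concentrate on the generic case $m>n$, $m'>n'$, as the boundary case $m=n$ gives a standard wreath product $E\wr\Z^n$ handled by Theorem~\ref{thm1.2}.

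\smallskip

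The first task is to verify the hypotheses of Theorem~\ref{thm1.3}. Finite presentability of $\Z^m$, and the fact that $\Z^{m-n}\lhd\Z^m$ is finitely generated, infinite, and of infinite index, are all immediate. Co-amenability of $\Z^{m-n}$ in $\Z^m$ follows from amenability of the quotient $\Z^n$, placing us in the first bullet of the theorem. The only genuine verification concerns the coarse separation assumption: any subspace of $\Z^m$ that coarsely separates $\Z^m$ has polynomial volume growth of degree at least $m-1$, whereas any subspace coarsely embedding into $\Z^{m-n}$ has growth of degree at most $m-n$. As $n\ge 2$ gives $m-n\le m-2 < m-1$, no such subspace can exist; the identical argument handles the $H$ side.

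\smallskip

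The first bullet of Theorem~\ref{thm1.3} then reduces the problem to the existence of $k,r,s\ge 1$ with $|E|=k^r$, $|F|=k^s$, together with a quasi-isometry of pairs $(\Z^m,\Z^{m-n})\to(\Z^{m'},\Z^{m'-n'})$ inducing a quasi-$\frac{s}{r}$-to-one quasi-isometry $\Z^n\to\Z^{n'}$. For the necessary direction, any such quasi-isometry $\Z^m\to\Z^{m'}$ forces $m=m'$ by matching polynomial growth degrees (Gromov), and the induced quasi-isometry on quotients then forces $n=n'$. For the converse, assuming $m=m'$ and $n=n'$, I would split $\Z^m=\Z^{m-n}\times\Z^n$ and exhibit the product scaling
\begin{equation*}
f\colon \Z^{m-n}\times\Z^n\longrightarrow \Z^{m-n}\times\Z^n, \qquad f(x,y) = \bigl(x,\lfloor \lambda y\rfloor\bigr)
\end{equation*}
with $\lambda=(r/s)^{1/n}$; this map fixes the first factor (hence preserves $\Z^{m-n}$-cosets), and the induced map $y\mapsto\lfloor\lambda y\rfloor$ on $\Z^n$ is a quasi-isometry of multiplicity $\lambda^{-n}=\frac{s}{r}$.

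\smallskip

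The main subtlety I anticipate lies in the coarse separation verification, which rests on the growth-theoretic characterization of coarsely separating subspaces of polynomial-growth groups developed in~\cite{BGT24}; every other step is essentially arithmetic combined with the standard scaling maps on Euclidean lattices.
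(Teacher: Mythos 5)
Your proposal is correct and follows essentially the same route as the paper: verify the non-coarse-separation hypothesis via the polynomial-growth criterion (Theorem~\ref{thm2.6}) and then apply Theorem~\ref{thm1.3}, realizing the required quasi-isometry of pairs as a product map which is the identity on the $\Z^{m-n}$ factor and a measure-scaling quasi-isometry on the $\Z^{n}$ factor (the paper simply invokes $\mathrm{Sc}(\Z^{n})=\R_{>0}$ where you write the explicit floor map). One minor wording point: Theorem~\ref{thm2.6} only guarantees that a coarsely separating subspace of $\Z^{m}$ has degree growth greater than $m-2$, not necessarily at least $m-1$, but your application only needs the inequality $m-n\le m-2$, so the argument is unaffected.
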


For this result (resp. Corollary~\ref{cor1.8} and Proposition~\ref{prop1.10} below), we refer to Section~\ref{section7}, more precisely to Question~\ref{question7.6}, for a discussion on the assumption $n,n'\ge 2$ (resp. $n\ge 2$, $k\ge 2$).

\smallskip

In fact, the proof of this corollary shows that we can reformulate Theorem~\ref{thm1.3} in the case of permutational wreath products over base groups that are direct products.

\begin{corollary}\label{cor1.5}
Let $E$ and $F$ be two non-trivial finite groups, and let $M,K$ be infinite finitely presented groups. Suppose that $K$ is amenable, and that there is no subspace of $G\defeq M\times K$ that coarsely separates $G$ and that coarsely embeds into $M$. Then $E\wr_{K}G$ and $F\wr_{K}G$ are quasi-isometric if and only if there exists $n,r,s\ge 1$ such that $|E|=n^{r}$, $|F|=n^{s}$ and $\frac{s}{r}\in \text{Sc}(K)$.
\end{corollary}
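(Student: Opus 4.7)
The plan is to deduce Corollary~\ref{cor1.5} directly from Theorem~\ref{thm1.3}, applied to the pair $(G,M)=(H,N)=(M\times K,\,M\times\{1\})$. I would first verify the hypotheses: since $K$ is infinite, $M$ has infinite index in $G$; the product $G=M\times K$ is finitely presented because both factors are; $M$ is normal and finitely generated; the coarse non-separation condition is exactly the one assumed in the statement; and, crucially, $G/M\cong K$ is amenable, which means $M$ is co-amenable in $G$. We are therefore in the first (amenable) case of Theorem~\ref{thm1.3}.

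Applying that case, $E\wr_{K}G$ and $F\wr_{K}G$ are quasi-isometric if and only if $|E|=n^{r}$ and $|F|=n^{s}$ for some $n,r,s\ge 1$, and there exists a quasi-isometry of pairs $(G,M)\longrightarrow(G,M)$ whose induced map on $G/M\cong K$ is quasi-$\frac{s}{r}$-to-one. It then remains to check that the existence of such a quasi-isometry of pairs is equivalent to $\frac{s}{r}\in\mathrm{Sc}(K)$.

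One direction is immediate from the definition of the scaling group: any quasi-isometry of pairs $(G,M)\to(G,M)$ induces a quasi-$\frac{s}{r}$-to-one self-quasi-isometry of $K\cong G/M$ (unique up to bounded distance by Proposition~\ref{prop2.9}), and this witnesses $\frac{s}{r}\in\mathrm{Sc}(K)$. Conversely, given a quasi-$\frac{s}{r}$-to-one self-quasi-isometry $\varphi\colon K\to K$, the key point is that the product structure $G=M\times K$ permits us to form the product map
\[
\Phi\defeq \mathrm{id}_{M}\times \varphi\colon M\times K\longrightarrow M\times K,
\]
which is automatically a quasi-isometry of $G$. By construction $\Phi$ sends each $M$-coset $M\times\{k\}$ precisely onto the $M$-coset $M\times\{\varphi(k)\}$, so it is a quasi-isometry of pairs $(G,M)\to(G,M)$, and the induced map on $G/M\cong K$ agrees with $\varphi$ up to bounded distance, hence is quasi-$\frac{s}{r}$-to-one.

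I do not expect any serious obstacle beyond invoking Theorem~\ref{thm1.3}: the nontrivial content is entirely contained in that theorem, and the reduction to the scaling group $\mathrm{Sc}(K)$ rests on the trivial observation that, for a \emph{direct} product $G=M\times K$, every self-quasi-isometry of $K$ lifts to a quasi-isometry of $G$ preserving the $M$-coset partition on the nose.
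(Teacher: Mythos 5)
Your proof is correct and follows essentially the same route as the paper: both directions reduce immediately to Theorem~\ref{thm1.3}, and the converse implication is handled in both cases by lifting a quasi-$\frac{s}{r}$-to-one self-quasi-isometry $\varphi$ of $K$ to the quasi-isometry of pairs $\mathrm{id}_{M}\times\varphi$ of $G=M\times K$, which by construction induces $\varphi$ on the quotient.
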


Here also, we refer to Section~\ref{section2} for the definition of scaling groups of finitely generated groups.

\smallskip

Hence one can classify up to quasi-isometry many permutational wreath products over a direct product for which the scaling group is known, such as free abelian groups, solvable Baumslag-Solitar groups or $\text{SOL}(\Z)$ (see~\cite[Corollary~6.6]{GT22}).

\smallskip

As another consequence of Theorem~\ref{thm1.3}, we may also classify permutational wreath products over a group which is not necessarily a direct product, if the quotient has trivial scaling group:

\begin{corollary}\label{cor1.6}
Let $E,F$ be two non-trivial finite groups. Let $G$ be a finitely presented group with a finitely generated normal infinite subgroup $M\lhd G$ of infinite index. Suppose that $M$ is co-amenable in $G$, and that there is no subspace of $G$ that coarsely separates $G$ and that coarsely embeds into $M$. If $\text{Sc}(G/M)=\lbrace 1\rbrace$, then $E\wr_{G/M}G$ and $F\wr_{G/M}G$ are quasi-isometric if and only if $|E|=|F|$. 
\end{corollary}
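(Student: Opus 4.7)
The plan is to derive Corollary~\ref{cor1.6} directly from the first case of Theorem~\ref{thm1.3}, specialized to $H=G$ and $N=M$. All the hypotheses required by that theorem are available under the standing assumptions: $G$ is finitely presented, $M$ is a finitely generated normal infinite subgroup of infinite index in $G$ which is co-amenable in $G$, and the coarse separation assumption on the pair $(H,N)$ coincides with the one imposed on $(G,M)$. Theorem~\ref{thm1.3} then asserts that $E\wr_{G/M}G$ and $F\wr_{G/M}G$ are quasi-isometric if and only if there exist integers $n,r,s\ge 1$ with $|E|=n^{r}$ and $|F|=n^{s}$, together with a quasi-isometry of pairs $(G,M)\longrightarrow (G,M)$ inducing a quasi-$\frac{s}{r}$-to-one quasi-isometry $G/M\longrightarrow G/M$.

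The "if" direction of the corollary is immediate, since $|E|=|F|$ makes the two permutational wreath products isomorphic. For the converse, I will use the induced self-quasi-isometry $G/M\longrightarrow G/M$ supplied by Theorem~\ref{thm1.3}: it is quasi-$\frac{s}{r}$-to-one, and by the very definition of the scaling group recalled in Definition~\ref{def2.2}, this places $\frac{s}{r}$ in $\text{Sc}(G/M)$. The hypothesis $\text{Sc}(G/M)=\{1\}$ then forces $s=r$, and consequently $|E|=n^{r}=n^{s}=|F|$. There is no genuine obstacle beyond invoking Theorem~\ref{thm1.3}: the role of the scaling-group assumption is simply to collapse, inside the conclusion of the theorem, the \emph{a priori} possible ratios between the exponents $r$ and $s$; the only point worth a verification is the routine one that quasi-$\lambda$-to-one self-quasi-isometries are exactly what is recorded by $\text{Sc}(\cdot)$.
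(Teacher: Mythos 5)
Your proof is correct and takes the approach the paper evidently intends (the paper states Corollary~\ref{cor1.6} without giving a separate proof, treating it as an immediate consequence of Theorem~\ref{thm1.3}, which is exactly what you do). One small imprecision in the ``if'' direction: $|E|=|F|$ does not make $E\wr_{G/M}G$ and $F\wr_{G/M}G$ \emph{isomorphic}, since $E$ and $F$ can be non-isomorphic finite groups of the same order (e.g.\ $\Z_4$ and $\Z_2\times\Z_2$). What is true is that the Cayley graphs $\text{Cay}(E\wr_{G/M}G, E\cup S)$ and $\text{Cay}(F\wr_{G/M}G, F\cup S)$ are both isomorphic to the lamplighter graph $\mathcal{L}_{|E|}(\text{Cay}(G,S),\mathcal{C}_M)$, which depends only on the cardinality of the lamp group, so the two groups are quasi-isometric. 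Equivalently, one can read the ``if'' direction off Theorem~\ref{thm1.3} directly by taking $n=|E|=|F|$, $r=s=1$, and the identity as the quasi-isometry of pairs, since the identity on $G/M$ is quasi-$1$-to-one.
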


On the other hand, Theorem~\ref{thm1.3} also applies to many permutational wreath products built over nilpotent groups. Indeed, such a group $G$ is automatically finitely presented when finitely generated, has all its subgroups finitely generated, and is not coarsely separated by subgroups having degree growth $\le \text{deg}(G)-2$~\cite[Theorem~1.5]{BGT24}, where $\text{deg}(G)$ is the degree growth of $G$. For instance, if $H$ is the Heisenberg group over the integers, and $Z(H)$ denotes its centre, then $\Z_{n}\wr_{H/Z(H)}H$ and $\Z_{m}\wr_{H/Z(H)}H$ are quasi-isometric if and only if $n$ and $m$ are powers of a common number. 

\smallskip

We also emphasize that assumptions of Theorem~\ref{thm1.3} are not necessary for proving both directions of the equivalences. In fact, when constructing quasi-isometries of permutational wreath products from the data of a quasi-isometry of pairs between the base groups, we can get rid of the finite generation and coarse separation assumptions on the subgroups, as well as the finite presentation on the groups. Especially, in the case of non-co-amenable subgroups, we obtain a wide range of quasi-isometric permutational wreath products:

\begin{proposition}\label{prop1.7}
Let $n,m\ge 2$ be two integers, and let $G$ and $H$ be finitely generated groups with normal subgroups $M\lhd G$, $N\lhd H$. Assume that $M$ is not co-amenable in $G$. If there exists a quasi-isometry of pairs $(G,M)\longrightarrow (H,N)$ and if $n$ and $m$ have the same prime divisors, then there exists a quasi-isometry
\begin{equation*}
    \Z_{n}\wr_{G/M}G\longrightarrow \Z_{m}\wr_{H/N}H.
\end{equation*}
\end{proposition}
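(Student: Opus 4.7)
The plan is to construct the quasi-isometry explicitly by combining the given quasi-isometry of pairs on the cursor factor with a matching of lamp configurations, the latter made possible by the non-amenability of the Schreier coset graph that is forced by the non-co-amenability of $M$ in $G$.

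\textbf{Setup.} The quasi-isometry of pairs $\Phi \colon (G,M) \to (H,N)$ descends to a quasi-isometry $\bar{\Phi} \colon G/M \to H/N$ between the Schreier coset spaces endowed with their induced coset metrics. Since $M$ is not co-amenable in $G$, the Schreier graph $G/M$ is non-amenable as a bounded-degree graph, and this non-amenability transfers to $H/N$ through the quasi-isometry $\bar\Phi$.

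\textbf{Lamp bijection.} Writing $n = p_1^{a_1}\cdots p_k^{a_k}$ and $m = p_1^{b_1}\cdots p_k^{b_k}$ (so that the $p_i$ range over the common prime divisors), the Chinese Remainder Theorem yields $\Z_n \cong \prod_i \Z_{p_i^{a_i}}$ and $\Z_m \cong \prod_i \Z_{p_i^{b_i}}$, so it suffices to construct, for each $p = p_i$, a bijection
\begin{equation*}
\beta_p \colon \bigoplus_{G/M}\Z_{p^{a_i}} \longrightarrow \bigoplus_{H/N}\Z_{p^{b_i}}
\end{equation*}
whose action on supports lies within a bounded Hausdorff neighbourhood of $\bar\Phi$ and preserves support cardinality up to a constant factor. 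I would do this block by block: partition $G/M$ into blocks of size $b_i$ and $H/N$ into blocks of size $a_i$, paired bi-Lipschitzly across $\bar\Phi$, and use on each paired block the set bijection $\Z_{p^{a_i}}^{b_i} \leftrightarrow \Z_{p^{b_i}}^{a_i}$ that sends the all-zero configuration to the all-zero configuration. Such a bijection exists since both sides have cardinality $p^{a_i b_i}$. The existence of paired block partitions of prescribed sizes on non-amenable bounded-degree graphs is provided by Hall-type matching theorems, in the spirit of Whyte's bi-Lipschitz equivalence theorem for non-amenable bounded-geometry spaces. The full bijection $\beta$ is then the product of the $\beta_{p_i}$'s across primes.

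\textbf{Assembling.} Define $\Psi(c,g) = (\beta(c), \Phi(g))$. The word metric in a permutational wreath product $F\wr_{X}K$ is comparable, up to multiplicative and additive constants, to the sum of the cursor word length in $K$, a travelling-salesman-type distance on the lamp support in $X$, and a lamp cost linear in the support cardinality. The first piece is preserved by $\Phi$; the second by $\bar\Phi$, combined with the bounded-neighbourhood property of $\beta$; the third by the comparability of support cardinalities. A symmetric construction using a quasi-inverse of $\Phi$ together with $\beta^{-1}$ shows that $\Psi$ has dense image, and so is the desired quasi-isometry.

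\textbf{Main obstacle.} The construction of the paired block partitions is where non-amenability is used in an essential way: for amenable Schreier graphs, Følner sequences obstruct partitioning into blocks of prescribed (mismatched) sizes paired in a bi-Lipschitz way across $\bar\Phi$, and this is precisely why the amenable case of Theorem~\ref{thm1.3} requires the stronger condition that $|E|$ and $|F|$ be powers of a common number rather than only sharing prime divisors.
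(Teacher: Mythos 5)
Your proof is correct, and it takes a route that is recognizably different from the paper's. The paper proves Proposition~\ref{prop1.7} (via Corollary~\ref{cor5.4}) by composing two aptolic quasi-isometries: Proposition~\ref{prop5.3} first transports $\mathcal{L}_{n}(G,\mathcal{C}_{M})$ to $\mathcal{L}_{n}(H,\mathcal{C}_{N})$ by replacing the induced map $\overline{f}\colon G/M\to H/N$ with a nearby bijection (possible because both quotients are non-amenable, so $\overline{f}$ is quasi-one-to-one), and Proposition~\ref{prop5.1} then rescales the lamp group on a single wreath product $\mathcal{L}_{n}(H,\mathcal{C}_{N})\to \mathcal{L}_{m}(H,\mathcal{C}_{N})$ by an inductive step $\mathcal{L}_{mp}\to\mathcal{L}_{mp^{n}}$, built from an $n$-to-one self-map of $H/N$ at bounded distance from the identity. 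You instead build the quasi-isometry in one stroke: CRT-decompose the lamp groups, and for each common prime $p_{i}$ use the paired partitions of $G/M$ (size $b_{i}$) and $H/N$ (size $a_{i}$) furnished by Theorem~\ref{thm2.3} — available here precisely because the two non-amenable Schreier graphs make $\bar\Phi$ quasi-$k$-to-one for every $k$ — and a zero-preserving set bijection $\Z_{p_{i}^{a_{i}}}^{b_{i}}\leftrightarrow\Z_{p_{i}^{b_{i}}}^{a_{i}}$ on each paired block. Both routes rest on the same pillars (non-amenability of $G/M$ and hence of $H/N$; Whyte-type paired partitions; cardinality matching on blocks). The paper's factorization is more modular and lets Proposition~\ref{prop5.3} do double duty in the amenable case, while yours is more self-contained and makes the CRT bookkeeping, and hence the exact role of the ``same prime divisors'' hypothesis, explicit. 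The only small caveats in your write-up are cosmetic: $\Phi$ only \emph{coarsely} preserves cursor word length, and when you invoke ``Hall-type matching theorems'' the precise statement you need is exactly the paper's Theorem~\ref{thm2.3}, applied after observing that any quasi-isometry between non-amenable bounded-degree graphs is quasi-$k$-to-one for all $k>0$.
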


In addition, in~\cite{GT24b}, the authors use Theorem~\ref{thm1.2} to deduce a classification of wreath products over amenable finitely presented one-ended groups~\cite[Corollary~1.14]{GT24b} up to biLipschitz equivalences. Precisely, they proved, under the above hypotheses, a strong rigidity statement: $\Z_{n}\wr H$ and $\Z_{m}\wr H$ are biLipschitz equivalent if and only if $n=m$. 

\smallskip

For permutational wreath products, we observe much more flexibility than in the standard case. For instance:

\begin{corollary}\label{cor1.8}
Let $E$ and $F$ be two non-trivial finite groups, and let $m,n$ be two integers such that $m>n\ge 2$. Then $E\wr_{\Z^{n}}\Z^{m}$ and $F\wr_{\Z^{n}}\Z^{m}$ are biLipschitz equivalent if and only if\; $|E|$ and $|F|$ are powers of a common number. 
\end{corollary}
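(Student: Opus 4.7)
The forward implication is immediate: a biLipschitz equivalence is in particular a quasi-isometry, hence Corollary~\ref{cor1.4} forces $|E|$ and $|F|$ to be powers of a common integer.

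For the converse, write $|E|=k^{r}$ and $|F|=k^{s}$ for some common base $k$. Using the decomposition $\Z^{m}=\Z^{n}\oplus \Z^{m-n}$ together with the fact that the $\Z^{n}$-summand acts trivially on the quotient $\Z^{m}/\Z^{n}\cong \Z^{m-n}$, one obtains a group isomorphism
\[
E\wr_{\Z^{n}}\Z^{m}\ \cong\ (E\wr \Z^{m-n})\times \Z^{n},
\]
and similarly for $F$. It therefore suffices to produce a biLipschitz equivalence between $(E\wr \Z^{m-n})\times \Z^{n}$ and $(F\wr \Z^{m-n})\times \Z^{n}$.

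The plan for this last step is to combine two ingredients. First, Theorem~\ref{thm1.2} (when $m-n\ge 2$), or the Eskin--Fisher--Whyte classification recalled in the introduction (when $m-n=1$), supplies a quasi-$\frac{s}{r}$-to-one quasi-isometry $\Phi : E\wr \Z^{m-n}\to F\wr \Z^{m-n}$. Second, I use one of the $\Z$-factors coming from $\Z^{n}$ (available since $n\ge 2$) as a ``reservoir'' that absorbs the non-constant multiplicity of $\Phi$: via a Hall marriage / uniformly finite filling-type argument, one modifies $\Phi\times \mathrm{id}_{\Z}$ within bounded distance into an actual bijection that is biLipschitz for the product metric. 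Transporting the remaining $\Z^{n-1}$-factor by the identity then completes the map.

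The main technical obstacle is this upgrade step: one has to arrange the preimages of $\Phi$ coherently along slices of the $\Z$-factor so that the resulting bijection is genuinely biLipschitz, not merely a quasi-isometry. It is precisely the availability of the $\Z^{n}$-factor that distinguishes this permutational setting from the standard case, where the corresponding biLipschitz rigidity of~\cite{GT24b} would preclude such a construction.
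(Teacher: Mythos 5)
The forward direction is fine and is exactly the paper's argument (biLipschitz $\Rightarrow$ quasi-isometric $\Rightarrow$ Corollary~\ref{cor1.4}). The converse, however, has a genuine gap. A minor issue first: in the paper's notation the subscript is the index set $G/M$, so the splitting is $E\wr_{\Z^{n}}\Z^{m}\cong (E\wr\Z^{n})\times\Z^{m-n}$ (lamps indexed by the quotient $\Z^{n}$, kernel $\Z^{m-n}$), not $(E\wr\Z^{m-n})\times\Z^{n}$ as you wrote; this swap could be repaired by relabelling, so it is not the real problem.

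The real problem is the ``reservoir'' step, which cannot work as stated. Writing $|E|=k^{r}$, $|F|=k^{s}$ with $s\neq r$, your map $\Phi$ is quasi-$\frac{s}{r}$-to-one, and (by the argument of Lemma~\ref{lm6.5}) $\Phi\times\mathrm{id}$ is again quasi-$\frac{s}{r}$-to-one on the product. The product is an amenable bounded-degree graph, so the scaling factor is unique and is invariant under bounded perturbation; by Whyte's theorem (Theorem~\ref{thm2.4}) a quasi-isometry lies at finite distance from a bijection if and only if it is quasi-one-to-one. Hence no modification of $\Phi\times\mathrm{id}_{\Z}$ \emph{within bounded distance} can be a bijection (let alone biLipschitz) unless $s=r$, i.e.\ $|E|=|F|$. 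The multiplicity cannot be absorbed by a Hall-marriage correction along slices; it must be compensated at unbounded scale. This is precisely what the paper does: it constructs an explicit biLipschitz bijection $\gamma\colon\Z^{m}\to\Z^{m}$ (so quasi-one-to-one) which is a quasi-isometry of pairs for the $\Z^{m-n}$-cosets and ``splits'' each coset into $r$ cosets, stretching the quotient direction by $r$ while compressing the coset direction by $r$, so that the induced map $\overline{\gamma}\colon\Z^{n}\to\Z^{n}$ is quasi-$\frac{1}{r}$-to-one; Proposition~\ref{prop6.1} (via Propositions~\ref{prop5.6} and~\ref{prop6.2} and Whyte's theorem) then upgrades this to a quasi-one-to-one aptolic quasi-isometry of the permutational wreath products, hence a biLipschitz equivalence. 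In particular the equivalence is genuinely not of product form, which is why your product-based strategy breaks down.
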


This result can be seen as an extension of~\cite[Proposition~A.2]{Cor06}, and provides examples of biLipschitz equivalent permutational wreath products over non-biLipschitz equivalent lamp groups. It is in fact a particular case of Proposition~\ref{prop6.1}, proved using the same techniques as for Theorem~\ref{thm1.3}.

\smallskip

Notice also that the assumption of co-amenability of the subgroup in Corollary~\ref{cor1.8} is not restrictive at all: if $M\lhd G$ is not co-amenable, one directly deduces from Theorem~\ref{thm1.3} and Whyte's theorem (see Theorem~\ref{thm2.4} below) that $E\wr_{G/M}G$ and $F\wr_{H/N}H$ are biLipschitz equivalent if and only if $|E|,|F|$ have the same prime divisors and there exists a quasi-isometry of pairs $(G,M)\longrightarrow (H,N)$.

\smallskip

Lastly, our classification can also be used to classify wreath products whose base groups are themselves permutational wreath products. For instance, the non-amenable part of Theorem~\ref{thm1.3} together with results from~\cite{GT24a} provide:

\begin{corollary}\label{cor1.9}
Let $n,m,p,q\ge 2$ be four integers. Let $H$ be a one-ended finitely presented group, and let $N\lhd H$ be a normal finitely generated infinite subgroup of infinite index. Suppose that $N$ is not co-amenable in $H$, and that there is no subspace of $H$ that coarsely separates $H$ and that coarsely embeds into $N$. Then $\Z_{n}\wr(\Z_{p}\wr_{H/N}H)$ and $\Z_{m}\wr(\Z_{q}\wr_{H/N}H)$ are quasi-isometric if and only if $n$ and $m$ have the same prime divisors, and $p$ and $q$ have the same prime divisors.
\end{corollary}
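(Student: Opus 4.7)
The plan is to peel off the two nested wreath-product layers in succession, invoking Theorem~\ref{thm1.3} for the inner permutational layer and Theorem~\ref{thm1.2} for the outer standard layer. Set $G_{p} := \Z_{p}\wr_{H/N}H$ and $G_{q} := \Z_{q}\wr_{H/N}H$.

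First, I would apply the non-co-amenable case of Theorem~\ref{thm1.3} to the identical data $(G,M)=(H,N)$ on both sides, the identity $H \to H$ serving as a trivial quasi-isometry of pairs. All the hypotheses of Theorem~\ref{thm1.3} are satisfied ($H$ is finitely presented, $N$ is finitely generated, infinite and of infinite index, not co-amenable in $H$, and the coarse separation condition holds by assumption), so this step yields that $G_{p}$ and $G_{q}$ are quasi-isometric if and only if $p$ and $q$ have the same prime divisors.

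Next, to apply the non-amenable case of Theorem~\ref{thm1.2} to $\Z_{n}\wr G_{p}$ and $\Z_{m}\wr G_{q}$, I must check that $G_{p}$ (equivalently $G_{q}$) is finitely presented, one-ended and non-amenable. Non-amenability is immediate: since $N$ is not co-amenable in $H$, the quotient $H/N$ is non-amenable, forcing $H$ to be non-amenable and hence $G_{p}$ as well, since $G_{p}$ surjects onto $H$. Finite presentation is a known fact for permutational wreath products $F\wr_{H/N}H$ when $F$ is finite, $H$ finitely presented, and the point-stabilizer $N$ finitely generated. For one-endedness, I would exploit the short exact sequence
\begin{equation*}
    1 \longrightarrow \bigoplus_{H/N}\Z_{p} \longrightarrow G_{p} \longrightarrow H \longrightarrow 1,
\end{equation*}
whose kernel is infinite (because $H/N$ is infinite) and whose quotient $H$ is one-ended; standard results on ends of finitely generated group extensions then rule out the two-ended and multi-ended cases. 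With these three properties in hand, Theorem~\ref{thm1.2} gives that $\Z_{n}\wr G_{p}$ and $\Z_{m}\wr G_{q}$ are quasi-isometric if and only if $G_{p}$ and $G_{q}$ are quasi-isometric and $n, m$ share the same prime divisors. Combining this with the conclusion of the first step yields the claimed equivalence.

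The only step that is not a direct invocation of the theorems already proved in the paper is the verification that $G_{p}$ is one-ended; everything else amounts to bookkeeping and checking the hypotheses. I expect this one-endedness to be the main (but modest) obstacle, as it requires a separate argument about ends of extensions rather than being supplied directly by the assumptions of the corollary.
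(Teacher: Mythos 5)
There is a genuine and fatal gap in your proposal: you assert that $G_{p} := \Z_{p}\wr_{H/N}H$ is finitely presented when $F$ is finite, $H$ is finitely presented and $N$ is finitely generated. This is false when $N$ has infinite index — in fact, the paper itself proves the opposite in the lemma following Theorem~\ref{thm3.2}, which establishes that $F\wr_{H/N}H$ is \emph{infinitely} presented precisely in this setting. The reason is Cornulier's criterion ([Cor06, Theorem~1.1], cited in the paper): finite presentation of a permutational wreath product requires the action of $H$ on $(H/N)^2$ to have finitely many orbits, and Claim~\ref{claim3.4} shows this fails whenever $H/N$ is infinite. (Indeed, the infinite presentation of $G_p$ is the very fact that makes the approximation-by-truncation strategy of Theorem~\ref{thm3.1} nontrivial.) Since Theorem~\ref{thm1.2} requires the base groups to be finitely presented — its proof in~\cite{GT24b} relies essentially on the coarse simple connectedness of those base groups — your plan to apply it to $\Z_n\wr G_p$ and $\Z_m\wr G_q$ does not go through, regardless of how the one-endedness check (which you correctly flagged as nontrivial) turns out.

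The paper's actual route around this is Proposition~\ref{prop6.4}\textit{(ii)}, which does not invoke Theorem~\ref{thm1.2} for the rigidity direction but instead appeals to the machinery of~\cite{GT24a} (Theorem~7.23 there), which is designed to handle infinitely presented base groups that satisfy the so-called \emph{thick bigon property}. Permutational wreath products $F\wr_{H/N}H$ with $N$ infinite do satisfy this property, which is exactly why the infiniteness of $N$ is an essential hypothesis of Corollary~\ref{cor1.9} (as the paper emphasizes in the remark after Question~7.3). That result yields both that $n$ and $m$ share the same prime divisors and that the quasi-isometry can be taken aptolic, producing a quasi-isometry $\Z_p\wr_{H/N}H\to\Z_q\wr_{H/N}H$, at which point Theorem~\ref{thm1.3} finishes. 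The construction ("if") direction in Proposition~\ref{prop6.3} does go roughly as you outline, since building an explicit aptolic quasi-isometry does not use finite presentation; but the rigidity direction cannot be salvaged by the argument you propose.
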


The proof of this statement is decomposed into several intermediate statements, that are themselves of independent interest for the study of rigidity and flexibility properties of such iterated wreath products.

\smallskip

On the amenable side, an additional scaling condition is required. We compute then the scaling groups of some permutational wreath products in Section~\ref{section6}, thus proving:

\begin{proposition}\label{prop1.10}
Let $n,m,p,q,d,k\ge 2$ be integers such that $d>k\ge 2$. Then $\Z_{n}\wr(\Z_{p}\wr_{\Z^{k}}\Z^{d})$ and $\Z_{m}\wr(\Z_{q}\wr_{\Z^{k}}\Z^{d})$ are quasi-isometric if and only if $n$ and $m$ are powers of a common number, and $p$ and $q$ are powers of a common number.
\end{proposition}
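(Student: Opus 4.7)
My plan is to reduce Proposition \ref{prop1.10} to the quasi-isometric classification of permutational wreath products given by Corollary \ref{cor1.4}, together with a computation of the scaling group of $L\defeq\Z_{p}\wr_{\Z^{k}}\Z^{d}$. Setting also $L'\defeq\Z_{q}\wr_{\Z^{k}}\Z^{d}$, I would first verify that $L$ and $L'$ are amenable (as iterated extensions of abelian groups), finitely presented, and one-ended (being infinite amenable and containing $\Z^{d}$ with $d\ge 2$, they cannot be finite or two-ended). These checks place me in the setting where the amenable case of Theorem \ref{thm1.2} applies to the outer lamplighters $\Z_{n}\wr L$ and $\Z_{m}\wr L'$.

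By that theorem, $\Z_{n}\wr L$ and $\Z_{m}\wr L'$ are quasi-isometric if and only if there exist $a,n_{1},n_{2}\ge 1$ with $n=a^{n_{1}}$, $m=a^{n_{2}}$ and a quasi-$\tfrac{n_{2}}{n_{1}}$-to-one quasi-isometry $L\longrightarrow L'$. The ratio $n_{2}/n_{1}$ equals $\log m/\log n$ and, as $(n,m)$ varies over all pairs of powers of a common integer, ranges over all of $\Q_{>0}$. For the forward direction, the existence of such a quasi-isometry forces $L$ and $L'$ to be quasi-isometric, and Corollary \ref{cor1.4} (applied with $E=\Z_{p}$, $F=\Z_{q}$, $m=m'=d$, $n=n'=k$) then yields that $p$ and $q$ are powers of a common integer.

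The converse is where the main difficulty lies. Assuming $p,q$ are powers of a common integer, Corollary \ref{cor1.4} supplies at least one quasi-isometry $L\longrightarrow L'$, so the relative scaling set
\begin{equation*}
   \mathrm{Sc}(L,L') \defeq \lbrace c>0 : \text{there is a quasi-}c\text{-to-one quasi-isometry } L\longrightarrow L' \rbrace
\end{equation*}
is a nonempty coset of the scaling group $\mathrm{Sc}(L)$ in $(\R_{>0},\cdot)$. It therefore suffices to prove the inclusion $\mathrm{Sc}(L)\supseteq\Q_{>0}$. My intended strategy is to construct, for each integer $N\ge 2$, an explicit quasi-$N$-to-one self quasi-isometry of $L$: start from the natural quasi-$N$-to-one self quasi-isometry of the base $\Z^{d}$ (for instance, integer rescaling in a single coordinate), and promote it to a self quasi-isometry of $L$ that coherently redistributes lamp configurations along the $N$-to-one folding of $\Z^{d}$. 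Closing under coarse inverses (reciprocals) and compositions (products) then produces all of $\Q_{>0}$. The main obstacle, which I expect to be the content of Section \ref{section6}, is carrying out this promotion compatibly with the permutational action of $\Z^{d}$ on $\Z^{d}/\Z^{k}$ and with the structure of the lamp group $\bigoplus_{\Z^{d}/\Z^{k}}\Z_{p}$, so as to obtain a genuine volumetric $N$-to-one scaling on $L$ itself rather than merely on the base.
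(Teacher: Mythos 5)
Your outline misses a critical obstruction that the paper deals with explicitly. The groups $L=\Z_{p}\wr_{\Z^{k}}\Z^{d}$ and $L'=\Z_{q}\wr_{\Z^{k}}\Z^{d}$ are \emph{not} finitely presented: as shown in Section~\ref{section3} (the unnumbered lemma following Theorem~\ref{thm3.2}), a permutational wreath product $F\wr_{H/N}H$ with $F,H$ non-trivial finitely presented and $N\lhd H$ normal, finitely generated and of infinite index is always infinitely presented, and here $N=\Z^{d-k}$ has infinite index in $\Z^{d}$ since $d>k$. Consequently Theorem~\ref{thm1.2}, which explicitly requires $H_{1},H_{2}$ finitely presented, does not apply to $\Z_{n}\wr L$ and $\Z_{m}\wr L'$, and the rigidity direction of your argument has no footing. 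The paper instead invokes~\cite[Theorem~8.6]{GT24a}, a strengthening valid for amenable bases having the so-called thick bigon property, which $L$ and $L'$ satisfy by~\cite[Corollary~3.15]{GT24a}; this is exactly the mechanism used in Proposition~\ref{prop6.4}\textit{(i)} and is what lets one conclude that $n,m$ are powers of a common number together with the existence of a measure-scaling quasi-isometry $L\longrightarrow L'$. Without it, the forward implication is not justified.

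Two further points on the converse, which is closer to being right. First, for the scaling-group computation the paper does not attempt the ``promotion of an $N$-to-one folding of $\Z^{d}$ to $L$'' that you sketch. It instead exploits the direct product decomposition $L\cong\Z^{d-k}\times(\Z_{p}\wr\Z^{k})$, together with Lemma~\ref{lm6.5} and Corollary~\ref{cor6.6}, to conclude at once that $\text{Sc}(L)\supseteq\text{Sc}(\Z^{d-k})=\R_{>0}$. This is substantially cleaner and gives the full $\R_{>0}$, not just $\Q_{>0}$. Second, and related, you need the base quasi-isometry $L\longrightarrow L'$ to have a known (ideally trivial) scaling factor in order to identify the coset $\text{Sc}(L,L')$ of $\text{Sc}(L)$; Corollary~\ref{cor1.4} only guarantees some quasi-isometry, whereas Corollary~\ref{cor1.8} provides a biLipschitz equivalence, hence a quasi-one-to-one one by Theorem~\ref{thm2.4}. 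If you had only $\Q_{>0}\subseteq\text{Sc}(L)$ and an unknown base scaling factor, the argument would not close; using Corollary~\ref{cor1.8} (or proving $\text{Sc}(L)=\R_{>0}$ as the paper does) fixes this. The final assembly via~\cite[Proposition~3.11]{GT24b}, which constructs the quasi-isometry $\Z_{n}\wr L\longrightarrow \Z_{m}\wr L'$ from the measure-scaling quasi-isometry of the bases, does not require finite presentation, so that part of your plan is sound.
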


More generally, one can establish the classification up to quasi-isometry of many finitely generated groups built out of different halos structures. For instance, if $n,m,d,k\ge 2$ are four integers with $d>k$, the combination of Corollary~\ref{cor1.8} and~\cite[Corollary~8.9]{GT24a} shows that $\shuf{\Z_{n}\wr_{\Z^{k}}\Z^{d}}\footnote{Recall that, given any finitely generated group $H$, $\shuf{H}$ is defined as the semi-direct product $\text{FSym}(H)\rtimes H$, where $\text{FSym}(H)$ is the group of bijections from $H$ to $H$ that are the identity outside a finite set, and where $H$ acts on the latter through its initial action on itself by left-multiplication.}$ and $\shuf{\Z_{m}\wr_{\Z^{k}}\Z^{d}}$ are quasi-isometric if and only if $n$ and $m$ are powers of a common number, while if $d=k$, $\shuf{\Z_{n}\wr\Z^{d}}$ and $\shuf{\Z_{m}\wr\Z^{d}}$ are quasi-isometric if and only if $n=m$. 

\bigskip

\noindent \textbf{Outlines of the proof of Theorem~\ref{thm1.3}.} Let us now briefly explain the main steps in order to deduce Theorem~\ref{thm1.3}.

\smallskip

The first part of the proof consists in proving that any quasi-isometry between two permutational wreath products always preserve, up to a finite distance, cosets of the base groups. This statement follows from a general \textit{embedding theorem}:

\begin{theorem}\label{thm1.11}
Let $F$ be a finite group. Let $H$ be a finitely generated group and $N\lhd H$ a normal finitely generated subgroup of infinite index. Let $A$ be a coarsely simply connected graph, and let $\rho\colon A\longrightarrow F\wr_{H/N}H$ be a coarse embedding. Assume that $\rho(A)$ cannot be coarsely separated by coarsely embedded subspaces of $N$.

Then $\rho(A)$ lies into the neighborhood of an $H-$coset. Moreover, the size of this neighborhood depends only on $A$, $F$, $H$ and the parameters of $\rho$.
\end{theorem}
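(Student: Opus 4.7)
I would argue by contradiction: assume $\rho(A)$ is not contained in any bounded neighborhood of an $H$-coset, and produce a coarsely embedded subspace of $N$ that coarsely separates $\rho(A)$, contradicting the hypothesis. Let $\pi\colon F\wr_{H/N}H\longrightarrow H$ denote the natural projection and write $\rho(a)=(f_{a},h_{a})$ for every $a\in A$. The starting observation is that distances between two $H$-cosets inside $F\wr_{H/N}H$ are governed by the cost of toggling the lamps distinguishing them, which in turn depends on how far from the lamplighter these lamps sit. The failure of the conclusion would thus produce pairs $(a_{n},a_{n}')$ in $A$ whose images satisfy $h_{a_{n}},h_{a_{n}'}$ close in $H$, but $f_{a_{n}}$ and $f_{a_{n}'}$ differing at $H/N$-cosets of arbitrarily large $H$-distance from $h_{a_{n}}$.

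\smallskip

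The second step is to exploit coarse simple connectedness of $A$ to rigidify the lamp component of $\rho$. Along any coarse path in $A$ pushed forward through $\rho$, a single step in $H$ can only modify the lamp above the current lamplighter position, so the positions where $f_{\bullet}$ evolves along $\rho$ must stay close to $\pi\circ\rho$. Filling short loops in $A$ by coarse discs and pushing them into $F\wr_{H/N}H$ should then impose that the assignment $a\mapsto f_{a}$ behaves as a coarse $1$-cocycle into $\bigoplus_{H/N}F$ with distortion controlled by the filling function of $A$, ruling out the possibility of lamp configurations changing ``at infinity'' in an uncontrolled way around coarse loops.

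\smallskip

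The candidate separating subspaces are slices of the form
\begin{equation*}
    S_{xN,f_{0}}\defeq \{(f,h)\in F\wr_{H/N}H:\ h\in xN,\ f|_{H/N\setminus\{xN\}}=f_{0}\},
\end{equation*}
each of which is quasi-isometric to $N$ (up to the finite factor $F$), and thus coarsely embeds into $N$. The essential geometric fact is that such a slice forms a wall between any two $H$-cosets whose lamp configurations coincide off $xN$ and differ at $xN$: any path crossing this dichotomy in $F\wr_{H/N}H$ must send the lamplighter into $xN$ to toggle the relevant lamp, and a suitable choice of $f_{0}$ ensures that the passage occurs through $S_{xN,f_{0}}$. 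Given a witness pair $(a_{n},a_{n}')$ supplied by Step~1, a judicious choice of $xN$ far from $h_{a_{n}},h_{a_{n}'}$ but where $f_{a_{n}}$ and $f_{a_{n}'}$ disagree would produce a slice $S_{n}$ separating $\rho(a_{n})$ from $\rho(a_{n}')$.

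\smallskip

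The principal difficulty, in my view, is the aggregation step: turning the family of local separators $(S_{n})_{n}$ into a single coarsely embedded copy of $N$ that coarsely separates $\rho(A)$ globally. Without coarse simple connectedness, $\rho(A)$ could reconnect the two sides of $S_{n}$ via long detours that avoid the slice; the cocycle picture from Step~2 is precisely what should forbid such detours on large scales by constraining the global pattern of lamp changes. I would expect the proof to conclude by extracting a single $H/N$-coset $xN$ at which changes of lamp occur cofinally along $\rho(A)$, and by taking the corresponding slice as the final separating subspace, thereby contradicting the hypothesis on $\rho(A)$ and forcing $\rho(A)$ to lie near a single $H$-coset, with the neighborhood size depending only on $A$, $F$, $H$, and the parameters of $\rho$.
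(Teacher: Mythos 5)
Your contradiction scheme founders at the separation step, and the gap is exactly where the paper has to work hardest. A single slice $S_{xN,f_{0}}$ is \emph{not} a wall in $F\wr_{H/N}H$ (nor in $\rho(A)$): a path joining two points whose colorings differ on $xN$ must indeed send the lamplighter into $xN$, but at that moment the background configuration can be \emph{anything} — lamps arbitrarily far from $xN$ may already have been toggled — so no choice of $f_{0}$ makes the crossing pass through your slice. The honest crossing locus is $\lbrace (f,h) : h\in xN\rbrace$, which contains a copy of $\bigoplus_{H/N}F$ in the coloring coordinate and does not coarsely embed into $N$; restricting the background to $f_{0}$ recovers embeddability but destroys the separation property, unless you already know $\rho(A)$ stays near one leaf, which is the conclusion. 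Your proposed fix via a ``coarse $1$-cocycle'' coming from coarse simple connectedness is too vague to close this: nothing in it prevents the image from toggling far-away lamps while the pointer crosses $xN$, precisely because $F\wr_{H/N}H$ is infinitely presented and has no bound on the length of relations tying distant lamps together.

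The paper's proof uses coarse simple connectedness in a different and essential way: since $N$ has infinite index, $F\wr_{H/N}H$ is infinitely presented, so the coarse embedding $\rho$ factors (Theorem~\ref{thm3.2}, Corollary~\ref{cor3.5}) through a finitely presented truncation $\Gamma F\rtimes H$, where $\Gamma F$ is a graph product of copies of $F$ over a Cayley graph $\Gamma$ of $H/N$. In the geometric model $F\square_{\Gamma}H$ the role of your slices is played by hyperplanes $J$ of the quasi-median graph $\text{QM}(\Gamma,F)$: by Lemma~\ref{lm2.21} the neighborhood of $J$ is supported on the finite set $\text{star}(g)$, so the portion of $\eta(A)$ lying on $N(J)$ projects into $\lbrace (\alpha^{\xi}c,\iota(u)s(g)) : \text{supp}(c)\subset\text{star}(g),\,u\in N\rbrace$, which genuinely is a coarsely embedded copy of $N$ — this is exactly the localization your slices lack, and it is bought by the truncation. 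The non-separation hypothesis then selects a ``deep'' sector $J^{+}$ for each hyperplane, and the aggregation problem you flag is solved not by finding one coset where lamp changes occur cofinally, but by a Helly-type intersection of all the chosen sectors (Lemma~\ref{lm2.16}) yielding a single vertex, whose leaf is the desired $H$-coset after projecting back, with the quantitative bound coming from counting pairwise non-transverse hyperplanes (Lemma~\ref{lm2.17}). Without the factorization through the approximation group, the argument as you outline it cannot produce a coarsely embedded copy of $N$ that separates $\rho(A)$.
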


The proof strategy consists in factorizing our coarse embedding through a group that "approximates" our permutational wreath product $F\wr_{H/N}H$ into some suitable sense. For extracting valuable informations from this step, our approximation group need to be not equal to our original wreath product $F\wr_{H/N}H$. This observation follows from the fact that $N$ is normal and has infinite index in $H$. Then, our approximation group surjects onto $F\wr_{H/N}H$ and sends specific subspaces, called \textit{leaves}, to $H-$cosets. Thus we only have to prove that the image of our graph under the factorization embedding lies close to a leaf. This is done by noticing that the approximation group has a nice structure, since it splits as a semi-direct product of $H$ and a graph product of groups. This graph product exhibits a quasi-median geometry, that we exploit in order to prove the desired claim.

\smallskip

This embedding theorem is similar in spirit to the ones proved in~\cite{GT24b} and in~\cite{GT24a}. The main difference is in the construction of a geometric model of the approximation group. We must construct such a model taking into account the action of $H$ on $H/N$.  

\smallskip

As already mentioned, the proof of Theorem~\ref{thm1.11} is the step requiring most of our assumptions, namely the finite presentation of the base groups, their non-coarse separation by subspaces of the considered subgroups, and the finite generation of the subgroups. 

\smallskip

In~\cite{GT24b, GT24a}, the next step consists in using the property of preserving cosets of the base group, referred to as \textit{leaf-preservingness}, to deduce that all quasi-isometries between two wreath products (with additional assumptions) preserve the lamplighter structure in a strong way, namely they are all \textit{aptolic}, in the sense:

\begin{definition}\label{def1.12}
Let $A,B,C,D$ be four finitely generated groups. A quasi-isometry $q\colon A\wr B \longrightarrow C\wr D$ is of \textit{aptolic form} if there exists two maps $\alpha\colon \bigoplus_{B}A\longrightarrow \bigoplus_{D}C$ and $\beta\colon B\longrightarrow D$ such that 
\begin{equation*}
   \forall(c,p)\in A\wr B,\; q(c,p)=(\alpha(c),\beta(p)).
\end{equation*}
A quasi-isometry $q\colon A\wr B\longrightarrow C\wr D$ is aptolic if it is of aptolic form and has a quasi-inverse of aptolic form. 
\end{definition}

In the permutational case, it turns out that such a rigidity statement does not hold, and we construct in Proposition~\ref{prop4.10} many examples of leaf-preserving non-aptolic quasi-isometries between permutational wreath products. Loosely speaking, these examples come from the fact that the lamplighter can control at the same time the color of all lamps in a single coset. 

\smallskip

This lack of aptolicity thus suggests that another strategy than the one followed in~\cite{GT24b} is needed to reach the conclusion. The key observation is that the leaf-preservingness property implies that the base groups must be quasi-isometric through a quasi-isometry quasi-preserving the cosets of the subgroups. The proof of this claim uses in a crucial way the fact that our subgroups are normal and the resulting parallelism between the cosets of the subgroups in the base groups. Once this claim is established, we can cone-off cosets of the subgroups in our permutational wreath products and obtain in this way an induced quasi-isometry between two standard wreath products. This induced quasi-isometry turns out to be, up to finite distance, aptolic. This phenomenon can be thought of as a relative aptolicity:

\begin{proposition}\label{prop1.13}
Let $n,m\ge2$. Let $G$, $H$ be finitely presented groups with finitely generated normal infinite groups $M\lhd G$, $N\lhd H$. Suppose that there is no subspace of $G$ (resp. $H$) that coarsely separates $G$ and that coarsely embeds into $M$ (resp. $N$). Then any quasi-isometry $q\colon \Z_{n}\wr_{G/M}G\longrightarrow \Z_{m}\wr_{H/N}H$ induces a quasi-isometry $q^{\text{in}}\colon \Z_{n}\wr G/M \longrightarrow \Z_{m}\wr H/N$ that lies at finite distance from an aptolic quasi-isometry $\Z_{n}\wr G/M\longrightarrow \Z_{m}\wr H/N$.
\end{proposition}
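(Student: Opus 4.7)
The plan is to run four steps, leveraging the embedding theorem (Theorem~\ref{thm1.11}) as a starting point. First, I apply Theorem~\ref{thm1.11} to each $G$-leaf $L_{c} = \{(c,g) : g \in G\}$ of $\Z_{n}\wr_{G/M}G$, which is coarsely simply connected since $G$ is finitely presented. The non-coarse-separation hypotheses of the proposition ensure the assumption of the embedding theorem is met, producing a map $\alpha$ of lamp configurations with $q(L_{c}) \subseteq N_{R}(L_{\alpha(c)})$ for a uniform $R$. In particular, composing $q|_{L_{c}}$ with closest-point projection to $L_{\alpha(c)}$ gives, for each $c$, a quasi-isometry $\bar q_{c} : G \to H$, and $q(c,g)$ lies at uniformly bounded distance from $(\alpha(c), \bar q_{c}(g))$.

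Next, I upgrade $\bar q_{0}$ (the base map for the zero configuration) to a quasi-isometry of pairs $(G, M) \to (H, N)$. The key input is the $1$-edge in the source Cayley graph that toggles the lamp at $x = gM$ from the lamplighter position $g \in xM$: its $q$-image has bounded length and, combined with the leaf-preservingness of Step~1, it forces the finite support of $\alpha(\delta_{x}) - \alpha(0)$ in $H/N$ to lie within a uniformly bounded $d_{H/N}$-distance from $\pi(\bar q_{0}(g))$ for \emph{every} $g \in xM$, where $\pi : H \to H/N$ is the quotient map. Since this support is a fixed finite set independent of $g$, the projections $\pi(\bar q_{0}(g))$ for $g \in xM$ must be trapped in a uniformly bounded subset of $H/N$, which is precisely the statement that $\bar q_{0}(xM)$ lies in a uniform neighborhood of a single $N$-coset. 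Normality of $M, N$ is crucial here, equipping $G/M$ and $H/N$ with well-behaved quotient metrics, and the resulting parallelism of cosets lets the argument carried out at one coset apply uniformly across all cosets.

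With $\bar q_{0}$ now a quasi-isometry of pairs, it descends to a quasi-isometry $\tilde q : G/M \to H/N$. I then observe that coning off $M$-cosets in $\Z_{n}\wr_{G/M}G$ --- freeing the lamplighter to move within an $M$-coset while leaving lamp toggles unchanged --- recovers exactly the standard lamplighter $\Z_{n}\wr G/M$, and the pair-QI condition implies that $q$ descends to a well-defined quasi-isometry $q^{\text{in}} : \Z_{n}\wr G/M \to \Z_{m}\wr H/N$. The map $(c, xM) \mapsto (\alpha(c), \tilde q(xM))$ is of aptolic form by inspection, its quasi-inverse (obtained by the symmetric argument applied to $q^{-1}$) is also of aptolic form, and passing the approximate formula from Step~1 through the descent shows that $q^{\text{in}}$ lies at bounded distance from it. I expect the main obstacle to be the pair-QI step: leaf-preservingness is a plug-in application of the embedding theorem and the aptolic approximation is a formal consequence of the descent, but extracting coset-preservation from leaf-preservation is specific to the permutational setting and relies crucially on the lamp-toggle analysis sketched above, together with the normality hypothesis through the parallelism of cosets.
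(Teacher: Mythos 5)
Your first three steps track the paper's own route (leaf-preservingness via the embedding theorem as in Corollary~\ref{cor4.5}, the lamp-toggle argument upgrading leaf-preservingness to a quasi-isometry of pairs as in Proposition~\ref{prop4.12}, and the cone-off identification of Lemma~\ref{lm4.21} and Proposition~\ref{prop4.3}), but the final step has a genuine gap. Writing $q(c,p)$ uniformly close to $(\alpha(c),\beta_{c}(p))$, the descent through the cone-off gives $q^{\text{in}}(c,xM)$ uniformly close to $(\alpha(c),\overline{\beta_{c}}(xM))$, where the position component $\overline{\beta_{c}}\colon G/M\to H/N$ a priori depends on the coloring $c$; it does \emph{not} give closeness to $(\alpha(c),\tilde q(xM))$ with the single map $\tilde q=\overline{\beta_{\ind}}$ coming from the trivial coloring. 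Your claim that ``the aptolic approximation is a formal consequence of the descent'' is precisely the point that requires proof: one must show that all the $\overline{\beta_{c}}$ lie at a \emph{uniformly} bounded distance from $\overline{\beta_{\ind}}$, uniformly over the infinitely many colorings $c$. This is not formal. Proposition~\ref{prop4.10} of the paper exhibits leaf-preserving quasi-isometries of $\Z_{p}\wr_{H/N}H$ whose leafwise maps $\beta_{c}$ are \emph{not} uniformly close to one another (they differ by translations $z^{|\text{supp}(c)|}$ inside an $N$-coset), so any such uniformity can only hold after coning off and must be argued. The paper handles this by observing that $q^{\text{in}}$ is a leaf-preserving quasi-isometry between the standard lamplighter graphs $\mathcal{L}_{|E|}(\text{Cay}(G/M,\pi_{G}(T)))$ and $\mathcal{L}_{|F|}(\text{Cay}(H/N,\pi_{H}(S)))$ and then invoking the proof of~\cite[Corollary~6.11]{GT24a}, which yields exactly the uniform bound on $d(\overline{\beta_{c}},\overline{\beta_{\ind}})$ and hence the bounded distance to an aptolic map (its quasi-inverse being of aptolic form by the same argument applied to a quasi-inverse of $q$). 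Your proposal never addresses this $c$-dependence, so the aptolicity conclusion is unsupported as written.

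A secondary, repairable issue: in your Step 2 the toggle argument only produces one inclusion, namely that $\beta_{\ind}(xM)$ lies in a uniform neighborhood of a single $N$-coset. A quasi-isometry of pairs requires a bound on the \emph{Hausdorff} distance, and the converse inclusion is obtained in Proposition~\ref{prop4.12} by running the symmetric argument with a quasi-inverse and using Lemma~\ref{lm4.6}; without it the descent to a quasi-isometry $G/M\to H/N$ (Proposition~\ref{prop2.9}) is not yet justified. This should be stated, even though it follows by the same method.
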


Once we are reduced to an aptolic quasi-isometry between standard lamplighters, we can then deduce our conclusion thanks to results from~\cite{GT24b}. 

\smallskip

\noindent \textbf{Structure of the paper.} In Section~\ref{section2}, we fix our notations and we recall many concepts and definitions on coarse and quasi-median geometry that are used throughout the next parts. We also gather the results of other articles that will be useful for our applications.

\smallskip

Section~\ref{section3} is devoted to the proof of the embedding theorem (cf. Theorem~\ref{thm1.11}) and relies heavily on tools from quasi-median geometry recalled in Section~\ref{section2}. 

\smallskip

Section~\ref{section4} is the core of the article, and we prove in Theorem~\ref{theorem4.1}, roughly speaking, the first half of Theorem~\ref{thm1.3}. In subsection~\ref{subsection4.3}, we show that the leaf-preservingness property guarantees the existence of a quasi-isometry of pairs between the base groups (Proposition~\ref{prop4.12}). Subsection~\ref{subsection4.4} introduces cone-offs of graphs and several results regarding quasi-isometries between them. In subsection~\ref{subsection4.5} we combine all these results to prove Proposition~\ref{prop1.13} and Theorem~\ref{theorem4.1}.

\smallskip

In Section~\ref{section5}, we prove the other half of Theorem~\ref{thm1.3}. We begin by proving Proposition~\ref{prop1.7} in subsection~\ref{subsection5.1} (cf. Corollary~\ref{cor5.4}), because the non-amenable case is less rigid that the amenable one and provides substantially more quasi-isometric permutational lamplighters, and we treat the amenable case in subsection~\ref{subsection5.2}. We emphasize that these results require much less assumptions on the subgroups under consideration. We finish the proof of Theorem~\ref{thm1.3} in subsection~\ref{subsection5.3}. 

\smallskip

Section~\ref{section6} is devoted to some consequences of our main theorem. We prove Corollaries~\ref{cor1.4} and~\ref{cor1.5} in subsection~\ref{subsection6.1}, Corollary~\ref{cor1.8} in subsection~\ref{subsection6.2}, and Corollary~\ref{cor1.9} and Proposition~\ref{prop1.10} in subsection~\ref{subsection6.3}. 

\smallskip

Lastly, Section~\ref{section7} records various questions related to the results of the article. 

\smallskip

\noindent \textbf{Acknowledgements.} I am grateful to my advisors Anthony Genevois and Romain Tessera for all our discussions and their careful readings of many preliminaries versions of this text. Results of this paper are part of the author's PhD thesis, funded through a PhD scholarship from the University of Paris. 

\bigskip

\section{Preliminaries}\label{section2}

\subsection{Notations} In this text, all considered graphs are unoriented and \textit{simplicial}, i.e. they have no loops of length one nor multiple edges. If $\Gamma$ is such a graph, $V(\Gamma)$ refers to its vertex set, and $E(\Gamma)$ denotes its edge set.

\smallskip

Given a group $G$, we denote $1_{G}$ its neutral element, and if $G$ is generated by a finite set $S$, $\text{Cay}(G,S)$ refers to the Cayley graph of $G$ with respect to $S$, that is the graph with elements of $G$ as vertices and edges are pairs of the form $(g,gs)$ with $g\in G$ and $s\in S\cup S^{-1}\setminus\lbrace 1_{H}\rbrace$, while $\ell_{S}$ denotes the usual length function on $G$ associated to $S$. Also, if $G$ is a group and $H$ is a subgroup of $G$, left $H-$cosets are denoted $gH$, $g\in G$. If $n\ge 1$ is an integer, $\Z_{n}$ stands for the cyclic group of order $n$. 

\smallskip

If $a,b,n$ are three integers, we write $a\equiv b\;[n]$ to say that $a-b$ is a multiple of $n$. 

\smallskip

Lastly, recall from~\cite{GT24b} that, given $n\ge 2$ and a graph $X$, $\mathcal{L}_{n}(X)$ is the graph 
\begin{itemize}
    \item whose vertex-set is the set of pairs $(c,p)$ where $c\colon V(X)\longrightarrow \Z_{n}$ is finitely supported and $p\in V(X)$;
    \item whose edges connect $(c_{1},p_{1})$ to $(c_{2},p_{2})$ if either $c_{1}=c_{2}$ and $p_{1}, p_{2}$ are adjacent in $X$, or if $p_{1}=p_{2}$ and $c_{1},c_{2}$ differ only on this vertex.
\end{itemize}
The set of finitely supported colorings $V(X)\longrightarrow \Z_{n}$ is denoted $\Z_{n}^{(X)}$. In the special case where $X$ is a Cayley graph of a finitely generated group $G$, we also note $\Z_{n}^{(G)}$. 

\subsection{Coarse geometry} We now define and recall many basic concepts in the study of the large-scale geometry of metric spaces.

\smallskip

\noindent \textbf{Coarse embeddings.} A map $f\colon (X,d_{X})\longrightarrow (Y,d_{Y})$ between two metric spaces is a \textit{coarse embedding} if there exist two functions $\sigma_{-},\sigma_{+}\colon [0,\infty)\longrightarrow [0,\infty)$ such that $\sigma_{-}(t)\longrightarrow\infty$ when $t\rightarrow\infty$ and such that 
\begin{equation*}
    \sigma_{-}(d_{X}(x,y)) \le d_{Y}(f(x),f(y)) \le \sigma_{+}(d_{X}(x,y))
\end{equation*}
for all $x,y\in X$. The maps $\sigma_{-}, \sigma_{+}$ are called the \textit{parameters of $f$}. Additionally, if there exists $C\ge 0$ such that $d_{Y}(y, f(X)) \le C$ for all $y\in Y$, we call $f$ a \textit{coarse equivalence}.

\smallskip

If the parameters of a coarse embedding $f$ are both affine functions, we say that $f$ is a \textit{quasi-isometric embedding}, and without restrictions we may assume that $\sigma_{+}$ and $\sigma_{-}$ have multiplicative constants inverses of each other and that their additive constants differ by the sign. More precisely, given $C\ge 1$ and $K\ge 0$, we say that $f$ is a $(C,K)-$\textit{quasi-isometric embedding} if 
\begin{equation*}
    \frac{1}{C}d_{X}(x,y)-K \le d_{Y}(f(x),f(y)) \le Cd_{X}(x,y)+K
\end{equation*}
for all $x,y\in X$. Additionally if $d_{Y}(y,f(X))\le K$ for all $y\in Y$, we say that $f$ is a \textit{quasi-isometry}, or a $(C,K)-$\textit{quasi-isometry}. When such a map exists, we say that $X$ and $Y$ are \textit{quasi-isometric}, and we denote $X\sim_{Q.I.} Y$.

\smallskip

A $(C,0)-$quasi-isometry is usually called a \textit{biLipschitz equivalence}, or a \textit{$C-$biLipschitz equivalence}, and a map $f\colon (X,d_{X})\longrightarrow (Y,d_{Y})$ such that 
\begin{equation*}
    d_{Y}(f(x),f(y)) \le Cd_{X}(x,y)
\end{equation*}
for any $x,y\in X$ is said to be \textit{$C-$Lipschitz}.

\smallskip 

Lastly, recall that the distance between two maps $f,g\colon (X,d_{X})\longrightarrow (Y,d_{Y})$, denoted $d(f,g)$, is defined as $d(f,g) \defeq \sup_{x\in X}d_{Y}(f(x),g(x))$.

\smallskip

Here goes an easy observation that we will use several times in the sequel.

\begin{lemma}\label{lm2.1}
Let $f\colon X\longrightarrow Y$ be a map between two graphs. If there exists $C>0$ such that $d(f(x),f(y)) \le C$ for any pair $(x,y)$ of adjacent vertices, then $f$ is $C-$Lipschitz. 
\end{lemma}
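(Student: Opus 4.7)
The plan is to reduce the global Lipschitz estimate to the hypothesis on adjacent vertices by walking along a geodesic. Given two vertices $x, y \in V(X)$ with $n = d_X(x,y)$, I would fix a geodesic $x = x_0, x_1, \ldots, x_n = y$ in $X$ (which exists as soon as $X$ is connected, as is the case for all graphs appearing in the paper, typically Cayley graphs of finitely generated groups). Each consecutive pair $x_i, x_{i+1}$ is adjacent in $X$, so the hypothesis gives $d_Y(f(x_i), f(x_{i+1})) \le C$.

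An iterated application of the triangle inequality in $Y$ along the images then yields
$$d_Y(f(x), f(y)) \;\le\; \sum_{i=0}^{n-1} d_Y(f(x_i), f(x_{i+1})) \;\le\; nC \;=\; C \cdot d_X(x,y),$$
which is precisely the $C$-Lipschitz condition. There is no real obstacle: the whole argument is a single application of the triangle inequality along a geodesic, with the only implicit convention being that $f$ is understood as a map on vertex sets and that $X$ is connected.
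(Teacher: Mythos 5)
Your proof is correct and is essentially identical to the paper's: both pick a geodesic $x = x_0, \ldots, x_n = y$ and apply the triangle inequality along it to get $d(f(x),f(y)) \le Cn = C\,d(x,y)$. The remark about connectedness and the map being understood on vertices is a harmless clarification of the implicit conventions.
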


\begin{proof}
Let $x,y\in X$ be two vertices, and pick $x=x_{0},x_{1},\dots,x_{n-1},x_{n}=y$ a geodesic connecting $x$ and $y$ in $X$, so that $n=d(x,y)$. From the triangle inequality and the assumption, we get 
\begin{equation*}
    d(f(x),f(y))=d(f(x_{0}),f(x_{n})) \le \sum_{i=0}^{n-1}d(f(x_{i}), f(x_{i+1})) \le Cn=Cd(x,y).
\end{equation*}
Thus $f$ is $C-$Lipschitz, as claimed. 
\end{proof}

\noindent \textbf{Amenability and co-amenability.} Given a group $G$ acting on a set $X$, we say that the action is \textit{amenable} if there exists a mean (i.e. a finitely additive probability measure) $\mu$ on $X$ such that $\mu(gA)=\mu(A)$ for any $g\in G$ and $A\subset X$; see~\cite{Gre69}. If $G$ acts on itself by left-multiplication, it reduces to the classical notion of amenability. 

\smallskip

Amenability has been intensively studied and is known for admitting many different equivalent formulations, in terms of invariant means, paradoxical decompositions, F\o lner and Reiter properties, or random walks for instance. It can also be defined for other spaces such as graphs, using the isoperimetric constant or F\o lner sequences. It is also well-known that amenability provides a quasi-isometry invariant, and that a finitely generated group is amenable if and only if so is any of its Cayley graphs. 

\smallskip

Lastly, given a group $G$ and a subgroup $H$, we say that $H$ is \textit{co-amenable in $G$} if the action of $G$ on $G/H$ by left-multiplication is amenable. We refer to~\cite{MP03} and the references therein for more background on this notion. Let us just notice that, if $H$ is the trivial subgroup, this amounts to say that $G$ is amenable and, more generally, if $H$ is normal in $G$, this is equivalent to say that the quotient group $G/H$ is amenable. 

\smallskip

\noindent  \textbf{Measure-scaling quasi-isometries.} We also recall a refinment of quasi-isometries, introduced in~\cite{Dym10, GT22}, called \textit{measure-scaling quasi-isometries}. We only state the definition in the context that will be relevant for us, namely for bounded degree graphs, but it naturally extends to more general metric measure spaces. 

\begin{definition}\label{def2.2}
Let $X,Y$ be bounded degree graphs, let $f\colon X\longrightarrow Y$ be a quasi-isometry and let $k>0$. We say that $f$ is \textit{quasi-$k$-to-one} if there exists $C>0$ such that 
\begin{equation*}
    \left|k|A|-|f^{-1}(A)|\right|\le C\cdot|\partial_{Y} A|
\end{equation*}
for any finite subset $A\subset Y$, where $\partial_{Y} A\defeq \lbrace y\in Y\setminus A : \exists x\in A, (x,y)\in E(Y)\rbrace$ is the \textit{boundary} of $A$ in $Y$.
\end{definition}

In this case, we call $f$ a \textit{measure-scaling quasi-isometry}, and the real number $k$ is the \textit{scaling factor}. We refer to~\cite{GT22} for a general introduction and many examples and properties of measure-scaling quasi-isometries. For our purposes, what is important is the following characterization of rational scaling factors, proved in~\cite[Theorem~4.2]{GT22}.

\begin{theorem}\label{thm2.3}
Let $m,n\ge 1$ be two integers and let $f\colon X\longrightarrow Y$ be a quasi-isometry between two bounded degree graphs. The following claims are equivalent:
\begin{enumerate}[label=(\roman*)]
    \item $f\colon X\longrightarrow Y$ is quasi-$\frac{m}{n}$-to-one.
    \item There exist a partition $\mathcal{P}_{X}$ (resp. $\mathcal{P}_{Y}$) of $X$ (resp. of $Y$) with uniformly bounded pieces of size $m$ (resp. $n$) and a bijection $\psi\colon \mathcal{P}_{X}\longrightarrow \mathcal{P}_{Y}$ such that $f$ is at bounded distance from a map $g\colon X\longrightarrow Y$ satisfying $g(P)\subset \psi(P)$ for every $P\in\mathcal{P}_{X}$.
\end{enumerate}
\end{theorem}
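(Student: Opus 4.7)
For the easy direction (ii) $\Rightarrow$ (i), the argument is a direct counting. For any finite $A \subset Y$, split $\mathcal{P}_Y$ into pieces \emph{internal} to $A$ and pieces \emph{crossing} $\partial_Y A$. Since the pieces of $\mathcal{P}_Y$ are uniformly bounded and $Y$ has bounded degree, the crossing pieces lie in a fixed neighborhood of $\partial_Y A$ and so number at most $C_1|\partial_Y A|$. Writing $p$ for the number of internal pieces, one has $|A| = np + O(|\partial_Y A|)$; applying the same analysis to $\mathcal{P}_X$ through $\psi$ and using $g(P) \subset \psi(P)$ yields $|g^{-1}(A)| = mp + O(|\partial_Y A|)$. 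Since $f$ is at bounded distance from $g$ and $Y$ has bounded degree, $|f^{-1}(A) \triangle g^{-1}(A)| = O(|\partial_Y A|)$. Combining the three estimates, $\bigl||f^{-1}(A)| - \tfrac{m}{n}|A|\bigr| = O(|\partial_Y A|)$, which is the required quasi-$\tfrac{m}{n}$-to-one estimate after multiplying by $n$.

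The converse (i) $\Rightarrow$ (ii) is a Hall-type matching argument, generalizing Whyte's classical treatment of the case $m=n=1$. Fix $R$ large, form the bipartite graph $\mathcal{B}$ with parts $X$ and $Y$, connect $x$ to $y$ whenever $d_Y(f(x),y) \leq R$, and assign ``demand'' $n$ to each $x$ and ``demand'' $m$ to each $y$. A Hall-type inequality $m|N_R(A)| \geq n|A|$ for every finite $A \subset X$ follows from the quasi-$\tfrac{m}{n}$-to-one hypothesis applied to $N_R(A) = \{y \in Y : d_Y(y,f(A)) \leq R\}$: since $A \subset f^{-1}(N_R(A))$, one has $|A| \leq \tfrac{m}{n}|N_R(A)| + C|\partial_Y N_R(A)|$, and the boundary term is absorbed by an exhaustion argument (replacing $R$ by $R' \gg R$ and using the bounded geometry of $Y$). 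A symmetric condition on the other side is proved similarly. Applying the Rado--Hall theorem for locally finite bipartite graphs with integer demands then produces an integer flow $\phi\colon X \times Y \to \mathbb{Z}_{\geq 0}$ supported on the edges of $\mathcal{B}$, with $\sum_y \phi(x,y) = n$ and $\sum_x \phi(x,y) = m$.

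The final step is to decompose the resulting bounded-range $(n,m)$-biregular bipartite multigraph into sub-multigraphs of exact sizes $(m,n)$ and uniformly bounded diameter. My plan is to do this by a local alternating-path procedure: pick an unused right-vertex $y_0 \in Y$, aggregate the $m$ left-vertices contributing flow to $y_0$, absorb the residual flow at those left-vertices by short augmenting paths, and iterate. The bounded range of $\phi$ ensures each resulting block has bounded diameter. Each such block provides one piece of $\mathcal{P}_X$ (its $m$ left-vertices), one piece of $\mathcal{P}_Y$ (its $n$ right-vertices), and the bijection $\psi$. The map $g$ is then defined by selecting, for each $x$, an element $y \in \psi(P_x)$ with $\phi(x,y) > 0$; by construction $d_Y(f(x),g(x)) \leq R$ and $g(P) \subset \psi(P)$.

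The main obstacle is this last decomposition step: producing pieces of the exact sizes $(m,n)$ that are simultaneously of uniformly bounded diameter requires careful local surgery on $\phi$, since a naive greedy construction may propagate global corrections. The Hall verification is also somewhat delicate because of the boundary corrections, but these are standard to absorb in the bounded-geometry setting by choosing the range $R$ sufficiently large.
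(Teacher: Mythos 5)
A preliminary remark: the paper does not prove Theorem~\ref{thm2.3} at all; it quotes it from~\cite[Theorem~4.2]{GT22}, so your argument can only be measured against that source's strategy (a reduction to a Whyte-type matching), which yours resembles in outline. Your direction (ii)$\Rightarrow$(i) is correct; the only point left implicit is that $f$ and $g$ have uniformly bounded fibres (true for quasi-isometries between bounded degree graphs), which is what justifies $|f^{-1}(A)\triangle g^{-1}(A)|=O(|\partial_Y A|)$.

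The direction (i)$\Rightarrow$(ii) has two genuine gaps, exactly at the places you flag, and neither is a routine matter. First, the Hall condition. What the hypothesis gives is $n|A|\le m|N_r(A)|+nC|\partial_Y N_r(A)|$ for every radius $r$, and the boundary term is not absorbed by taking the range large: since $\partial_Y N_r(A)\subset N_{r+1}(A)\setminus N_r(A)$, summing over $r<R$ only yields $n|A|\le \left(m+\tfrac{nC}{R}\right)|N_R(A)|$, and the slack $\tfrac{nC}{R}|N_R(A)|$ is not negligible for the exact inequality $n|A|\le m|N_R(A)|$, because in the amenable case $|N_R(A)|$ is of the same order as $|A|$. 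The dangerous sets are finite sets whose boundary is comparable to their size but whose $R$-neighbourhood grows slowly (comb-like sets in $\mathbb{Z}^2$, for instance); excluding a Hall failure there forces you to use the quasi-$\frac{m}{n}$-to-one hypothesis on the thickened sets in a genuinely global way, and this is precisely the substance of Whyte's theorem rather than a standard boundary absorption. The efficient repair is not to reprove a Hall condition: verify (a short computation) that $(x,i)\mapsto(f(x),1)$ defines a quasi-one-to-one quasi-isometry $X\times\lbrace 1,\dots,n\rbrace\longrightarrow Y\times\lbrace 1,\dots,m\rbrace$ and invoke Theorem~\ref{thm2.4}, which hands you exactly the bounded-range $(n,m)$-biregular bipartite multigraph you want. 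Second, the final step, cutting that multigraph into matched blocks with exactly $m$ left vertices, exactly $n$ right vertices and uniformly bounded diameter, is only a plan: nothing in the alternating-path sketch guarantees that the augmenting paths are uniformly short or that the local surgeries do not cascade, and since the blocks are not required to respect the edges of the multigraph, what must be proved is a nontrivial combinatorial lemma — it is exactly here that the uniform bound on the pieces in (ii) gets produced. As written, the proposal is a plausible outline of the strategy of~\cite{GT22}, but these two steps carry the actual content of the theorem and neither is established.
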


Let us also mention that, under composition, the scaling factor is multiplicative~\cite[Proposition~3.6]{GT22}. Moreover, if $X$ is amenable, a quasi-isometry $f\colon X\longrightarrow X$ can be quasi-$k$-to-one for at most one value $k$ (in contrast with the case of non-amenable graphs, where any quasi-isometry is quasi-$k$-to-one for all $k>0$). Thus, for $X$ an amenable graph of bounded degree, there is a well-defined group morphism 
\begin{equation*}
    \text{scale}\colon \text{QI}_{\text{sc}}(X) \longrightarrow \R_{>0}
\end{equation*}
where $\text{QI}_{\text{sc}}(X)\defeq \lbrace \text{measure-scaling quasi-isometries $X\longrightarrow X$}\rbrace/$bounded distance. The image of this morphism is called the \textit{scaling group of $X$} and is denoted $\text{Sc}(X)$.

\smallskip

Finally, recall the following important result due to Whyte~\cite[Theorem~2]{Why99} (see also~\cite[Proposition~4.1]{GT22}).

\begin{theorem}\label{thm2.4}
A quasi-isometry between two bounded degree graphs is quasi-one-to-one if and only if it lies at finite distance from a bijection. 

In particular, any quasi-isometry between two non-amenable bounded degree graphs lies at finite distance from a bijection.
\end{theorem}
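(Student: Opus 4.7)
The plan is to prove both directions of the equivalence, then derive the ``in particular'' assertion as a corollary of the non-amenable isoperimetric inequality.

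\emph{Easy direction.} Suppose there exists a bijection $g\colon X \to Y$ with $d(f,g) \le D$. For any finite $A \subset Y$, since $|g^{-1}(A)| = |A|$, I would bound $\bigl||A| - |f^{-1}(A)|\bigr| \le |f^{-1}(A) \triangle g^{-1}(A)|$. Each $x$ in this symmetric difference has $f(x) \in A$ and $g(x) \notin A$ (or vice versa), and $d(f(x), g(x)) \le D$ forces $g(x) \in N_D(\partial A) \setminus A$. In bounded degree $\Delta$, $|N_D(\partial A)| \le \Delta^D |\partial A|$, so $\bigl||A| - |f^{-1}(A)|\bigr| \le \Delta^D |\partial A|$, i.e.\ $f$ is quasi-one-to-one.

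\emph{Hard direction.} Suppose $f$ is quasi-one-to-one. I would apply an infinite version of Hall's marriage theorem (valid for locally finite bipartite graphs, which is our setting since both $X$ and $Y$ have bounded degree) to the auxiliary bipartite graph $\Gamma_R$ on vertex set $V(X) \sqcup V(Y)$ having an edge $\{x,y\}$ whenever $d_Y(f(x),y) \le R$. A perfect matching, which by symmetry one can require on both sides, produces a bijection $\phi\colon X \to Y$ with $d(\phi, f) \le R$. The heart of the proof, and the main obstacle, is verifying Hall's condition: every finite $A \subset V(X)$ satisfies $|N_{\Gamma_R}(A)| \ge |A|$ (and symmetrically for finite subsets of $V(Y)$). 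Given such $A$, setting $B \defeq N_R(f(A))$, the inclusion $A \subset f^{-1}(B)$ combined with the quasi-one-to-one estimate yields
\[|A| \le |f^{-1}(B)| \le |B| + C|\partial B|.\]
Combining this with the analogous inequality for a quasi-inverse $\bar f$ of $f$ (noting that $\bar f^{-1}(A) \subset N_{R_0}(f(A)) \subset B$ for $R \ge R_0$ depending only on the QI constants, which yields $|B| \ge |A| - C'|\partial A|$), a swapping/exhaustion argument between $f$ and $\bar f$ rules out any hypothetical finite Hall violator once $R$ is large enough. This delicate bookkeeping is where the bulk of the technical work lies.

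\emph{Non-amenable case.} Finally, if $Y$ is a non-amenable bounded-degree graph, the defining linear isoperimetric inequality provides $\varepsilon > 0$ with $|\partial A| \ge \varepsilon |A|$ for every finite $A \subset Y$. Bounded degree of $X$ and the lower-Lipschitz bound on $f$ yield a uniform multiplicity bound $|f^{-1}(A)| \le M|A|$, so
\[\bigl||A| - |f^{-1}(A)|\bigr| \le (M+1)|A| \le \tfrac{M+1}{\varepsilon}|\partial A|,\]
which is precisely the quasi-one-to-one condition. The first part of the theorem then produces a bijection at bounded distance, completing the proof.
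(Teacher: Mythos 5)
The paper does not prove this statement itself — it records it as a citation to Whyte~\cite{Why99} and to~\cite[Proposition~4.1]{GT22} — so your proposal has to stand on its own. Your easy direction and the derivation of the non-amenable corollary are essentially correct. One small slip in the easy direction: for $x$ in the half of the symmetric difference where $g(x)\in A$ and $f(x)\notin A$, one has $g(x)\in N_D(\partial A)\cap A$ rather than $g(x)\in N_D(\partial A)\setminus A$. This is harmless, since in both halves $g(x)\in N_D(\partial A)$, and injectivity of $g$ still gives $\bigl|f^{-1}(A)\triangle g^{-1}(A)\bigr|\le |N_D(\partial A)|$, which is $O_{D,\Delta}(|\partial A|)$.

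The hard direction contains a genuine gap, which you half-acknowledge. With $B=N_R(f(A))$, the two inequalities you extract — $|A|\le|B|+C|\partial_Y B|$ from the quasi-one-to-one property of $f$, and $|B|\ge|A|-C'|\partial_X A|$ from that of a quasi-inverse — do not combine to give Hall's condition $|B|\ge|A|$: each carries a boundary error term that is strictly positive for every proper finite $A$, and there is no mechanism by which they cancel. The sentence ``a swapping/exhaustion argument between $f$ and $\bar f$ rules out any hypothetical finite Hall violator'' names the missing step without performing it, and it is not even clear that Hall's condition holds verbatim in the $R$-bipartite graph for any fixed $R$, since quasi-one-to-one controls the Hall defect only to within $O(|\partial A|)$. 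The proofs in the literature are structurally different: Whyte detects the obstruction in uniformly finite homology $H_0^{uf}(Y)$, and~\cite[Theorem~4.2]{GT22} proves the finer partition statement (Theorem~\ref{thm2.3} above, of which Theorem~\ref{thm2.4} is the case $m=n=1$) via a more elaborate combinatorial construction. Your setup points at the right bipartite graph, but without a concrete replacement for the missing lemma the Hall verification does not close.
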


\noindent \textbf{Coarse simple connectedness.} Recall that a graph $Z$ is \textit{coarsely simply connected} if there exists $R\ge 0$ such that filling all cycles of length $\le R$ in $Z$ produces a simply connected $2-$complex. Coarse simple connectedness is a quasi-isometry invariant~\cite[Theorem~6.B.7]{CH16} and, in the context of a finitely generated group $H$ with a finite generating set $S$, the Cayley graph $\text{Cay}(H,S)$ is coarsely simply connected if and only if $H$ is finitely presented (see for instance~\cite[Proposition~7.B.1]{CH16}). 

\smallskip

\noindent \textbf{Coarse separation.} For the next definition, recall that if $A\subset X$ is a subset of a metric space $X$ and $R\ge 0$, we denote by $A^{+R}$ the $R-$neighborhood of $A$ in $X$, that is the set 
\begin{equation*}
    A^{+R}\defeq \lbrace x\in X : \exists a\in A, \; d(x,a)\le R\rbrace.
\end{equation*}

Additionally, if $A,B\subset X$, the \textit{Hausdorff distance between $A$ and $B$} is 
\begin{equation*}
    d_{\text{Haus}}(A,B)\defeq \inf\lbrace R\ge 0 : A\subset B^{+R},\; B\subset A^{+R}\rbrace. 
\end{equation*} 

Recall also that if $k>0$, a metric space $(X,d_{X})$ is \textit{$k-$coarsely connected} if for any $x,y\in X$, there is a sequence of points $x=x_{0},x_{1},\dots,x_{n-1},x_{n}=y$ such that $d_{X}(x_{i-1},x_{i})\le k$ for any $i=1,\dots, n$.

\begin{definition}
Let $(X,d_{X})$ be a metric space, and let $Z\subset X$. We say that $Z$ \textit{coarsely separates} $X$ if there exist $k>0$ and $L\ge 0$ so that for any $D\ge 0$, $X\setminus Z^{+L}$ contains at least two $k-$coarsely connected components with points at distance $\ge D$ from $Z$.
\end{definition}

Naturally, the property of being coarsely separated is preserved by coarse equivalences~\cite[Lemma~2.3]{BGT24}. 

\smallskip

We record here one of the main result from~\cite{BGT24} on the coarse separation of groups of polynomial growth, for our future applications.

\begin{theorem}\label{thm2.6}
Let $G$ be a finitely generated group, having polynomial growth of degree $d\ge 2$. Then $G$ cannot be coarsely separated by a subspace having degree growth $\le d-2$.
\end{theorem}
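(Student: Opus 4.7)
The plan is to invoke Gromov's polynomial growth theorem to reduce to a virtually nilpotent $G$, pass to the asymptotic cone to convert the coarse situation into a topological one, and then rule out separation by a dimension count in the resulting Carnot group. Assume, for contradiction, that some subspace $Z\subset G$ of degree growth $\le d-2$ coarsely separates $G$, witnessed by constants $k,L\ge 0$ and arbitrarily far unbounded $k$-coarsely connected components of $G\setminus Z^{+L}$.

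First I would replace $G$ by a finite-index nilpotent subgroup (this is harmless since coarse separation is a coarse invariant by \cite[Lemma~2.3]{BGT24}) and choose a word metric. By the Pansu--Malcev picture, the asymptotic cone $G_\infty=\lim_\omega (G,\frac{1}{n}d)$ is a simply connected nilpotent Lie group equipped with a left-invariant Carnot--Carathéodory metric, whose Hausdorff (and homogeneous) dimension equals $d$. Rescaling the sequence $(Z,\frac{1}{n}d)$ produces a closed subset $Z_\infty\subset G_\infty$; the crucial quantitative input is that the polynomial growth bound of degree $\le d-2$ on $Z$ forces the upper box-counting dimension of $Z_\infty$ (with respect to the Carnot--Carathéodory metric) to be $\le d-2$, and hence the Hausdorff dimension to be $\le d-2$.

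Next I would transfer the coarse separation hypothesis to the cone: the $k$-coarsely connected components of $G\setminus Z^{+L}$ that stay at distance $\ge D_n\to\infty$ from $Z$ produce, in the ultralimit, at least two distinct connected components of $G_\infty\setminus Z_\infty$ (the rescaled neighborhoods $Z^{+L}$ collapse to $Z_\infty$, and $k$-coarse paths become genuine continuous paths in the cone). So $Z_\infty$ would separate $G_\infty$ topologically.

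The final step, where I expect the main obstacle, is the dimension contradiction: a closed subset of Hausdorff dimension $\le d-2$ in a Carnot group of homogeneous dimension $d$ cannot disconnect it. Over $\mathbb{R}^d$ this is classical, but in a Carnot group with the sub-Riemannian metric one must use horizontal path-connectivity. The argument I would attempt is to show, using Chow's theorem and genericity of horizontal curves, that the set of horizontal paths between any two points in $G_\infty\setminus Z_\infty$ that avoid $Z_\infty$ has positive measure in a suitable parameter space, because the ``bad'' set of paths meeting a codimension $\ge 2$ subset is negligible by a Fubini-type argument on endpoint maps. This yields path-connectivity of $G_\infty\setminus Z_\infty$ and contradicts the separation obtained in the previous step, completing the proof. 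The technical heart lies in making the degree-growth-to-Hausdorff-dimension conversion quantitative enough that the ultralimit $Z_\infty$ is genuinely of codimension $\ge 2$, and in the Fubini/transversality argument in the sub-Riemannian setting.
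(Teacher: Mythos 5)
The paper does not prove Theorem~\ref{thm2.6} itself; it is quoted verbatim from Bensaid--Genevois--Tessera and the text simply refers to~\cite[Theorem~1.5]{BGT24} for the argument. So there is no in-paper proof to compare your sketch against. Evaluated on its own terms, however, your proposal has a genuine gap at exactly the place you yourself flag as the ``technical heart.''

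The claim that polynomial degree growth $\le d-2$ for $Z$ forces the upper box-counting (hence Hausdorff) dimension of the ultralimit $Z_\infty$ to be $\le d-2$ is false. The growth condition counts points of $Z$ in extrinsic balls; after rescaling by $1/n$, the cardinality bound $|Z\cap B(R)|\lesssim R^{d-2}$ gives at most $\lesssim n^{d-2}$ points in the rescaled unit ball, which is useless for an $\varepsilon$-covering bound once $n$ exceeds $\varepsilon^{-1}$. Concretely, already in $\mathbb{Z}\subset\mathbb{R}$: fix $0<\alpha<1$ and set $C=\{\lfloor k^{1/\alpha}\rfloor : k\in\mathbb{N}\}$. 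Then $|C\cap[0,R]|\asymp R^{\alpha}$, so $C$ has degree growth $\alpha<1$, yet the consecutive gaps in $(1/n)C\cap[0,1]$ are $O(n^{-\alpha})\to 0$, so the ultralimit of $\bigl(C,\tfrac{1}{n}d\bigr)$ is all of $[0,\infty)$, of dimension $1$. The same phenomenon persists in any $\mathbb{Z}^d$, and is not cured by assuming $Z$ is the image of a coarse embedding of a group $N$, since a coarse embedding can have sublinear compression $\sigma_-$, inflating the image density. Without a control of this type, your ultralimit step does not produce a set of codimension $\ge 2$, and the rest of the argument cannot start.

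Even granting codimension $\ge 2$ in the cone, your final step---that a closed subset of Hausdorff dimension $\le d-2$ cannot disconnect a Carnot group of homogeneous dimension $d$---is asserted via a Fubini/transversality heuristic over horizontal curves, not proved. The Euclidean analogue is classical, but in the sub-Riemannian setting the endpoint map of the horizontal path space has singularities (abnormal geodesics, Sard-type issues), and one cannot simply ``push a generic horizontal path off a codimension-two set'' without substantial work. There is also a secondary technical issue in passing coarse separation to the cone: the two far components witness distance $\ge D$ from $Z$ for every $D$, but once you fix a rescaling sequence $1/n$ you must choose $D_n$ comparable to $n$ to keep the components a definite distance apart in the limit, and then check that their ultralimits remain in distinct components of $G_\infty\setminus Z_\infty$; you do not address this.

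In short, the reduction to nilpotent groups and the idea of using the asymptotic cone are reasonable starting points, but the degree-growth-to-dimension conversion is the step that actually fails. The expected route (and, to my understanding, the one behind~\cite[Theorem~1.5]{BGT24}) is quantitative rather than infinitesimal: one uses a filling or isoperimetric/Poincar\'e-type inequality in groups of polynomial growth $d$, showing that a subset coarsely cutting a ball of radius $R$ into two unbounded pieces must meet that ball in $\gtrsim R^{d-1}$ points, which is incompatible with $|Z\cap B(R)|\lesssim R^{d-2}$. That kind of argument avoids the ultralimit and dimension-theoretic difficulties entirely.
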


We refer to~\cite[Theorem~1.5]{BGT24} for the proof of this statement.

\subsection{Quasi-isometries of pairs}\label{subsection2.3} In the sequel, we will be interested in quasi-isometries that moreover quasi-preserves some subspaces of their source space. This notion plays a key role in our classification theorem. 

\begin{definition}\label{def2.7}
Let $X,Y$ be metric spaces and let $\mathcal{A}$ (resp. $\mathcal{B}$) be a collection of subspaces of $X$ (resp. $Y$). A quasi-isometry $f\colon X\longrightarrow Y$ is a \textit{quasi-isometry of pairs} $f\colon (X,\mathcal{A})\longrightarrow (Y,\mathcal{B})$ if there exists $Q>0$ such that:
\begin{itemize}
    \item For any $A\in\mathcal{A}$, there exists $B\in\mathcal{B}$ such that $d_{\text{Haus}}(f(A),B)\le Q$.
    \item For any $B\in\mathcal{B}$, there exists $A\in\mathcal{A}$ such that $d_{\text{Haus}}(f(A),B)\le Q$.
\end{itemize}
In this case, if $f$ is a $(C,K)-$quasi-isometry, then we say that $f$ is a $(C,K,Q)-$\textit{quasi-isometry of pairs}, and that $(X,\mathcal{A}), (Y,\mathcal{B})$ are \textit{quasi-isometric pairs}. 
\end{definition}

In the particular case where $X=G$ and $Y=H$ are finitely generated groups and $\mathcal{A}=\mathcal{C}_{M}$, $\mathcal{B}=\mathcal{C}_{N}$ are collections of left cosets of subgroups $M\leqslant G$, $N\leqslant H$, we simply write $(G,M)\longrightarrow (H,N)$ to denote a quasi-isometry of pairs $(G,\mathcal{C}_{M})\longrightarrow (H,\mathcal{C}_{N})$.

\smallskip

In this part, we record basic properties of quasi-isometries of pairs that will be relevant for us, and we specify then to quasi-isometries of pairs between finitely generated groups quasi-preserving cosets of a fixed normal subgroup. For a more general approach on the notion and many geometric properties with respect to such quasi-isometries, we refer to~\cite{HMS21, HM23, AM24} and the references therein.  

\smallskip

We start by gathering in a lemma some elementary observations for future use.

\begin{lemma}\label{lm2.8}
Let $X,Y,Z$ be three metric spaces with collections of subspaces $\mathcal{A}, \mathcal{B}, \mathcal{D}$ respectively. 
\begin{enumerate}[label=(\roman*)]
    \item If \;$f\colon (X,\mathcal{A})\longrightarrow (Y,\mathcal{B})$ is a quasi-isometry of pairs and if $h\colon X\longrightarrow Y$ lies at finite distance from $f$, then $h\colon (X,\mathcal{A})\longrightarrow (Y,\mathcal{B})$ is a quasi-isometry of pairs.
    \item If \;$f\colon (X, \mathcal{A})\longrightarrow (Y,\mathcal{B})$, $g\colon (Y,\mathcal{B})\longrightarrow (Z,\mathcal{D})$ are quasi-isometry of pairs, then $g\circ f\colon (X,\mathcal{A})\longrightarrow (Z,\mathcal{D})$ is a quasi-isometry of pairs. 
    \item If \;$f\colon (X,\mathcal{A})\longrightarrow (Y,\mathcal{B})$ is a quasi-isometry of pairs, then any of its quasi-inverse $f'\colon (Y,\mathcal{B})\longrightarrow (X,\mathcal{A})$ is a quasi-isometry of pairs.  
\end{enumerate}
\end{lemma}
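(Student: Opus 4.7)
All three parts are routine but unwind directly from the definitions; the main point is to keep track of how the Hausdorff--distance constants degrade under perturbations, compositions and quasi-inverses. I will treat the three items separately, using throughout that a $(C,K)$-quasi-isometry $\varphi$ sends the $R$-neighborhood of any set into the $(CR+K)$-neighborhood of its image, and in particular satisfies $d_{\mathrm{Haus}}(\varphi(U),\varphi(V)) \le C\,d_{\mathrm{Haus}}(U,V)+K$ for any subsets $U,V$.

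For part $(i)$, suppose $f\colon (X,\mathcal{A})\longrightarrow (Y,\mathcal{B})$ is a $(C,K,Q)$-quasi-isometry of pairs and $h\colon X\longrightarrow Y$ satisfies $d(f,h)\le D$. Then $h$ is itself a quasi-isometry (with constants depending only on $C$, $K$ and $D$), and the elementary inequality $d_{\mathrm{Haus}}(f(A),h(A))\le D$ for every $A\subset X$ immediately transports both matching conditions from $f$ to $h$ at the cost of replacing the constant $Q$ by $Q+D$.

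For part $(ii)$, let $f\colon (X,\mathcal{A})\longrightarrow (Y,\mathcal{B})$ and $g\colon (Y,\mathcal{B})\longrightarrow (Z,\mathcal{D})$ be $(C_1,K_1,Q_1)$- and $(C_2,K_2,Q_2)$-quasi-isometries of pairs respectively; the composition $g\circ f$ is a quasi-isometry by the usual argument. Given $A\in\mathcal{A}$, pick $B\in\mathcal{B}$ with $d_{\mathrm{Haus}}(f(A),B)\le Q_1$ and then $D\in\mathcal{D}$ with $d_{\mathrm{Haus}}(g(B),D)\le Q_2$; applying $g$ to the first estimate yields $d_{\mathrm{Haus}}(g(f(A)),g(B))\le C_2 Q_1+K_2$, and the triangle inequality gives the required bound for $(g\circ f)(A)$. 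The reverse matching condition is obtained symmetrically by starting with $D\in\mathcal{D}$, pulling back first to $\mathcal{B}$ and then to $\mathcal{A}$.

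For part $(iii)$, let $f'\colon Y\longrightarrow X$ be a quasi-inverse of $f$, so there exists $E\ge 0$ with $d(f'\circ f,\mathrm{id}_X)\le E$ and $d(f\circ f',\mathrm{id}_Y)\le E$. Given $B\in\mathcal{B}$, the second condition of Definition~\ref{def2.7} applied to $f$ furnishes $A\in\mathcal{A}$ with $d_{\mathrm{Haus}}(f(A),B)\le Q$; applying the Lipschitz estimate for $f'$ gives $d_{\mathrm{Haus}}(f'(f(A)),f'(B))\le C'Q+K'$ where $(C',K')$ are the quasi-isometry constants of $f'$, and combining this with $d_{\mathrm{Haus}}(f'(f(A)),A)\le E$ yields $d_{\mathrm{Haus}}(f'(B),A)\le C'Q+K'+E$. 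Starting instead from $A\in\mathcal{A}$ and choosing $B\in\mathcal{B}$ via the first condition applied to $f$, the same two ingredients show $d_{\mathrm{Haus}}(f'(B),A)\le C'Q+K'+E$, establishing both conditions for $f'$. None of the three arguments presents a real obstacle; the only point to be careful about is that the constants obtained depend solely on the data of the pairs and on $E$, $D$ or the original constants, so that the resulting statements are uniform.
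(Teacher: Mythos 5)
Your proposal is correct and follows essentially the same route as the paper: in each part one transports the Hausdorff-distance matching conditions via the triangle inequality and the estimate $d_{\mathrm{Haus}}(\varphi(U),\varphi(V))\le C\,d_{\mathrm{Haus}}(U,V)+K$ for a $(C,K)$-quasi-isometry $\varphi$, obtaining the same kind of explicit constants ($Q+D$ for (i), $C_2Q_1+K_2+Q_2$ for (ii), $C'Q+K'+E$ for (iii)). No gaps.
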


\begin{proof}
\textit{(i)} Let $f\colon (X,\mathcal{A})\longrightarrow (Y,\mathcal{B})$ be a $(C,K,Q)-$quasi-isometry of pairs, and suppose that $h\colon X\longrightarrow Y$ lies at distance $\le L$ from $f$. Then $h$ is a $(C,K+2L)-$quasi-isometry. Now, let $A\in\mathcal{A}$. By assumption, we find $B\in\mathcal{B}$ such that $d_{\text{Haus}}(f(A),B)\le Q$, so it follows that
\begin{equation*}
    d_{\text{Haus}}(h(A), B) \le d_{\text{Haus}}(h(A), f(A))+d_{\text{Haus}}(f(A),B)\le Q+L.
\end{equation*}
Similarly, given any $B\in\mathcal{B}$ there exists $A\in \mathcal{A}$ with $d_{\text{Haus}}(h(A),B)\le Q+L$, thus $h\colon (X,\mathcal{A})\longrightarrow (Y,\mathcal{B})$ is a $(C,K+2L, Q+L)-$quasi-isometry of pairs. 

\smallskip

\noindent \textit{(ii)} Let $f$ be a $(C,K,Q)-$quasi-isometry of pairs, and let $g$ be a $(C',K',Q')-$quasi-isometry of pairs. Then a computation shows that $g\circ f$ is a $(CC',C'K+2K')-$quasi-isometry. Let $A\in \mathcal{A}$. By assumption we may find some $B\in\mathcal{B}$ such that $d_{\text{Haus}}(f(A),B)\le Q$, and we may find some $D\in\mathcal{D}$ such that $d_{\text{Haus}}(g(B),D)\le Q'$. Thus it follows 
\begin{align*}
    d_{\text{Haus}}(g(f(A)), D) &\le d_{\text{Haus}}(g(f(A)), g(B))+d_{\text{Haus}}(g(B),D) \\ 
    &\le C'd_{\text{Haus}}(f(A),B)+K'+Q' \\
    &\le C'Q+K'+Q'.
\end{align*}
Conversely, given any $D\in\mathcal{D}$, we pick $B\in\mathcal{B}$ such that $d_{\text{Haus}}(g(B),D)\le Q'$, and we pick $A\in \mathcal{A}$ such that $d_{\text{Haus}}(f(A),B)\le Q$. We then obtain
\begin{align*}
    d_{\text{Haus}}(g(f(A)), D) &\le d_{\text{Haus}}(g(f(A)), g(B))+d_{\text{Haus}}(g(B),D) \\ 
    &\le C'd_{\text{Haus}}(f(A),B)+K'+Q' \\
    &\le C'Q+K'+Q'.
\end{align*}
We conclude that $g\circ f$ is an $(CC', C'K+2K',C'Q+K'+Q')-$quasi-isometry of pairs from $(X,\mathcal{A})$ to $(Z,\mathcal{D})$. 

\smallskip

\noindent \textit{(iii)} Assume that  $f\colon (X,\mathcal{A})\longrightarrow (Y,\mathcal{B})$ is a $(C,K,Q)-$quasi-isometry of pairs and let $f'\colon Y\longrightarrow X$ be a quasi-inverse. Up to increasing $C$ and $K$, we assume that $f'$ is also a $(C,K)-$quasi-isometry. Fix now $B\in\mathcal{B}$. By assumption, we find $A\in \mathcal{A}$ such that
\begin{equation*}
    d_{\text{Haus}}(f(A),B)\le Q.
\end{equation*}
This implies that $d_{\text{Haus}}(f'(f(A)),f'(B))\le C Q+K$, so that
\begin{equation*}
    d_{\text{Haus}}(f'(B),A)\le d_{\text{Haus}}(f'(B), f'(f(A)))+d_{\text{Haus}}(f'(f(A)), A) \le CQ+2K
\end{equation*}
and similarly one shows that given any $A\in\mathcal{A}$ there exists $B\in\mathcal{B}$ such that the Hausdorff distance between $f'(B)$ and $A$ is at most $CQ+2K$. Thus $f'\colon (Y,\mathcal{B})\longrightarrow (X,\mathcal{A})$ is a $(C,K,CQ+2K)-$quasi-isometry of pairs. The proof is complete.
\end{proof}

We now focus on finitely generated groups with collections given by cosets of normal subgroups.

\begin{proposition}\label{prop2.9}
Let $G, H$ be finitely generated groups with normal subgroups $N\lhd G$, $M\lhd H$. A quasi-isometry of pairs $f\colon (G,M)\longrightarrow (H,N)$ induces a well-defined quasi-isometry $\overline{f}\colon G/M\longrightarrow H/N$. 
\end{proposition}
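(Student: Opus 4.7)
The strategy is, for each left coset $gM$ in $G$, to select (using the first bullet of Definition~\ref{def2.7}) an $N$-coset $hN$ in $H$ with $d_{\text{Haus}}(f(gM),hN) \le Q$, and to set $\overline{f}(gM) \defeq hN$. The resulting map on $G/M$ will depend on these choices only up to a bounded error in $H/N$; it remains to establish that it is a quasi-isometry with respect to the quotient metrics.

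The technical core is the following coset-parallelism identity, which uses normality in an essential way: for any $g_{1},g_{2} \in G$,
\begin{equation*}
  d_{\text{Haus}}(g_{1}M,g_{2}M) \,=\, d_{G/M}(g_{1}M,g_{2}M).
\end{equation*}
To prove it, fix a point $x = g_{2}m \in g_{2}M$. Conjugating $m^{-1}$ past $g_{2}^{-1}g_{1}$ via normality of $M$ yields $m^{-1}g_{2}^{-1}g_{1} = g_{2}^{-1}g_{1}\widetilde{m}$ for some $\widetilde{m} \in M$, so that
\begin{equation*}
  d_{G}(x,g_{1}M) \,=\, \inf_{m' \in M} \ell_{S}(g_{2}^{-1}g_{1}\widetilde{m}m') \,=\, \inf_{m'' \in M} \ell_{S}(g_{2}^{-1}g_{1}m'') \,=\, d_{G/M}(g_{1}M,g_{2}M),
\end{equation*}
independently of $x \in g_{2}M$. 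Exchanging the roles of $g_{1}$ and $g_{2}$ gives the symmetric bound, so the Hausdorff and quotient distances coincide. The identical statement holds for $N$-cosets in $H$.

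With this identity at hand, well-definedness of $\overline{f}$ up to an additive $2Q$ is immediate: if $hN$ and $h'N$ both witness the pair condition for $f(gM)$, then $d_{H/N}(hN,h'N) = d_{\text{Haus}}(hN,h'N) \le 2Q$. The quasi-isometric upper bound follows from
\begin{equation*}
  d_{H/N}(\overline{f}(g_{1}M),\overline{f}(g_{2}M)) \,\le\, d_{\text{Haus}}(f(g_{1}M),f(g_{2}M)) + 2Q \,\le\, C\,d_{G/M}(g_{1}M,g_{2}M) + K + 2Q,
\end{equation*}
where the last inequality uses the parallelism identity together with the $(C,K)$-quasi-isometry estimate on $f$ applied between closest representatives of the two $M$-cosets. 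The lower bound is obtained by running the same argument for a quasi-inverse of $f$, which is itself a quasi-isometry of pairs by Lemma~\ref{lm2.8}(iii), and coarse surjectivity of $\overline{f}$ follows at once from the second bullet of Definition~\ref{def2.7}. The only genuine subtlety is the parallelism identity, which fails without normality: in general, distinct cosets can sit at infinite Hausdorff distance from one another, and no canonical descent to the quotient could be expected.
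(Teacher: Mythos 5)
Your argument is correct, and it takes a genuinely different and arguably cleaner route than the paper's. The paper's proof works locally: it checks that $\overline{f}$ sends adjacent vertices of $\mathrm{Cay}(G/M,\pi_G(T))$ to vertices at bounded distance and then invokes Lemma~\ref{lm2.1}; the only geometric input about cosets it uses is the one-sided inequality $d_{H/N}(aN,bN)\le d_H(a,b)$, obtained by projecting a short path. You instead isolate a global ``parallelism identity,'' $d_{\text{Haus}}(g_1M,g_2M)=d_{G/M}(g_1M,g_2M)$, proved by a conjugation trick that uses normality directly, and then transfer quasi-isometry estimates at the level of Hausdorff distances in one stroke. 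This is essentially the combination of the paper's later Lemma~\ref{lm4.6} (which gives $d_{\text{Haus}}(gM,hM)\le d_G(gM,hM)$, and the reverse inequality is trivial) with the standard identification $d_G(gM,hM)=d_{G/M}(gM,hM)$; you have just brought it forward and used it where it does the most work. The gain is conceptual clarity and shorter estimates; the paper's version has the small advantage of not needing to check that the inner infimum in your chain equals $d_{G/M}$, a standard but unremarked fact.

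Two small remarks for completeness, neither a genuine gap. First, your chain $d_G(x,g_1M)=\inf_{m'}\ell_S(g_2^{-1}g_1\widetilde{m}m')$ skips the intermediate line $d_G(x,g_1M)=\inf_{m'}\ell_S(m^{-1}g_2^{-1}g_1m')$ before the conjugation is inserted; this is easy to fill in. Second, ``running the same argument for a quasi-inverse'' gives the Lipschitz bound for $\overline{f'}$, but to turn that into the lower bound for $\overline{f}$ you still need to verify that $\overline{f'}\circ\overline{f}$ lies at bounded distance from $\mathrm{Id}_{G/M}$; this follows from the fact that $f'\circ f$ is close to $\mathrm{Id}_G$ together with your parallelism identity, exactly as the paper spells out.
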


\begin{proof}
Fix a finite generating set $T$ (resp. $S$) of $G$ (resp. of $H$), and let $\pi_{G}$ (resp. $\pi_{H}$) denote the canonical projection of $G$ (resp. of $H$) onto $G/M$ (resp. $H/N$). We equip $G/M$ (resp. $H/N$) with the word metric provided by the generating set $\pi_{G}(T)$ (resp. $\pi_{H}(S)$).
Let $C\ge 1$, $K,Q\ge 0$ be constants such that $f$ and a quasi-inverse $g\colon (H,N)\longrightarrow (G,M)$ are $(C,K,Q)-$quasi-isometry of pairs. 

\smallskip

We define $\overline{f}$ as follows: if $pM\in G/M$, then this coset is mapped by $f$ at distance $\le Q$ from some $N-$coset $y_{p}N$. We then let $\overline{f}(pM) \defeq y_{p}N$. By construction, the value of $\overline{f}$ on a given coset does not depend on the choice of a representative of this specific coset. We now check that $\overline{f}$ is a quasi-isometry.

\smallskip

Let $a=pM$, $b=p'M$ be two adjacent vertices in $\text{Cay}(G/M, \pi_{G}(T))$. This means that $b=a\pi_{G}(t)=(pM)(tM)=ptM$ for some $t\in T$, so that 
\begin{align*}
    d_{H/N}(\overline{f}(a), \overline{f}(b))&=d_{H/N}(y_{p}N, y_{pt}N) \\
    &\le d_{H/N}(y_{p}N, f(p)N)+d_{H/N}(f(p)N,f(pt)N)+d_{H/N}(f(pt)N, y_{pt}N).
\end{align*}
In this expression, the second term is bounded from above by $C+K$, since
\begin{equation*}
    d_{H/N}(f(p)N,f(pt)N) \le d_{H}(f(p),f(pt))\le Cd_{H}(p,pt)+K=C+K.
\end{equation*}
For the first term, we know by assumption that $d_{\text{Haus}}(f(pM),y_{p}N)\le Q$, which implies in particular that $f(p)\in (y_{p}N)^{+Q}$, so $f(p)$ is connected to a point in $y_{p}N$ by a path in $\text{Cay}(H,S)$ of length $\le Q$. The projection of such a path to $H/N$ is a path in $\text{Cay}(H/N,\pi_{H}(S))$ of length $\le Q$ connecting $f(p)N$ to $y_{p}N$, so that 
\begin{equation*}
    d_{H/N}(y_{p}N, f(p)N) \le Q. 
\end{equation*}
We show similarly that $d_{H/N}(f(pt)N, y_{pt}N) \le Q$, whence 
\begin{equation*}
    d_{H/N}(\overline{f}(a), \overline{f}(b)) \le C+K+2Q.
\end{equation*}
We conclude from Lemma \ref{lm2.1} that $\overline{f}\colon G/M\longrightarrow H/N$ is $(C+K+2Q)-$Lipschitz. 

\smallskip

Now, consider the map $\overline{g}\colon H/N\longrightarrow G/M$ induced by $g$, defined by $\overline{g}(qN) \defeq z_{q}M$, where $z_{q}M$ is an $M-$coset at Hausdorff distance at most $Q$ from $g(qN)$. We can reproduce the above argument to prove that $\overline{g}$ is also $(C+K+2Q)-$Lipschitz. Now, if $p\in G$, note that
\begin{align*}
    d_{\text{Haus}}(z_{y_{p}}M, pM) &\le d_{\text{Haus}}(z_{y_{p}}M, g(y_{p}N))+d_{\text{Haus}}(g(y_{p}N), g(f(pM)))+d_{\text{Haus}}(g(f(pM)), pM) \\
    &\le Q+Cd_{\text{Haus}}(y_{p}N, f(pM))+K+K \\
    &\le (C+1)Q+2K
\end{align*}
since $g\circ f$ lies at distance $\le K$ from $\text{Id}_{G}$ and since $g$ is a $(C,K)-$quasi-isometry. In particular, we get that $p\in (z_{y_{p}}M)^{+((C+1)Q+2K)}$, so there is a path in $\text{Cay}(G,T)$ of length $\le (C+1)Q+2K$ connecting $p$ to a point of $z_{y_{p}}M$. The projection of such a path to $G/M$ provides a path in $\text{Cay}(G/M,\pi_{G}(T))$ of length $\le (C+1)Q+2K$ connecting $pM$ to $z_{y_{p}}M=\overline{g}(\overline{f}(pM))$, so that 
\begin{equation*}
    d_{G/M}\left(\overline{g}(\overline{f}(pM)), pM\right)\le (C+1)Q+2K. 
\end{equation*}
We conclude that $\overline{g}\circ\overline{f}$ lies at distance at most $(C+1)Q+2K$ from $\text{Id}_{G/M}$, and similarly $\overline{f}\circ\overline{g}$ lies at distance at most $(C+1)Q+2K$ from $\text{Id}_{H/N}$. We can now conclude that 
\begin{align*}
    d_{H/N}(\overline{f}(a),\overline{f}(b)) &\ge \frac{1}{C+K+2Q}d_{G/M}(\overline{g}(\overline{f}(a)), \overline{g}(\overline{f}(b))) \\
    &\ge \frac{1}{C+K+2Q}d_{G/M}(a,b)-\frac{2((C+1)Q+2K)}{C+K+2Q}
\end{align*}
for any $a,b\in G/M$, so $\overline{f}$ is a quasi-isometry with $\overline{g}$ as a quasi-inverse. 
\end{proof}

\begin{remark}
We emphasize that, given a $(C,K,Q)-$quasi-isometry of pairs $f\colon (G,M)\longrightarrow (H,N)$, the map $\overline{f}$ given by this statement is not unique, as given an $M-$coset $pM$ in $G$, there could be many $N-$cosets in $H$ lying at Hausdorff distance at most $Q$ from $f(pM)$. However, any two such cosets would be at Hausdorff distance $\le 2Q$, so that any two such maps induced by $f$ always lie at distance at most $2Q$ from each other. 
\end{remark}

We record now some elementary properties of the correspondence $f\longmapsto \overline{f}$. 

\begin{proposition}\label{prop2.11}
Let $G,H,I$ be finitely generated groups with normal subgroups $M,N,J$. Let $f\colon (G,M)\longrightarrow (H,N)$ be a quasi-isometry of pairs, and fix an induced map $\overline{f}\colon G/M\longrightarrow H/N$. The following properties hold. 
\begin{enumerate}[label=(\roman*)]
    \item If \;$h\colon G\longrightarrow H$ is a map at bounded distance from $f$, then any induced quasi-isometry $\overline{h}$ induced by $h$ lies at bounded distance from $\overline{f}$, and there is at least one choice of $\overline{h}$ that coincides with $\overline{f}$.
    \item If $f'\colon H\longrightarrow G$ is a quasi-inverse of $f$, then $\overline{f'}$ is a quasi-inverse of $\overline{f}$. 
    \item If $g\colon (H,N)\longrightarrow (I,J)$ is another quasi-isometry of pairs with an induced quasi-isometry $\overline{g}\colon H/N\longrightarrow I/J$, then $\overline{g\circ f}$ lies at bounded distance from $\overline{g}\circ\overline{f}$, and there is at least one choice of $\overline{g\circ f}$ that coincides with $\overline{g}\circ \overline{f}$.  
\end{enumerate}
\end{proposition}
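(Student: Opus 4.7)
The plan is to handle all three items by the same elementary Hausdorff-distance bookkeeping, exploiting the observation already at the heart of the proof of Proposition~\ref{prop2.9} and the Remark following it: if two $N$-cosets $B_{1}, B_{2}$ in $H$ lie at Hausdorff distance at most $R$ from a common subset, then projecting in $\text{Cay}(H,S)$ a realising path yields $d_{H/N}(B_{1},B_{2})\le 2R$. In particular, any two induced maps for a given quasi-isometry of pairs are automatically at bounded distance, so in each case it will suffice to exhibit at least one choice of the ``expected'' map which is a legitimate induced map; the bounded-distance clause then follows for free.

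For item \textit{(i)}, one first invokes Lemma~\ref{lm2.8}~\textit{(i)} to ensure that $h$ is itself a quasi-isometry of pairs, with constant $Q$ replaced by $Q+L$ where $L\defeq d(f,h)$. Writing $y_{p}N\defeq \overline{f}(pM)$, the triangle inequality for Hausdorff distance yields $d_{\text{Haus}}(h(pM),y_{p}N)\le L+Q$, so that $y_{p}N$ is an admissible value for $\overline{h}(pM)$; choosing it uniformly in $p$ produces an induced map for $h$ coinciding with $\overline{f}$, and the bounded-distance statement for any other choice follows from the observation of the previous paragraph.

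For item \textit{(ii)}, the proof of Proposition~\ref{prop2.9} already establishes, with $z_{y_{p}}M\defeq\overline{f'}(\overline{f}(pM))$, the bound $d_{\text{Haus}}(z_{y_{p}}M,pM)\le (C+1)Q+2K$; projecting a realising path from $\text{Cay}(G,T)$ down to $\text{Cay}(G/M,\pi_{G}(T))$ then shows that $\overline{f'}\circ\overline{f}$ lies at distance at most $(C+1)Q+2K$ from $\text{Id}_{G/M}$ in the sup distance between maps. The symmetric argument exchanging the roles of $f$ and $f'$ bounds $d(\overline{f}\circ\overline{f'},\text{Id}_{H/N})$, which is exactly the content of being a quasi-inverse.

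For item \textit{(iii)}, set $y_{p}N\defeq \overline{f}(pM)$ and $w_{y_{p}}J\defeq \overline{g}(y_{p}N)$, and suppose $f$ and $g$ are $(C,K,Q)-$ and $(C',K',Q')-$quasi-isometries of pairs respectively. The triangle inequality gives
\[
d_{\text{Haus}}\bigl(g(f(pM)),w_{y_{p}}J\bigr)\le C'\,d_{\text{Haus}}(f(pM),y_{p}N)+K'+d_{\text{Haus}}(g(y_{p}N),w_{y_{p}}J)\le C'Q+K'+Q',
\]
a quantity which equals the parameter $Q''$ for $g\circ f$ provided by Lemma~\ref{lm2.8}~\textit{(ii)}. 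Hence $w_{y_{p}}J$ is a legitimate image for $\overline{g\circ f}(pM)$, which settles the coincidence clause, and combined with the first paragraph also the bounded-distance clause for any other choice. The only mild obstacle throughout is to keep track of the fact that induced maps are defined only up to a universal ambiguity; the ``at least one choice'' phrasing in the statement is designed precisely to absorb this.
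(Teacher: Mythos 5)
Your proof is correct and takes essentially the same approach as the paper: part (i) via Lemma~\ref{lm2.8}(i) and a Hausdorff triangle inequality to show $\overline{f}$ is an admissible choice for $\overline{h}$, part (ii) by noting the quasi-inverse statement is already contained in the proof of Proposition~\ref{prop2.9}, and part (iii) by the analogous bookkeeping through Lemma~\ref{lm2.8}(ii). The paper's proof is terser (it merely says (ii) was proved in Proposition~\ref{prop2.9} and (iii) is similar to (i)), but your more explicit computation for (iii) and the upfront reduction via the ``any two induced maps are at bounded distance'' observation from Remark~2.12 are exactly the intended argument.
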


\begin{proof}
\textit{(i)} Let $f$ be a $(C,K,Q)-$quasi-isometry of pairs. By Lemma~\ref{lm2.8}\textit{(i)} and its proof, $h$ is then a $(C,K+2L, Q+L)-$quasi-isometry of pairs, where $L\ge 0$ is a constant that controls the distance between $h$ and $f$. Moreover, the proof of Lemma~\ref{lm2.8}\textit{(i)} shows that, if $pM$ is an $M-$coset, then $h(pM)$ lies at Hausdorff distance at most $Q+L$ from the $N-$coset $y_{p}N$ lying at Hausdorff distance at most $Q$ from $f(pM)$, given by the assumption that $f$ is a quasi-isometry of pairs. Since this is true for any $M-$coset $pM$ of $G$, we indeed have that $h$ induces a quasi-isometry $\overline{h}$ equals to $\overline{f}$. 

\smallskip

\noindent \textit{(ii)} has been proved in Proposition~\ref{prop2.9}, and \textit{(iii)} is proved similarly to \textit{(i)}. 
\end{proof}

\smallskip

\begin{remark}\label{rm2.12}
Let $f\colon (G,M)\longrightarrow (H,N)$ be a $(C,K,Q)-$quasi-isometry of pairs. By assumption, for any $p\in G$, $f$ sends a coset $pM\subset G$ at Hausdorff distance $\le Q$ from a coset $y_{p}N$, used to define $\overline{f}\colon G/M\longrightarrow H/N$. We can thus modify $f$ to get a map $f'\colon G\longrightarrow H$ at distance $\le Q$ from $f$, sending the coset $pM$ into the coset $y_{p}N$. This in turn implies that $y_{p}N=f'(p)N$ for any $p\in G$, which means that $f'$ has an induced quasi-isometry $G/M\longrightarrow H/N$ with a simpler formula: $\overline{f'}(pM) \defeq f'(p)N$. Note that this induced quasi-isometry agrees with $\overline{f}$. Hence, in the sequel, when we will be given a quasi-isometry of pairs $f\colon (G,M)\longrightarrow (H,N)$ with an induced quasi-isometry $\overline{f}\colon G/M\longrightarrow H/N$ satisfying a property $\mathcal{P}$ (for instance being quasi-$k$-to-one for some $k>0$), we will always be able to assume (up to finite distance) that $f$ sends cosets into cosets and that the new induced quasi-isometry we get after this change still has property $\mathcal{P}$.
\end{remark}

\subsection{Quasi-median geometry} We conclude this section by gathering useful tools for the proof of the embedding theorem. For a more in-depth introduction on quasi-median geometry, we refer to~\cite{Gen17a, GM19}. 

\begin{definition}
A connected graph $X$ is \textit{quasi-median} if it does not contain $K_{3,2}$ and $K_{4}^{-}$ as induced subgraphs, and if it satisfies the following two conditions:
\begin{enumerate}[label=(\roman*)]
    \item For every vertices $u,v,w\in X$ so that $d(v,w)=1$, $d(u,v)=d(u,w)=k$, there exists a vertex $x\in X$ which is a common neighbour of $v$ and $w$ and which satisfies $d(u,x)=k-1$. 
    \item For every vertices $u,v,w,z\in X$ so that $d(v,z)=d(w,z)=1$, $d(u,v)=d(u,w)=k$ and $d(u,z)=k+1$, then there exists $x\in X$ a common neighbour of $v$ and $w$ so that $d(u,x)=k-1$.
\end{enumerate}
\end{definition}

These two conditions are usually referred to as the \textit{triangle condition} and the \textit{quadrangle condition}. 

\begin{definition}
Let $X$ be a quasi-median graph. A \textit{hyperplane} $J$ is an equivalence class of edges with respect to the transitive closure of the relation identifying two edges in a common $3-$cycle or two opposite edges in a common $4-$cycle. The \textit{neighborhood} of $J$, denoted $N(J)$, is the subgraph generated by the edges of $J$. The connected components of the graph $X\rem J$ obtained from $X$ by removing the interiors of the edges of $J$ are called the \textit{sectors delimited by $J$}, and the connected components of $N(J)\rem J$ are the \textit{fibers} of $J$. Two distinct hyperplanes $J_{1}, J_{2}$ are \textit{transverse} if $J_{2}$ contains an edge in $N(J_{1})\rem J_{1}$. 
\end{definition}

Recall that in a quasi-median graph $X$, a subgraph $Y\subset X$ is \textit{gated} if for any vertex $x\in X$, there exists a vertex $y\in Y$ so that for any $z\in Y$, there exists a geodesic from $x$ to $z$ passing through $y$. Such a vertex is referred to as the \textit{projection of $x$ onto $Y$}. Note that a gated subgraph is convex. 

\smallskip

The next statement shows how combinatorics of hyperplanes in a quasi-median graph interfer with the geometry of the graph. See~\cite[Theorem~2.15]{Gen17a} for a self-contained proof. 

\begin{theorem}\label{thm2.15}
Let $X$ be a quasi-median graph. The following claims hold.

\begin{enumerate}[label=(\roman*)]
    \item For any hyperplane $J$ of $X$, $X\rem J$ has at least two connected components.
    \item For any hyperplane of $X$, its sectors, neighborhood and fibers are gated subgraphs of $X$.
    \item A path in $X$ is a geodesic if and only if it crosses each hyperplane at most once.
    \item The distance between two vertices of $X$ coincide with the number of hyperplanes separating them. 
\end{enumerate}
\end{theorem}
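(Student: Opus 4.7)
\smallskip

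\noindent \textbf{Proof proposal.} My plan is to establish the four claims in an interlocked fashion, since in the quasi-median setting they are all consequences of one core technical fact: each hyperplane admits a well-behaved projection structure coming from the triangle and quadrangle conditions. I would organize the argument as follows.

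\smallskip

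First, I would prove the following local compatibility lemma. Suppose two edges $e=(a,b)$ and $e'=(a',b')$ lie in a common $3$-cycle or in a $4$-cycle as opposite edges, and write $F_a, F_b$ for the two fibers they pin down. Then $a,a'\in F_a$, $b,b'\in F_b$, the vertices $a$ and $a'$ (resp. $b$ and $b'$) are equidistant, and any geodesic joining a vertex of $F_a$ to a vertex of $F_b$ uses exactly one edge of the hyperplane containing $e$. In the triangle case this is immediate, and in the quadrangle case it reduces to repeatedly applying axiom (i) of a quasi-median graph together with the absence of $K_{3,2}$ and $K_{4}^{-}$ to rule out extra short paths. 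Iterating the lemma along any finite chain of $3$- and $4$-cycles shows that, relative to a fixed hyperplane $J$, the two sides stay coherently separated.

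\smallskip

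Next I tackle (iii), which is the technical heart. For the nontrivial direction, assume a geodesic $v_0, v_1, \dots, v_n$ crosses the same hyperplane $J$ at two distinct edges $(v_{i-1}, v_i)$ and $(v_{j-1}, v_j)$ with $i < j$. By hypothesis these two edges are linked by a finite sequence of $3$-cycles and $4$-cycles. I would argue by induction on $j-i$ together with the length of this linking sequence: the triangle condition on the initial configuration, combined with absence of $K_{3,2}$ and $K_{4}^{-}$, produces a vertex allowing one to replace the subpath between $v_{i-1}$ and $v_j$ by a strictly shorter one, contradicting geodesicity. The quadrangle condition handles the case where the two crossings are linked through $4$-cycles in consecutive steps. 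The ``if'' direction, as well as (iv), then follows: each edge traversal either introduces a fresh hyperplane (raising the separating count by one) or traverses a hyperplane already crossed (which (iii) forbids along a geodesic), so both the distance and the geodesic length coincide with the number of distinct hyperplanes met, and this in turn equals the number of hyperplanes separating the endpoints, since by definition two vertices are separated by a hyperplane precisely when every path between them crosses it.

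\smallskip

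Item (i) is then essentially a corollary: for any edge $(x,y)\in J$, by definition every path from $x$ to $y$ uses some edge of $J$, so removing all edges of $J$ places $x$ and $y$ in distinct components of $X\rem J$. For (ii), the strategy is to build a projection map to each fiber $F$ of $J$. Given a vertex $u\notin F$, let $p$ be a vertex of $F$ closest to $u$. Using the quadrangle condition, any edge of $F$ incident to $p$ extends a geodesic from $u$ to $p$ into a geodesic from $u$ to its other endpoint; iterating and combining with (iii) and (iv), one shows that for every $z\in F$ there is a geodesic $u \rightsquigarrow z$ through $p$, and that $p$ is unique, which is the gated property. Gated-ness of a sector $S$ follows by projecting first onto the fiber of $J$ adjacent to $S$ and noting this projection already lies in $S$; gated-ness of $N(J)$ is immediate from the disjoint union of its fibers once each is gated.

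\smallskip

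The main obstacle I anticipate is controlling the inductive step in the proof of (iii), where one must simultaneously reduce the geodesic length and the complexity of the chain of $3$- and $4$-cycles linking the two repeated crossings, while avoiding creation of forbidden induced subgraphs $K_{3,2}$ or $K_{4}^{-}$. Once that combinatorial core is in place, (i), (ii) and (iv) follow cleanly.
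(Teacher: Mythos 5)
The paper does not contain its own proof of this theorem; it states the result and refers the reader to~\cite[Theorem~2.15]{Gen17a}, where it is established in full. So there is no internal argument for your proposal to be compared against line-by-line, and the relevant comparison is with the standard development of quasi-median geometry in the cited source.

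Assessed on its own terms, your sketch has the right overall spirit but contains a genuine logical gap at item~(i). You assert that for an edge $(x,y)\in J$, ``by definition every path from $x$ to $y$ uses some edge of $J$,'' and hence deduce that $X\rem J$ is disconnected. That is not a definition: a hyperplane is defined as an equivalence class of edges under the relation generated by $3$-cycles and opposite sides of $4$-cycles, and it is a nontrivial theorem, not a tautology, that this equivalence class forms a cutset. (Your formulation of ``separation'' in item~(iv) uses the same path-based phrasing, so the argument there inherits the circularity.) Item~(iii) rules out a \emph{geodesic} crossing $J$ twice, but it says nothing about non-geodesic paths from $x$ to $y$ that avoid $J$ entirely, so~(i) does not follow from~(iii) the way you suggest. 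In fact, in the standard treatment~(i) (or rather the stronger convexity of sectors) is proved directly from the axioms, essentially by taking a shortest path between $x$ and $y$ avoiding $J$, and using the triangle/quadrangle conditions together with the excluded induced subgraphs $K_{3,2}$ and $K_4^{-}$ to shorten it or to exhibit a forbidden configuration. A second, related issue is your ``local compatibility lemma'': the claim that ``any geodesic joining a vertex of $F_a$ to a vertex of $F_b$ uses exactly one edge of the hyperplane containing $e$'' is already a special case of~(iii) and is not immediate even in the triangle case, since a geodesic between two fiber vertices may a priori leave $N(J)$ entirely; presenting it as a lemma that feeds \emph{into} the proof of~(iii) makes the dependency structure unclear.

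The remainder of your outline is reasonable in intent. The inductive shortening argument for~(iii), using the triangle and quadrangle conditions to collapse a geodesic that crosses $J$ twice, is indeed the right idea, and you correctly identify the combinatorial induction on the linking chain of $3$- and $4$-cycles as the hard part. The gate construction for~(ii) via the quadrangle condition is also the right strategy, and deducing~(iv) from~(iii) by counting fresh versus revisited hyperplanes is standard. But for the argument to actually close, you need to prove that hyperplanes are cutsets (and more strongly, that their sectors are convex) as an independent step, rather than treating it as definitional; that is the missing ingredient.
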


The next statement ensures that the Helly property provided above for finite family of gated subgraphs actually extend to infinite family of sectors of hyperplanes, under an additional assumption of finite cubical dimension. Recall that the \textit{cubical dimension} of a quasi-median graph $X$, denoted $\text{dim}(X)$, is the maximal size of a collection of pairwise transverse hyperplanes.

\begin{lemma}\label{lm2.16}
Let $X$ be a quasi-median graph with $\text{dim}(X)<\infty$. For every hyperplane $J$ of $X$, let $J^{+}$ be a sector. Assume that
\begin{enumerate}[label=(\roman*)]
    \item For any hyperplanes $J_{1}, J_{2}$ of $X$, $J_{1}^{+}\cap J_{2}^{+}\neq\emptyset$.
    \item Every non-increasing sequence $J_{1}^{+}\supset J_{2}^{+}\supset \dots$ eventually stabilizes.
\end{enumerate}
Then $\dis\bigcap_{J\;\text{hyperplane}}J^{+}$ is non-empty and reduced to a single vertex. 
\end{lemma}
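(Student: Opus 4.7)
The argument splits naturally into uniqueness and existence.

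\textbf{Uniqueness} is immediate from Theorem~\ref{thm2.15}(iv): two distinct vertices $x\neq y$ in $\bigcap_{J}J^{+}$ would be separated by some hyperplane $J$, placing them in distinct sectors of $J$ and contradicting that both lie in the chosen sector $J^{+}$.

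For \textbf{existence}, I would first establish a \emph{projection lemma} that is the main use of assumption~(i): given a vertex $v$ and hyperplanes $J,J'$ with $v\notin J^{+}$ and $v\in J'^{+}$, the gate-projection $p_{J}(v)$ of $v$ onto $J^{+}$ (well-defined by Theorem~\ref{thm2.15}(ii)) still lies in $J'^{+}$. Indeed, pick $w\in J^{+}\cap J'^{+}$ using~(i); by gatedness some geodesic from $v$ to $w$ passes through $p_{J}(v)$; since $v,w\in J'^{+}$, the hyperplane $J'$ does not separate them by Theorem~\ref{thm2.15}(iv), hence by Theorem~\ref{thm2.15}(iii) the geodesic does not cross $J'$, placing $p_{J}(v)$ on the same side of $J'$ as $v$.

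With this, I would fix an arbitrary basepoint $v_{0}\in X$ and iterate as follows: as long as $\mathcal{B}(v_{n}):=\{J:v_{n}\notin J^{+}\}$ is non-empty, pick $J_{n+1}\in \mathcal{B}(v_{n})$ with $J_{n+1}^{+}$ minimal (in the inclusion order) among $\{J^{+}: J\in \mathcal{B}(v_{n})\}$ — such a minimum exists by the well-foundedness provided by condition~(ii) — and set $v_{n+1}:=p_{J_{n+1}}(v_{n})$. The projection lemma guarantees $\mathcal{B}(v_{n+1})\subsetneq \mathcal{B}(v_{n})$. The aim is then to combine this strict decrease with condition~(ii) and the finite cubical dimension of $X$, in particular with the finite Helly property enjoyed by gated subgraphs of finite-dimensional quasi-median graphs, in order to force the iteration to terminate at some $v_{N}$ with $\mathcal{B}(v_{N})=\emptyset$, i.e.\ with $v_{N}\in \bigcap_{J}J^{+}$.

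\textbf{Main obstacle.} The genuinely delicate point is justifying termination when $\mathcal{B}(v_{0})$ is infinite: the strict inclusion $\mathcal{B}(v_{n+1})\subsetneq \mathcal{B}(v_{n})$ alone is not enough to conclude. If the direct iterative argument above proves too fragile, the fallback plan is a transfinite Zorn-style construction: well-order the hyperplanes $(J_{\alpha})_{\alpha<\kappa}$ and build a descending chain $A_{\alpha}:=\bigcap_{\beta<\alpha}J_{\beta}^{+}$, using finite Helly for gated subgraphs at successor stages and condition~(ii), applied to an appropriately extracted chain of sectors realising the decrease of $(A_{\alpha})$, to pass limits. Either way, the technical core is a careful interplay between~(ii) and the behaviour of gate-projections with respect to hyperplane separation provided by Theorem~\ref{thm2.15}.
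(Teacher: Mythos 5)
Your uniqueness argument and your projection lemma are both correct (the latter is exactly the right use of hypothesis~(i) together with Theorem~\ref{thm2.15}(ii)--(iv), and it does give $\mathcal{B}(p_{J}(v))\subsetneq\mathcal{B}(v)$). But the existence half has a genuine gap, which you yourself flag: nothing in your argument makes the iteration terminate, and the fallback transfinite plan does not repair it, because condition~(ii) is a chain condition on the \emph{chosen sectors} only, whereas the sets $A_{\alpha}=\bigcap_{\beta<\alpha}J_{\beta}^{+}$ are intersections of sectors; from a strictly decreasing chain of the $A_{\alpha}$ one cannot, as written, extract a non-stabilizing chain of sectors, and the non-emptiness of $A_{\alpha}$ at limit stages is precisely the statement being proved. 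So as it stands the core of the lemma — where finite dimension and hypothesis~(ii) actually enter — is missing. (For what it is worth, the paper records this lemma without proof, as a tool quoted from quasi-median geometry, so there is no in-paper argument to compare against.)

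The missing step is to show that for a fixed basepoint $v_{0}$ the set $\mathcal{B}(v_{0})=\{J : v_{0}\notin J^{+}\}$ is \emph{finite}; your projection iteration then terminates after $|\mathcal{B}(v_{0})|$ steps at a vertex of $\bigcap_{J}J^{+}$. Here is how the finiteness goes, and where each hypothesis is used. If $\mathcal{B}(v_{0})$ were infinite, then since $\dim(X)<\infty$ there is no infinite family of pairwise transverse hyperplanes, so by Ramsey's theorem $\mathcal{B}(v_{0})$ contains an infinite subfamily of pairwise \emph{non-transverse} hyperplanes. For two non-transverse $J,J'\in\mathcal{B}(v_{0})$, the carrier $N(J')$ lies in a single sector $S$ of $J$ and every other sector of $J$ lies in the single sector $S'$ of $J'$ containing $N(J)$ (and symmetrically); combining $v_{0}\notin J^{+}$, $v_{0}\notin J'^{+}$ with $J^{+}\cap J'^{+}\neq\emptyset$ from~(i), a short case analysis rules out $J^{+}=S$ and $J'^{+}=S'$ holding simultaneously and forces $J^{+}\subseteq J'^{+}$ or $J'^{+}\subseteq J^{+}$. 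Thus the chosen sectors of the infinite non-transverse subfamily are totally ordered by inclusion, hence contain an infinite strictly descending or strictly ascending sequence. A descending one contradicts~(ii). An ascending one $J_{1}^{+}\subsetneq J_{2}^{+}\subsetneq\cdots$ is impossible for a purely geometric reason: fixing any vertex $y\in J_{1}^{+}$, each $J_{n}$ separates $v_{0}$ from $y$ (since $y\in J_{n}^{+}$ and $v_{0}\notin J_{n}^{+}$), so infinitely many distinct hyperplanes separate two fixed vertices, contradicting Theorem~\ref{thm2.15}(iv). This is the interplay between~(i), (ii), finite dimension and Theorem~\ref{thm2.15} that your sketch gestures at but does not supply; with it, your strategy closes, and note that once $\mathcal{B}(v_{0})$ is known to be finite you no longer need to select a minimal bad sector at each step.
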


Another useful result using the cubical dimension is the following lemma~\cite[Lemma~5.8]{GT24b}.

\begin{lemma}\label{lm2.17}
Let $X$ be a quasi-median graph and let $x,y\in X$ be two vertices. If $k$ denotes the maximal number of pairwise non-transverse hyperplanes separating $x$ and $y$, then one has
\begin{equation*}
    d(x,y)\le k\cdot\text{dim}(X).
\end{equation*}
\end{lemma}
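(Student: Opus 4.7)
The plan is to recast the inequality as a bound on the number of hyperplanes separating $x$ and $y$, and then to apply a Mirsky--Dilworth type partition argument. By Theorem~\ref{thm2.15}(iv), one has $d(x,y)=|\mathcal{H}|$, where $\mathcal{H}$ denotes the (finite) set of hyperplanes of $X$ separating $x$ from $y$, so it suffices to establish the inequality $|\mathcal{H}|\le k\cdot\text{dim}(X)$.

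Next, I would equip $\mathcal{H}$ with a partial order by declaring $J_{1}\prec J_{2}$ whenever $J_{1}$ and $J_{2}$ are distinct non-transverse hyperplanes of $\mathcal{H}$ such that $J_{1}$ is contained in the sector of $J_{2}$ containing $x$. The key observation is that two elements of $\mathcal{H}$ are comparable for $\prec$ if and only if they are non-transverse. Indeed, if $J_{1}$ and $J_{2}$ are non-transverse, then one must be entirely contained in a sector of the other; and the assumption that each of $J_{1}, J_{2}$ separates $x$ from $y$ rules out the symmetric configuration where $J_{1}$ lies in the $y$-sector of $J_{2}$ while $J_{2}$ simultaneously lies in the $y$-sector of $J_{1}$. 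Consequently, the longest $\prec$-chain in $\mathcal{H}$ has length exactly $k$, while $\prec$-antichains correspond precisely to subsets of pairwise transverse hyperplanes of $\mathcal{H}$.

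Applying Mirsky's theorem (dual to Dilworth's) to the finite poset $(\mathcal{H},\prec)$, one may partition $\mathcal{H}$ into at most $k$ antichains $\mathcal{A}_{1},\dots,\mathcal{A}_{k}$. By the very definition of $\text{dim}(X)$, each $\mathcal{A}_{i}$ — being a collection of pairwise transverse hyperplanes — has cardinality at most $\text{dim}(X)$. Summing over the antichains yields
\begin{equation*}
d(x,y)=|\mathcal{H}|=\sum_{i=1}^{k}|\mathcal{A}_{i}|\le k\cdot\text{dim}(X),
\end{equation*}
as desired.

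The main obstacle is verifying that $\prec$ is genuinely a partial order — in particular, transitivity — and this is where the geometry of quasi-median graphs must be used, via the gatedness of sectors (Theorem~\ref{thm2.15}(ii)). A technically cleaner alternative I would consider, should the direct verification prove awkward, is to fix an arbitrary geodesic $\gamma$ from $x$ to $y$; by Theorem~\ref{thm2.15}(iii) it crosses each hyperplane of $\mathcal{H}$ exactly once, and one can show that two non-transverse hyperplanes are crossed in the same order by every such geodesic, so that the induced linear order on $\mathcal{H}$ restricts to the same partial order $\prec$ and bypasses the need to check the axioms separately.
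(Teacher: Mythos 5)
Your overall strategy — reformulate via Theorem~\ref{thm2.15}(iv), build a poset on $\mathcal{H}$ whose chains are the pairwise non-transverse subfamilies and whose antichains are the pairwise transverse ones, and invoke Mirsky's theorem — is the right combinatorial reduction. But there is a genuine gap: you never establish that $\prec$ is actually a partial order, and the ``bypass'' you propose does not fill it. Restricting a linear order to a subset of \emph{pairs} (as opposed to a subset of elements) automatically gives irreflexivity and antisymmetry, but \emph{not} transitivity: if $\gamma$ crosses $J_1$, then $J_2$, then $J_3$, with $J_1,J_2$ non-transverse and $J_2,J_3$ non-transverse, nothing in the linear order alone forces $J_1,J_3$ to be non-transverse. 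Both Mirsky's theorem and its standard one-line proof (map each element to the length of the longest chain terminating at it and check each fibre is an antichain) genuinely require transitivity. Without it the conclusion can fail outright: for an arbitrary symmetric relation on a finite set, ``(max clique)$\times$(max independent set) $\ge$ size'' is false — a $5$-cycle has both parameters equal to $2$. Also note that, separately, claim (A) (comparable iff non-transverse) requires ruling out more configurations than the single one you mention, since quasi-median hyperplanes can have more than two sectors.

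The missing geometric fact is precisely what you flagged as ``the main obstacle'', and it needs an explicit argument rather than a gesture at gatedness. Here is one using only tools already quoted in the paper. Suppose a geodesic $\gamma$ from $x$ to $y$ crosses $J_1,J_2,J_3$ in that order (Theorem~\ref{thm2.15}(iii) gives a single crossing each), with $J_1,J_2$ and $J_2,J_3$ non-transverse. Non-transversality of $J_1,J_2$ means $J_2$ has no edge in $N(J_1)\rem J_1$; since distinct hyperplanes share no edges, $N(J_1)$ in fact contains no edge of $J_2$ at all, hence is a connected subgraph of $X\rem J_2$ and lies entirely inside one sector of $J_2$ — necessarily the sector containing $x$, as $\gamma$ crosses $J_1$ first. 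Symmetrically $N(J_3)$ lies entirely in the sector of $J_2$ containing $y$. These sectors are disjoint, so $N(J_1)\cap N(J_3)=\emptyset$; in particular $J_3$ has no edge in $N(J_1)\rem J_1$, whence $J_1,J_3$ are non-transverse, and the crossing order forces $J_1\prec J_3$. This closes the transitivity gap (antisymmetry follows from the same geodesic argument), and the remainder of your argument then goes through. For the record, the paper does not prove this lemma internally but imports it from [GT24b, Lemma~5.8], so there is no in-paper proof to compare against.
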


\smallskip

\noindent \textbf{Graph products of groups.} Let $\Gamma$ be a simplicial graph and let $\mathcal{C}=\lbrace G_{u} : u\in V(\Gamma)\rbrace$ be a collection of groups indexed by the vertices of $\Gamma$. The \textit{graph product} is the group denoted $\Gamma\mathcal{C}$ and defined by 
\begin{equation*}
    \Gamma\mathcal{C}\defeq \left. \left(\mystar_{u\in V(\Gamma)}G_{u}\right)\right/\langle\langle [g,h]: g\in G_{u}, h\in G_{v}, \lbrace u,v\rbrace\in E(\Gamma)\rangle\rangle.
\end{equation*}
The groups of the collection $\mathcal{C}$ are called the \textit{vertex-groups}, and embed naturally into the graph product. If $G_{u}=G$ for all $u\in V(\Gamma)$, we denote the graph product $\Gamma G$ rather than $\Gamma\mathcal{C}$. 

\smallskip

Graph products of groups have recently attracted much attention since they provide a unified way to study several classes of groups with a geometric flavour, such as Coxeter or right-angled Artin groups (see~\cite{Gen17a, GM19, GV20, Gen22b}). They also provide good examples of quasi-median graphs, as shown by the following theorem, proved in~\cite[Proposition~8.2]{Gen17a}.

\begin{theorem}\label{thm2.18}
Let $\Gamma$ be a simplicial graph and $\mathcal{C}=\lbrace G_{u} : u\in V(\Gamma)\rbrace$ a collection of groups indexed by the vertices of \;$\Gamma$. Then the graph
\begin{equation*}
    \text{QM}(\Gamma, \mathcal{C}) \defeq \text{Cay}\left(\Gamma\mathcal{C},\bigcup_{u\in V(\Gamma)}G_{u}\setminus\lbrace 1\rbrace\right)
\end{equation*}
is a quasi-median graph of cubical dimension $\text{clique}(\Gamma)$.
\end{theorem}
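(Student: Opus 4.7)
The plan is to exploit the normal form theory of graph products and read off quasi-median features from it. Recall that every element $g\in\Gamma\mathcal{C}$ admits an expression $g=g_1g_2\cdots g_n$, where each $g_i$ lies in some vertex group $G_{u_i}\setminus\{1\}$, and that two such expressions are related by a sequence of moves of the following three types: swapping adjacent letters $g_ig_{i+1}$ from commuting vertex groups (a \emph{shuffle}), fusing two adjacent letters from the same vertex group, or deleting a trivial letter. An expression is \emph{reduced} (a \emph{normal form}) if no shuffles can move two letters from the same vertex group next to each other. The number of letters in a normal form is a well-defined invariant $|g|$ of $g$, and one checks that $|g|$ coincides with the distance from $1$ to $g$ in $\mathrm{QM}(\Gamma,\mathcal{C})$, since every generator in the prescribed generating set has length one and the above moves witness the minimality. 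I would first establish this identification; it makes the ensuing combinatorial verifications manageable.

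Next I would verify directly that $\mathrm{QM}(\Gamma,\mathcal{C})$ contains no induced $K_{3,2}$ or $K_4^-$. This reduces, by vertex-transitivity, to analyzing configurations at the identity, where adjacency means right-multiplication by a single non-trivial vertex-group element; a short case analysis using the normal form rules out both induced subgraphs. The triangle condition is verified as follows: given $u,v,w$ with $d(v,w)=1$ and $d(u,v)=d(u,w)=k$, I would translate so that $u=1$ and write $v,w$ in normal form. The edge between $v$ and $w$ corresponds to multiplication by an element of some vertex group $G_s$, and the equality of distances forces $v$ and $w$ to differ only in their final $G_s$-letter; then the vertex $x$ obtained by deleting that letter is a common neighbour at distance $k-1$ and forms a $3$-cycle with $v,w$. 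The quadrangle condition is handled similarly: the hypothesis $d(u,z)=k+1$ and the normal form for $z$ with respect to two neighbours $v,w$ force the two commuting syllables witnessing the steps $v\to z$ and $w\to z$ to come from commuting vertex groups, and cancelling them both produces the common neighbour $x$.

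Once the quasi-median property is established, I would identify the hyperplanes concretely. Two edges are in the same hyperplane precisely when they can be related by a sequence of triangle-identifications (letters from the same vertex group) and square-identifications (shuffles past commuting letters); consequently each hyperplane is naturally labelled by a vertex $u\in V(\Gamma)$, and its edges are exactly the translates $g\cdot G_u\setminus\{1\}$ of a certain $G_u$-coset. Two hyperplanes are transverse if and only if their defining $G_u$-cosets share a common vertex and their labels $u,u'$ are adjacent in $\Gamma$. From this combinatorial description, a family of pairwise transverse hyperplanes through a given vertex gives rise, by picking labels, to a clique in $\Gamma$, and conversely every clique in $\Gamma$ yields such a family via the corresponding vertex-group generators at the identity. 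Hence $\dim\mathrm{QM}(\Gamma,\mathcal{C})=\mathrm{clique}(\Gamma)$.

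The main obstacle is the careful bookkeeping in the triangle and quadrangle conditions: translating the geodesic/distance hypotheses into statements about normal forms, and ensuring that the candidate common neighbour produced by deleting or shuffling syllables is indeed at the required distance and connected as claimed. Everything else is essentially formal once the syllable-length-equals-distance lemma and the hyperplane description are in place.
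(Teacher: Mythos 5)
The paper itself does not prove Theorem~\ref{thm2.18}: it is quoted directly from~\cite[Proposition~8.2]{Gen17a}, and your outline — syllable length equals distance via the normal form theory of graph products, exclusion of induced $K_{3,2}$ and $K_{4}^{-}$, verification of the triangle and quadrangle conditions by shuffling/deleting final syllables, and the labelling of hyperplanes by vertices of $\Gamma$ to get $\dim=\text{clique}(\Gamma)$ — is essentially the standard argument carried out in that reference. The only caveat is the bookkeeping you yourself flag, e.g.\ in the quadrangle condition the case where the two edges into $z$ carry the same vertex-group label (there one removes the merged final syllable rather than two commuting ones), and the precise formulation of transversality via cliques spanning a prism; these are routine once the normal-form lemma is in place.
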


Here also, when the collection $\mathcal{C}$ consists of a single group $G$, we will write $\text{QM}(\Gamma, G)$ instead of $\text{QM}(\Gamma,\mathcal{C})$. 

\smallskip

Additionally, we can relate cliques, prisms and neighborhoods of hyperplanes of this Cayley graph to the structure of the graph $\Gamma$. The next two lemmas are the content of~\cite[Lemma~8.6]{Gen17a} and~\cite[Corollary~8.10]{Gen17a}.

\begin{lemma}\label{lm2.19}
Let $\Gamma$ be a simplicial graph and $\mathcal{C}$ be a collection of groups indexed by $V(\Gamma)$. The cliques of $\text{QM}(\Gamma,\mathcal{C})$ coincide with the cosets of the vertex-groups.
\end{lemma}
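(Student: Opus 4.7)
The plan is to verify both inclusions, using the syllable normal form for elements of the graph product $\Gamma\mathcal{C}$. For the easy direction, I check that any coset $gG_u$ of a vertex-group is a clique in $\text{QM}(\Gamma,\mathcal{C})$: if $gh_1,gh_2\in gG_u$ are distinct, then $(gh_1)^{-1}(gh_2)=h_1^{-1}h_2\in G_u\setminus\{1\}$ lies in the generating set, so the two vertices are adjacent. Maximality of such a clique is equally straightforward: any $x\in\Gamma\mathcal{C}$ adjacent to every element of $gG_u$ must be of the form $g\alpha$ for a generator $\alpha$, and comparing with two distinct translates $gh_1,gh_2$ (assuming $|G_u|\ge 2$) forces $\alpha\in G_u$, hence $x\in gG_u$.

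For the converse, I consider a clique $K$ of $\text{QM}(\Gamma,\mathcal{C})$ with at least two vertices and aim to locate it within a coset of a single vertex-group. Up to left-translating by an element of $\Gamma\mathcal{C}$, I may assume $1\in K$, so that every other vertex of $K$ lies in some $G_u\setminus\{1\}$. Fix one $h\in K$ with $h\in G_u\setminus\{1\}$, and take any other $k\in K\setminus\{1,h\}$, with $k\in G_v\setminus\{1\}$. The adjacency between $h$ and $k$ in $\text{QM}(\Gamma,\mathcal{C})$ forces $h^{-1}k$ to be a generator as well, i.e.\ to belong to $G_w\setminus\{1\}$ for some $w\in V(\Gamma)$.

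The key step, and the main (though standard) obstacle, is to invoke Green's normal form theorem for graph products: every element of $\Gamma\mathcal{C}$ admits a reduced expression as a product of syllables from the vertex-groups, and its syllable length is a well-defined invariant, which moreover coincides with the distance to $1$ in $\text{QM}(\Gamma,\mathcal{C})$. Now the expression $h^{-1}k$, viewed in $\Gamma\mathcal{C}$, is already a reduced word of syllable length $2$ whenever $u\neq v$, regardless of whether $\{u,v\}$ is an edge of $\Gamma$: no sequence of commutations among adjacent vertex-groups can merge two non-trivial syllables coming from distinct $G_u$ and $G_v$. This contradicts $h^{-1}k\in G_w\setminus\{1\}$, so $v=u$. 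Since $k$ was arbitrary, every vertex of $K$ lies in $G_u$; after un-translating, $K$ is contained in the coset $gG_u$, and the maximality proved in the first paragraph promotes this inclusion to an equality, completing the proof.
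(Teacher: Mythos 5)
Your proof is correct. The paper itself gives no proof of this lemma, deferring to [Gen17a, Lemma~8.6], so there is no proof in the text to compare against directly. Your argument is the standard one and almost certainly what the cited source does: adjacency in $\text{QM}(\Gamma,\mathcal{C})$ is left-translation by a non-trivial syllable, so after translating a clique to contain the identity, pairwise adjacency forces (via Green's normal form, or just the fact that a word $h^{-1}k$ with $h\in G_u\setminus\{1\}$, $k\in G_v\setminus\{1\}$, $u\neq v$ is reduced of syllable length two and hence is not a single generator) all vertices to lie in a single vertex-group, while the easy direction shows each coset is a maximal complete subgraph.

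One small stylistic remark: you invoke the coincidence of syllable length with the graph distance in $\text{QM}(\Gamma,\mathcal{C})$, but the proof never actually needs that metric interpretation — it suffices to know that a reduced word of syllable length two is not a single generator, which is a purely algebraic consequence of the normal form theorem. Also, your maximality argument in the first paragraph tacitly assumes $|G_u|\ge 2$ (as you flag), which is the usual convention for graph products (trivial vertex groups are discarded); it is worth noting this is genuinely needed, since for $|G_u|=1$ the singleton $gG_u$ is generally not a maximal clique.
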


In particular, for any $u\in V(\Gamma)$, the subgroup $G_{u}$ is a clique in $\text{QM}(\Gamma,\mathcal{C})$, and we denote $J_{u}$ the hyperplane containing the clique $G_{u}$. As a consequence of~\cite[Lemma~8.5 and Lemma~8.8]{Gen17a}, any hyperplane of $\text{QM}(\Gamma,\mathcal{C})$ is a translate of some $J_{u}$:

\begin{lemma}\label{lm2.20}
Let $\Gamma$ be a simplicial graph and $\mathcal{C}=\lbrace G_{u} : u\in V(\Gamma)\rbrace$ a collection of groups indexed by the vertices of\; $\Gamma$. Let $J$ be a hyperplane of $\text{QM}(\Gamma,\mathcal{C})$. Then there exist $\alpha\in \Gamma\mathcal{C}$ and $u\in V(\Gamma)$ such that $J=\alpha J_{u}$. 
\end{lemma}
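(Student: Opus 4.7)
The plan is to exploit the homogeneity of the Cayley graph $\text{QM}(\Gamma,\mathcal{C})$: left multiplication by any $\alpha\in\Gamma\mathcal{C}$ induces a graph automorphism, and hyperplanes are defined purely in terms of the graph structure (edges and the 3-cycle/4-cycle relation), so they are permuted by such automorphisms. In particular, for every $\alpha\in\Gamma\mathcal{C}$ and $u\in V(\Gamma)$, the translate $\alpha J_{u}$ is again a hyperplane of $\text{QM}(\Gamma,\mathcal{C})$. Thus the content of the lemma is really that every hyperplane arises in this way, which is a transitivity statement: the left action of $\Gamma\mathcal{C}$ has at most $|V(\Gamma)|$ orbits on hyperplanes, one for each $J_{u}$.

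To produce $\alpha$ and $u$, I would pick any edge $e\in J$. In the Cayley graph $\text{QM}(\Gamma,\mathcal{C})$, an edge is a pair $\{\beta,\beta g\}$ with $g\in G_{v}\setminus\{1\}$ for some $v\in V(\Gamma)$, and this edge is contained in the clique $\beta G_{v}$ (this is the "type" of the edge). By Lemma~\ref{lm2.19}, $\beta G_{v}$ is indeed a clique, so applying this to $e$ gives us $u\in V(\Gamma)$ and $\alpha\in\Gamma\mathcal{C}$ with $e\subset \alpha G_{u}$.

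Next, observe that the edge $\{1,g\}$ belongs to the clique $G_{u}$, which by definition is contained in $J_{u}$; translating by $\alpha$, the edge $e=\{\alpha,\alpha g\}$ is contained in $\alpha J_{u}$. Hence $e$ lies simultaneously in the two hyperplanes $J$ and $\alpha J_{u}$. Since hyperplanes form a partition of the edge set (being equivalence classes for the transitive closure of the 3-cycle/4-cycle relation), this forces $J=\alpha J_{u}$, which is the desired conclusion.

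The only point that requires care is verifying that left multiplication preserves the defining relation of hyperplanes, but this is immediate: a 3-cycle (resp. 4-cycle) in the Cayley graph is mapped by left multiplication to another 3-cycle (resp. 4-cycle), so the generating relation between edges is preserved, and therefore so is its transitive closure. Consequently $\alpha\cdot J_{u}$ is genuinely a hyperplane, and no obstacle remains. The whole argument really relies on two ingredients already isolated in the excerpt: the identification of cliques with cosets of vertex-groups (Lemma~\ref{lm2.19}), and the fact that a hyperplane is determined by any single one of its edges together with the hyperplane partition.
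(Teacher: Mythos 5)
Your proof is correct. The paper itself does not supply an argument for this lemma; it simply cites Lemmas 8.5 and 8.8 of the reference \cite{Gen17a}. You instead give a short self-contained proof from the two ingredients already available: the identification of cliques with cosets of vertex-groups (Lemma~\ref{lm2.19}) and the purely combinatorial definition of hyperplanes. The chain of reasoning is sound: left multiplication by any $\alpha\in\Gamma\mathcal{C}$ is a graph automorphism of the Cayley graph, hence permutes 3-cycles and 4-cycles and therefore permutes hyperplanes; any edge $e$ of a hyperplane $J$ lies in a clique $\alpha G_u$ by Lemma~\ref{lm2.19}; translating $\{1,g\}\subset G_u\subset J_u$ by $\alpha$ shows that $e$ also lies in the hyperplane $\alpha J_u$; and since hyperplanes partition the edge set as equivalence classes, $J=\alpha J_u$. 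You also correctly isolate the one implicit step (that all edges of a clique belong to the same hyperplane, so that $J_u$ really contains the clique $G_u$, and that left translation carries hyperplanes to hyperplanes), which follows immediately from the 3-cycle relation. This is a clean, elementary derivation that avoids invoking the external lemmas used in the original reference.
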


\smallskip

For the next statement, recall that the \textit{star} of a vertex $u$ of $\Gamma$, denoted $\text{star}(u)$, is the subgraph induced by $u$ and its neighbours. 

\begin{lemma}\label{lm2.21}
Let $\Gamma$ be a simplicial graph and $\mathcal{C}$ be a collection of groups indexed by $V(\Gamma)$. Let $u\in V(\Gamma)$. The cliques in $J_{u}$ are the cosets $gG_{u}$, where $g\in\langle\text{star}(u)\rangle$. Consequently, $N(J_{u})=\langle\text{star}(u)\rangle$.
\end{lemma}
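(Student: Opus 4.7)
The plan is to assign to every edge of $\text{QM}(\Gamma, \mathcal{C})$ a \emph{type} in $V(\Gamma)$: by Lemma~\ref{lm2.19}, every edge lies in a unique clique $gG_v$, and I call $v$ its type. The first key step is to verify that the equivalence relation defining hyperplanes preserves edge types. Edges in a common $3$-cycle lie in a single clique, so share their type trivially. For $4$-cycles, a direct inspection using the normal form for graph products shows that every $4$-cycle in $\text{QM}(\Gamma, \mathcal{C})$ has vertices of the form $\{a, as_1, as_1 s_2, as_2\}$ with $s_i \in G_{v_i}\setminus\{1\}$ and $v_1, v_2$ distinct vertices adjacent in $\Gamma$ (so that $s_1, s_2$ commute); the two pairs of opposite edges then have types $v_1$ and $v_2$ respectively. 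Applied to $J_u$, this forces every clique of $J_u$ to be of the form $gG_u$.

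Next, I would characterize the admissible $g$'s by setting $S = \{g \in \Gamma\mathcal{C} : gG_u \text{ is a clique of } J_u\}$, which is a union of left $G_u$-cosets, and prove $S = \langle \text{star}(u) \rangle$. For the inclusion $\langle \text{star}(u) \rangle \subseteq S$, I note that $1 \in S$ and check that $S$ is stable under right multiplication by $G_v$ for each $v \in \text{star}(u)$. The case $v = u$ is trivial. For $v \in \text{star}(u) \setminus \{u\}$ and $s \in G_v \setminus \{1\}$, picking any $t \in G_u \setminus \{1\}$, the vertices $g, gt, gts = gst, gs$ form a $4$-cycle whose opposite edges $\{g, gt\}$ and $\{gs, gst\}$ are of type $u$; hence $gG_u \in J_u$ forces $gsG_u \in J_u$. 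Iterating gives $\langle \text{star}(u) \rangle \subseteq S$.

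For the converse, if $gG_u$ is a clique of $J_u$, unpacking the hyperplane equivalence produces a finite chain of cliques $G_u = g_0 G_u, g_1 G_u, \dots, g_k G_u = gG_u$ in which consecutive cliques are joined either by a $3$-cycle (in which case they coincide) or by a $4$-cycle. The analysis of $4$-cycles above shows that each such $4$-cycle move corresponds to $g_{i+1} \in g_i G_{v_i} G_u$ for some $v_i$ adjacent to $u$ in $\Gamma$, so a straightforward induction yields $g \in \langle \text{star}(u) \rangle$. This proves the first assertion, and the second follows immediately, since $V(N(J_u))$ is the union of the vertex-sets of the cliques of $J_u$, namely $\bigcup_{g \in \langle \text{star}(u)\rangle} gG_u = \langle \text{star}(u) \rangle$ (using $u \in \text{star}(u)$ so that $G_u \subseteq \langle \text{star}(u) \rangle$).

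The main technical point is the classification of $4$-cycles: the argument relies on the normal form theorem for graph products to conclude that a relation $s_1 s_2 s_3 s_4 = 1$ with each $s_i$ in some vertex-group non-trivially forces the shape $v_1 = v_3$, $v_2 = v_4$, with $\{v_1, v_2\}$ distinct and adjacent in $\Gamma$ and $s_3 = s_1^{-1}$, $s_4 = s_2^{-1}$. Once this rigidity is established, the bookkeeping on chains of $4$-cycle moves is essentially immediate.
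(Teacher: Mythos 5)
The paper does not prove Lemma~\ref{lm2.21}: it is cited from~\cite[Corollary~8.10]{Gen17a} alongside Lemma~\ref{lm2.19}, so there is no in-text proof to compare against. Your argument is a self-contained reconstruction, and the overall scheme is sound: the observation that every edge of $\text{QM}(\Gamma,\mathcal{C})$ lies in a unique coset $gG_v$ (hence has a well-defined type $v$), that the hyperplane relation preserves types, and that the admissible translates form exactly $\langle\text{star}(u)\rangle$, is exactly what is needed, and your closing steps (deducing $S = \langle\text{star}(u)\rangle$ and then $N(J_u) = \langle\text{star}(u)\rangle$) are correct.

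One statement in your write-up is overreaching as phrased. You claim that every $4$-cycle of $\text{QM}(\Gamma,\mathcal{C})$ has vertex set $\{a, as_1, as_1s_2, as_2\}$ with $s_i \in G_{v_i}\setminus\{1\}$, $v_1 \neq v_2$ adjacent in $\Gamma$, and more generally that a relation $s_1 s_2 s_3 s_4 = 1$ with each $s_i$ nontrivially in a vertex-group forces the alternating shape $v_1 = v_3$, $v_2 = v_4$ with $v_1 \neq v_2$. This is false without a further hypothesis: if all four $s_i$ lie in a single vertex group $G_v$, the relation certainly can hold (e.g.\ $a\cdot a\cdot a^{-1}\cdot a^{-1}=1$), and the resulting four vertices span a $K_4$, not an induced $4$-cycle. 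The classification you want is for \emph{induced} $4$-cycles (which, since $K_4^-$ is forbidden in a quasi-median graph, is the only other possibility). Your argument still goes through because a $4$-cycle inside a $K_4$ is subsumed by the $3$-cycle relation, under which all six edges are identified and trivially share their type; only induced $4$-cycles need the normal-form analysis. If you restrict the claimed rigidity to induced $4$-cycles (or separately dispose of the clique case), the proof is complete. The induction step for the reverse inclusion $S\subseteq\langle\text{star}(u)\rangle$ should likewise explicitly discard $4$-cycle moves that happen inside a common clique $gG_u$, which merely fix the coset and pose no issue.
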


An immediate consequence is the following useful fact.

\begin{lemma}\label{lm2.22}
Let $\Gamma$ be a simplicial graph and $\mathcal{C}$ be a collection of groups indexed by $V(\Gamma)$. Let $J\subset \text{QM}(\Gamma,\mathcal{C})$ be a hyperplane, $x\in N(J)$, and $C_{1},C_{2}\subset N(J)$ two distinct cliques containing $x$. If $C_{1}\subset J$, then $C_{1}$ and $C_{2}$ span a prism.
\end{lemma}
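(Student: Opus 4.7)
The plan is to transport everything to the identity of $\Gamma\mathcal{C}$ via the left-multiplication action, and then read off the whole configuration from the defining graph $\Gamma$.

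First, I would use the fact that $\Gamma\mathcal{C}$ acts vertex-transitively on $\text{QM}(\Gamma,\mathcal{C})$ by left multiplication. This is an action by graph automorphisms, so it preserves the quasi-median structure, and in particular hyperplanes, their neighbourhoods, cliques and prisms. After translating by $x^{-1}$ I may therefore assume without loss of generality that $x = 1_{\Gamma\mathcal{C}}$.

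Second, I would use Lemmas~\ref{lm2.19} and~\ref{lm2.21} to get a concrete algebraic description of $C_{1}$, $C_{2}$ and $J$. By Lemma~\ref{lm2.19}, both $C_{1}$ and $C_{2}$ are cosets of vertex groups; since both contain $1_{\Gamma\mathcal{C}}$, this forces $C_{1} = G_{u}$ and $C_{2} = G_{v}$ for some $u, v \in V(\Gamma)$. All edges of the clique $G_{u}$ are pairwise related under the equivalence relation defining hyperplanes (they lie in common $3$-cycles inside $G_{u}$), hence belong to a single hyperplane, which is $J_{u}$ by definition. Since $C_{1}\subset J$ and every edge of $\text{QM}(\Gamma,\mathcal{C})$ belongs to a unique hyperplane, this forces $J = J_{u}$. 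Lemma~\ref{lm2.21} then gives $N(J) = \langle\text{star}(u)\rangle$, and from $C_{2} = G_{v}\subset\langle\text{star}(u)\rangle$ I deduce $v\in\text{star}(u)$. The hypothesis $C_{1}\neq C_{2}$ rules out $v = u$, so $v$ is a proper neighbour of $u$ in $\Gamma$, and hence $G_{u}$ and $G_{v}$ commute inside $\Gamma\mathcal{C}$.

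To conclude, I would observe that the commutation relation implies that $G_{u}G_{v}$ is a subgroup isomorphic to $G_{u}\times G_{v}$, and that the subgraph of $\text{QM}(\Gamma,\mathcal{C})$ induced on the vertex set $G_{u}G_{v}$ is the Cartesian product of the cliques $G_{u}$ and $G_{v}$, i.e. a prism containing both $C_{1}$ and $C_{2}$. The only point that really requires verification is that no spurious edges appear in this induced subgraph: since edges in $\text{QM}(\Gamma,\mathcal{C})$ come from right multiplication by a non-trivial element of some vertex group, and since $G_{u}$ and $G_{v}$ commute, multiplication by a generator of $G_{u}$ or $G_{v}$ keeps one inside $G_{u}G_{v}$ while multiplication by a generator of any other vertex group leaves it, yielding exactly the prism structure. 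I do not anticipate any serious obstacle; the argument is essentially bookkeeping around the graph product, once the translation to the identity has been performed.
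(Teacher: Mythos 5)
Your argument is correct and is essentially the paper's proof: translate $x$ to the identity, identify $C_{1}=G_{u}$, $C_{2}=G_{v}$ and $J=J_{u}$ via Lemma~\ref{lm2.19}, use Lemma~\ref{lm2.21} to get $v\in\text{star}(u)$, and rule out $v=u$ to conclude that $G_{u}$ and $G_{v}$ span a prism. The only difference is that you spell out a few steps the paper leaves implicit (why $J=J_{u}$ and why $G_{u}G_{v}$ carries the product graph structure), which is harmless.
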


\begin{proof}
Up to right-multiplying $x$ by its inverse, we may assume that $x=1$. Hence there exist $u,v\in V(\Gamma)$ so that $J=J_{u}$, $C_{1}=G_{u}$, $C_{2}=G_{v}$. By Lemma~\ref{lm2.21}, $v\in\text{star}(u)$. Since $C_{1}\neq C_{2}$, $v$ must be adjacent to $u$, so that $C_{1}$ and $C_{2}$ span the prism $G_{u}\oplus G_{v}$. 
\end{proof}

\begin{definition}
Let $\Gamma$ be a simplicial graph and $\mathcal{C}$ be a collection of groups indexed by $V(\Gamma)$. Denote by 
\begin{equation*}
    \xi\colon \Gamma\mathcal{C}\twoheadrightarrow \bigoplus_{u\in V(\Gamma)}G_{u}
\end{equation*}
the canonical projection. For $x\in\Gamma\mathcal{C}$, let $x^{\xi}$ denote its image under $\xi$. 
\end{definition}

\bigskip

\section{The embedding theorem}\label{section3}

This section is the central part of the article, dedicated to the proof of the embedding theorem. We first recall the statement from the introduction, before outlining the strategy of the proof.

\begin{theorem}\label{thm3.1}
Let $F$ be a finite group, $H$ be a finitely generated group and $N\lhd H$ a normal finitely generated subgroup of infinite index. Let $A$ be a coarsely simply connected graph, and let $\rho\colon A\longrightarrow F\wr_{H/N}H$ be a coarse embedding. Assume that $\rho(A)$ cannot be coarsely separated by coarsely embedded subspaces of $N$.

Then $\rho(A)$ lies into a neighborhood of an $H-$coset. Moreover, the size of this neighborhood depends only on $A$, $F$, $H$ and the parameters of $\rho$.
\end{theorem}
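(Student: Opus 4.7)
The plan is to follow the scheme developed in \cite{GT24b, GT24a}, adapted to reflect the action of $H$ on $H/N$. The argument has three main movements: construct an auxiliary approximation group $\widetilde{G}$ that surjects onto $F \wr_{H/N} H$ and whose ``lamp part'' carries a quasi-median geometry; lift $\rho$ to a coarse embedding into $\widetilde{G}$; and then use hyperplanes of the lamp part to locate the lifted image near a single $H$-coset.

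For the construction, I fix a large parameter $R > 0$ (to be calibrated at the end) and define $\Gamma$ to be the graph with vertex set $H/N$ in which two distinct cosets are adjacent if and only if their distance in $H/N$ is at most $R$. Since $[H:N] = \infty$, this graph is not complete. The action of $H$ on $H/N$ extends to an action by automorphisms on the graph product $\Gamma F$, and I set
\begin{equation*}
    \widetilde{G} \defeq \Gamma F \rtimes H,
\end{equation*}
endowed with the word metric coming from a finite generating set built from generators of $F$ at the identity coset together with a finite generating set of $H$. The canonical morphism $\pi \colon \widetilde{G} \twoheadrightarrow F \wr_{H/N} H$ imposing the remaining commutation relations between lamps at non-adjacent cosets is $1$-Lipschitz, and it carries each $H$-coset (a ``leaf'' of $\widetilde{G}$) onto an $H$-coset of $F \wr_{H/N} H$. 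Coarse simple connectedness of $\widetilde{G}$ holds for $R$ large enough, so a standard edge-by-edge lifting argument using the coarse simple connectedness of $A$ produces a coarse embedding $\widetilde{\rho} \colon A \to \widetilde{G}$ such that $\pi \circ \widetilde{\rho}$ is at bounded distance from $\rho$, with parameters depending only on those of $\rho$ and on the construction.

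The heart of the argument is to show that $\widetilde{\rho}(A)$ lies within bounded distance of a single leaf of $\widetilde{G}$. I project $\widetilde{\rho}(A)$ to the lamp part $X \defeq \text{QM}(\Gamma, F)$, which is a quasi-median graph by Theorem~\ref{thm2.18}; by Lemma~\ref{lm2.20}, every hyperplane of $X$ is a translate $\alpha J_u$. For each hyperplane $J$, I select the sector $J^+$ containing the projection of a fixed basepoint of $\widetilde{\rho}(A)$. Suppose, for contradiction, that the projection of $\widetilde{\rho}(A)$ substantially meets both sides of some $J = \alpha J_u$. Then the preimage in $A$ of a thickening of $J$ in $\widetilde{G}$ coarsely separates $A$, and via $\pi$ this transports to a coarse separation of $\rho(A)$ by a subspace of $F \wr_{H/N} H$ contained in the image of the hyperplane cylinder. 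By Lemma~\ref{lm2.21}, $N(J_u) = \langle \text{star}(u) \rangle$ is a graph product over the star of $u$, which is a finite set of vertices because $H/N$-balls of radius $R$ are finite by finite generation of $H$. Crucially, since $N$ is normal the stabilizer of $u = xN$ in $H$ is exactly $N$, so the relevant cylinder in $\widetilde{G}$ projects by $\pi$ to a subspace quasi-isometric to a finite extension of $N$, which coarsely embeds into $N$. This contradicts the standing hypothesis, ruling out such a hyperplane.

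Since no hyperplane substantially separates $\widetilde{\rho}(A)$, the chosen family of sectors is consistent. The quasi-median graph $X$ has finite cubical dimension, equal to the clique number of $\Gamma$ and finite because balls in $H/N$ are finite. Lemma~\ref{lm2.16} then gives that $\bigcap_J J^+$ reduces to a single vertex $v \in X$, so $\widetilde{\rho}(A)$ lies in a bounded neighborhood of the leaf $v H \subset \widetilde{G}$; applying $\pi$ yields that $\rho(A)$ lies in a bounded neighborhood of the $H$-coset $\pi(v)H$, with all constants depending only on $A$, $F$, $H$, and the parameters of $\rho$. The subtlest point, and the main obstacle, is the identification of hyperplane cylinders with subspaces that coarsely embed in $N$: it requires the normality of $N$, so that the stabilizer of each vertex of $\Gamma$ is exactly $N$, together with the finite generation of $H$ to ensure that stars in $\Gamma$ are finite and contribute only finitely many factors on top of $N$.
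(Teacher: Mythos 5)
Your overall scheme is the paper's (truncate $F\wr_{H/N}H$ to $\Gamma F\rtimes H$ with $\Gamma$ a locally finite graph on $H/N$, lift $\rho$, exploit the quasi-median geometry of the graph product, choose sectors and apply Lemma~\ref{lm2.16}, then project leaves to $H$-cosets), but the two central steps are asserted rather than proved, and as written the first one is false. The separating subspace you use is ``the image of the hyperplane cylinder'': if this means the set of elements of $\widetilde G$ whose lamp part lies in (a neighborhood of) a hyperplane $J=\alpha J_g$, then its $H$-coordinate is completely unconstrained, so its image in $F\wr_{H/N}H$ is coarsely a finite set times $H$, not anything that coarsely embeds into $N$, and the non-separation hypothesis gives you nothing. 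What is true, and what the argument needs, is that a path can only cross $J$ at a moment when a lamp generator labelled by the vertex $g$ is applied, and at such a moment the lamplighter's position is forced to lie in the single $N$-coset above $g$ (this is where normality enters); the set of such potential crossing points then maps into finitely many lamp configurations times one $N$-coset, i.e.\ a coarse copy of $N$. Turning ``the lamp parts of two image points lie in different sectors'' into ``every coarse path in $\rho(A)$ between them passes uniformly close to that position-constrained set'' is exactly what the paper's pointed-marked-clique model $F\square_{\Gamma}H$ (with marks in $N$, where ``clique contained in $J$'' pins the position to $Ns(g)$) is built to deliver in Claim~\ref{claim3.9}; your proposal names the stabilizer fact but never constructs this bridge, so the contradiction step does not go through as stated. (A smaller point: the lifting should be justified by the approximation theorem, i.e.\ Theorem~\ref{thm3.2}, not by coarse simple connectedness of $\widetilde G$, which need not hold since $H$ is only assumed finitely generated.)

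The endgame is also missing. You choose, for each hyperplane, the sector containing a fixed basepoint; with that choice the hypotheses of Lemma~\ref{lm2.16} are vacuously satisfied and the intersection is simply the lamp part of the basepoint, so the lemma gives no information, and the final inference ``hence $\widetilde\rho(A)$ lies in a bounded neighborhood of the leaf $vH$'' is a non sequitur. This is the main quantitative part of the theorem: one must take $J^{+}$ to be the \emph{deep} sector produced by the separation claim, bound the lamp-part distance from an arbitrary image point to the distinguished vertex using maximal chains of pairwise non-transverse hyperplanes (Lemma~\ref{lm2.17}) together with connectivity of the image, and then convert a bounded lamp-part distance into a bounded distance to the leaf by an explicit path of slides and rotations inside a hyperplane fiber (where consecutive cliques span prisms, as in Claim~\ref{claim3.11}). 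Bounded distance in $\mathrm{QM}(\Gamma,F)$ alone does not bound the distance to a leaf, because rotating between cliques whose labels are far apart in $\Gamma$ is expensive, and it also does not directly bound distances after projecting to $F\wr_{H/N}H$, since the cosets where two configurations differ may be far from the lamplighter's position. Without these estimates the uniform constant claimed in the statement is not obtained.
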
 

The strategy towards the proof of Theorem~\ref{thm3.1} is as follows. First of all, we make use of a well-known approximation theorem to factor our coarse embedding $\rho$ into a group obtained as a truncation of our initial permutational lamplighter $F\wr_{H/N}H$. In order to get a relevant factorization, the approximation needs to be non-trivial, i.e. not equal to the whole $F\wr_{H/N}H$, so we start to check that such permutational wreath products are infinitely presented. 

\smallskip

In a second step, we observe that these truncations are in fact semi-direct products of $H$ with graph products of copies of $F$, that are supported on Cayley graphs of $H/N$. Next, we construct a geometric model of the Cayley graph of the approximation that, moreover, captures the fact that $H\curvearrowright H/N$ has a non-trivial stabilizer. Then, this approximation, that we denote $F\square_{\Gamma}H$ naturally surjects onto $F\wr_{H/N}H$ and this projection sends \textit{leaves}, i.e. natural copies of $H$ inside $F\square_{\Gamma}H$, to $H-$cosets. Therefore, to conclude the proof of the theorem, it is enough to prove that the image of the factorization of $\rho$ lies into a neighborhood of a leaf of $F\square_{\Gamma}H$. We show this claim exploiting the median structure of the Cayley graph of this graph product (cf. Theorem~\ref{thm2.18}) and the non-coarse separation assumption on the image $\rho(A)$.

\subsection{The approximation group} Here is the first ingredient towards the proof of Theorem~\ref{thm3.1}. The idea is that a group $G$ having a presentation of the form $\langle S \; | \; r_{1},r_{2},\dots\rangle$ can be coarsely approximated by its finitely presented truncations $G_{p}=\langle S \; | \; r_{1},r_{2},\dots,r_{p}\rangle$, $p\ge 1$ (see~\cite[Fact~A.2]{GT21}, as well as~\cite[Lemma~6.21]{BGT24} and the references therein for more details on this strategy). Denote $\pi_{p}\colon G_{p}\twoheadrightarrow G$ the canonical morphism.

\begin{theorem}\label{thm3.2}
Let $A$ be a coarsely simply connected graph. For any coarse embedding $\rho\colon A\longrightarrow G$, there exists $p\ge 1$ and a coarse embedding $\eta\colon A\longrightarrow G_{p}$ such that $\pi_{p}\circ \eta=\rho$, i.e. the diagram
\begin{align*}
\xymatrix{
    A \ar[r]^{\rho} \ar[d]^{\eta}  & G  \\
    G_{p} \ar[ru]_{\pi_{p}}
}
\end{align*}
is commutative.
\end{theorem}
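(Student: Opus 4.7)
The plan is to construct $\eta$ by lifting $\rho$ along paths in $A$, choosing $p$ large enough so that the lifting is path-independent. First, I would fix a finite generating set $S$ of $G$; then every $G_p$ is also generated by $S$, and every projection $\pi_p\colon G_p\to G$ is $1$-Lipschitz for the corresponding Cayley metrics. Let $D\defeq \sigma_+(1)$, where $\sigma_+$ is the upper parameter of $\rho$, and let $R\ge 0$ be a constant witnessing the coarse simple connectedness of $A$. For every ordered pair $(a,a')$ of adjacent vertices of $A$ I would fix, once and for all, a word $w_{a,a'}$ on $S\cup S^{-1}$ of length at most $D$ representing $\rho(a)^{-1}\rho(a')$ in $G$. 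After fixing a basepoint $a_0\in A$ together with a preimage $\widetilde{x}_0\in G_p$ of $\rho(a_0)$, the map $\eta$ is then defined on any $a\in A$ by picking a combinatorial path $a_0,a_1,\ldots,a_n=a$ in $A$ and setting
\begin{equation*}
\eta(a)\defeq \widetilde{x}_0\cdot w_{a_0,a_1}\cdot w_{a_1,a_2}\cdots w_{a_{n-1},a_n}\in G_p.
\end{equation*}

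The main obstacle is to choose $p$ so that this definition does not depend on the chosen path, equivalently so that every loop in $A$ based at $a_0$ produces a word that is trivial in $G_p$. This is where coarse simple connectedness enters: the $2$-complex obtained from $A$ by filling in all its cycles of length at most $R$ is simply connected, so in $F(S)$ every word coming from a loop in $A$ lies in the normal closure of the words $w_\gamma\defeq w_{b_0,b_1}\cdots w_{b_{k-1},b_0}$ indexed by cycles $\gamma=(b_0,b_1,\ldots,b_k=b_0)$ in $A$ of length at most $R$. Each such $w_\gamma$ has length at most $DR$, and words on $S\cup S^{-1}$ of length at most $DR$ form a finite set; hence the collection $\mathcal{W}$ of all such $w_\gamma$ is finite, independently of the cardinality of $A$. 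Every element of $\mathcal{W}$ represents the identity of $G$ and therefore belongs to the normal closure of $\{r_1,r_2,\ldots\}$ in $F(S)$; by finiteness of $\mathcal{W}$, there exists a single $p\ge 1$ for which every element of $\mathcal{W}$ already lies in the normal closure of $\{r_1,\ldots,r_p\}$. For this $p$, every loop in $A$ yields a word that is trivial in $G_p$, so $\eta$ is well defined.

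It remains to check the conclusion. The identity $\pi_p\circ\eta=\rho$ is immediate, since projecting to $G$ turns each $w_{a_i,a_{i+1}}$ into $\rho(a_i)^{-1}\rho(a_{i+1})$ and successive factors telescope to $\rho(a_0)\cdot\rho(a_0)^{-1}\rho(a)=\rho(a)$. For the upper Lipschitz control, any two adjacent vertices $a,a'\in A$ satisfy $d_{G_p}(\eta(a),\eta(a'))\le|w_{a,a'}|\le D$, and Lemma~\ref{lm2.1} promotes this into a global $D$-Lipschitz bound. For the lower bound, the $1$-Lipschitz character of $\pi_p$ gives
\begin{equation*}
d_{G_p}(\eta(a),\eta(a'))\ge d_G(\pi_p\eta(a),\pi_p\eta(a'))=d_G(\rho(a),\rho(a'))\ge \sigma_-(d_A(a,a')),
\end{equation*}
so $\eta$ inherits the lower parameter of $\rho$ and is therefore a coarse embedding with the required factorization.
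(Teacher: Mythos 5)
Your proof is correct and reconstructs what is essentially the standard argument for this approximation lemma; the paper does not prove Theorem~\ref{thm3.2} itself but cites it from~\cite{GT21} and~\cite{BGT24}, and those references follow the same van Kampen--style strategy of lifting along paths after bounding the relevant relators. One point you should make explicit: for the path-lifting to descend to a well-defined map modulo homotopy in $A$ (equivalently, for the assertion that every loop word lies in the normal closure of the $w_\gamma$ to go through), you need backtracking in $A$ to become trivial in $F(S)$, which is guaranteed either by choosing $w_{a',a}\defeq w_{a,a'}^{-1}$ in the free group, or by observing that since one may take $R\ge 2$, each length-$2$ backtracking cycle $(a,a',a)$ contributes its word $w_{a,a'}w_{a',a}$ to $\mathcal{W}$ already. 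With that adjustment, the construction, the choice of $p$ via the finiteness of $\mathcal{W}$, and the verification of the coarse-embedding parameters are all sound.
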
 

In our setting, to ensure that this theorem provides a non-trivial factorization of our coarse embedding, we must check that our permutational wreath product $F\wr_{H/N}H$ is infinitely presented. This is the content of the next lemma:
\begin{lemma}
Let $F,H$ be non-trivial finitely presented groups, and let $N\lhd H$ be a normal finitely generated subgroup of $H$ of infinite index. Then $F\wr_{H/N}H$ is infinitely presented.
\end{lemma}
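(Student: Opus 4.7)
The plan is to argue by contradiction: assuming $F\wr_{H/N}H$ is finitely presented, I will show that this forces infinitely many of the standard commutation relations among lamps to be consequences of finitely many, which cannot happen since $H/N$ is infinite.

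I will begin by recording a natural (generally infinite) presentation of $F\wr_{H/N}H$. Fix finite presentations $F=\langle S_F\mid R_F\rangle$, $H=\langle S_H\mid R_H\rangle$ and a finite generating set $S_N$ of $N$. Because $N$ is exactly the $H$-stabilizer of the trivial coset in $H/N$ and the action is transitive, one checks that $F\wr_{H/N}H$ is presented by generators $S_F\sqcup S_H$ together with the finite set $R_F\cup R_H$, the finite set of stabilizer relations $[s,n]=1$ ($s\in S_F$, $n\in S_N$), and the infinite family of commutation relations $[s,hth^{-1}]=1$ ($s,t\in S_F$, $h\in H\setminus N$). If $F\wr_{H/N}H$ were finitely presented, a standard Tietze-transformation argument would show that finitely many relations from this presentation already suffice; after enlarging the finite portion if necessary, only finitely many of the commutation relations, say those with $h\in\{h_1,\ldots,h_k\}\subset H\setminus N$, would be needed. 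Denote the resulting finitely presented group by $G$, so that by hypothesis the natural surjection $G\twoheadrightarrow F\wr_{H/N}H$ is an isomorphism.

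I will then identify the structure of $G$. The stabilizer relations imply by induction on word length in $N$ that $[s,n]=1$ holds in $G$ for every $n\in N$, and conjugation by arbitrary $g\in H$ shows that every element of $N$ commutes with every $H$-conjugate of $S_F$. Hence the normal closure $L$ of $S_F$ in $G$ is naturally indexed by $H/N$ rather than by $H$. Combining this with the splitting $G=L\rtimes H$, a direct inspection of the defining relations produces mutually inverse homomorphisms between $G$ and $\Gamma F\rtimes H$, where $\Gamma$ is the Cayley graph of $H/N$ with respect to the finite set $T\defeq\{h_iN,(h_iN)^{-1}:1\le i\le k\}$ and $\Gamma F$ is the graph product from Theorem~\ref{thm2.18}. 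Thus $L\cong\Gamma F$ and $G\cong\Gamma F\rtimes H$.

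Finally, because $N$ has infinite index, the group $H/N$ is infinite; but $T$ is finite, so the graph $\Gamma$ cannot be complete. Choosing distinct $x,y\in H/N$ with $x^{-1}y\notin T$, the subgroups $F_x$ and $F_y$ of $\Gamma F$ do not commute, yielding a nontrivial commutator $[F_x,F_y]\le L$ that must map to the identity under the projection $L\twoheadrightarrow\bigoplus_{H/N}F$. This forces the surjection $G\twoheadrightarrow F\wr_{H/N}H$ to have nontrivial kernel, contradicting the isomorphism $G\cong F\wr_{H/N}H$. The main technical point will be the identification $L\cong\Gamma F$, ensuring that no hidden consequence of $R_H$ collapses the graph product; this is handled by verifying that the canonical maps between the two presentations respect all defining relations.
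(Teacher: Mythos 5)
Your argument is correct, but it follows a genuinely different route from the paper. The paper disposes of this lemma by citing Cornulier's characterization of finitely presented permutational wreath products~\cite[Theorem~1.1]{Cor06} and checking that its third condition fails: since $N$ has infinite index, $H/N$ has infinite diameter, and an isometric action on a space of infinite diameter has infinitely many orbits on pairs (Claim~\ref{claim3.4}). You instead re-derive the relevant direction of that criterion by hand: you write down the natural infinite presentation, use the standard fact that a finitely presented group needs only finitely many relators from any presentation on a finite generating set, and identify the resulting truncation with $\Gamma F\rtimes H$ for $\Gamma=\mathrm{Cay}(H/N,T)$ with $T$ finite; since $H/N$ is infinite, $\Gamma$ is not complete, and a commutator of two non-adjacent vertex groups is a nontrivial element (non-adjacent vertex groups generate their free product, e.g.\ by retracting onto the full subgraph on those two vertices) lying in the kernel of the natural surjection onto $F\wr_{H/N}H$, contradicting injectivity. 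The two steps you defer --- that your relator list really presents $F\wr_{H/N}H$, and that the truncation is exactly $\Gamma F\rtimes H$ with no hidden collapsing --- are routine and are in effect carried out in the paper right after the lemma (the rewriting of the truncated presentation, Corollary~\ref{cor3.5} and subsection~\ref{subsection3.2}), so your proof meshes naturally with the rest of Section~\ref{section3}; just note that finite generation of $N$ is precisely what makes the stabilizer relations finite, that normality of $N$ is what lets you encode the commutation constraints by $x^{-1}y\in H/N$ (a Cayley rather than Schreier graph), and that the relators $[s,hth^{-1}]$ must be read as words after choosing words for the $h_i$, which is harmless once $R_H$ is retained in the finite portion. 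As for what each approach buys: the paper's citation is shorter and the quoted criterion also handles non-normal stabilizers, while yours is self-contained, avoids Cornulier's theorem altogether, and makes explicit the same truncation-to-graph-product mechanism that later powers the embedding theorem.
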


\begin{proof}
To get our conclusion, it is enough to show that at least one of the three conditions of~\cite[Theorem~1.1]{Cor06} does not hold. The first and second conditions are contained in our assumptions, thus we show that the third condition does not hold, i.e. we must check that the product action of $H$ on $H/N\times H/N$ has infinitely many orbits. This follows from the next claim: 

\begin{claim}\label{claim3.4}
Let $(X,d_{X})$ be a metric space of infinite diameter, and let $G$ be a group acting isometrically on $X$. Then the product action $G\curvearrowright X^2$ has infinitely many orbits.
\end{claim}

{
\renewcommand{\proofname}{Proof of Claim~\ref{claim3.4}.}
\renewcommand{\qedsymbol}{$\blacksquare$}
\begin{proof}
As $X$ has infinite diameter, fix two sequences $(a_{k})_{k\in\mathbb{N}}, (b_{k})_{k\in\mathbb{N}}\subset X$ such that $d_{X}(a_{k},b_{k})\ge k$ for any $k\in\mathbb{N}$. Towards a contradiction, suppose that $G\curvearrowright X^2$ has finitely many orbits $\mathcal{O}_{1},\dots,\mathcal{O}_{r}$. Then there exists $i\in\lbrace 1,\dots, r\rbrace$ such that $(a_{k},b_{k})\in \mathcal{O}_{i}$ for infinitely many $k\in\mathbb{N}$ and thus we pick $k,k' \in \mathbb{N}$ such that $d_{X}(a_{k},b_{k})<k'$ and such that $(a_{k},b_{k}), (a_{k'},b_{k'})\in\mathcal{O}_{i}$. Hence, there is $g\in G$ such that $(a_{k'},b_{k'})=g\cdot (a_{k},b_{k})=(g\cdot a_{k}, g\cdot b_{k})$, and it follows that 
\begin{equation*}
    d_{X}(a_{k},b_{k})<k'\le d_{X}(a_{k'},b_{k'})=d_{X}(g\cdot a_{k},g\cdot b_{k})=d_{X}(a_{k},b_{k})
\end{equation*}
using in the last step that $G\curvearrowright X$ is isometric. This contradiction proves that $G\curvearrowright X^2$ has infinitely many orbits.
\end{proof}}
\renewcommand{\qedsymbol}{$\square$}
In our case, assumptions of Claim~\ref{claim3.4} are satisfied, because the action of $H$ on $H/N$ is isometric (when $H/N$ is equipped with a word metric induced by a finite generating set) and $H/N$ has infinite diameter since $N$ has infinite index. Thus $F\wr_{H/N}H$ is infinitely presented.
\end{proof}

We now give an explicit description of the truncations of our wreath product $F\wr_{H/N}H$. The latter admits
\begin{equation*}
    \langle H, F_{g} \;(g\in H/N) \; | \; [F_{1_{H}N},F_{g}] \;(g\in H/N), \;hF_{g}h^{-1}=F_{h\cdot g} \;(h\in H, g\in H/N)\rangle 
\end{equation*}
as a presentation, where $F_{g}$ is a copy of $F$, for any $g\in H/N$. Fixing $S$ a finite subset of $H/N$, we see that the truncation
\begin{equation*}
    \langle H, F_{g} \;(g\in H/N) \; | \; [F_{1_{H}N},F_{g}] \;(g\in S), \;hF_{g}h^{-1}=F_{h\cdot g} \;(h\in H, g\in H/N)\rangle 
\end{equation*}
of $F\wr_{H/N}H$ can be re written as 
\begin{equation*}
    \langle H, F_{g}\; (g\in H/N)\;|\; [F_{g},F_{h}] \;(g^{-1}h\in S), \;hF_{g}h^{-1}=F_{h\cdot g} \;(h\in H, g\in H/N)\rangle.
\end{equation*}
This is a presentation of a semi-direct product between $H$ and a graph product of infinitely many copies of $F$, which is supported on the graph $\Gamma=\text{Cay}(H/N,S)$. Denoting
\begin{equation*}
    \pi_{S}\colon \Gamma F\rtimes H\longrightarrow F\wr_{H/N}H, \; (x,h)\longmapsto (x^{\xi},h)
\end{equation*}
the natural projection, we deduce from Theorem \ref{thm3.2} the following statement:
\begin{corollary}\label{cor3.5}
Let $A$ be a coarsely simply connected graph. For any coarse embedding $\rho\colon A\longrightarrow F\wr_{H/N}H$, there exists a graph $\Gamma$ and a coarse embedding $\eta\colon A\longrightarrow \Gamma_{n}F\rtimes H$ such that $\pi\circ \eta=\rho$, i.e. the diagram
\begin{align*}
\xymatrix{
    A \ar[r]^{\rho} \ar[d]^{\eta}  & F\wr_{H/N}H  \\
    \Gamma F\rtimes H \ar[ru]_{\pi}
}
\end{align*}
is commutative.
\end{corollary}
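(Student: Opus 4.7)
The plan is a direct application of Theorem~\ref{thm3.2} once a suitable presentation of $F\wr_{H/N}H$ is in place. Since, by the previous lemma, $F\wr_{H/N}H$ is infinitely presented, truncating its presentation produces a genuinely non-trivial sequence of approximating groups, so Theorem~\ref{thm3.2} has content: it guarantees that $\rho$ factors through one such truncation.

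The presentation I would use is on the finite generating set $T_H\cup(F\setminus\lbrace 1\rbrace)$, with $T_H$ a finite generating set of $H$ and $F\setminus\lbrace 1\rbrace$ generating the lamp $F_{1_H N}$. The relations split into three packets: the finite set of relations of $H$, the finite set of relations of $F$, and the infinite family of commutations $[F_{1_H N}, F_{gN}]=1$ indexed by $gN\in H/N\setminus\lbrace 1_H N\rbrace$, where $F_{gN}\defeq hF_{1_H N}h^{-1}$ for any lift $h$ of $gN$. Order the list of relations so that the relations of $H$ and $F$ appear first, followed by an enumeration of the commutations. Applying Theorem~\ref{thm3.2} to $\rho$ then produces some $p\ge 1$, which we may take large enough to include the defining relations of $H$ and $F$, together with a coarse embedding $\eta\colon A\longrightarrow G_p$ satisfying $\pi_p\circ\eta=\rho$.

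The final step is to identify $G_p$ with $\Gamma F\rtimes H$ for an appropriate graph $\Gamma$. By construction, $G_p$ retains all relations of $H$ and $F$ and a finite subset $S\subset H/N$ of commutations; up to symmetrizing and adding $1_H N$, we may assume $S$ is symmetric and contains the identity. The bookkeeping carried out immediately before the statement then shows that the resulting presentation is exactly that of $\Gamma F\rtimes H$ with $\Gamma\defeq \text{Cay}(H/N,S)$: the conjugation relations produce well-defined copies $F_g$ permuted by $H$ through its action on $H/N$, the retained commutations match the edge set of $\Gamma$, and no further identifications are forced on $\bigoplus_{H/N}F$. Under this identification $\pi_p$ coincides with the canonical projection $\pi\colon(x,h)\longmapsto(x^\xi,h)$, so $\eta$ is the required factorization.

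I do not anticipate any serious obstacle here. The only step deserving vigilance is the identification of the truncation $G_p$ with a bona fide semi-direct product $\Gamma F\rtimes H$ rather than some intermediate quotient; this reduces to checking that the retained commutations indexed by $S$ together with the conjugation relations constitute a complete set of relations for the graph product $\Gamma F$, which follows directly from the defining presentation of a graph product of groups.
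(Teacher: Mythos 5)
Your strategy is essentially the one the paper uses: apply Theorem~\ref{thm3.2} to a presentation of $F\wr_{H/N}H$ on the finite generating set $T_H\cup(F\setminus\{1\})$, and identify the resulting truncation with a group of the form $\Gamma F\rtimes H$.

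The presentation you write down is, however, missing a finite packet of relations, and the omission is not cosmetic. Beyond the relations of $H$, the relations of $F$, and the commutations $[F_{1_HN},F_{gN}]=1$, one must also impose the $N$-centralizing relations $[t,f]=1$ for $t$ running over a finite generating set of $N$ and $f\in F$. These encode that $N$ acts trivially on the distinguished lamp --- equivalently, that $F_{gN}\defeq hF_{1_HN}h^{-1}$ is genuinely well defined, independently of the lift $h$ --- and they are \emph{not} consequences of the other relations. A concrete witness: take $H=\Z^2=\langle x,y\rangle$, $N=\langle x\rangle$ and $F=\Z_2=\langle f\rangle$. The group $\Z_2\wr\Z^2$ satisfies the relations of $H$, those of $F$, and $[f,y^nfy^{-n}]=1$ for all $n\neq 0$, yet $[x,f]\neq 1$ there; so the group presented by your three packets surjects properly onto $\Z_2\wr_{\Z}\Z^2=\Z\times(\Z_2\wr\Z)$, and in general your $G_p$ is strictly larger than $\Gamma F\rtimes H$. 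The final identification therefore fails as written, and the later claim that ``no further identifications are forced on $\bigoplus_{H/N}F$'' is precisely what goes wrong. In the paper's symbolic presentation this packet is carried by the conjugation relations $hF_gh^{-1}=F_{h\cdot g}$, which are kept \emph{in full} in the truncation, exactly so that $N$ continues to centralize each $F_g$. The fix is immediate --- place the $N$-centralizing relations in the always-retained part of the enumeration; $N$ being finitely generated is what makes this packet finite --- but it must be done, and indeed this same invariance is what later makes the marks in the geometric model $F\square_{\Gamma}H$ behave correctly.
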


\subsection{A geometric model}\label{subsection3.2} We present now a geometric description of the group $\Gamma F\rtimes H$, where $\Gamma$ is the graph given by Corollary~\ref{cor3.5}. Fix $T$ a finite generating set of $N$, $S$ a finite generating set of $H/N$. Then $U\defeq \iota(T)\cup s(S)$ is a finite generating set of $H$, where $s\colon H/N\longrightarrow H$ is a section of the natural surjection $\pi\colon H\longrightarrow H/N$, and $\iota\colon N\hookrightarrow H$ is the natural inclusion.

\smallskip

Let $F\square_{\Gamma}H$ be the graph of \textit{pointed-marked cliques} of $\text{QM}(\Gamma,F)$, whose vertices are triples of the form $(C,x,u)$ where $C\subset \text{QM}(\Gamma,F)$ is a clique, $x\in C$ is the \textit{point}, $u\in N$ is the \textit{mark}, and whose edges have one of the following forms:
\begin{itemize}
    \item (\textit{Slide}) $(C,x,u)-(C,xs,u)$, where $C\subset \text{QM}(\Gamma,F)$ is a clique labelled by $g\in V(\Gamma)=H/N$, $s\in F_{g}$ is a generator, and $u\in N$;
    \item (\textit{Jump}) $(xF_{g}, x, u)-(xF_{g}, x, us(g)ts(g)^{-1})$, where $x\in \text{QM}(\Gamma,F)$, $g\in H/N$, $t\in T$, $u\in N$;
    \item (\textit{Rotation}) $(xF_{g},x,u)-(xF_{gq}, x, u\delta_{g, q})$ where $x\in \text{QM}(\Gamma,F)$, $g\in H/N$, $q\in S$, $u\in N$
\end{itemize}
and where, for any $a,b\in H/N$, $\delta_{a,b}\defeq s(a)s(b)s(ab)^{-1}$ is the \textit{defect} of $s$ to be a morphism. 

\smallskip

Note that $\delta_{a,b}\in N$ for any $a,b\in H/N$: indeed one has 
\begin{equation*}
\pi(\delta_{a,b})=\pi(s(a)s(b)s(ab)^{-1})=\pi(s(a))\pi(s(b))\pi(s(ab)^{-1})=ab(ab)^{-1}=1_{H/N}
\end{equation*}
since $\pi$ is a morphism and $\pi\circ s=\text{Id}_{H/N}$. In particular, the third type of edges is well-defined, as well as the second since $N$ is normal in $H$.

\smallskip

As we will see below, these three types of edges are in one-to-one correspondence with the elementary moves in the Cayley graph of $\Gamma F\rtimes H$ endowed with its natural generating set.
The geometric picture to keep in mind is the following. Let $(C,x,u)\subset \text{QM}(\Gamma, F)$ be a pointed-marked clique. By Lemma~\ref{lm2.19}, $C=xF_{g}$ for some $g\in V(\Gamma)=H/N$ and $x\in  \text{QM}(\Gamma, F)$. Given a generator $s\in F_{g}$, we go from $(C,x,u)$ to one of its neighbors in $F\square_{\Gamma}H$ by "sliding" the vertex $x$ through the edge between $x$ and $xs$ in $\text{QM}(\Gamma, F)$. To understand better the third type of edges, observe that cliques of $\text{QM}(\Gamma, F)$ are labelled by $V(\Gamma)=H/N$, and that two such cliques span a prism if and only if their labels in $\Gamma$ are adjacent. Given a neighbor $g'=gq$ of $g$ in $\Gamma$, we go from $(xF_{g},x,u)$ to one of its neighbors by "rotating" the clique $C=xF_{g}$ around $x$ from $C=xF_{g}$ to $xF_{gq}$. Such a rotation is "elementary", as it corresponds to an elementary move in $\Gamma$, or equivalently it corresponds to replacing $C$ by a clique $C'$ such that $C\cup C'$ span a prism. Lastly, flexibility of the marks has to be taken into account, because our subgroup $N$ is finitely generated and contributes to the generating set of $\Gamma F\rtimes H$. As we shall see, this contribution results in a conjugation of the marks, that we record as "jumps", letting the clique and the point unchanged. 

\smallskip 

For $x\in\text{QM}(\Gamma,F)$ a fixed vertex, we denote $\mathcal{L}_{x}$ the subgraph given by 
\begin{equation*}
    \mathcal{L}_{x}\defeq \lbrace (xF_{g},x,u) : g\in H/N, \;u\in N\rbrace.
\end{equation*}
We refer to such a subgraph as a \textit{leaf} in $F\square_{\Gamma}H$.

\smallskip

Then we define the map 
\begin{align*}
    \psi \colon \text{Cay}(\Gamma F\rtimes H, F_{1_{H}N}\cup U) &\longrightarrow F\square_{\Gamma}H \\
    (x,h)&\longmapsto (xF_{\pi(h)}, x, hs(\pi(h))^{-1}) 
\end{align*}
as well as 
\begin{align*}
    \varphi\colon F\square_{\Gamma}H &\longrightarrow \text{Cay}(\Gamma F\rtimes H, F_{1_{H}N}\cup U) \\
    (xF_{g}, x, u) &\longmapsto (x,\iota(u)s(g)).
\end{align*}
The fact that $F\square_{\Gamma}H$ is a suitable geometric model for the semi-direct product $\Gamma F\rtimes H$ is expressed as follows:

\begin{lemma}\label{lm3.6}
The maps $\varphi$, $\psi$ are graph isomorphisms, that are inverses of each other. Moreover, $\varphi$ sends leaves to $H-$cosets. 
\end{lemma}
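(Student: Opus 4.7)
The plan is to prove the lemma in three steps: first verify that $\varphi$ and $\psi$ are well-defined inverse bijections on vertex sets, then check that each of the three families of edges in $F\square_{\Gamma}H$ (slides, jumps, rotations) corresponds under $\varphi$ to right-multiplication by exactly one of the three pieces $F_{1_{H}N}$, $\iota(T)$, $s(S)$ of the generating set of $\text{Cay}(\Gamma F\rtimes H,F_{1_{H}N}\cup U)$, and finally compute the image of a leaf.

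For the inverse-bijection step, one just needs to observe that both maps are well-defined. For $\psi$, the element $hs(\pi(h))^{-1}$ lies in $N$ because it projects to $\pi(h)\pi(h)^{-1}=1$, and $x\in xF_{\pi(h)}$ so the triple is a legitimate pointed-marked clique; for $\varphi$ one uses $\iota(u)\in H$ and $\pi(\iota(u)s(g))=g$. Direct computation then gives $\varphi\circ\psi(x,h)=(x,\iota(hs(\pi(h))^{-1})s(\pi(h)))=(x,h)$ and $\psi\circ\varphi(xF_{g},x,u)=(xF_{g},x,\iota(u)s(g)s(g)^{-1})=(xF_{g},x,u)$.

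The core of the proof is the edge analysis, carried out case by case using the semi-direct product relation $h F_{g}h^{-1}=F_{\pi(h)g}$. For $y\in F_{1_{H}N}$, one computes
\[
(x,h)\cdot (y,1)=(x\cdot hyh^{-1},h),
\]
and since $hyh^{-1}\in F_{\pi(h)}$ is a generator, applying $\psi$ to both endpoints produces a slide edge within the clique $xF_{\pi(h)}$ with unchanged mark. For $\iota(t)$ with $t\in T$, right-multiplication leaves the point and the clique unchanged (as $\pi(h\iota(t))=\pi(h)$); a short rearrangement shows that the mark changes from $u=hs(\pi(h))^{-1}$ to $us(\pi(h))\iota(t)s(\pi(h))^{-1}$, which is exactly a jump. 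For $s(q)$ with $q\in S$, the clique rotates from $xF_{\pi(h)}$ to $xF_{\pi(h)q}$, and the new mark $hs(q)s(\pi(h)q)^{-1}$ equals $u\cdot s(\pi(h))s(q)s(\pi(h)q)^{-1}=u\cdot\delta_{\pi(h),q}$, matching the definition of a rotation edge. Running the same three computations in reverse shows that $\varphi$ sends every edge of $F\square_{\Gamma}H$ to an edge of the Cayley graph, hence both maps are graph isomorphisms.

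For the statement on leaves, one has
\[
\varphi(\mathcal{L}_{x})=\{(x,\iota(u)s(g)) : u\in N,\, g\in H/N\},
\]
and using the decomposition $H=\bigsqcup_{g\in H/N}\iota(N)s(g)$ provided by the section $s$ of $\pi\colon H\twoheadrightarrow H/N$, this is exactly the $H$-coset $(x,1)H$ in $\Gamma F\rtimes H$. The only conceptual subtlety is that $s$ is not a homomorphism, so the defect cocycle $\delta_{g,q}\in N$ appears when analyzing the $s(S)$-generators; keeping track of it in the mark coordinate is the main bookkeeping obstacle, but it is precisely engineered into the definition of the rotation edges.
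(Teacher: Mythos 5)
Your proof is correct and follows essentially the same route as the paper: verify $\varphi$ and $\psi$ are inverse vertex-bijections, then match the three generator types $F_{1_H N}$, $\iota(T)$, $s(S)$ to slides, jumps, and rotations respectively (with the defect cocycle $\delta_{\pi(h),q}$ accounting for the rotation mark), and finally compute $\varphi(\mathcal{L}_x)=(x,1_H)H$. The only deviation is the semi-direct-product multiplication convention (you write $(x,h)(y,1)=(xhyh^{-1},h)$, the paper writes $(xh^{-1}rh,h)$), which does not affect the substance of the argument.
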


\begin{proof}
First of all, we fix a vertex $(xF_{g},x,u)\in F\square_{\Gamma}H$, and we compute that
\begin{align*}
\psi\circ\varphi(xF_{g},x,u)=\psi(x,\iota(u)s(g))=(xF_{\pi(\iota(u)s(g))}, x,\iota(u)s(g)s(\pi(\iota(u)s(g)))^{-1})=(xF_{g},x,u)
\end{align*}
since $\pi(\iota(u)s(g))=\pi(s(g))=g$, as $\pi\circ s=\text{Id}_{H/N}$. The other way around, if $(x,h)$ is a vertex of the Cayley graph of $\Gamma F\rtimes H$, then
\begin{equation*}
\varphi\circ\psi(x,h)=\varphi(xF_{\pi(h)}, x, hs(\pi(h))^{-1})=(x, \iota(hs(\pi(h))^{-1})s(\pi(h)))=(x,h)
\end{equation*}
so that $\varphi, \psi$ are bijections on the vertices, inverses of each other. It remains to prove they both preserve adjacency. 

\smallskip

Let then $a$ and $b$ be adjacent vertices in $\text{Cay}(\Gamma F\rtimes H, F_{1_{H}N}\cup U)$. Assume first that $a=(x,h)$ and that there is $r\in F_{1_{H}N}$ so that $b=(x,h)(r,1_{H})=(xh^{-1}rh, h)$. Then 
\begin{equation*}
    \psi(a)=(xF_{\pi(h)}, x, hs(\pi(h))^{-1})
\end{equation*}
and 
\begin{equation*}
    \psi(b)=(xh^{-1}rhF_{\pi(h)}, xh^{-1}rh, hs(\pi(h))^{-1}).
\end{equation*}
As $h^{-1}rh\in h^{-1}F_{1_{H}N}h=F_{\pi(h)}$, the latter reduces to $\psi(b)=(xF_{\pi(h)},xh^{-1}rh, hs(\pi(h))^{-1})$, so that $\psi(a)$ and $\psi(b)$ are adjacent in $F\square_{\Gamma}H$. 

\smallskip

Next, assume that $a=(x,h)$ and $b=(x,ht)$ for some $t\in T$. Then one has 
\begin{equation*}
    \psi(b)=(xF_{\pi(ht)}, x, hts(\pi(ht))^{-1})=(xF_{\pi(h)}, x, hts(\pi(h))^{-1})
\end{equation*}
and since 
\begin{align*}
hts(\pi(h))^{-1}=hs(\pi(h))^{-1}s(\pi(h))ts(\pi(h))^{-1}
\end{align*}
we conclude that $\psi(a)$ and $\psi(b)$ are adjacent in $F\square_{\Gamma}H$. 

\smallskip

Lastly, if $a=(x,h)$ and $b=(x,hs(q))$ for some $q\in S$, then $\psi(a)=(xF_{\pi(h)}, x, hs(\pi(h))^{-1})$ and 
\begin{align*}
    \psi(b)&=(xF_{\pi(hs(q))}, x, hs(q)s(\pi(hs(q)))^{-1}) \\
    &=(xF_{\pi(h)\pi(s(q))}, x, hs(q)s(\pi(h)\pi(s(q)))^{-1}) \\
    &=(xF_{\pi(h)q}, x, hs(q)s(\pi(h)q)^{-1}) 
\end{align*}
and the mark of $\psi(b)$ can in fact be written as 
\begin{equation*}
hs(q)s(\pi(h)q)^{-1}=hs(\pi(h))^{-1}s(\pi(h))s(q)s(\pi(h)q)^{-1}=hs(\pi(h))^{-1}\delta_{\pi(h),q}.
\end{equation*}
It follows that $\psi(a)$ and $\psi(b)$ are adjacent in this case as well, so $\psi$ preserves adjacency. 

\smallskip 

Conversely, let $a$ and $b$ be adjacent vertices in $F\square_{\Gamma}H$. Again, we distinguish three cases. Assume first that $a=(xF_{g},x,u)$ and $b=(xF_{g},xs,u)$. Then 
\begin{equation*}
    \varphi(a)=\varphi(xF_{g},x,u)=(x,\iota(u)s(g)), \; \varphi(b)=\varphi(xsF_{g}, xs,u)=(xs, \iota(u)s(g))
\end{equation*}
are adjacent in $\text{Cay}(\Gamma F\rtimes H, F_{1_{H}N}\cup U)$.

\smallskip

Assume now that $a=(xF_{g},x,u)$ and $b=(xF_{g}, x, us(g)ts(g)^{-1})$, where $t\in T$. Then $\varphi(a)=(x, \iota(u)s(g))$ and 
\begin{equation*}
    \varphi(b)=(x, \iota(us(g)ts(g)^{-1})s(g))=(x, \iota(u)s(g)t)
\end{equation*}
so $\varphi(a)$ and $\varphi(b)$ are adjacent in $\text{Cay}(\Gamma F\rtimes H, F_{1_{H}N}\cup U)$.

\smallskip

For the last case, write $a=(xF_{g},x,u)$ and $b=(xF_{gq}, x, u\delta_{g,q})$ for some $q\in S$. Then 
\begin{equation*}
    \varphi(b)=(x, \iota(u\delta_{g,q})s(gq))=(x, \iota(u)s(g)s(q))
\end{equation*}
with $s(q)\in s(S)$, so $\varphi(a), \varphi(b)$ are adjacent in $\text{Cay}(\Gamma F\rtimes H, F_{1_{H}N}\cup U)$.

\smallskip

For the last claim of the statement, let $x\in \text{QM}(\Gamma,F)$, and notice that
\begin{equation*}
    \varphi(\mathcal{L}_{x})=\lbrace (x, \iota(u)s(g)) : g\in V(\Gamma)=H/N, u\in N\rbrace = (x,1_{H})H
\end{equation*}
as desired. This concludes the proof. 
\end{proof}

As a consequence, there is a natural projection $p_{\Gamma}\colon F\square_{\Gamma}H \longrightarrow F\wr_{H/N}H$, given by $p_{\Gamma}\defeq \pi_{n}\circ \varphi$, i.e.
\begin{equation*}
    p_{\Gamma}(xF_{g},x,u) \defeq (x^{\xi}, \iota(u)s(g)), \; x\in \Gamma F,\; u\in N, \;g\in H/N.
\end{equation*}

\smallskip

Now, since a finitely generated group is quasi-isometric to any of its Cayley graphs, and since composition of coarse embeddings are coarse embeddings, we combine Corollary~\ref{cor3.5} and Lemma~\ref{lm3.6} to get the following result:
\begin{corollary}\label{cor3.7}
Let $A$ be a coarsely simply connected graph. For any coarse embedding $\rho\colon A\longrightarrow F\wr_{H/N}H$, there exists a graph $\Gamma$ and a coarse embedding $\eta\colon A\longrightarrow F\square_{\Gamma}H$ such that $p_{\Gamma}\circ\eta=\rho$.  
\end{corollary}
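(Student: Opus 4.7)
The statement is essentially a formal composition of the two preceding ingredients, so my plan is to chain them together with no new input. The key observation is that Corollary~\ref{cor3.5} already provides the factorization through $\Gamma F\rtimes H$, and Lemma~\ref{lm3.6} identifies this semi-direct product isomorphically with $F\square_{\Gamma}H$ as graphs; all that remains is to assemble these and verify the commutation with $p_{\Gamma}$.

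Concretely, starting from the coarse embedding $\rho\colon A\longrightarrow F\wr_{H/N}H$, I would first invoke Corollary~\ref{cor3.5} to obtain a graph $\Gamma$ (arising as a truncation of the standard presentation of $F\wr_{H/N}H$) together with a coarse embedding
\begin{equation*}
\eta'\colon A\longrightarrow \mathrm{Cay}(\Gamma F\rtimes H,\, F_{1_{H}N}\cup U)
\end{equation*}
satisfying $\pi\circ\eta'=\rho$. Then I would set $\eta \defeq \psi\circ\eta'$, where $\psi$ is the map from Lemma~\ref{lm3.6}. Since $\psi$ is a graph isomorphism, it is in particular an isometry on vertex sets, so $\eta\colon A\longrightarrow F\square_{\Gamma}H$ is a coarse embedding with the same distortion parameters as $\eta'$ (and hence depending only on $A$, $F$, $H$, and $\rho$).

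It remains to verify $p_{\Gamma}\circ\eta=\rho$. Unpacking the definition $p_{\Gamma}=\pi\circ\varphi$ and using $\varphi\circ\psi=\mathrm{Id}_{F\square_{\Gamma}H}$ from Lemma~\ref{lm3.6}, I compute
\begin{equation*}
p_{\Gamma}\circ\eta \;=\; \pi\circ\varphi\circ\psi\circ\eta' \;=\; \pi\circ\eta' \;=\; \rho,
\end{equation*}
which closes the argument.

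I do not anticipate any real obstacle, since the substantive work has already been done: Corollary~\ref{cor3.5} relied on the general approximation theorem (Theorem~\ref{thm3.2}) together with the infinite presentability of $F\wr_{H/N}H$ (via Claim~\ref{claim3.4}), and Lemma~\ref{lm3.6} carried out the case-by-case verification that the three types of edges in $F\square_{\Gamma}H$ (slide, jump, rotation) match exactly the generators in $F_{1_{H}N}\cup U$. The present corollary is purely a bookkeeping statement packaging these two facts into the form that will feed into the next step of the proof of Theorem~\ref{thm3.1}, namely the argument that $\eta(A)$ must lie close to a single leaf $\mathcal{L}_{x}$.
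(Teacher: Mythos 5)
Your proposal is correct and follows the paper's own route: the paper likewise obtains the factorization from Corollary~\ref{cor3.5}, composes with the isomorphism of Lemma~\ref{lm3.6} (absorbing any change of generating set, since a finitely generated group is quasi-isometric to all its Cayley graphs), and notes that $p_{\Gamma}\circ\psi=\pi$ gives the required commutation. No gaps.
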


We conclude this part with an easy observation on the projection $p_{\Gamma}$.

\begin{lemma}\label{lm3.8}
The projection $p_{\Gamma}\colon F\square_{\Gamma}H \longrightarrow F\wr_{H/N}H$ sends leaves to $H-$cosets. 
\end{lemma}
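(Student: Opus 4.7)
The plan is to deduce this statement immediately from what has just been established. By construction $p_\Gamma = \pi_S \circ \varphi$, where $\pi_S \colon \Gamma F \rtimes H \to F\wr_{H/N}H$ is the natural surjection from the approximation group onto the permutational lamplighter. The last part of Lemma~\ref{lm3.6} already identifies $\varphi(\mathcal{L}_x)$ with the $H$-coset $(x, 1_H)H$ inside $\Gamma F \rtimes H$. Since $\pi_S$ is a group homomorphism whose restriction to the canonical copy of $H$ sitting inside $\Gamma F \rtimes H$ is the identity onto the canonical copy of $H$ inside $F\wr_{H/N}H$, applying $\pi_S$ to the coset $(x, 1_H)H$ yields the coset $(x^\xi, 1_H) H$, which is an $H$-coset in $F\wr_{H/N}H$.

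For a self-contained verification avoiding the appeal to Lemma~\ref{lm3.6}, I would simply unfold the definition of $p_\Gamma$ on $\mathcal{L}_x$ and obtain
\begin{equation*}
    p_\Gamma(\mathcal{L}_x) = \{(x^\xi,\, \iota(u) s(g)) : g \in H/N,\; u \in N\}.
\end{equation*}
The only substantive point is then that the map $(g, u) \longmapsto \iota(u) s(g)$ is a bijection from $H/N \times N$ onto $H$. This is the standard set-theoretic decomposition of $H$ associated to the section $s$: for any $h \in H$, set $g \defeq \pi(h)$ and $u \defeq h s(\pi(h))^{-1}$, which lies in $N$ since $\pi(u) = 1_{H/N}$, giving $\iota(u)s(g) = h$; uniqueness follows from the observation that $\iota(u)s(g)$ determines $g$ via $\pi$ and then $u$. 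Hence $p_\Gamma(\mathcal{L}_x) = \{(x^\xi, h) : h \in H\} = (x^\xi, 1_H) H$, which is an $H$-coset. There is no genuine obstacle here — the lemma is essentially a bookkeeping consequence of the construction of $\varphi$ and the factorization $p_\Gamma = \pi_S \circ \varphi$.
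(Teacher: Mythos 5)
Your proposal is correct and follows essentially the same route as the paper: the paper's proof is exactly your second paragraph, namely unfolding $p_{\Gamma}(\mathcal{L}_{x})=\lbrace (x^{\xi},\iota(u)s(g)) : g\in H/N,\, u\in N\rbrace$ and identifying this with $(x^{\xi},1_{H})H$ via the decomposition of $H$ given by the section $s$ (which the paper leaves implicit and you spell out). Your first argument, via the last claim of Lemma~\ref{lm3.6} and the factorization $p_{\Gamma}=\pi_{S}\circ\varphi$, is only a cosmetic repackaging of the same observation and is also fine.
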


\begin{proof}
If $x\in\text{QM}(\Gamma, F)$ and $\mathcal{L}_{x}=\lbrace (xF_{g},x,u) : g\in V(\Gamma), u\in N\rbrace$ is such a leaf, then 
\begin{align*}
    p_{\Gamma}(\mathcal{L}_{x}) &= \lbrace (x^{\xi}, \iota(u)s(g)) : g\in V(\Gamma), u\in N\rbrace \\
    & =(x^{\xi}, 1_{H})\lbrace (0,\iota(u)s(g)) : g\in H/N, u\in N\rbrace \\
    &=(x^{\xi},1_{H})H
\end{align*}
and the lemma follows. 
\end{proof}

\subsection{Proof of the embedding theorem}

We are now ready to prove our embedding theorem, using tools from quasi-median geometry recalled in the previous section.

\begin{proof}[Proof of Theorem~\ref{thm3.1}]
Let $A$ be as in the statement, $F,H$ be as above, and $\rho\colon A\longrightarrow F\wr_{H/N}H$ be a coarse embedding. Without loss of generality, we may assume that $\rho$ is continuous. By Corollary~\ref{cor3.7}, there exists a graph $\Gamma$ and a continuous coarse embedding $\eta\colon A \longrightarrow F\square_{\Gamma}H$ such that the diagram 
\begin{align*}
\xymatrix{
    A \ar[r]^{\rho} \ar[d]^{\eta}  & F\wr_{H/N}H  \\
    F\square_{\Gamma}H \ar[ru]_{p_{\Gamma}}
}
\end{align*}
commutes, and by Lemma~\ref{lm3.8}, $p_{\Gamma}$ sends leaves to $H-$cosets. It is therefore enough to show that $\eta(A)$ lies in the neighborhood of a leaf in $F\square_{\Gamma}H$ to conclude that $p_{\Gamma}(\eta(A))=\rho(A)$ lies in the neighborhood of an $H-$coset in $F\wr_{H/N}H$. 

\begin{claim}\label{claim3.9}
Let $J$ be a hyperplane of $\text{QM}(\Gamma,F)$, and let 
\begin{equation*}
    \mathcal{H}_{J}\defeq \lbrace (C,x,u) \in \eta(A) : C\subset J, x\in N(J)\rbrace.
\end{equation*}
There exist a sector $W$ and a constant $R\ge 0$, depending only on $A,\Gamma$ and the parameters of $\eta$ and $\rho$ such that $\eta(A)\cap W$ contains points arbitrarily far away from $\mathcal{H}_{J}$ and any vertex of $\eta(A)\cap W'$ is at distance at most $R$ from $\mathcal{H}_{J}$, for any sector $W'\neq W$ delimited by $J$. 
\end{claim}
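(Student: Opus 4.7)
We argue by contradiction, assuming as we may (since the embedding theorem is trivial when $\eta(A)$ is bounded) that $\eta(A)$ is unbounded. The negation of the claim then provides two distinct sectors $W_1, W_2$ of $J$ in $\text{QM}(\Gamma, F)$ such that $\eta(A) \cap W_i$ contains points arbitrarily far from $\mathcal{H}_J$. The plan is to exhibit a subspace of $F\wr_{H/N}H$ that coarsely embeds into $N$ and coarsely separates $\rho(A)$, contradicting the non-coarse separation hypothesis on $\rho(A)$.

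The central geometric observation concerns the lift $\tilde{N}(J) \defeq \lbrace (C, x, u) \in F\square_\Gamma H : C \subset J \rbrace$. Examining the three types of edges of $F\square_\Gamma H$ introduced in Subsection~\ref{subsection3.2}, jumps and rotations preserve the point-component $x$, while a slide $(C, x, u) - (C, xs, u)$ traverses an edge of $\text{QM}(\Gamma, F)$ lying in $J$ if and only if $C \subset J$, i.e. precisely when both endpoints lie in $\tilde{N}(J)$. Consequently, removing $\tilde{N}(J)$ disconnects $F\square_\Gamma H$ into pieces indexed by the sectors of $J$, so that any path joining the lifts of $W_1$ and $W_2$ must traverse $\tilde{N}(J)$.

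The second ingredient identifies $p_\Gamma(\tilde{N}(J))$ as coarsely embedded into $N$. By Lemmas~\ref{lm2.20} and~\ref{lm2.21}, we may write $J = \alpha J_u$ with $N(J) = \alpha \langle \text{star}(u) \rangle$ for some $\alpha \in \Gamma F$ and some $u \in V(\Gamma) = H/N$. Since $F$ is finite and $\Gamma = \text{Cay}(H/N, S)$ is locally finite, $K \defeq \lbrace \gamma^\xi : \gamma \in \langle \text{star}(u) \rangle \rbrace$ is a finite subset of $\bigoplus_{H/N} F$, and unfolding the definition of $p_\Gamma$ yields
\begin{equation*}
p_\Gamma(\tilde{N}(J)) \subset \lbrace (\alpha^\xi k,\; \iota(u') s(u)) : k \in K,\; u' \in N \rbrace,
\end{equation*}
which lies at bounded Hausdorff distance from a translate of the subgroup $\lbrace (0, n) : n \in N \rbrace \cong N$ inside $F\wr_{H/N}H$, and therefore coarsely embeds into $N$.

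Combining the two ingredients, we choose $v_i \in \eta(A) \cap W_i$ with $d(v_i, \mathcal{H}_J) \to \infty$ and set $P_i \defeq p_\Gamma(v_i) \in \rho(A)$. Since $p_\Gamma \circ \eta = \rho$ is a coarse embedding and $\eta$ is a coarse embedding onto its image, the restriction $p_\Gamma|_{\eta(A)} \colon \eta(A) \to \rho(A)$ is a coarse equivalence, ensuring that the $P_i$ are arbitrarily far from $p_\Gamma(\mathcal{H}_J)$. Any $k$-coarse path in $\rho(A)$ joining $P_1$ to $P_2$ lifts, via a coarse inverse, to a $k'$-coarse path in $\eta(A)$ running between vertices near $v_1$ and $v_2$; crossing from $W_1$ to $W_2$, this lifted path must come within $k'$ of $\tilde{N}(J)$ by the separation property, so the original path approaches $p_\Gamma(\tilde{N}(J))$ within bounded distance. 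This establishes that $\rho(A)$ is coarsely separated by a subspace coarsely embedded in $N$, contradicting the hypothesis. The main technical delicacy is to bridge between $\mathcal{H}_J = \tilde{N}(J) \cap \eta(A)$, which controls the escape of the $P_i$, and $\tilde{N}(J)$ itself, which the lifted coarse paths actually cross (the two differing because $p_\Gamma$ is not injective); this is handled by framing the final separation via a suitable neighborhood of $p_\Gamma(\mathcal{H}_J)$ inside $p_\Gamma(\tilde{N}(J))$ and carefully tracking distances through the coarse equivalence $p_\Gamma|_{\eta(A)}$.
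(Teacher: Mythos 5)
Your overall strategy is the paper's: identify a subspace of $\rho(A)$ coarsely embedded in $N$, invoke the non–coarse-separation hypothesis, and conclude that at most one sector can contain far-away points. The paper does this very tersely (it shows $p_\Gamma(\mathcal{H}_J)$ lies in a set of the form $\lbrace(\alpha^\xi c,\iota(u)s(g)) : \text{supp}(c)\subset\text{star}(g),\,u\in N\rbrace$ and then writes ``the claim follows''). Your ``central geometric observation'' — that jumps and rotations preserve the point-component while a slide crosses $J$ if and only if its clique lies in $J$, so that $\tilde N(J)$ separates the sectors in $F\square_\Gamma H$ — is correct, genuinely needed, and implicit in the paper; spelling it out is a good contribution.

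However, the ``technical delicacy'' you flag at the end is a genuine gap, and the resolution you sketch does not work. You observe that lifted coarse paths cross $\tilde N(J)$ rather than $\mathcal{H}_J$, and you propose to fix this by ``framing the final separation via a suitable neighborhood of $p_\Gamma(\mathcal{H}_J)$ inside $p_\Gamma(\tilde N(J))$.'' But for a point $v\in\eta(A)$ with $d(v,\tilde N(J))\le k'$, there is in general no control on $d(v,\mathcal{H}_J)$: the witnessing vertex of $\tilde N(J)$ need not lie in $\eta(A)$. You also misattribute the difficulty to non-injectivity of $p_\Gamma$, when in fact it is purely about $\eta(A)$ versus $F\square_\Gamma H$. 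The missing ingredient is the connectedness of $\eta(A)$ as a subgraph of $F\square_\Gamma H$, which is exactly what the WLOG reduction ``$\rho$ (hence $\eta$) continuous'' buys you. With $\eta(A)$ a connected subgraph, one works with genuine $1$-paths inside $\eta(A)$: if such a path goes from one sector to another, the slide crossing $J$ has both endpoints in $\tilde N(J)$, and since the path lies in $\eta(A)$ those endpoints lie in $\tilde N(J)\cap\eta(A)=\mathcal{H}_J$. This closes the gap, and then the non-separation hypothesis (transferred to $\eta(A)$ via the coarse equivalence $p_\Gamma|_{\eta(A)}$, which preserves coarse separation) directly yields the unique far sector $W$ together with the uniform bound $R$. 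Replacing your $k'$-coarse-path lifting scheme with this direct $1$-path argument inside $\eta(A)$ would make the proof both correct and shorter.
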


Here, the notation $\eta(A)\cap W$ is used (abusively) as a shortcut to denote the subset of vertices of $F\square_{\Gamma}H$ corresponding to cliques lying in $W$. 

\renewcommand{\qedsymbol}{$\blacksquare$}
\begin{proof}[Proof of Claim~\ref{claim3.9}]
By Lemma~\ref{lm2.20}, we may write $J=\alpha J_{g}$ for some $g\in V(\Gamma)$ and $\alpha\in \Gamma F$, and we get
\begin{align*}
p_{\Gamma}(\mathcal{H}_{J})\cap\rho(A)&=\lbrace (x^{\xi}, \iota(u)s(g)) \in \rho(A) : x\in N(J),\; u\in N\rbrace \\
&=\lbrace (\alpha^{\xi}c, \iota(u)s(g))\in\rho(A) : \text{supp}(c)\subset \text{star}(g),\;u\in N\rbrace 
\end{align*}
is a coarsely embedded subspace of $N$, since $\text{star}(g)$ is finite. By assumption, $\rho(A)$ cannot be coarsely separated by such subspaces, thus there exists a constant $D\ge0$ so that $\rho(A)\setminus p_{\Gamma}(\mathcal{H}_{J})$ contains one coarsely connected component with points at arbitrary large distance from $p_{\Gamma}(\mathcal{H}_{J})$, and any point in another component is at distance $<D$ from $p_{\Gamma}(\mathcal{H}_{J})$. The claim follows. 
\end{proof}
\renewcommand{\qedsymbol}{$\square$}

For any hyperplane $J$ of $\text{QM}(\Gamma,F)$, denote $J^{+}$ the sector given by Claim~\ref{claim3.9}.

\begin{claim}\label{claim3.10}
\textit{The intersection} $\dis\bigcap_{J\;\text{hyperplane}}J^{+}$ \textit{is reduced to a single vertex.}
\end{claim}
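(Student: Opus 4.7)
The plan is to apply Lemma~\ref{lm2.16} to the family $\{J^{+}\}_{J}$ of sectors produced by Claim~\ref{claim3.9}, indexed by the hyperplanes of $\text{QM}(\Gamma, F)$. The finite cubical dimension hypothesis of that lemma is satisfied: by Theorem~\ref{thm2.18}, $\text{dim}(\text{QM}(\Gamma, F)) = \text{clique}(\Gamma)$, and since $\Gamma = \text{Cay}(H/N, S)$ with $S$ finite, this is at most $|S|+1 < \infty$. It then remains to verify the two hypotheses of Lemma~\ref{lm2.16}, after which the conclusion (non-emptiness and uniqueness of the intersection point) follows automatically.

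For the pairwise intersection (i), I argue by contradiction. Suppose two hyperplanes $J_1, J_2$ satisfy $J_1^+ \cap J_2^+ = \emptyset$. Then $J_1$ and $J_2$ must be non-transverse, since two transverse hyperplanes in a quasi-median graph have all four pairs of sectors intersecting. Applying Claim~\ref{claim3.9}, I produce a sequence $x_n \in \eta(A) \cap J_1^+$ with $d(x_n, \mathcal{H}_{J_1}) \to \infty$; the assumption $J_1^+ \cap J_2^+ = \emptyset$ forces $x_n$ to lie in a sector of $J_2$ distinct from $J_2^+$, so by the second part of Claim~\ref{claim3.9} we have $d(x_n, \mathcal{H}_{J_2}) \le R$. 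Projecting via $p_\Gamma$, the points $\rho(x_n)$ lie in a bounded neighborhood of $p_\Gamma(\mathcal{H}_{J_2}) \cap \rho(A)$, which coarsely embeds into $N$ exactly as observed in the proof of Claim~\ref{claim3.9} (using that $\text{star}(g)$ is finite in $\Gamma$). Running the symmetric argument from $J_2^+$ yields a second unbounded sub-family $\rho(y_n)$ contained in the neighborhood of $p_\Gamma(\mathcal{H}_{J_1}) \cap \rho(A)$. The union $p_\Gamma(\mathcal{H}_{J_1} \cup \mathcal{H}_{J_2}) \cap \rho(A)$ is still coarsely embedded in $N$, and the two unbounded sub-families now populate distinct coarsely connected components of its complement in $\rho(A)$, contradicting the standing non-coarse-separation hypothesis.

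For the stabilization (ii), suppose for contradiction that there exists a strictly decreasing chain $J_1^+ \supsetneq J_2^+ \supsetneq \dots$. An induction shows that the hyperplanes $J_i$ are pairwise non-transverse, with each $J_{i+1}$ lying entirely in the sector $J_i^+$. By Claim~\ref{claim3.9}, for each $n$ I can choose $z_n \in \eta(A) \cap J_n^+$ with $d(z_n, \mathcal{H}_{J_n})$ arbitrarily large. Lemma~\ref{lm2.17} then shows that any two points separated by the family $\{J_1, \dots, J_n\}$ of pairwise non-transverse hyperplanes lie at distance at least $n/\text{dim}(\text{QM}(\Gamma, F))$. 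Combining this distance growth with the coarse embedding estimates of $\eta$, one extracts a sequence in $A$ whose images in $F\square_{\Gamma}H$ diverge but whose sources remain controlled, violating the lower bound in the definition of a coarse embedding and yielding the desired contradiction.

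The main obstacle is step (i): one must carefully monitor how $p_\Gamma$ transports the quasi-median neighborhoods of hyperplanes into coarsely-$N$-embedded subspaces of $\rho(A)$, and ensure that the union of two such subspaces remains coarsely embedded in $N$ (for which the finiteness of stars in $\Gamma$ is essential). Once this bookkeeping is done, both sub-families are unbounded and separated by a coarsely-$N$-embedded subspace, which contradicts the hypothesis. With both conditions of Lemma~\ref{lm2.16} established, the intersection $\bigcap_J J^+$ is non-empty and reduced to a single vertex, completing the proof of the claim.
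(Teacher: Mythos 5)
Your overall strategy — verify the two hypotheses of Lemma~\ref{lm2.16} and then read off the conclusion — is exactly the paper's strategy, and your handling of the cubical dimension via Theorem~\ref{thm2.18} is fine. The problem is in the verification of hypothesis~(i), where your argument both departs from the paper's and contains a fatal flaw.

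You try to contradict the non-coarse-separation hypothesis by producing two unbounded families: $x_{n}\in\eta(A)\cap J_{1}^{+}$ far from $\mathcal{H}_{J_{1}}$ but (since $x_{n}\notin J_{2}^{+}$) within $R$ of $\mathcal{H}_{J_{2}}$, and symmetrically $y_{n}\in\eta(A)\cap J_{2}^{+}$ far from $\mathcal{H}_{J_{2}}$ but within $R$ of $\mathcal{H}_{J_{1}}$. You then claim that $\rho(x_{n})$ and $\rho(y_{n})$ lie in two distinct deep coarsely connected components of the complement of a bounded neighbourhood of $Z\defeq p_{\Gamma}(\mathcal{H}_{J_{1}}\cup\mathcal{H}_{J_{2}})$. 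But this is impossible: by your own construction, each $x_{n}$ is within $R$ of $\mathcal{H}_{J_{2}}\subset\mathcal{H}_{J_{1}}\cup\mathcal{H}_{J_{2}}$, so $\rho(x_{n})$ lies within a uniformly bounded neighbourhood of $Z$; it never escapes into the complement, let alone into a component containing points arbitrarily far from $Z$. The same is true of the $y_{n}$. The coarse separation contradiction therefore does not materialise.

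The paper's argument for~(i) is a direct case analysis that never invokes the separation hypothesis beyond what is already encoded in Claim~\ref{claim3.9}. Supposing $J_{1},J_{2}$ non-transverse with $J_{1}^{+}\not\supset J_{2}$ and, towards a contradiction, $J_{2}^{+}\not\supset J_{1}$, one sees that $J_{2}^{+}$ lies in a sector $W_{1}\ne J_{1}^{+}$ of $J_{1}$, so every point of $\eta(A)\cap J_{2}^{+}$ is within $R$ of $\mathcal{H}_{J_{1}}$; and since $J_{1}$ lies in a sector $W_{2}\ne J_{2}^{+}$ of $J_{2}$, every point of $\mathcal{H}_{J_{1}}$ is itself within $R$ of $\mathcal{H}_{J_{2}}$. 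Thus every point of $\eta(A)\cap J_{2}^{+}$ is within $2R$ of $\mathcal{H}_{J_{2}}$, contradicting Claim~\ref{claim3.9} applied to $J_{2}$. This two-step transfer of bounds through Claim~\ref{claim3.9} is the ingredient your argument is missing. (In passing, for~(ii) you invoke Lemma~\ref{lm2.17} for a lower bound on distance, but that lemma gives an upper bound; the lower bound $d(x,y)\ge n$ when $n$ hyperplanes separate $x$ and $y$ comes directly from Theorem~\ref{thm2.15}(iv). And the final sentence, asserting that the sources in $A$ "remain controlled", is not justified and would need an actual argument.)
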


\renewcommand{\qedsymbol}{$\blacksquare$}
\begin{proof}[Proof of Claim~\ref{claim3.10}]
We check the assumptions of Lemma~\ref{lm2.16}. First, if $J_{1}$ and $J_{2}$ are transverse, then $J_{1}^{+}$ and $J_{2}^{+}$ clearly intersect. If $J_{1}, J_{2}$ are not transverse and $J_{1}^{+}$ contains $J_{2}$, then $J_{1}^{+}$ and $J_{2}^{+}$ intersect. Lastly, if $J_{1}$ and $J_{2}$ are not transverse and $J_{1}^{+}$ does not contain $J_{2}$, then $J_{2}^{+}$ must be the sector delimited by $J_{2}$ that contains $J_{1}$ as a consequence of Claim~\ref{claim3.9}. Hence $J_{1}^{+}$ and $J_{2}^{+}$ intersect.

\smallskip

Now, towards a contradiction, assume there exists a decreasing sequence $J_{1}^{+}\supset J_{2}^{+}\supset \dots$ that does not stabilize. Fix a vertex $(C,x,u)\in F\square_{\Gamma}H$ with $C\subset J_{1}^{+}$. Since any two vertices of $\text{QM}(\Gamma,F)$ are separated by only finitely many hyperplanes, there is $r\ge 1$ so that $C$ is disjoint from $J_{r}^{+}$. Let then $s>r+2R$ and choose a vertex $(C',x',u')\in F\square_{\Gamma}H$ with $C'\subset J_{s}^{+}$. As before, we pick $t>s$ so that $C'$ is disjoint from $J_{t}^{+}$, and it follows that
\begin{equation*}
    2R<s-r\le d(x,x') \le d((C,x,u),(C',x',u')) \le 2R
\end{equation*}
a contradiction. 
\end{proof}
\renewcommand{\qedsymbol}{$\square$}
Let thus $x\in\text{QM}(\Gamma,F)$ be the vertex given by Claim~\ref{claim3.10}, and consider the leaf 
\begin{equation*}
    \mathcal{L}\defeq \lbrace (xF_{g},x,u) : g\in V(\Gamma), u\in N\rbrace
\end{equation*}
in $F\square_{\Gamma}H$. We show that the image of $\eta$ lies in a neighborhood of $\mathcal{L}$. 

\smallskip

Let $(Q,z,w)$ be a vertex in $\eta(A)$. Fix a geodesic $\gamma$ from $z$ to $x$ in $\text{QM}(\Gamma,F)$, and let $K$ be the clique containing the last edge of $\gamma$. Hence the hyperplane $J$ of $\text{QM}(\Gamma,F)$ containing $K$ separates $z$ and $x$. Since moreover $Q\not\subset J^{+}$, it follows from the definition of $J^{+}$ that $J$ separates $z$ from a pointed-marked clique corresponding to a vertex in $\eta(A)$. Since $\eta(A)$ is connected, there must exist $(C,y,u)\in\eta(A)$ so that $C\subset \partial J$. Denoting $z'\in C$ the unique vertex of $C$ in the same sector delimited by $J$ as $z$, and applying the triangle inequality to the triple of vertices $(Q,z,w),(C,z',u),(C,y,u)$, one has
\begin{equation}\label{eq3.1}
    d((Q,z,w),(C,y,u))\le R+2.
\end{equation}

\begin{claim}\label{claim3.11}
There exists a mark $b\in N$ so that $d((C,y,u),(K,x,b)) \le 3d(y,x)+1$.
\end{claim}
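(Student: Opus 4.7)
The plan is to construct an explicit path of length at most $3d(y,x) + 1$ in $F \square_\Gamma H$ from $(C, y, u)$ to a vertex of the form $(K, x, b)$, using only slide and rotation moves.

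The key geometric input is that both $y$ and $x$ lie in the neighborhood $N(J)$ of $J$ in $\text{QM}(\Gamma, F)$: this is immediate for $x$ because $x \in K \subset J \subset N(J)$, and for $y$ it follows from $y \in C \subset N(J)$. Since $N(J)$ is gated by Theorem~\ref{thm2.15}, any geodesic from $y$ to $x$ in $\text{QM}(\Gamma, F)$ stays inside $N(J)$; fix one such geodesic $y = y_0, y_1, \dots, y_n = x$ of length $n = d(y, x)$. By Lemma~\ref{lm2.21}, $N(J)$ is a left translate of $\langle \text{star}(g_J) \rangle$, where $g_J \in V(\Gamma) = H/N$ denotes the label of $J$. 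Consequently, if $g_i \in V(\Gamma)$ denotes the label of the vertex group $F_{g_i}$ whose coset is the clique containing the $i$-th edge of the geodesic, then each $g_i$ lies in $\text{star}(g_J)$; similarly $g_C \in \text{star}(g_J)$, while $g_K = g_J$. In particular, for any two such labels $g, g'$, one has $d_\Gamma(g, g') \leq 2$ via the detour through $g_J$.

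I then build the path as follows. Starting at $(C, y, u) = (y F_{g_C}, y, u)$, first apply at most $d_\Gamma(g_C, g_1) \leq 2$ rotations to reach a vertex of the form $(y F_{g_1}, y, \bullet)$. For $i = 1, \dots, n$, perform the $i$-th slide (one move) from $y_{i-1}$ to $y_i$ via the generator of $F_{g_i}$, and, if $i < n$, follow it with at most $d_\Gamma(g_i, g_{i+1}) \leq 2$ rotations to change the clique label from $g_i$ to $g_{i+1}$. After the $n$-th slide, a final batch of at most $d_\Gamma(g_n, g_K) \leq 1$ rotations brings the clique label from $g_n$ back to $g_K = g_J$, reaching a vertex with clique $K$. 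Taking $b \in N$ to be the resulting mark (which lies in $N$ since slides do not touch the mark and each rotation alters it only by some $\delta_{g, q} \in N$), we have built a path of length at most $n + (2 + 2(n-1) + 1) = 3n + 1$. The main obstacle to watch for is that a priori the clique labels along the geodesic could be arbitrarily far apart in $\Gamma$; but the containment in $N(J)$ forces them all to sit in $\text{star}(g_J)$, which is precisely what makes every intermediate rotation cheap (at most two generators of $S$, routed through $g_J$) and prevents the bound from exploding.
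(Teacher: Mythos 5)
Your proof is correct and takes essentially the same approach as the paper: one builds an explicit slide-and-rotation path in $F\square_\Gamma H$ along a geodesic in $\text{QM}(\Gamma,F)$ that stays near $J$, using the fact that all clique labels encountered are in $\text{star}(g_J)$ so that each intermediate rotation batch can be routed through $g_J$ in at most two steps, giving $3d(y,x)+1$. The only difference is bookkeeping: the paper first slides from $y$ to $x_0 \in C \cap J^+$ and runs a geodesic $x_0 \to x$ that stays in a single fiber of $J$, so that each intermediate vertex $x_i$ sits on a clique $C_i \subset J$ and Lemma~\ref{lm2.22} gives a single rotation from $C_i$ to the next edge-clique $K_i$; you instead run a geodesic from $y$ inside $N(J)$ (which may cross $J$ once) and appeal directly to Lemma~\ref{lm2.21} to keep every label in $\text{star}(g_J)$. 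Both land on the same count. One minor remark: since $C\subset J$, Lemma~\ref{lm2.21} already forces $g_C=g_J$, not merely $g_C\in\text{star}(g_J)$, so your opening rotation batch actually costs at most $1$; your stated total $3n+1$ is therefore a slight overcount but still correct.
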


\renewcommand{\qedsymbol}{$\blacksquare$}
\begin{proof}[Proof of Claim~\ref{claim3.11}]
Let $x_{0}$ be the unique vertex of $C$ that belongs to $J^{+}$, as $x$. Fix a geodesic $x_{0},\dots,x_{k-1},x_{k}=x$ from $x_{0}$ to $x$ in $\text{QM}(\Gamma,F)$.  By Theorem~\ref{thm2.15}(\textit{ii}), this geodesic lies in a fiber of $J$, so for any $i\in\lbrace 0,\dots,k\rbrace$, $x_{i}$ belongs to a clique $C_{i}\subset J$. By construction, $C_{0}=C$ and $C_{k}=K$. For all $i\in\lbrace 0,\dots,k-1\rbrace$, denote $K_{i}$ the clique containing the edge $[x_{i},x_{i+1}]$. By Lemma \ref{lm2.22}, $C_{i}$ spans a prism with both $K_{i-1}$ and $K_{i}$. We can therefore use slides and rotations in $F\square_{\Gamma}H$ to construct a path from $(C,y,u)$ to some $(K,x,b)\in\mathcal{L}$ as follows:
\begin{itemize}
    \item Start from $(C,y,u)=(C_{0},y,u)$, and slide to $(C_{0},x_{0},u)$.
    \item Use a rotation (that will modify the mark) to go from $(C_{0},x_{0},u)$ to some vertex $(K_{0},x_{0},u_{0})$. Then slide to $(K_{0},x_{1},u_{0})$.
    \item Rotate the clique $K_{0}$ to $C_{1}$ to go to some vertex $(C_{1},x_{1},u_{1})$, and then rotate again to go to $(K_{1},x_{1},\Tilde{u_{1}})$. Then slide to $(K_{1}, x_{2},\Tilde{u_{1}})$.
    \item Rotate now to go to $(C_{2},x_{2},u_{2})$. 
    \item More generally, being at $(C_{i},x_{i},u_{i})$ for some $u_{i}\in N$, make a rotation-slide-rotation to go to $(C_{i+1},x_{i+1},u_{i+1})$ for some $u_{i+1}\in N$. 
\end{itemize}
At the end, we reach the vertex $(C_{k},x_{k},u_{k})=(K,x,u_{k})$. Set $b\defeq u_{k}$. Counting each step in the above path, it follows that 
\begin{equation*}
    d((C,y,u),(K,x,b)) \le 3k+1 \le 3d(y,x)+1
\end{equation*}
as claimed.
\end{proof}
\renewcommand{\qedsymbol}{$\square$}

It thus remains to estimate $d(y,x)$. Let $J_{1},\dots,J_{\ell}$ be a maximal collection of pairwise non-transverse hyperplanes separating $z$ and $x$. Without restrictions, we may assume that $J_{i}$ separates $J_{i-1}$ from $J_{i+1}$ for any $i\in\lbrace 2,\dots,\ell+1\rbrace$, and that $J_{1}$ separates $z$ from $J_{k}$. As $C$ is disjoint from $J_{k}^{+}$ and that $\eta(A)$ is connected, we find a vertex $(M,p,e)\in\eta(A)$ so that $M\subset J_{k}$. Exactly as above, we have
\begin{equation}\label{eq3.2}
    d(z,p)\le d((Q,z,w),(M,p,e))\le R+2
\end{equation}
and on the other hand, since $z$ and $p$ are separated by $J_{1},\dots,J_{\ell-1}$, we also have $d(z,p)\ge \ell-1$. Thus 
\begin{align*}
    d(y,x)\le \text{clique}(\Gamma)\cdot\ell \le \text{clique}(\Gamma)\cdot(d(z,p)+1) \le \text{clique}(\Gamma)\cdot(R+3)
\end{align*}
using Lemma~\ref{lm2.17} for the first inequality and (\ref{eq3.2}) for the last inequality. As also $d(y,z)\le d((C,y,u),(Q,z,w))\le R+2$ by (\ref{eq3.1}), we conclude that
\begin{equation}\label{eq3.3}
    d(y,x)\le d(y,z)+d(z,x)\le R+2+\text{clique}(\Gamma)(R+3).
\end{equation}
Combining inequality (\ref{eq3.1}) and Claim \ref{claim3.11}, we deduce 
\begin{align*}
d((Q,z,w),(K,x,b)) &\le d((Q,z,w), (C,y,u))+d((C,y,u),(K,x,b)) \\
&\le R+2+3d(y,x)+1 \\
&\le R+2+3(R+2+\text{clique}(\Gamma)(R+3))+1 \\
&=4R+9+\text{clique}(\Gamma)(3R+9).
\end{align*}
Hence it follows that $d((Q,z,w), \mathcal{L}) \le 4R+9+\text{clique}(\Gamma)(3R+9)$, and we conclude that $\eta(A)$ lies in a neighborhood of a leaf in $F\square_{\Gamma}H$, as was to be shown. 
\end{proof}

\bigskip

\section{Large-scale geometry of permutational lamplighters}\label{section4}

In this section, we use our embedding theorem and several additional tools on cone-offs of graphs in order to prove:

\begin{theorem}\label{theorem4.1}
Let $E$ and $F$ be non-trivial finite groups. Let $G,H$ be finitely presented groups with normal finitely generated infinite subgroups $M\lhd G$, $N\lhd H$. Assume that $M$ has infinite index in $G$ and that there is no subspace of $G$ (resp. $H$) that coarsely separates $G$ and that coarsely embeds into $M$ (resp. $N$). If $E\wr_{G/M}G$ and $F\wr_{H/N}H$ are quasi-isometric, then 
\begin{enumerate} [label=(\roman*)]
    \item $|E|$ and $|F|$ have the same prime divisors;
    \item There exists a quasi-isometry of pairs $(G,M)\longrightarrow (H,N)$. In particular, $M$ and $N$ are quasi-isometric, and $G/M$ and $H/N$ are quasi-isometric.
\end{enumerate}
Moreover, if $M$ is co-amenable in $G$, then $N$ is co-amenable in $H$ and there exist $n,r,s\ge 1$ such that $|E|=n^{r}$, $|F|=n^{s}$, and the quasi-isometry of pairs from (ii) induces a quasi-$\frac{s}{r}$-to-one quasi-isometry $G/M\longrightarrow H/N$. 
\end{theorem}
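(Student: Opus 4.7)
The plan is to combine three ingredients: the embedding theorem (Theorem~\ref{thm3.1}), a cone-off reduction to standard wreath products, and the classification theorem for standard lamplighters (Theorem~\ref{thm1.2}). Fix a quasi-isometry $q\colon E\wr_{G/M}G \longrightarrow F\wr_{H/N}H$ with quasi-inverse $q'$. My first step is to establish \emph{leaf-preservingness}: $q$ sends each left $G$-coset of $E\wr_{G/M}G$ into a bounded neighborhood of some $H$-coset of $F\wr_{H/N}H$, with a uniform constant. Composing the natural inclusion $G \hookrightarrow E\wr_{G/M}G$, $g\mapsto(\mathbf{0},g)$, with $q$ yields a coarse embedding from $G$, which is coarsely simply connected since finitely presented. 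The non-coarse-separation hypotheses on $G$ and $H$ transfer through $q$ into the hypothesis required by Theorem~\ref{thm3.1}, so the theorem applies and the image lies in a neighborhood of some $H$-coset; translating and running the symmetric argument for $q'$ gives leaf-preservingness in both directions.

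Next, I would upgrade leaf-preservingness to a quasi-isometry of pairs $(G,M)\longrightarrow(H,N)$, thereby proving (ii). Leaf-preservingness yields an induced quasi-isometry $\overline{q}\colon G\longrightarrow H$, well-defined up to bounded distance. The crucial observation exploits the normality of $M$ and $N$: the cosets of $M$ in $G$ are pairwise parallel, and a configuration whose lamps are supported on a single $M$-coset projects to a single orbit of $M$ acting on $G/M$; this intrinsic coarse feature, visible through $q$ in the target lamplighter, forces $\overline{q}$ to quasi-preserve $M$-cosets, sending each one to within bounded Hausdorff distance of an $N$-coset. By Proposition~\ref{prop2.9}, such a quasi-isometry of pairs induces a quasi-isometry $G/M\longrightarrow H/N$, well-defined up to bounded error.

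With (ii) in hand, I would cone off every left $M$-coset in $E\wr_{G/M}G$ and every left $N$-coset in $F\wr_{H/N}H$. A direct computation identifies the resulting coned-off graphs, up to quasi-isometry, with the standard lamplighters $E\wr(G/M)$ and $F\wr(H/N)$, and $q$ descends to a leaf-preserving quasi-isometry $q^{\mathrm{in}}\colon E\wr(G/M) \longrightarrow F\wr(H/N)$. The key technical step --- Proposition~\ref{prop1.13} --- is that $q^{\mathrm{in}}$ lies at finite distance from an \emph{aptolic} quasi-isometry. Note that $q$ itself need not be aptolic, as Proposition~\ref{prop4.10} exhibits leaf-preserving non-aptolic quasi-isometries between permutational wreath products; the role of the cone-off is precisely to collapse the extra flexibility provided by the action of $M$ on each coset of lamps and thereby recover a rigid product structure at the level of quotients.

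Finally, with an aptolic quasi-isometry between standard wreath products at hand, I would apply Theorem~\ref{thm1.2}. This requires checking that $G/M$ is finitely presented and one-ended; the former holds since $G$ is finitely presented and $M$ is finitely generated, and the latter follows from our coarse-separation hypothesis on $G$: a multi-ended splitting of $G/M$ would lift to a coarse separator of $G$ consisting of finitely many $M$-cosets, hence coarsely embedding into $M$, a contradiction. The non-amenable case of Theorem~\ref{thm1.2} then yields (i). When $M$ is co-amenable in $G$, the quotient $G/M$ is amenable (and so is $H/N$ by (ii)), and the amenable case of Theorem~\ref{thm1.2} produces integers $n,r,s\ge 1$ with $|E|=n^r$, $|F|=n^s$ and a quasi-$\tfrac{s}{r}$-to-one quasi-isometry $G/M\longrightarrow H/N$, which coincides up to bounded error with the quotient quasi-isometry induced by (ii). The main obstacle in this plan is the relative-aptolicity statement (Proposition~\ref{prop1.13}): since leaf-preservingness is strictly weaker than aptolicity in the permutational setting, genuinely new work is required to exhibit the aptolic model after coning off, and this is where the strategy of \cite{GT24b} has to be substantially modified.
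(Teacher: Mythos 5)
Your plan reproduces the paper's architecture (embedding theorem $\Rightarrow$ leaf-preservingness, normality $\Rightarrow$ quasi-isometry of pairs as in Proposition~\ref{prop4.12}, cone-off to standard lamplighters over the quotients, relative aptolicity, then the scaling results of~\cite{GT24b}), but the crucial step is left unproved. The whole difficulty of Theorem~\ref{theorem4.1} is concentrated precisely in the point you defer: showing that the induced quasi-isometry between the coned-off lamplighters $\mathcal{L}_{|E|}(G/M)\longrightarrow\mathcal{L}_{|F|}(H/N)$ lies at finite distance from an aptolic one. Invoking Proposition~\ref{prop1.13} here is circular, since that proposition is exactly what is established in the course of the proof (both are proved together in subsection~\ref{subsection4.5}), and you explicitly acknowledge offering no argument for it. In the paper this is done by writing the leaf-preserving map as $q(c,p)=(\alpha(c),\beta_{c}(p))$, passing through Corollaries~\ref{cor4.14} and~\ref{cor4.20}, Lemma~\ref{lm4.21} and Proposition~\ref{prop4.3} to obtain the explicit map $(c,pM)\longmapsto(\alpha(\Tilde{c})^{\flat},\overline{\beta_{\Tilde{c}}}(pM))$ between lamplighter graphs over the quotients, and then quoting the proof of~\cite[Corollary~6.11]{GT24a} to see that all $\overline{\beta_{c}}$ lie at uniformly bounded distance from $\overline{\beta_{\ind}}$. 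That uniform-distance statement is the missing content; without it your proof does not go through. (Relatedly, in the leaf-preservingness step your "symmetric argument" glosses over the use of Lemma~\ref{lm4.7} and the infinite index of $M$, which is what forces the coset correspondence set up by $q$ and $q'$ to be a bijection rather than merely a coarse assignment.)

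Your endgame also deviates from the paper in a way that does not quite deliver the statement. Once aptolicity is in hand, the paper does not use Theorem~\ref{thm1.2}: it applies~\cite[Proposition~3.3]{GT24b} and~\cite[Theorem~3.8]{GT24b} directly to the aptolic model whose base component is $\overline{\beta_{\ind}}$, i.e.\ the quotient map induced by the quasi-isometry of pairs from (ii). This both avoids any discussion of one-endedness of $G/M$ (your lifting argument for one-endedness is essentially correct, using that finitely many $M$-cosets lie at finite Hausdorff distance from $M$ by normality, but it is unnecessary) and, more importantly, gives the precise "moreover" clause: Theorem~\ref{thm1.2}, used as a black box, only yields the existence of \emph{some} quasi-$\frac{s}{r}$-to-one quasi-isometry $G/M\longrightarrow H/N$, not that the induced map from (ii) has this property, so your claim that it "coincides up to bounded error" with the induced map does not follow from that theorem; one must run the measure-scaling analysis on the specific aptolic map, which is exactly what~\cite[Theorem~3.8]{GT24b} provides in the paper's proof.
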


\subsection{An alternative description of permutational lamplighters}\label{sub4.1} Let us first introduce some notations and a convenient description of permutational wreath products.

\begin{definition}
Let $n\ge 2$, and let $X$ be a graph together with a partition $\mathcal{C}$. The lamplighter graph over $X$ with respect to $\mathcal{C}$ is the graph $\mathcal{L}_{n}(X,\mathcal{C})$ 
\begin{itemize}
    \item whose vertices are pairs $(c,p)$, where $c\colon X\longrightarrow \Z_{n}$ is a coloring of $X$ such that $c$ is constant on the pieces of $\mathcal{C}$ and all but finitely many pieces have a trivial color (denoted by $c\in\Z_{n}^{(X,\mathcal{C})}$), and $p\in V(X)$;
    \item whose edges connect $(c_{1},p_{1}), (c_{2},p_{2})$ either if $c_{1}=c_{2}$ and $p_{1}, p_{2}$ are adjacent in $X$, or if $p_{1}=p_{2}$ and $c_{1},c_{2}$ only differ on the piece containing $p_{1}$. 
\end{itemize}
\end{definition}

Hence we think of the vertex $(c,p)\in \mathcal{L}_{n}(X,\mathcal{C})$ as being a coloring of the pieces of $X$ (also referred to as \textit{zones}), with all but finitely many pieces having a non-trivial color, together with an arrow pointing at $p\in X$. The two types of edges correspond then to:
\begin{itemize}
    \item either the arrow changes its position from $p$ to a neighbour of $p$, and the coloring stays unchanged;
    \item or the arrow stays on the vertex where it stands, but changes the color of the zone containing this vertex. 
\end{itemize}

We define the support of a coloring $c\in\Z_{n}^{(X,\mathcal{C})}$ as being the collection of pieces of $\mathcal{C}$ where $c$ is non-trivial, i.e. 
\begin{equation*}
    \text{supp}(c) \defeq \lbrace C\in\mathcal{C} : c(C)\neq 0\rbrace. 
\end{equation*}

In the graph $\mathcal{L}_{n}(X,\mathcal{C})$, the distance between $(c_{1},p_{1})$ and $(c_{2},p_{2})$ is thus given by 
\begin{equation*}
    d((c_{1},p_{1}), (c_{2},p_{2}))=\text{length}(\alpha)+|\text{supp}(c_{1}^{-1}c_{2})|
\end{equation*}
where $\alpha$ is the shortest path in $X$ starting from $p_{1}$, visiting all zones where $c_{1}$ and $c_{2}$ differ, and ending at $p_{2}$. 

\smallskip

Lastly, we define also \textit{leaves} of $\mathcal{L}_{n}(X,\mathcal{C})$ as subgraphs of the form 
\begin{equation*}
    L(c) \defeq \lbrace (c,p)\in \mathcal{L}_{n}(X,\mathcal{C}) : p\in X\rbrace
\end{equation*}
where $c\in \Z_{n}^{(X,\mathcal{C})}$ is a fixed coloring. Hence, we can think of edges contained in a single leaf as "horizontal" edges, along which the arrow moves, while "vertical" edges are those connecting different leaves in the lamplighter graph, changing only the coloring and keeping the arrow on the same vertex.

\smallskip

Now, observe that given a finite group $F$, a finitely generated group $H$ with a finite generating set $S$ and a subgroup $N\leq H$, one has a graph isomorphism
\begin{equation*}
    \text{Cay}(F\wr_{H/N}H, F\cup S)\cong \mathcal{L}_{|F|}(\text{Cay}(H,S), \mathcal{C}_{N})
\end{equation*}
where $\mathcal{C}_{N}$ denotes the collection of left $N-$cosets. Note that, in $\mathcal{L}_{|F|}(\text{Cay}(H,S), \mathcal{C}_{N})$, leaves are simply $H-$cosets, since
\begin{equation*}
    L(c)=\lbrace (c,p) : p\in H\rbrace=(c,1_{H})H.
\end{equation*}
In the sequel, to shorten notations, we will often denote the leaf $L(c)$ as $cH$. Also, if $n\ge 1$ and if $H$ is a finitely generated group with a subgroup $N\le H$, we write merely $\mathcal{L}_{n}(H,\mathcal{C}_{N})$ for the graph $\mathcal{L}_{n}(\text{Cay}(H,S), \mathcal{C}_{N})$, where $S$ is a finite generating set of $H$. Note that the large-scale geometry of $\mathcal{L}_{n}(\text{Cay}(H,S), \mathcal{C}_{N})$ is in fact independent of the choice of the generating set $S$, because if $X,Y$ are biLipschitz equivalent graphs through $\varphi\colon X\longrightarrow Y$ and $\mathcal{C}$ is a partition of $X$, then $\mathcal{L}_{n}(X, \mathcal{C})$ and $\mathcal{L}_{n}(Y, \varphi(\mathcal{C}))$ are also biLipschitz equivalent. 

\smallskip

Hence, following our previous description of lamplighters over partitioned graphs, we can see an element $(c,p)\in F\wr_{H/N}H$ as a pair made of a coloring of the left $N-$cosets, supported on finitely many cosets, together with an arrow pointing at some $p\in H$. More precisely, there is a canonical identification between $\bigoplus_{H/N} F$ and the set 
\begin{equation*}
    F^{(H,\mathcal{C}_{N})} \defeq \lbrace c\colon H\longrightarrow F \;|\; c \;\text{constant on each $N-$coset}, c\equiv 1_{F} \;\text{for all but finitely many $N-$cosets}\rbrace
\end{equation*}
where $\mathcal{C}_{N}$ denotes the collection of left $N-$cosets (i.e. the pieces of the partition). We will denote $\ind$ the trivial coloring, defined by $\ind(pN)=1_{F}$ for any $p\in H$. 

\smallskip

The set $F^{(H,\mathcal{C}_{N})}$ has an obvious group structure (written multiplicatively), when identified with $\bigoplus_{H/N}H$. Given now any $S\subset H/N$, the subgroup $\bigoplus_{S}F$ of $\bigoplus_{H/N}F$ is identified to the subgroup $F^{(H,S)}$ of $F^{(H,\mathcal{C}_{N})}$ consisting of colorings supported only on zones from $S$. 

\smallskip

Additionally, if $gN$, $hN$ are two zones in $H$, the distance between them is defined by
\begin{equation*}
    d_{H}(gN, hN) \defeq \min_{a\in gN, \;b\in hN}d_{H}(a,b)
\end{equation*}
and given $x\in H$, $R\ge0$, we let $\lbrace x\rbrace_{+R} \subset \mathcal{C}_{N}$ denote the subset of zones at distance at most $R$ from $x$, i.e.
\begin{equation*}
    \lbrace x\rbrace_{+R}\defeq \lbrace S\in\mathcal{C}_{N} : d_{H}(x,S)\le R\rbrace.
\end{equation*}

To conclude this part, we observe that some lamplighters over partitioned graphs may coincide with standard lamplighter graphs. The next criterion will be crucial in our future purposes:

\begin{proposition}\label{prop4.3}
Let $n\ge 2$ and let $X$ be a graph with a partition $\mathcal{C}$. Assume that there exists $Q\ge 0$ such that $\text{diam}(C)\le Q$ for any $C\in\mathcal{C}$. Then there exists a graph $Y$, which is quasi-isometric to $X$, such that $\mathcal{L}_{n}(X,\mathcal{C})$ is quasi-isometric to $\mathcal{L}_{n}(Y)$.
\end{proposition}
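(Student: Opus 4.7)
The plan is to define $Y$ by collapsing each piece of $\mathcal{C}$ to a single vertex, so that a coloring which is constant on pieces becomes a bona-fide coloring of $Y$. Formally, I would take $V(Y) := \mathcal{C}$ and declare two distinct pieces $C, C' \in \mathcal{C}$ adjacent in $Y$ whenever there exist $x \in C$ and $x' \in C'$ with $\{x, x'\} \in E(X)$. The canonical projection $\pi \colon X \to Y$, $x \mapsto [x]$, is $1$-Lipschitz by Lemma~\ref{lm2.1}, and any path $C_0, C_1, \dots, C_m$ in $Y$ lifts to a path in $X$ of length at most $(Q+1)m + Q$ by choosing adjacent representatives $v_i \in C_{i-1}$, $u_i \in C_i$ for each successive edge of the path and filling the intra-piece gaps with paths of length at most $Q$. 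This shows that $\pi$ is a $(Q+1, Q)$-quasi-isometry, so $X \sim_{Q.I.} Y$.

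Next, I would define $\Phi \colon \mathcal{L}_n(X, \mathcal{C}) \to \mathcal{L}_n(Y)$ by $\Phi(c, p) := (\overline{c}, [p])$, where $\overline{c} \in \Z_n^{(Y)}$ is the coloring $\overline{c}([x]) := c(x)$, well-defined because $c$ is constant on each piece and finitely supported because $c$ is. The map $\Phi$ is surjective, as any $(\gamma, C) \in \mathcal{L}_n(Y)$ equals $\Phi(c, p)$ for $p \in C$ arbitrary and $c(x) := \gamma([x])$. Each elementary edge of $\mathcal{L}_n(X, \mathcal{C})$ is either a slide (mapped to an edge or to a trivial move in $\mathcal{L}_n(Y)$) or a color-change on the current piece (mapped to the corresponding color-change in $\mathcal{L}_n(Y)$), so $\Phi$ sends adjacent vertices to vertices at distance at most $1$; by Lemma~\ref{lm2.1}, $\Phi$ is therefore $1$-Lipschitz.

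For the lower bound, I would use the explicit distance formulas from Subsection~\ref{sub4.1}. Writing
\[
d_{\mathcal{L}_n(Y)}\bigl(\Phi(c_1, p_1), \Phi(c_2, p_2)\bigr) = \text{length}(\alpha) + |\text{supp}(\overline{c}_1^{-1} \overline{c}_2)|
\]
with $\alpha$ an optimal path in $Y$ from $[p_1]$ to $[p_2]$ meeting every piece in $\text{supp}(\overline{c}_1^{-1} \overline{c}_2)$, and noting that $|\text{supp}(\overline{c}_1^{-1} \overline{c}_2)| = |\text{supp}(c_1^{-1} c_2)|$, I would lift $\alpha$ to a path $\beta$ in $X$ from $p_1$ to $p_2$ of length at most $(Q+1)\text{length}(\alpha) + Q$ as in the first paragraph. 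By construction $\beta$ visits at least one vertex in each piece met by $\alpha$, and in particular in each zone of $\text{supp}(c_1^{-1} c_2)$. The distance formula for $\mathcal{L}_n(X, \mathcal{C})$ then gives
\[
d_{\mathcal{L}_n(X, \mathcal{C})}\bigl((c_1, p_1), (c_2, p_2)\bigr) \le (Q+1)\, d_{\mathcal{L}_n(Y)}\bigl(\Phi(c_1, p_1), \Phi(c_2, p_2)\bigr) + Q,
\]
so $\Phi$ is a $(Q+1, Q)$-quasi-isometry. The only point requiring attention is arranging the lifted path $\beta$ to start at $p_1$, end at $p_2$, and visit the prescribed pieces; this is a routine verification made possible by the uniform diameter bound $Q$, and is not expected to be a serious obstacle.
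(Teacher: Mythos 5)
Your construction is correct and is essentially the paper's own: you take the same quotient graph $Y$ (the paper realises it by choosing a representative $x_C$ in each piece, which gives an isomorphic graph) and the same map $(c,p)\longmapsto(\overline{c},[p])$ pushing colorings forward. The only difference is in the verification of the lower bound, where you lift paths and invoke the distance formula for lamplighter graphs instead of writing down an explicit Lipschitz quasi-inverse as the paper does; both routes are routine and your constants work out.
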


\begin{proof}
For any $C\in\mathcal{C}$, choose a vertex $x_{C}\in C$. Define $Y$ as the graph 
\begin{itemize}
    \item whose vertex set is $\lbrace x_{C} : C\in\mathcal{C}\rbrace$;
    \item whose edges connect $x_{C}$ to $x_{C'}$ whenever $C\neq C'\in \mathcal{C}$ contain adjacent vertices in $X$.
\end{itemize}
Define then the map 
\begin{align*}
    \varphi\colon X &\longrightarrow Y \\
    x&\longmapsto x_{C}, \;\text{where $C\in\mathcal{C}$ contains $x$.} 
\end{align*}
Notice first that $\varphi$ is surjective, in particular coarsely surjective. Next, let $x,y\in X$ be adjacent vertices. There are two possible cases:
\begin{itemize}
    \item Either $x,y\in C$ are in a same piece of the partition, in which case $\varphi(x)=\varphi(y)$, so $d_{Y}(\varphi(x),\varphi(y))=0$;
    \item Or $x\in C$, $y\in C'$, $C\neq C'$. Then $x_{C}$, $x_{C'}$ are adjacent in $Y$, so
    \begin{equation*}
        d_{Y}(\varphi(x),\varphi(y))=d_{Y}(x_{C}, x_{C'})=1.
    \end{equation*}
\end{itemize}
In both cases, we have $d_{Y}(\varphi(x),\varphi(y))\le 1$, so we conclude from Lemma~\ref{lm2.1} that $\varphi$ is $1-$Lipschitz. The other way around, define 
\begin{align*}
    \psi\colon Y&\longrightarrow X \\
    x_{C}&\longmapsto x_{C}
\end{align*}
and note that, since any vertex $x\in X$ lies at distance at most $Q$ from a vertex of $Y$, $\psi$ is coarsely surjective. Additionally, if $x_{C}, x_{C'}\in Y$ are adjacent, then $C,C'$ contain adjacent vertices in $X$, say $u\in C$ and $v\in C'$, and thus
\begin{equation*}
    d_{X}(\psi(x_{C}), \psi(x_{C'}))=d_{X}(x_{C},x_{C'})\le d_{X}(x_{C},u)+d_{X}(u,v)+d_{X}(v,x_{C'}) \le 2Q+1.
\end{equation*}
It follows that $\psi$ is $(2Q+1)-$Lipschitz. Lastly, $\varphi\circ \psi=\text{Id}_{Y}$ and for any $x\in X$ one has
\begin{equation*}
    d_{X}(x,\psi(\varphi(x)))=d_{X}(x,x_{C}) \le Q
\end{equation*}
where $C\in\mathcal{C}$ is the piece containing $x$. 
Hence $\psi\circ\varphi$ is at distance $\le Q$ from $\text{Id}_{X}$, and we deduce that 
\begin{align*}
    d_{X}(x,y) &\ge d_{Y}(\varphi(x),\varphi(y)) \\
    &\ge \frac{1}{2Q+1}d_{X}(\psi(\varphi(x)), \psi(\varphi(y))) \\
    &\ge \frac{1}{2Q+1}\left(d_{X}(x,y)-d_{X}(\psi(\varphi(x)),x)-d_{X}(y,\psi(\varphi(y)))\right)\\
    &\ge \frac{1}{2Q+1}d_{X}(x,y)-\frac{2Q}{2Q+1}
\end{align*}
for any $x,y\in X$, so $\varphi \colon X\longrightarrow Y$ is a quasi-isometry, with $\psi$ as a quasi-inverse. This shows the first part of the proposition. 

\smallskip

To prove the second part, we consider the map 
\begin{align*}
    f\colon \mathcal{L}_{n}(X,\mathcal{C}) &\longrightarrow \mathcal{L}_{n}(Y) \\
    (c,x)&\longrightarrow (c^{\flat}, x_{C}), \; \text{where $C\in\mathcal{C}$ contains $x$}
\end{align*}
and where $c^{\flat}$ is the coloring of $Y$ naturally defined by $c^{\flat}(x_{P})\defeq c(P)$, for any $P\in\mathcal{C}$. The other way around, define 
\begin{align*}
    g\colon \mathcal{L}_{n}(Y) &\longrightarrow  \mathcal{L}_{n}(X,\mathcal{C}) \\
    (c,x_{C})&\longmapsto (\Tilde{c}, x_{C})
\end{align*}
where $\Tilde{c}$ is the coloring of $X$ naturally defined by $\Tilde{c}(x)\defeq c(x_{C})$ where $C\in\mathcal{C}$ is the piece containing $x$. Obviously, one has $\Tilde{c^{\flat}}=c$ (resp. $\Tilde{c}^{\flat}=c$) for any $c\in \Z_{n}^{(X,\mathcal{C})}$ (resp. $c\in\Z_{n}^{(Y)})$, so this already tells us that $f\circ g=\text{Id}_{\mathcal{L}_{n}(Y)}$. Also, for $(c,x)\in\mathcal{L}_{n}(X,\mathcal{C})$ and $C\in\mathcal{C}$ the zone containing $x$, one has 
\begin{equation*}
    d_{\mathcal{L}_{n}(X,\mathcal{C})}((c,x), g(f(c,x)))=d_{\mathcal{L}_{n}(X,\mathcal{C})}((c,x),(c,x_{C}))=d_{X}(x,x_{C})\le Q
\end{equation*}
so $g\circ f$ is at distance $\le Q$ from $\text{Id}_{\mathcal{L}_{n}(X,\mathcal{C})}$. We now show that both $f$ and $g$ are Lipschitz.

\smallskip

Let $a,b\in\mathcal{L}_{n}(X,\mathcal{C})$ be two adjacent vertices. As usual, two cases can occur:
\begin{itemize}
    \item Either $a=(c,x)$ and $b=(c',x)$, where $c,c'$ only differ on the zone $C\in\mathcal{C}$ with $x\in C$. Then
    \begin{equation*}
        d_{\mathcal{L}_{n}(Y)}(f(a),f(b))=d_{\mathcal{L}_{n}(Y)}((c^{\flat},x_{C}), ((c')^{\flat}, x_{C}))=1
    \end{equation*}
    since $c^{\flat}$, $(c')^{\flat}$ only differ on the vertex $x_{C}\in Y$. 
    \item Either $a=(c,x)$ and $b=(c,y)$ where $x,y$ are adjacent in $X$. If $x$ and $y$ are in the same piece of the partition one has $d_{\mathcal{L}_{n}(Y)}(f(a),f(b))=0$, whereas if $x\in C$ and $y\in C'$ for $C\neq C'$, then 
    \begin{equation*}
        d_{\mathcal{L}_{n}(Y)}(f(a),f(b))=d_{\mathcal{L}_{n}(Y)}((c^{\flat},x_{C}),(c^{\flat}, x_{C'}))=d_{Y}(x_{C},x_{C'})=1.
    \end{equation*}
\end{itemize}
In both cases we conclude that $d_{\mathcal{L}_{n}(Y)}(f(a),f(b))\le 1$, so $f$ is $1-$Lipschitz according to Lemma~\ref{lm2.1}. 

\smallskip 

Let now $a,b\in \mathcal{L}_{n}(Y)$ be adjacent vertices. We treat two cases:
\begin{itemize}
    \item Assume that $a=(c,x_{C})$ and $b=(c',x_{C})$ where $c,c'$ only differ on $x_{C}\in Y$. Then one has 
    \begin{equation*}
        d_{\mathcal{L}_{n}(X,\mathcal{C})}(g(a), g(b))=d_{\mathcal{L}_{n}(X,\mathcal{C})}((\Tilde{c},x_{C}),(\Tilde{c'},x_{C}))=1
    \end{equation*}
    since $\Tilde{c}, \Tilde{c'}$ only differ on the zone $C$ containing $x_{C}$. 
    \item Assume now $a=(c,x_{C})$ and that $b=(c,x_{C'})$ with $x_{C}, x_{C'}$ adjacent in $Y$. Then $C,C'$ contain adjacent vertices in $X$, say $u\in C$ and $v\in C'$, so that
    \begin{align*}
        d_{\mathcal{L}_{n}(X,\mathcal{C})}(g(a), g(b))&=d_{\mathcal{L}_{n}(X,\mathcal{C})}((\Tilde{c},x_{C}), (\Tilde{c},x_{C'})) \\
        &=d_{X}(x_{C},x_{C'}) \\
        &\le d_{X}(x_{C},u)+d_{X}(u,v)+d_{X}(v,x_{C'}) \\
        &\le 2Q+1.
    \end{align*}
\end{itemize}
In both cases, we conclude that $g$ sends adjacent vertices in $\mathcal{L}_{n}(Y)$ to vertices at distance at most $2Q+1$ in $\mathcal{L}_{n}(X,\mathcal{C})$. Applying one last time Lemma \ref{lm2.1} it follows that $g$ is $(2Q+1)-$Lipschitz. We have then
\begin{align*}
    d_{\mathcal{L}_{n}(X,\mathcal{C})}(a,b) \ge d_{\mathcal{L}_{n}(Y)}(f(a),f(b)) &\ge \frac{1}{2Q+1}d_{\mathcal{L}_{n}(X,\mathcal{C})}(g(f(a)), g(f(b))) \\
    &\ge \frac{1}{2Q+1}d_{\mathcal{L}_{n}(X,\mathcal{C})}(a,b)-\frac{2Q}{2Q+1}
\end{align*}
for any $a,b\in \mathcal{L}_{n}(X,\mathcal{C})$. Thus $f$ is a quasi-isometry with $g$ as a quasi-inverse. 
\end{proof}

\subsection{Leaf-preserving, aptolic and non-aptolic quasi-isometries}\label{subsection4.2} In~\cite{GT24b}, Genevois and Tessera established a strong rigidity result for quasi-isometries between standard wreath products and, later, for more general halo products~\cite[Corollary~6.11]{GT24a}. More precisely, they proved that a quasi-isometry between two halos products (satisfying additional properties) always lies at finite distance from an aptolic quasi-isometry. Perhaps surprisingly, we will show in this section that such a rigidity does not hold in the permutational case (see Proposition~\ref{prop4.10}). This will motivate the strategy developed in subsequent sections.

\smallskip

We begin by defining the leaf-preservingness property for permutational wreath products.

\begin{definition}
A quasi-isometry $q\colon E\wr_{G/M}G\longrightarrow F\wr_{H/N}H$ is \textit{leaf-preserving} if it sends $G-$cosets to $H-$cosets and admits a quasi-inverse that sends $H-$cosets to $G-$cosets. 
\end{definition}

From our embedding theorem we deduce that:

\begin{corollary}\label{cor4.5}
Let $E,F$ be non-trivial finite groups. Let $G,H$ be finitely presented groups with normal finitely generated infinite subgroups $M\lhd G$, $N\lhd H$. Suppose that $M$ has infinite index in $G$ and that there is no subspace of $G$ (resp. $H$) that coarsely separates $G$ and that coarsely embeds into $M$ (resp. $N$). If $q\colon E\wr_{G/M}G \longrightarrow F\wr_{H/N}H$ is a quasi-isometry, then $q$ is (up to finite distance) leaf-preserving. 
\end{corollary}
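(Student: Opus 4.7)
The natural plan is to apply the embedding theorem (Theorem~\ref{thm3.1}) one leaf at a time, both for $q$ and for a quasi-inverse. Fix a $G$-coset $\mathcal{L}$ of $E\wr_{G/M}G$: with its induced metric it is isometric to a Cayley graph of $G$, and so is coarsely simply connected because $G$ is finitely presented. The restriction $\rho\defeq q|_{\mathcal{L}}\colon \mathcal{L}\longrightarrow F\wr_{H/N}H$ is then a coarse embedding of a coarsely simply connected graph, with parameters controlled only by those of $q$. If one can verify that $q(\mathcal{L})$ cannot be coarsely separated by coarsely embedded subspaces of $N$, then Theorem~\ref{thm3.1} delivers a constant $R$ depending only on $G$, $F$, $H$ and the parameters of $q$ (hence independent of $\mathcal{L}$) such that $q(\mathcal{L})$ is contained in the $R$-neighborhood of some $H$-coset in $F\wr_{H/N}H$.

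The critical step, and what I expect to be the main obstacle, is verifying this non-coarse-separation hypothesis for $q(\mathcal{L})$. I would argue by contradiction: suppose $Z\subset q(\mathcal{L})$ coarsely separates $q(\mathcal{L})$ and coarsely embeds into $N$. Since $q|_{\mathcal{L}}$ is a quasi-isometry onto its image, the pullback $q^{-1}(Z)\cap\mathcal{L}$ coarsely separates $\mathcal{L}\cong G$ and coarsely embeds into $N$ (via the embedding $Z\hookrightarrow N$ precomposed with $q|_{\mathcal{L}}$). The hypothesis on $G$ forbids coarse separators that embed into $M$, not directly into $N$, so the delicate point is bridging between `coarsely embeds into $N$' on the target side and `coarsely embeds into $M$' on the source side. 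The natural way to handle this is to use the symmetric hypothesis on $H$: one runs the same embedding-theorem argument with $q^{-1}$ restricted to a suitable $H$-coset, exploiting the full quasi-isometry status of $q$ (not merely coarse embedding) to transfer the non-separation property between the two sides, and obtaining a contradiction between the putative separator and the source-side hypothesis via this detour through $H$.

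Granted the verification, Theorem~\ref{thm3.1} gives a uniform constant $R$ such that every $G$-coset in $E\wr_{G/M}G$ is sent by $q$ into the $R$-neighborhood of some $H$-coset in $F\wr_{H/N}H$. Running the symmetric argument with a quasi-inverse $q'$ of $q$ (whose assumptions are symmetric in $(G,M)$ and $(H,N)$) yields the analogous uniform bound in the reverse direction. A standard bounded-distance adjustment then produces a map at finite distance from $q$ that genuinely sends each $G$-coset to within bounded Hausdorff distance of an $H$-coset, and whose quasi-inverse does the symmetric thing on $H$-cosets. This is precisely the statement that $q$ is leaf-preserving up to finite distance, which is the content of the corollary.
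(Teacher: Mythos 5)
Your overall plan — factor through the embedding theorem (Theorem \ref{thm3.1}) applied to each $G$-coset, then run the symmetric argument with a quasi-inverse — is precisely the route the paper takes. You also correctly note that the uniformity of the constant $R$ comes for free, and that the assumptions are symmetric so the argument can be mirrored for the quasi-inverse. However, there are two places where the proposal differs from, or falls short of, the paper's proof.

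First, the non-separation verification. You correctly flag that Theorem \ref{thm3.1}, applied to $q|_\mathcal{L}\colon \mathcal{L}\to F\wr_{H/N}H$, requires that $q(\mathcal{L})\approx G$ not be coarsely separated by subspaces that coarsely embed into $N$, whereas the stated hypothesis on $G$ concerns subspaces that coarsely embed into $M$. The paper's own proof does not dwell on this point at all: it simply asserts ``By Theorem \ref{thm3.1}, $q(G)$ lies in a neighborhood of an $H$-coset'', treating the hypothesis as met. Your instinct to worry about the $M$-versus-$N$ mismatch is sound, but the detour you sketch (``run the embedding-theorem argument with $q^{-1}$ restricted to a suitable $H$-coset \ldots\ obtaining a contradiction \ldots\ via this detour through $H$'') never materialises into an actual argument; it is a hope, not a proof. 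As written, your proposal therefore has a genuine unresolved gap at the step you yourself identify as critical.

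Second, and more importantly, you never carry out what the paper's proof actually does once the embedding theorem has been invoked on both sides. The paper does not stop at ``each coset lands near some coset and conversely''; it pins down \emph{which} coset. Concretely: after arranging that $q(G)$ is close to an $H$-coset (WLOG $H$) and $\overline q(H)$ is close to a $G$-coset $aG$, the paper observes that $\overline q(q(G))$ is then within a bounded neighborhood of $aG$ while also being within Hausdorff distance $B$ of $G$, so that $G\subset (aG)^{+K}$ for some $K$. Lemma \ref{lm4.7} then shows that if $aG\neq G$, the coarse intersection $G\cap (aG)^{+K}$ is contained in a bounded neighborhood of $M$, forcing $M$ to be quasi-dense in $G$ and hence of finite index — contradicting a standing hypothesis. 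This is the decisive use of the infinite-index assumption on $M$, and it is what gives the well-defined bijective coset correspondence $\alpha$ which the rest of Section \ref{section4} (in particular Proposition \ref{prop4.12}) relies on. Your concluding paragraph replaces this with a vague ``standard bounded-distance adjustment'' that never uses Lemma \ref{lm4.7} nor the infinite-index hypothesis. You should also record, as the paper does at the start of its proof, why $N$ has infinite index in $H$ — this is a hypothesis of Theorem \ref{thm3.1} and is obtained by noting that $E\wr_{G/M}G$ is infinitely presented (since $[G:M]=\infty$), hence so is the quasi-isometric group $F\wr_{H/N}H$, hence $[H:N]=\infty$.
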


The proof of this corollary requires, in addition to the embedding theorem, to understand better the coarse intersection of two leaves in a permutational wreath product. This is the goal of the next two lemmas. 

\begin{lemma}\label{lm4.6}
Let $G$ be a finitely generated group with an infinite normal subgroup $M\lhd H$. Endow $G$ with a word metric $d_{G}$ coming from a finite generating set $S$. Then we have 
\begin{equation*}
    d_{\text{Haus}}(gM, hM) \le d_{G}(gM, hM)
\end{equation*}
for any $g,h\in G$.
\end{lemma}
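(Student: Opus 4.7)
The plan is to give an explicit construction showing every point of $gM$ has a point of $hM$ at distance at most $k\defeq d_G(gM,hM)$, and symmetrically, which is exactly what $d_{\text{Haus}}(gM,hM)\le k$ demands. The normality of $M$ is what makes such a parallel-transport argument work.

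First I would fix $a\in gM$ and $b\in hM$ realizing the distance, so $d_G(a,b)=k$. Given an arbitrary $x\in gM$, I will set
\begin{equation*}
  y \defeq x a^{-1} b.
\end{equation*}
By left-invariance of the word metric on $G$, one immediately gets $d_G(x,y)=d_G(1_G,a^{-1}b)=d_G(a,b)=k$. The point is then to check $y\in hM$: since $x,a\in gM$, the product $xa^{-1}$ lies in $gM\cdot Mg^{-1}=gMg^{-1}$, which equals $M$ by normality; hence $y=(xa^{-1})b\in M\cdot hM = hM\cdot M = hM$, using again that $Mh=hM$.

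Symmetrically, given $y\in hM$, I would set $x\defeq yb^{-1}a$ and run the same computation: $d_G(x,y)=d_G(b^{-1}a,1_G)=k$, while $yb^{-1}\in hMh^{-1}=M$ by normality, so $x\in M\cdot gM = gM$. Combining the two directions yields $gM\subset (hM)^{+k}$ and $hM\subset (gM)^{+k}$, i.e.\ $d_{\text{Haus}}(gM,hM)\le k$.

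There is no real obstacle here: the only ingredient beyond left-invariance is the identity $cMc^{-1}=M$ coming from normality, which lets us transport any point of one coset to a point of the other at a cost equal to the minimal inter-coset distance. Note that the hypothesis that $M$ is infinite plays no role in the argument; it is only there because it is the running assumption of the section.
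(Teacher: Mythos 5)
Your proof is correct and follows essentially the same route as the paper's: both realize the transport of points between the cosets at cost $k=d_G(gM,hM)$ by exploiting normality of $M$, the paper by lifting the projected geodesic back to a path in $\text{Cay}(G,S)$, you more directly by right-multiplication by $a^{-1}b$, which amounts to the same thing. Your observation that the infiniteness of $M$ is irrelevant here is also correct.
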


\begin{proof}
Without restrictions, we assume that $S$ is symmetric. Let $\pi\colon G\longrightarrow G/M$ denote the natural surjection.

\smallskip

Let $x\in gM$, $y\in hM$ be such that $d_{G}(x,y)=d_{G}(gM, hM)$, and let $\gamma$ be a geodesic in $\text{Cay}(G,S)$ from $x$ to $y$. The projection of $\gamma$ to $\text{Cay}(G/M,\pi(S))$ is a path of length $\le d_{G}(x,y)$ connecting $xM=gM$ to $yM=hM$. This path is a sequence of generators from $\pi(S)$, so it lifts to a path of length $\le d_{G}(x,y)$ made only of generators from $S$ that are not elements of $M$, and this path can be used to connect any element of $gM$ to an element of $hN$, and conversely any point of $hM$ can be connected to some point of $gM$ by a path of length $\le d_{G}(x,y)$. As the latter quantity equals $d_{G}(gM, hM)$, we indeed conclude that 
\begin{equation*}
    d_{\text{Haus}}(gM, hM) \le d_{G}(gM, hM)
\end{equation*}
and the proof is complete. 
\end{proof}

\begin{lemma}\label{lm4.7}
Let $E$ be a non-trivial finite group, and let $G$ be a finitely generated group with a normal finitely generated subgroup $M\lhd G$. For every distinct colorings $c,d\in E^{(G,\mathcal{C}_{M})}$, for every $R\ge 0$, there is a constant $Q\ge 0$ such that
\begin{equation*}
    (cG)\cap (dG)^{+R} \subset \lbrace (c,p)\in E\wr_{G/M}G : p\in M^{+Q}\rbrace.
\end{equation*}
\end{lemma}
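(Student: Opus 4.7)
The core observation is that, in a permutational wreath product, any path from the leaf $cG$ to the leaf $dG$ must at some point perform a color change on every coset in $\text{supp}(c^{-1}d)$, which forces the arrow to visit each such coset. Since $c \ne d$, the set $\text{supp}(c^{-1}d)\subset \mathcal{C}_{M}$ is a non-empty finite collection of $M$-cosets; I would therefore begin by fixing one such coset $gM$ together with a representative $g\in G$, and choosing some finite generating set $S$ of $G$.

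Taking any $(c,p)\in (cG)\cap (dG)^{+R}$, I would next pick $(d,q)\in dG$ realizing $d((c,p),(d,q))\le R$ and invoke the explicit distance formula recalled just before Proposition~\ref{prop4.3}: this distance equals $|\text{supp}(c^{-1}d)|$ plus the length of a shortest path in $\text{Cay}(G,S)$ from $p$ to $q$ visiting every zone in $\text{supp}(c^{-1}d)$. Since such a path must in particular pass through the fixed zone $gM$, this yields $d_G(p,gM)\le R$.

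The final step is to promote the estimate "close to $gM$" into "close to the identity coset $M$". For this I would invoke Lemma~\ref{lm4.6} applied to the cosets $gM$ and $M$ to get $d_{\text{Haus}}(gM,M)\le d_G(gM,M)\le \ell_S(g)$, and a one-line triangle inequality then yields $d_G(p,M)\le R+\ell_S(g)$. Setting $Q\defeq R+\ell_S(g)$ gives the desired inclusion $(cG)\cap (dG)^{+R}\subset \{(c,p): p\in M^{+Q}\}$.

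I do not anticipate a serious obstacle here; the argument is a direct combination of the distance formula in $\mathcal{L}_{|E|}(G,\mathcal{C}_{M})$ with the Hausdorff comparison of parallel cosets provided by Lemma~\ref{lm4.6}. The only subtlety worth flagging is that the constant $Q$ is allowed to depend on $(c,d)$ through $\ell_S(g)$, which matches the quantifier order of the statement ($c,d$ and $R$ are fixed before $Q$ is chosen).
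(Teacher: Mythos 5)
Your proposal is correct and takes essentially the same route as the paper's proof: from $d((c,p),(d,q))\le R$ one deduces that $p$ lies within $R$ of the zones of $\text{supp}(c^{-1}d)$, and Lemma~\ref{lm4.6} transfers this to $p\in M^{+(R+\ell_S(g))}$. The only cosmetic difference is that you fix a single coset $gM$ of the support, whereas the paper records the bound for all cosets $g_iM$ and takes $Q=R+\max_i\ell_S(g_i)$; either choice of $Q$ is admissible given the quantifier order.
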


\begin{proof}
Fix $S$ a finite generating set of $G$, two distinct colorings $c,d\in E^{(G,\mathcal{C}_{M})}$ and $R\ge 0$. Let $(c,p)\in (cG)\cap (dG)^{+R}$. Thus there exists $(d,q)\in dG$ such that $d((c,p),(d,q)) \le R$, which implies that $c^{-1}d$ is supported on $M-$cosets at distance $\le R$ from $p$. In other words, if we write explicitly $\text{supp}(c^{-1}d)=\lbrace g_{1}M,\dots, g_{r}M\rbrace$, then $p\in (g_{i}M)^{+R}$ for every $1\le i\le r$. Now, using Lemma~\ref{lm4.6}, one has that
\begin{equation*}
    d_{\text{Haus}}(M,g_{i}M)\le d_{G}(M,g_{i}M)=d_{G}(M,Mg_{i}) \le \ell_{S}(g_{i})
\end{equation*}
for any $1\le i\le r$, and thus $p\in M^{+(R+\ell_{S}(g_{i}))}$ for any $1\le i\le r$. Taking $Q\defeq R+\max_{1\le i\le r}\ell_{S}(g_{i})$, the conclusion follows. 
\end{proof}

\begin{proof}[Proof of Corollary~\ref{cor4.5}]
Fix $q\colon E\wr_{G/M}G \longrightarrow F\wr_{H/N}H$ an $(A,B)-$quasi-isometry and $\overline{q}$ one of its quasi-inverses. Notice that, since $M$ has infinite index, $E\wr_{G/M}G$ is infinitely presented, so $F\wr_{H/N}H$ is also infinitely presented, and thus $N$ also has infinite index in $H$. By Theorem~\ref{thm3.1}, $q(G)$ lies in a neighborhood of an $H-$coset, say $H$ itself. Conversely, $\overline{q}(H)$ lies in a neighborhood of a $G-$coset, say $aG$. It follows that $\overline{q}(q(G))$ lies in a neighborhood of $aG$. Since the Hausdorff distance between $G$ and $\overline{q}(q(G))$ is at most $B$, this in turn implies that $G$ lies in a neighborhood of $aG$, say $(aG)^{+K}$. 
We now claim that this implies $aG=G$. Indeed, suppose for a contradiction that these two leaves are distinct. Then, by Lemma~\ref{lm4.7}, there is a constant $Q\ge 0$ such that
\begin{equation*}
    G=G\cap (aG)^{+K}\subset \lbrace (\ind, p)\in E\wr_{G/M}G : p\in M^{+Q}\rbrace.
\end{equation*}
In other words, $M$ is quasi-dense in $G$, which implies that $M$ has finite index in $G$, contrary to our assumption. We thus deduce that $aG=G$. The same reasoning shows that any other leaf $cG$ is sent close to a leaf $\alpha(c)H$ by $q$, which is sent back close to $cG$ by $\overline{q}$. It follows that $q$ is leaf-preserving, and that the correspondence $\alpha\colon E^{(G,\mathcal{C}_{M})}\longrightarrow F^{(H,\mathcal{C}_{N})}$ thus defined is a bijection. 
\end{proof}

Here is an example that shows that the corollary does not hold anymore if one drops the assumption on the coarse separation of the base groups.

\begin{example}\label{example4.8}
Consider $E=\lbrace -1,1\rbrace$ the cyclic group of order $2$ and $G=\Z^2$, $M=\Z$. Let $S=\lbrace \pm (1,0), \pm(0,1)\rbrace$ denote the standard generating set of $G$. Clearly $G$ is coarsely separated by $M$, so Corollary~\ref{cor4.5} does not apply. Consider the map 
\begin{align*}
    \varphi\colon E\wr_{G/M}G &\longrightarrow E\wr_{G/M}G \\
    (c,p)&\longmapsto \begin{cases}
        (c,p) &\mbox{if $p\in ([0,+\infty)\cap\Z)\times\Z$} \\
        (c\delta_{0},p) &\mbox{otherwise}
    \end{cases}
\end{align*}
where $\delta_{0}$ denotes the coloring having only coloured the $\Z-$coset containing $(0,0)$. By construction, $\varphi$ is not leaf-preserving (even up to finite distance), and we show it is indeed a quasi-isometry.

\smallskip

Indeed, let $a$ and $b$ be adjacent vertices in $\text{Cay}(E\wr_{G/M}G, E\cup S)$. We consider two cases. Assume first that $a=(c_{1},p)$ and $b=(c_{2},p)$ where $c_{1},c_{2}$ only differ on the coset containing $p$. If $p\in ([0,+\infty)\cap\Z)\times\Z$ then $\varphi(a)$ and $\varphi(b)$ are adjacent in $\text{Cay}(E\wr_{G/M}G, E\cup S)$; and otherwise $\varphi(a)=(c_{1}\delta_{0},p)$, $\varphi(b)=(c_{2}\delta_{0},p)$ are adjacent as well since $c_{1}\delta_{0}$, $c_{2}\delta_{0}$ only differ on the coset containing $p$.

\smallskip
    
Assume now $a=(c,p)$ and $b=(c,p+s)$ for some $s\in S$. If $p\in \lbrace -1\rbrace\times\Z$ and $p+s\in\lbrace 0\rbrace\times\Z$, then $\varphi(a)=(c\delta_{0},p)$ while $\varphi(b)=(c, p+s)$, so 
\begin{equation*}
    d(\varphi(a),\varphi(b))=2.
\end{equation*}
One gets the same value in the symmetric situation $p\in\lbrace 0\rbrace\times\Z$, $p+s\in \lbrace -1\rbrace\times\Z$. In the other cases, one easily checks that $\varphi$ sends adjacent vertices to adjacent vertices.

\smallskip

Hence we conclude from Lemma~\ref{lm2.1} that $\varphi$ is $2-$Lipschitz. Now, observe that $\varphi$ is a bijection, whose inverse is 
\begin{align*}
    \psi\colon E\wr_{G/M}G &\longrightarrow E\wr_{G/M}G \\
    (c,p)&\longmapsto \begin{cases}
        (c,p) &\mbox{if $p\in ([0,+\infty)\cap\Z)\times\Z$} \\
        (c\delta_{0}^{-1},p) &\mbox{otherwise}
    \end{cases}
\end{align*}
and a similar computation shows that $\psi$ is $2-$Lipschitz. Thus it follows that 
\begin{equation*}
    \frac{1}{2}d(a,b)\le d(\varphi(a),\varphi(b))\le 2d(a,b)
\end{equation*}
for any $a,b\in E\wr_{G/M}G$, so $\varphi$ is a biLipschitz equivalence. 
\end{example}

\smallskip

We now turn to (non-)aptolic quasi-isometries. Let us first adapt the terminology from~\cite{GT24b, GT24a} in our case. 

\begin{definition}
A quasi-isometry $q\colon E\wr_{G/M}G\longrightarrow F\wr_{H/N}H$ is of aptolic form if there exists two maps $\alpha \colon E^{(G,\mathcal{C}_{M})}\longrightarrow F^{(H,\mathcal{C}_{N})}$, $\beta\colon G\longrightarrow H$ such that $q(c,p)=(\alpha(c), \beta(p))$ for any $(c,p)\in E\wr_{G/M}G$. Moreover, $q$ is aptolic if it is of aptolic form and it has a quasi-inverse of aptolic form. 
\end{definition}

The following proposition indicates that a strategy similar to the one followed in~\cite{GT24a} cannot be replicated in the context of permutational wreath products.

\begin{proposition}\label{prop4.10}
Let $F$ be a finite group. Let $H$ be a finitely generated group, $N\leqslant H$ an infinite index subgroup containing an element $z\in N$ of infinite order which is central in $H$. Then the map 
\begin{align*}
    \varphi\colon F\wr_{H/N}H &\longrightarrow F\wr_{H/N}H \\
    (c,p)&\longmapsto (c,pz^{|\text{supp}(c)|})
\end{align*}
is a leaf-preserving quasi-isometry which is not at finite distance from an aptolic quasi-isometry.
\end{proposition}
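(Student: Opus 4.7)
The plan is to establish the three assertions in turn: that $\varphi$ is a bi-Lipschitz equivalence, that it preserves leaves, and that it cannot lie at bounded distance from any aptolic quasi-isometry. The first two claims use the hypotheses $z \in N$ and $z$ central in $H$; the last exploits that $z$ has infinite order and that $N$ has infinite index.

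For the quasi-isometry property, fix a finite generating set $S$ of $H$ and write $k(c)\defeq|\text{supp}(c)|$. By Lemma~\ref{lm2.1}, it suffices to bound $d(\varphi(a),\varphi(b))$ for adjacent vertices $a,b$ in $\text{Cay}(F\wr_{H/N}H,F\cup S)$. On a horizontal edge $(c,p)\sim(c,ph)$ with $h\in S$, centrality of $z$ yields
\[
\varphi(c,ph)=(c,phz^{k(c)})=(c,pz^{k(c)}h),
\]
so the images remain adjacent. On a vertical edge $(c_1,p)\sim(c_2,p)$, where $c_1,c_2$ differ only at $pN$, one has $|k(c_1)-k(c_2)|\le 1$; since $z\in N$, both $pz^{k(c_i)}$ lie in $pN$, and $\varphi(c_1,p)$ connects to $\varphi(c_2,p)$ via one flip of the lamp on $pN$ followed by a path of length at most $\ell_S(z)$ in $H$. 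The inverse $(c,p)\mapsto(c,pz^{-k(c)})$ satisfies the same bound, so $\varphi$ is a bi-Lipschitz equivalence. Leaf preservation is then immediate: the leaf $cH=\lbrace(c,p):p\in H\rbrace$ maps to $\lbrace(c,pz^{k(c)}):p\in H\rbrace=cH$, since right-multiplication by $z^{k(c)}\in H$ permutes $H$.

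The heart of the proposition is the last claim. Assume for contradiction that $\psi(c,p)=(\alpha(c),\beta(p))$ is an aptolic quasi-isometry with $d(\varphi,\psi)\le D$ for some $D\ge 0$. The key observation is that in $\text{Cay}(F\wr_{H/N}H,F\cup S)$, any edge-path from $(c,p)$ to $(c',p')$ projects, by forgetting the colorings, to a walk in $\text{Cay}(H,S)$ from $p$ to $p'$, so
\[
d\bigl((c,p),(c',p')\bigr)\ge d_H(p,p').
\]
Comparing $\varphi(c,1_H)=(c,z^{k(c)})$ with $\psi(c,1_H)=(\alpha(c),\beta(1_H))$ gives
\[
d_H\bigl(z^{k(c)},\beta(1_H)\bigr)\le D\quad\text{for every }c\in F^{(H,\mathcal{C}_N)}.
\]
Because $N$ has infinite index in $H$ and $F$ is non-trivial, $k(c)$ takes every non-negative integer value as $c$ varies over $F^{(H,\mathcal{C}_N)}$, so the set $\lbrace z^k:k\ge 0\rbrace$ is contained in the ball of radius $D$ around $\beta(1_H)$ in $H$. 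This ball is finite by finite generation of $H$, while $\lbrace z^k:k\ge 0\rbrace$ is infinite since $z$ has infinite order, a contradiction. The main conceptual step is the projection inequality relating the wreath product metric to the pointer metric; once it is in hand, the rigidity of the aptolic form on the second coordinate produces the contradiction immediately.
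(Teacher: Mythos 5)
Your proof is correct and follows essentially the same route as the paper: the bi-Lipschitz and leaf-preservation claims are verified exactly as in the paper, and the non-aptolicity argument rests on the same projection inequality bounding $d_H$ of the pointers by the wreath-product distance, applied to the family of pairs $\varphi(c,1_H)$ versus $\psi(c,1_H)$ as $c$ varies. Your streamlined formulation (trapping all $z^k$, $k\ge 0$, in a single finite ball around $\beta(1_H)$) is an efficient specialization of the paper's comparison of the maps $\beta_c$ with $\beta_{\ind}=\text{Id}_H$, but the underlying idea is identical.
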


\begin{proof}
Denote $S$ a finite symmetric generating set of $H$, and let $G\defeq F\wr_{H/N}H$. We first show that $\varphi$ is Lipschitz. Fix $a$ and $b$ two adjacent vertices in $\text{Cay}(G, F\cup S)$. There are two cases to consider:
\begin{itemize}
    \item Assume first that $a=(c,p)$ and $b=(c,ps)$ for some $s\in S$. Then 
    \begin{align*}
        d_{G}(\varphi(a),\varphi(b))&=d_{G}((c,pz^{|\text{supp}(c)|}), (c,psz^{|\text{supp}(c)|})) \\
        &=d_{H}(pz^{|\text{supp}(c)|}, pz^{|\text{supp}(c)|}s) \\
        &=d_{H}(1_{H},s) \\
        &=1 \\
        &\le 1+\ell_{H}(z)
    \end{align*}
    using that $z$ (or any of its power) is central for the second equality. 
    \item Assume now that $a=(c,p)$ and $b=(c',p)$ where $c,c'$ only differ on the $N-$coset containing $p$. Then 
    \begin{align*}
        d_{G}(\varphi(a),\varphi(b))&=d_{G}((c,pz^{|\text{supp}(c)|}), (c',pz^{|\text{supp}(c')|})) \\
        &=1+d_{H}(pz^{|\text{supp}(c)|}, pz^{|\text{supp}(c')|}) \\
        &=1+d_{H}(z^{|\text{supp}(c)|}, z^{|\text{supp}(c')|}) \\
        &\le 1+\ell_{H}(z)
    \end{align*}
    using in the last equality that $|\text{supp}(c')|\in\lbrace |\text{supp}(c)|-1, |\text{supp}(c)|, |\text{supp}(c)|+1\rbrace$. 
\end{itemize}
We conclude from Lemma~\ref{lm2.1} that $\varphi$ is $(1+\ell_{H}(z))-$Lipschitz. Now, notice that $\varphi$ is a bijection, whose inverse is 
\begin{align*}
    \psi\colon G&\longrightarrow G \\
    (c,p)&\longmapsto (c,pz^{-|\text{supp}(c)|}).
\end{align*}
In particular, $\varphi$ is coarsely surjective, and a similar computation as the one above shows that $\psi$ is also $(1+\ell_{H}(z))-$Lipschitz. Therefore
\begin{equation*}
    d_{G}(x,y)=d_{G}(\psi(\varphi(x)), \psi(\varphi(y))) \le (1+\ell_{H}(z))d_{G}(\varphi(x), \varphi(y))
\end{equation*}
for any $x,y\in G$, whence 
\begin{equation*}
    \frac{1}{1+\ell_{H}(z)}d_{G}(x,y) \le d_{G}(\varphi(x), \varphi(y)) \le (1+\ell_{H}(z))d_{G}(x,y)
\end{equation*}
for all $x,y\in G$. Thus $\varphi\colon G\longrightarrow G$ is a biLipschitz equivalence. 

\smallskip

Next, $\varphi$ is leaf-preserving by construction, so we are only left to prove that $\varphi$ is not aptolic, up to finite distance. Towards a contradiction, assume there is $R\ge 0$ and 
\begin{align*}
    q\colon G&\longrightarrow G \\
    (c,p)&\longmapsto (\alpha(c),\beta(p))
\end{align*}
an aptolic self-quasi-isometry of $G$ such that $d(q,\varphi)\le R$. Then, for any $(c,p)\in G$, denoting $\beta_{c}\colon H\longrightarrow H$, $p\longmapsto pz^{|\text{supp}(c)|}$,
one gets
\begin{align*}
    d_{H}(\beta_{c}(p),\beta(p))&\le d_{G}((c,\beta_{c}(p)), (\alpha(c),\beta(p))) \\
    &=d_{G}(q(c,p), \varphi(c,p)) \\
    &\le d(q,\varphi) \\
    &\le R
\end{align*}
and it follows that $d(\beta_{c},\beta)\le R$. In particular, all $\beta_{c}$ lie at distance at most $2R$ from $\beta_{\ind}=\text{Id}_{H}$. On the other hand, since $z$ has infinite order and balls in $\text{Cay}(H,S)$ are finite, there is an integer $m\in\N$ such that $d_{H}(z^{m},1_{H})>2R$, and thus, fixing any coloring $c$ supported on $m$ cosets of $N$, we see that 
\begin{equation*}
    d_{H}(\beta_{c}(p),p)=d_{H}(p, pz^{|\text{supp}(c)|})=d_{H}(1_{H},z^{m})>2R 
\end{equation*}
for any $p\in H$, whence $d(\beta_{c}, \text{Id}_{H})>2R$. This contradiction proves that $\varphi$ is not at finite distance from an aptolic quasi-isometry, concluding the proof.
\end{proof}

\begin{remark}
In this proof, the fact that $F$ is finite plays no role, and one can show the same result for $F$ an arbitrary finitely generated group, as soon as one modifies the definition of lamplighter graphs over partitioned graphs as follows: declare now that the colorings take values in another graph $Y$, and that two pairs $(c,p)$, $(c',p)$ are adjacent if $c,c'$ only differ on the zone $C$ containing $p$ and that $c(C), c'(C)$ are adjacent in $Y$. 
\end{remark}

Based on the same idea, one can exhibit plenty leaf-preserving non-aptolic quasi-isometries of various permutational halos products, showing that the general rigidity result proved in~\cite[Corollary~6.11]{GT24a} fails in the permutational case.

\subsection{Preserving zones}\label{subsection4.3} The goal of this subsection is now to deduce from the property of preserving the leaves that in fact any quasi-isometry between two permutational wreath products must also quasi-preserve zones inside a single leaf. More formally:

\begin{proposition}\label{prop4.12}
Let $E,F$ be non-trivial finite groups, and $G,H$ be finitely generated groups, with normal infinite subgroups $M\lhd G$, $N\lhd H$. Suppose that $M$ has infinite index in $G$. If a quasi-isometry $q\colon E\wr_{G/M}G \longrightarrow F\wr_{H/N}H$ is leaf-preserving, then it is a quasi-isometry of pairs 
\begin{equation*}
    (E\wr_{G/M}G,M)\longrightarrow (F\wr_{H/N}H, N).
\end{equation*}
\end{proposition}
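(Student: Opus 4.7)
The strategy is to characterize $M$-cosets inside $E\wr_{G/M}G$ as ``close intersections'' of pairs of leaves, push this characterization through $q$, and then use normality of $N$ to pin down a single $N$-coset. Fix any non-trivial element $e \in E$ and, for $(c,p) \in E\wr_{G/M}G$, let $d$ be the coloring agreeing with $c$ outside the zone $pM$ and replacing $c(pM)$ by $ec(pM)$. Unwinding the distance formula shows $(cG) \cap (dG)^{+1} = (c,p)M$: an arrow-move edge between $cG$ and $dG$ is impossible since $c\neq d$, so any vertex $(c,q)$ in $(dG)^{+1}$ must be connected to $dG$ by a lamp-change at position $q$, forcing $q \in pM$; conversely every element of $(c,p)M$ is adjacent to $dG$. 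This identifies the $M$-cosets with close intersections of leaves differing on a single zone.

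Let $q$ be $(C,K)$-Lipschitz, and let $R \ge 0$ be a uniform constant from leaf-preservingness (so $d_{\text{Haus}}(q(xG), \alpha(x)H) \le R$ for all colorings $x$, with $\alpha$ the induced bijection on colorings). Then
\begin{equation*}
q((c,p)M) \subset (\alpha(c)H)^{+R} \cap (\alpha(d)H)^{+R'}, \quad R' \defeq C + K + R.
\end{equation*}
For any $(c',h)$ in this close intersection, the distance formula in $F\wr_{H/N}H$ forces $|\text{supp}(c'^{-1}\alpha(c))| \le R$ with $h$ within $R$ of each zone in this support, and similarly for $\alpha(d)$ and $R'$. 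Since $\text{supp}(\alpha(c)^{-1}\alpha(d)) \subseteq \text{supp}(c'^{-1}\alpha(c)) \cup \text{supp}(c'^{-1}\alpha(d))$, the arrow $h$ lies within $\max(R,R')$ of every coset in $\text{supp}(\alpha(c)^{-1}\alpha(d))$.

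The key step exploits the normality of $N$: the identity $d_H(h, gN) = d_{H/N}(\overline h, \overline g)$, valid precisely because $N \lhd H$, translates the previous bound into the statement that the projection $\overline h \in H/N$ lies within distance $\max(R,R')$ of every image $\overline{g_i}$ of a zone in $\text{supp}(\alpha(c)^{-1}\alpha(d))$. The triangle inequality then forces these $\overline{g_i}$ to lie pairwise within $2\max(R,R')$ of each other, so picking any representative $g_1$ of such a zone, the arrow $h$ is within $2\max(R,R')$ of the coset $g_1 N$. Combining with the bound on $|\text{supp}(c'^{-1}\alpha(c))|$ and a short routing argument (visit each of the $\le R$ differing zones near $h$, then move to $g_1 N$), the point $(c', h)$ lies within a uniform distance $Q_1$ of the $N$-coset $(\alpha(c), g_1)N$. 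Thus $q((c,p)M) \subset ((\alpha(c),g_1)N)^{+Q_1}$ with $Q_1$ independent of the chosen $M$-coset.

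To upgrade this one-sided containment to a full quasi-isometry of pairs, the identical analysis applied to a quasi-inverse $\overline q$ (also leaf-preserving, via $\alpha^{-1}$) shows that every $N$-coset $B$ has $\overline q(B) \subset A_B^{+Q_2}$ for some $M$-coset $A_B$. For an $M$-coset $A$, starting from $q(A) \subset B_A^{+Q_1}$, applying $\overline q$ and using $\overline q \circ q \approx \text{Id}$ yields $A \subset A_{B_A}^{+K_1}$ uniformly. Here normality of $M$ enters via the identity $d_{\text{Haus}}(pM, p'M) = d_{G/M}(\overline p, \overline{p'})$ for parallel $M$-cosets, which shows that any containment $A \subset (A')^{+K}$ between two $M$-cosets forces $d_{\text{Haus}}(A, A') \le K$; so $A_{B_A}$ is uniformly Hausdorff-close to $A$, and pushing back through $q$ gives $B_A \subset q(A)^{+K_3}$, providing the reverse Hausdorff bound. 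The main obstacle is the third step: collapsing ``$h$ close to several cosets $g_i N$'' to ``$h$ close to one coset'' relies essentially on normality, and without it the cosets would not be parallel and this geometric collapse would fail.
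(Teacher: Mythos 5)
Your argument is correct and mirrors the paper's proof in all its essential steps: take a coloring $c'$ differing from $c$ on a single zone, use the adjacency of $(c,p)$ and $(c',p)$ to confine $\text{supp}(\alpha(c)^{-1}\alpha(c'))$ near the image of the $M$-coset, invoke normality (your identity $d_H(h,gN)=d_{H/N}(\overline h,\overline g)$ is Lemma~\ref{lm4.6} in disguise) to collapse these $N$-cosets onto a single one, and then symmetrize with a quasi-inverse to upgrade the one-sided containment to a Hausdorff bound. The only difference is presentational: the paper works directly with the explicit decomposition $q(c,p)=(\alpha(c),\beta_c(p))$ and its Claim~\ref{claim4.13}, whereas you repackage the same mechanism via the intrinsic characterization of $M$-cosets as the $1$-close intersections $(cG)\cap(c'G)^{+1}$, a characterization whose correctness is itself a consequence of the infinite-index hypothesis you (correctly, but tacitly) use to make $\alpha$ a well-defined bijection.
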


\begin{proof} 
Fix $A\ge 1$, $B\ge 0$ such that $q$ and a quasi-inverse $q'\colon F\wr_{H/N}H\longrightarrow E\wr_{G/M}G$ are both $(A,B)-$quasi-isometries. As $q,q'$ are leaf-preserving we may write 
\begin{equation*}
    q(c,p)=(\alpha(c),\beta_{c}(p)), \; (c,p)\in E\wr_{G/M}G
\end{equation*}
for some maps $\alpha\colon E^{(G,\mathcal{C}_{M})}\longrightarrow F^{(H,\mathcal{C}_{N})}$ and $\beta_{c}\colon G\longrightarrow H$, $c\in E^{(G,\mathcal{C}_{M})}$; as well as 
\begin{equation*}
    q'(c,p)=(\alpha'(c), \beta_{c}'(p)),\; (c,p)\in F\wr_{H/N}H
\end{equation*}
for some $\alpha'\colon F^{(H,\mathcal{C}_{N})}\longrightarrow E^{(G,\mathcal{C}_{M})}$ and $\beta_{c}'\colon H\longrightarrow G$, $c\in F^{(H,\mathcal{C}_{N})}$. Let us record the following observation for future use:
\begin{claim}\label{claim4.13}
The maps $\alpha$, $\alpha'$ are bijections inverses of each other, and for any $c\in E^{(G,\mathcal{C}_{M})}$, $\beta_{c}\colon G\longrightarrow H$ is an $(A,B)-$quasi-isometry, whose a quasi-inverse is $\beta_{\alpha(c)}'\colon H\longrightarrow G$.
\end{claim}

\renewcommand{\qedsymbol}{$\blacksquare$}
\begin{proof}[Proof of Claim~\ref{claim4.13}]
Fix a finitely supported coloring $c\in E^{(G,\mathcal{C}_{M})}$. Then, for any $p\in G$, we have 
\begin{equation}\label{eq4.1}
    d((c,p), (\alpha'\circ\alpha(c), \beta'_{\alpha(c)}\circ\beta_{c}(p)))=d((c,p), (q'\circ q)(c,p))\le B
\end{equation}
which implies that $c$ and $\alpha'\circ\alpha(c)$ may only differ on zones at distance at most $B$ from $p$, for any $p\in G$. As $M$ has infinite index in $G$, it follows that $\alpha'\circ\alpha(c)=c$. Similarly, using that $q\circ q'$ is at distance $\le B$ from $\text{Id}_{F\wr_{H/N}H}$, we deduce also $\alpha\circ\alpha'(c)=c$ for all $c\in F^{(H,\mathcal{C}_{N})}$. Thus $\alpha$ is a bijection, whose inverse is $\alpha'$. 

\smallskip

Now let $p_{1},p_{2}\in G$. Then 
\begin{align*}
    d_{H}(\beta_{c}(p_{1}), \beta_{c}(p_{2})) &= d((\alpha(0), \beta_{c}(p_{1})),(\alpha(0), \beta_{c}(p_{2}))) \\
    &=d(q(0,p_{1}),q(0,p_{2})) \\
    &\le Ad((0,p_{1}), (0,p_{2}))+B \\
    &=Ad_{G}(p_{1},p_{2})+B
\end{align*}
and similarly one gets 
\begin{equation*}
    d_{H}(\beta_{c}(p_{1}), \beta_{c}(p_{2})) \ge \frac{1}{A}d_{G}(p_{1},p_{2})-B.
\end{equation*}
Thus $\beta_{c}$ is an $(A,B)-$quasi-isometric embedding, and additionally for any $h\in H$ there exists $(c,p)\in E\wr_{G/M}G$ such that $d(q(c,p), (0, h))\le B$, whence 
\begin{equation*}
    d_{H}(\beta_{c}(p), h) \le d((\alpha(c), \beta(p)), (0,h))=d(q(c,p), (0, h))\le B.
\end{equation*}
Hence $\beta_{c}$ is coarsely surjective as well, and it is then an $(A,B)-$quasi-isometry  $G\longrightarrow H$. Likewise, we prove that $\beta'_{\alpha(c)}\colon H\longrightarrow G$ is an $(A,B)-$quasi-isometry, and from (\ref{eq4.1}) it follows that $\beta'_{\alpha(c)}\circ\beta_{c}$ is at distance $\le B$ from $\text{Id}_{G}$. Using similarly that $q\circ q'$ is at distance $\le B$ from the identity, we deduce that $\beta_{c}\circ \beta'_{\alpha(c)}$ is at distance $\le B$ from $\text{Id}_{H}$. We conclude that $\beta_{c}$ and $\beta'_{\alpha(c)}$ are quasi-inverses of each other.
\end{proof}
\renewcommand{\qedsymbol}{$\square$}

To prove the statement, it is enough to prove that each $\beta_{c}\colon G\longrightarrow H$ is a quasi-isometry of pairs $(G,\mathcal{C}_{M})\longrightarrow (H,\mathcal{C}_{N})$ (here we identify geometrically the leaf $L(c)$ (resp. $L(\alpha(c))$) in $E\wr_{G/M}G$ (resp. in $F\wr_{H/N}H$) with $G$ (resp. $H$)).

\smallskip

Fix then a coloring $c\in E^{(G,\mathcal{C}_{M})}$, a zone $gM\subset L(c)$, and let $c'\in E^{(G,\mathcal{C}_{M})}$ be a coloring that differs from $c$ only the zone $gM$. Let $p\in gM$. One has then 
\begin{equation*}
    d((\alpha(c),\beta_{c}(p)), (\alpha(c'),\beta_{c'}(p)))=d(q(c,p), q(c',p)) \le Ad((c,p),(c',p))+B=A+B
\end{equation*}
since $d((c,p),(c',p))=1$. This inequality implies that 
\begin{equation*}
    \text{supp}(\alpha(c)^{-1}\alpha(c')) \subset \lbrace \beta_{c}(p)\rbrace_{+(A+B)}
\end{equation*}
so if we write explicitly $\text{supp}(\alpha(c)^{-1}\alpha(c'))=\lbrace h_{1}N,\dots, h_{r}N\rbrace$ where $r\ge 1$, we get 
\begin{equation}\label{eq4.2}
    d_{H}(h_{i}N, \beta_{c}(p))\le A+B 
\end{equation}
for any $1\le i\le r$. Thus $\beta_{c}(p)\in \dis \bigcap_{i=1}^{r}(h_{i}N)^{+(A+B)}$. Since this holds for any $p\in gM$, we have proved that
\begin{equation*}
    \beta_{c}(gM)\subset \bigcap_{i=1}^{r}(h_{i}N)^{+(A+B)}.
\end{equation*}
Now, for any $1\le i\le r$, it also follows from (\ref{eq4.2}) and from Lemma~\ref{lm4.6} that 
\begin{equation*}
    d_{\text{Haus}}(h_{i}N, \beta_{c}(g)N) \le A+B
\end{equation*}
for any $1\le i\le r$, so that
\begin{equation*}
    \beta_{c}(gM)\subset \bigcap_{i=1}^{r}(h_{i}N)^{+(A+B)} \subset \bigcap_{i=1}^{r}\left((\beta_{c}(g)N)^{+(A+B)}\right)^{+(A+B)}\subset(\beta_{c}(g)N)^{+2(A+B)}.
\end{equation*}
Similarly, one shows that given any zone $hN\subset L(\alpha(c))$, we have 
\begin{equation*}
    \beta_{\alpha(c)}'(hN)\subset (\beta_{\alpha(c)}'(h)M)^{+2(A+B)}.
\end{equation*}
In particular, when $h=\beta_{c}(g)$, this gives
\begin{equation*}
    \beta_{\alpha(c)}'(\beta_{c}(g)N)\subset \left((\beta_{\alpha(c)}'(\beta_{c}(g))M\right)^{+2(A+B)}
\end{equation*}
and applying $\beta_{c}$ to this inclusion, and using that $\beta_{c}\circ\beta_{\alpha(c)}'$ is at distance $\le B$ from $\text{Id}_{H}$, it follows that 
\begin{align*}
    \beta_{c}(g)N &\subset \left(\beta_{c}\circ\beta_{\alpha(c)}'(\beta_{c}(g)N)\right)^{+B} \\
    &\subset \beta_{c}\left(\left(\beta_{\alpha(c)}'(\beta_{c}(g))M\right)^{+2(A+B)}\right)^{+B} \\
    &\subset \left(\beta_{c}\left(\beta_{\alpha(c)}'(\beta_{c}(g))M\right)^{+A(2(A+B))+B}\right)^{+B} \\
    &\subset \beta_{c}\left(\beta_{\alpha(c)}'(\beta_{c}(g))M\right)^{+(2A(A+B)+2B)}.
\end{align*}
It just remains to notice that, since $\beta_{\alpha(c)}'(\beta_{c}(g))$ and $g$ are at distance at most $B$ in $G$, Lemma~\ref{lm4.6} ensures that the Hausdorff distance between $\beta_{\alpha(c)}'(\beta_{c}(g))M$ and $gM$ is at most $B$ as well, and we finally get 
\begin{align*}
    \beta_{c}(g)N  &\subset \beta_{c}\left(\beta_{\alpha(c)}'(\beta_{c}(g))M\right)^{+(2A(A+B)+2B)} \\
    &\subset \beta_{c}((gM)^{+B})^{+(2A(A+B)+2B)} \\
    &\subset \beta_{c}(gM)^{+((2A(A+B)+2B)+AB+B)}.
\end{align*}
We conclude that 
\begin{equation*}
    d_{\text{Haus}}(\beta_{c}(gM), \beta_{c}(g)N) \le 2A(A+B)+(A+3)B
\end{equation*}
showing that $\beta_{c}$ is indeed a quasi-isometry of pairs $(G,M)\longrightarrow (H,N)$.
\end{proof}

\begin{corollary}\label{cor4.14}
Let $E$ and $F$ be non-trivial finite groups. Let $G,H$ be finitely presented groups with normal finitely generated infinite subgroups $M\lhd G$, $N\lhd H$. Suppose that $M$ has infinite index in $G$ and that there is no subspace of $G$ (resp. $H$) that coarsely separates $G$ and that coarsely embeds into $M$ (resp. $N$). Then any quasi-isometry $E\wr_{G/M}G \longrightarrow F\wr_{H/N}H$ is a quasi-isometry of pairs
\begin{equation*}
     (E\wr_{G/M}G,M)\longrightarrow (F\wr_{H/N}H, N).
\end{equation*}
\end{corollary}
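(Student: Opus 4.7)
The statement to prove is an essentially immediate combination of the two main results of this section, namely Corollary~\ref{cor4.5} and Proposition~\ref{prop4.12}, together with the stability of quasi-isometries of pairs under finite distance perturbations recorded in Lemma~\ref{lm2.8}\textit{(i)}. Accordingly, the plan is simply to chain these three statements together.

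More precisely, given a quasi-isometry $q \colon E\wr_{G/M}G \longrightarrow F\wr_{H/N}H$, the first step is to invoke Corollary~\ref{cor4.5} (whose proof uses our embedding theorem, Theorem~\ref{thm3.1}, together with Lemma~\ref{lm4.7}). Under our standing assumptions on $G, H, M, N$, this yields a map $\tilde{q} \colon E\wr_{G/M}G \longrightarrow F\wr_{H/N}H$ at finite distance from $q$ such that $\tilde{q}$ is leaf-preserving, i.e. sends $G$-cosets to $H$-cosets and admits a quasi-inverse with the symmetric property.

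The second step is then to apply Proposition~\ref{prop4.12} to $\tilde{q}$. Note that the assumptions of Proposition~\ref{prop4.12} are strictly weaker than those of the corollary we are currently proving (no coarse separation assumption and no finite presentation assumption are required), so they are in particular satisfied here. We thereby obtain that $\tilde{q}$ is a quasi-isometry of pairs $(E\wr_{G/M}G, M) \longrightarrow (F\wr_{H/N}H, N)$.

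Finally, since $q$ lies at finite distance from $\tilde{q}$, Lemma~\ref{lm2.8}\textit{(i)} applied with the collections $\mathcal{A} = \mathcal{C}_M$ and $\mathcal{B} = \mathcal{C}_N$ of cosets directly upgrades the conclusion from $\tilde{q}$ to $q$, finishing the proof. There is no genuine obstacle at this stage: the entire substance lies in Corollary~\ref{cor4.5} (and hence in the embedding theorem) and in Proposition~\ref{prop4.12}; the corollary merely packages the two together.
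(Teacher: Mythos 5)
Your proposal is correct and follows exactly the paper's argument: apply Corollary~\ref{cor4.5} to pass to a leaf-preserving representative, then Proposition~\ref{prop4.12}; you merely spell out the finite-distance upgrade via Lemma~\ref{lm2.8}\textit{(i)}, which the paper leaves implicit in the phrase ``can be assumed to be leaf-preserving.''
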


\begin{proof}
By Corollary~\ref{cor4.5}, such a quasi-isometry can be assumed to be leaf-preserving, so the conclusion follows from Proposition~\ref{prop4.12}. 
\end{proof}

\begin{remark}
In the realm of quasi-isometric rigidity, an interesting and challenging question, already raised in~\cite[Question~2.3]{HMS21}, is the following: given a finitely generated groups $G,H$ and a collection $\mathcal{P}$ of subgroups of $G$, under which conditions does there exist a collection $\mathcal{Q}$ of subgroups of $H$ such that any quasi-isometry $G\longrightarrow H$ extends to a quasi-isometry of pairs $(G,\mathcal{P})\longrightarrow (H,\mathcal{Q})$? Corollary~\ref{cor4.14} shows that permutational wreath products of the form $E\wr_{G/M}G$, with our running assumptions, provide a class of groups answering positively to this question. 
\end{remark}

\subsection{Coning-off graphs}\label{subsection4.4} In this part, we introduce our main tool that will help us to deduce some valuable informations from the existence of a quasi-isometry between two permutational wreath products, even if the latter is not aptolic. 

\begin{definition}
Let $X$ be a graph with a partition $\mathcal{C}$. The \textit{cone-off of $X$ over $\mathcal{C}$}, denoted $\text{CO}(X,\mathcal{C})$, is the graph obtained from $X$ by adding an edge between two vertices whenever they belong to a common piece $C\in\mathcal{C}$. 
\end{definition}

As a graph, $\text{CO}(X,\mathcal{C})$ carries a natural metric. Moreover, since now any two vertices in the same piece are adjacent, we have $d_{\text{CO}(X,\mathcal{C})}(x,y) \le d_{X}(x,y)$ for any $x,y\in X$. 

\smallskip

We record some basic facts about the behaviour of cone-offs of quasi-isometric graphs. These are probably well known by the experts, but we include the proofs for completeness.

\begin{lemma}\label{lm4.17}
Let $X,Y$ be two graphs, $f\colon X\longrightarrow Y$ be a quasi-isometry, and $\mathcal{C}$ be a partition of $X$. Then $f$ induces a quasi-isometry $\hat{f}\colon \text{CO}(X,\mathcal{C})\longrightarrow \text{CO}(Y, f(\mathcal{C}))$, where $f(\mathcal{C})\defeq\lbrace f(C) : C\in \mathcal{C}\rbrace$. 
\end{lemma}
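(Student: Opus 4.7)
The plan is to show that the map $\hat f$, which we take to coincide with $f$ on vertex sets (noting that $\text{CO}(X,\mathcal{C})$ and $X$ share the same vertex set, and similarly on the $Y$ side), is a quasi-isometry. Fix constants $C_0 \ge 1$ and $K_0 \ge 0$ so that $f$ is a $(C_0,K_0)$-quasi-isometry. The verification then splits into three parts: a Lipschitz upper bound, a lower quasi-isometric bound, and coarse surjectivity.

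The Lipschitz bound and coarse surjectivity are immediate. Given adjacent vertices $u,v$ in $\text{CO}(X,\mathcal{C})$, either $u,v$ are adjacent in $X$, in which case $d_{Y}(f(u),f(v)) \le C_0 + K_0$ and hence $d_{\text{CO}(Y,f(\mathcal{C}))}(f(u),f(v)) \le C_0 + K_0$ since coning off can only decrease distances; or $u,v$ lie in a common piece $C \in \mathcal{C}$, so $f(u),f(v) \in f(C)$ are adjacent in $\text{CO}(Y,f(\mathcal{C}))$. In both cases the cone-off distance is at most $C_0 + K_0$, and Lemma~\ref{lm2.1} yields that $\hat f$ is $(C_0 + K_0)$-Lipschitz. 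Coarse surjectivity is inherited directly from $f$: given $z \in Y$ one finds $x \in X$ with $d_Y(f(x),z) \le K_0$, and a fortiori $d_{\text{CO}(Y,f(\mathcal{C}))}(\hat f(x),z) \le K_0$.

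The only delicate point, and the main obstacle, is the lower bound $d_{\text{CO}(X,\mathcal{C})}(x,y) \le M \cdot d_{\text{CO}(Y,f(\mathcal{C}))}(\hat f(x), \hat f(y))$ for a uniform constant $M$. The strategy is to lift geodesics: given $x,y \in X$, set $n \defeq d_{\text{CO}(Y,f(\mathcal{C}))}(f(x),f(y))$ and pick a geodesic $f(x) = y_0, y_1, \ldots, y_n = f(y)$ in $\text{CO}(Y,f(\mathcal{C}))$. For each intermediate index $i$, choose $x_i \in X$ with $d_Y(f(x_i),y_i) \le K_0$, adopting the convention $x_0 = x$ and $x_n = y$. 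Each edge $(y_i,y_{i+1})$ falls into one of two cases. If $y_i$ and $y_{i+1}$ are adjacent in $Y$, then the triangle inequality gives $d_Y(f(x_i),f(x_{i+1})) \le 2K_0 + 1$, so the lower bound in the quasi-isometric embedding property of $f$ provides a bound on $d_X(x_i,x_{i+1})$, hence on $d_{\text{CO}(X,\mathcal{C})}(x_i,x_{i+1})$, depending only on $(C_0,K_0)$. The harder case is the cone-edge case $y_i, y_{i+1} \in f(C)$ for some piece $C \in \mathcal{C}$: here we lift to $C$ by choosing $u_i,u_{i+1} \in C$ with $f(u_i) = y_i$ and $f(u_{i+1}) = y_{i+1}$, so that $d_{\text{CO}(X,\mathcal{C})}(u_i,u_{i+1}) \le 1$, while applying the quasi-isometric lower bound to $d_Y(f(x_i),f(u_i)) \le K_0$ and the symmetric estimate on the other side yields $d_X(x_i,u_i), d_X(u_{i+1},x_{i+1}) \le 2 C_0 K_0$. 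A triangle inequality in $\text{CO}(X,\mathcal{C})$ combines these into a uniform bound on $d_{\text{CO}(X,\mathcal{C})}(x_i,x_{i+1})$, and summing this uniform bound over the $n$ edges provides the required linear inequality.
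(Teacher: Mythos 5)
Your proposal is correct and follows essentially the same route as the paper: both directions are handled by transferring a geodesic of the relevant cone-off edge by edge, distinguishing original edges from cone edges, and the paper's use of a fixed quasi-inverse $f'$ together with its Claim on $d_X(f'(a),C)\le B$ for $a\in f(C)$ is exactly what your choice of coarse preimages $x_i$ and exact lifts $u_i,u_{i+1}\in C$ accomplishes. The only (harmless) cosmetic point is that your lower bound as stated needs an additive constant to cover the case $\hat f(x)=\hat f(y)$ with $x\neq y$, which the quasi-isometry definition permits and the quasi-isometric lower bound for $f$ supplies directly.
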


\begin{proof}
Fix constants $A\ge 1$, $B\ge 0$ such that $f$ and one of its quasi-inverse $f'$ are both $(A,B)-$quasi-isometric embeddings with $f\circ f', f'\circ f$ at distance $\le B$ from the identities.

\smallskip

As $f(X)$ is quasi-dense in $Y$, so is $f(\text{CO}(X,\mathcal{C}))$ in $\text{CO}(Y, f(\mathcal{C}))$. Now let $x,y\in X$, and let $x=x_{0},x_{1},\dots,x_{n}=y$ be the vertices of a geodesic connecting $x$ and $y$ in $\text{CO}(X,\mathcal{C})$. For any $0\le i\le n-1$, there are then two possibilities: either $x_{i}$ and $x_{i+1}$ belong to a common piece $C\in\mathcal{C}$, so $d_{\text{CO}(Y,f(\mathcal{C}))}(f(x_{i}), f(x_{i+1})) \le 1\le A+B$; or $x_{i}$ and $x_{i+1}$ are adjacent in $X$, in which case 
\begin{equation*}
    d_{\text{CO}(Y,f(\mathcal{C}))}(f(x_{i}), f(x_{i+1})) \le d_{Y}(f(x_{i}), f(x_{i+1})) \le Ad_{X}(x_{i},x_{i+1})+B=A+B.
\end{equation*}
Therefore one deduce that
\begin{align*}
    d_{\text{CO}(Y,f(\mathcal{C}))}(f(x),f(y)) &\le \sum_{i=0}^{n-1}d_{\text{CO}(Y,f(\mathcal{C}))}(f(x_{i}), f(x_{i+1})) \\
    &\le (A+B)n \\
    &= (A+B)d_{\text{CO}(X,\mathcal{C})}(x,y).
\end{align*}
For the other inequality, let now $f(x)=x_{0},x_{1},\dots,x_{n}=f(y)$ denote the vertices of a geodesic connecting $f(x)$ and $f(y)$ in $\text{CO}(Y,f(\mathcal{C}))$. Here also there are two cases: if for some $0\le i\le n-1$, $x_{i}$ and $x_{i+1}$ are in the same piece $f(C)$ for some $C\in\mathcal{C}$, then 
\begin{equation*}
    d_{\text{CO}(X,\mathcal{C})}(f'(x_{i}), f'(x_{i+1})) \le d_{\text{CO}(X,\mathcal{C})}(f'(x_{i}), \mathcal{C})+d_{\text{CO}(X,\mathcal{C})}(f'(x_{i+1}),\mathcal{C})+1 \le 2B+1
\end{equation*}
where the second inequality follows from 

\begin{claim}\label{claim4.18}
If $a\in f(C)$ for some $C\in\mathcal{C}$, then $d_{X}(f'(a), C) \le B$.
\end{claim}

\renewcommand{\qedsymbol}{$\blacksquare$}
\begin{proof}[Proof of Claim~\ref{claim4.18}]
Write $a=f(b)$ with $b\in \mathcal{C}$, so that
\begin{equation*}
    d_{X}(f'(a), \mathcal{C}) \le d_{X}(f'(a),b)\le d_{X}(f'(a), f'(f(b)))+d_{X}(f'(f(b)), b)\le B
\end{equation*}
since $f'\circ f$ is at distance $\le B$ from $\text{Id}_{X}$. 
\end{proof}
\renewcommand{\qedsymbol}{$\square$}
In the case where $x_{i}$ and $x_{i+1}$ are adjacent in $Y$, then 
\begin{equation*}
   d_{\text{CO}(X,\mathcal{C})}(f'(x_{i}), f'(x_{i+1})) \le d_{X}(f'(x_{i}), f'(x_{i+1})) \le Ad_{Y}(x_{i},x_{i+1})+B=A+B.
\end{equation*}
Hence it follows that 
\begin{align*}
    d_{\text{CO}(X,\mathcal{C})}(x,y) &\le 2B+d_{\text{CO}(X,\mathcal{C})}(f'(f(y)), f'(f(x))) \\
    &\le 2B+\sum_{i=0}^{n-1}d_{\text{CO}(X,\mathcal{C})}(f'(x_{i}), f'(x_{i+1})) \\
    &\le 2B+\max(A+B, 2B+1)n \\
    &=2B+\max(A+B, 2B+1)d_{\text{CO}(Y,f(\mathcal{C}))}(f(x),f(y)).
\end{align*}
We conclude that $f$ induces a quasi-isometry $\text{CO}(X,\mathcal{C})\longrightarrow \text{CO}(Y, f(\mathcal{C}))$, as desired.
\end{proof}

Our second observation ensures that the cone-offs of the same graph over two different partitions are quasi-isometric if the two partitions coarsely coincide.

\begin{lemma}\label{lm4.19}
Let $X$ be a graph, and $\mathcal{C}_{1}, \mathcal{C}_{2}$ two partitions of $X$. Assume there exists $Q\ge 0$ such that, for any $C_{1}\in \mathcal{C}_{1}$ (resp. $C_{2}\in\mathcal{C}_{2}$), there exists $C_{2}\in\mathcal{C}_{2}$ (resp. $C_{1}\in\mathcal{C}_{1}$) such that the Hausdorff distance between $C_{1}$ and $C_{2}$ is at most $Q$. Then the identity map $X\longrightarrow X$ induces a quasi-isometry $\text{CO}(X,\mathcal{C}_{1})\longrightarrow \text{CO}(X,\mathcal{C}_{2})$.
\end{lemma}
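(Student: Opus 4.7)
The plan is to show directly that the identity map on $X$ induces a biLipschitz equivalence $\text{CO}(X,\mathcal{C}_{1})\longrightarrow \text{CO}(X,\mathcal{C}_{2})$. Since the identity is already a bijection, coarse surjectivity is free, and by Lemma~\ref{lm2.1} it suffices to verify that adjacent vertices in $\text{CO}(X,\mathcal{C}_{1})$ are sent to vertices at bounded distance in $\text{CO}(X,\mathcal{C}_{2})$, and symmetrically.

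First I would treat edges coming from the graph $X$: any such edge is still an edge in $\text{CO}(X,\mathcal{C}_{2})$, so the distance is at most $1$. The only case requiring work is when $x,y\in X$ are endpoints of a \emph{cone edge} in $\text{CO}(X,\mathcal{C}_{1})$, that is, when $x,y$ lie in a common piece $C_{1}\in\mathcal{C}_{1}$. By assumption, there exists $C_{2}\in \mathcal{C}_{2}$ with $d_{\text{Haus}}(C_{1},C_{2})\le Q$, so one can choose $x',y'\in C_{2}$ with $d_{X}(x,x')\le Q$ and $d_{X}(y,y')\le Q$. Concatenating an $X$-geodesic from $x$ to $x'$ (of length $\le Q$, hence a path of length $\le Q$ in $\text{CO}(X,\mathcal{C}_{2})$), a single cone edge from $x'$ to $y'$ in $\text{CO}(X,\mathcal{C}_{2})$ (available since $x',y'\in C_{2}$), and an $X$-geodesic from $y'$ to $y$, one obtains a path in $\text{CO}(X,\mathcal{C}_{2})$ of length at most $2Q+1$. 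Applying Lemma~\ref{lm2.1}, the identity is $(2Q+1)$-Lipschitz as a map $\text{CO}(X,\mathcal{C}_{1})\longrightarrow \text{CO}(X,\mathcal{C}_{2})$.

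The symmetric hypothesis (each piece of $\mathcal{C}_{2}$ also lies at Hausdorff distance at most $Q$ from some piece of $\mathcal{C}_{1}$) lets me run exactly the same argument with the roles of $\mathcal{C}_{1}$ and $\mathcal{C}_{2}$ swapped, giving a $(2Q+1)$-Lipschitz inverse. Combining both bounds,
\begin{equation*}
    \tfrac{1}{2Q+1}\, d_{\text{CO}(X,\mathcal{C}_{1})}(x,y) \le d_{\text{CO}(X,\mathcal{C}_{2})}(x,y) \le (2Q+1)\, d_{\text{CO}(X,\mathcal{C}_{1})}(x,y)
\end{equation*}
for all $x,y\in X$, which is the desired quasi-isometry (in fact a biLipschitz equivalence). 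There is no real obstacle here: the only point worth care is making sure that the intermediate $X$-geodesics from $x$ to $x'$ and $y$ to $y'$ are interpreted as paths in $\text{CO}(X,\mathcal{C}_{2})$, which is automatic because every edge of $X$ is an edge of the cone-off.
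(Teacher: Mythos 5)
Your proof is correct and follows essentially the same route as the paper: both reduce to bounding the $\text{CO}(X,\mathcal{C}_{2})$-distance between endpoints of a single edge of $\text{CO}(X,\mathcal{C}_{1})$ by $2Q+1$ and then conclude by symmetry. Your explicit path (geodesic of length $\le Q$, one cone edge in $C_{2}$, geodesic of length $\le Q$) simply fills in the estimate the paper asserts without detail, so there is nothing to change.
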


\begin{proof}
Let $x,y\in X$, and fix a geodesic $x=x_{0},x_{1},\dots,x_{n}=y$ connecting $x$ and $y$ in $\text{CO}(X,\mathcal{C}_{1})$. For every $0\le i\le n-1$, either $x_{i}$ and $x_{i+1}$ are adjacent in $X$, so $d_{\text{CO}(X,\mathcal{C}_{2})}(x_{i},x_{i+1})=1\le 2Q+1$, or $x_{i}$ and $x_{i+1}$ belong to a common piece of $\mathcal{C}_{1}$, so that $d_{\text{CO}(X,\mathcal{C}_{2})}(x_{i},x_{i+1}) \le 2Q+1$. Thus we get 
\begin{equation*}
    d_{\text{CO}(X,\mathcal{C}_{2})}(x,y) \le \sum_{i=0}^{n-1}d_{\text{CO}(X,\mathcal{C}_{2})}(x_{i},x_{i+1}) \le (2Q+1)n = (2Q+1)d_{\text{CO}(X,\mathcal{C}_{1})}(x,y).
\end{equation*}
By symmetry, one also has $\frac{1}{2Q+1}d_{\text{CO}(X,\mathcal{C}_{1})} \le d_{\text{CO}(X,\mathcal{C}_{2})}$, which concludes.
\end{proof}

The following claim is then a direct consequence of our previous observations.

\begin{corollary}\label{cor4.20}
Let $X$ (resp. $Y$) be a graph with a partition $\mathcal{C}$ (resp. $\mathcal{D}$), and let $f\colon (X,\mathcal{C})\longrightarrow (Y,\mathcal{D})$ be an $(A,B,Q)-$quasi-isometry of pairs. Then $f$ induces an $(A',B')-$quasi-isometry $f^{\text{in}}\colon \text{CO}(X,\mathcal{C})\longrightarrow \text{CO}(Y,\mathcal{D})$, where 
\begin{equation*}
    A' \defeq (2Q+1)\max(A+B,2B+1), \;B'\defeq (2Q+1)\frac{2B}{\max(A+B,2B+1)}.
\end{equation*}
\end{corollary}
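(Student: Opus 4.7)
The plan is to factor $f^{\text{in}}$ as a composition of two quasi-isometries already provided by the preceding lemmas. First, applying Lemma~\ref{lm4.17} to $f\colon X\longrightarrow Y$ produces a quasi-isometry $\hat{f}\colon \text{CO}(X,\mathcal{C})\longrightarrow \text{CO}(Y,f(\mathcal{C}))$. Second, applying Lemma~\ref{lm4.19} to the two collections $f(\mathcal{C})$ and $\mathcal{D}$ of subsets of $Y$ gives a quasi-isometry $\text{CO}(Y,f(\mathcal{C}))\longrightarrow \text{CO}(Y,\mathcal{D})$ realized by the identity of $Y$. I will then define $f^{\text{in}}\defeq \mathrm{id}_{Y}\circ \hat{f}$.

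The one hypothesis to verify is that $f(\mathcal{C})$ and $\mathcal{D}$ coarsely coincide in the sense of Lemma~\ref{lm4.19}, but this is precisely built into the definition of a $(A,B,Q)$-quasi-isometry of pairs: every $f(C)$ lies at Hausdorff distance at most $Q$ from some $D\in\mathcal{D}$, and conversely every $D\in\mathcal{D}$ lies at Hausdorff distance at most $Q$ from some $f(C)$. Note that $f(\mathcal{C})$ need not be a partition of $Y$; however, the proof of Lemma~\ref{lm4.19} only uses the two-sided Hausdorff condition, and so it goes through verbatim when the input is a mere cover. This is the only mild subtlety in the whole argument.

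Finally, I will track the parameters. Reading off the proof of Lemma~\ref{lm4.17}, one finds that $\hat{f}$ is a $\left(\max(A+B,2B+1),\,\tfrac{2B}{\max(A+B,2B+1)}\right)$-quasi-isometry, while Lemma~\ref{lm4.19} supplies the identity as a $(2Q+1,0)$-quasi-isometry. A routine computation of the composition (using that composing a $(C_{1},K_{1})$-quasi-isometry with a $(C_{2},K_{2})$-quasi-isometry produces one with multiplicative constant $C_{1}C_{2}$ and additive constant at most $C_{2}K_{1}+K_{2}$) yields an $(A',B')$-quasi-isometry with $A'=(2Q+1)\max(A+B,2B+1)$ and $B'=(2Q+1)\cdot\tfrac{2B}{\max(A+B,2B+1)}$, exactly as stated. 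Coarse surjectivity of $f^{\text{in}}$ is automatic since $\hat{f}$ is coarsely surjective and the identity is surjective.
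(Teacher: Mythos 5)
Your proof is correct and follows exactly the route the paper intends: Corollary~\ref{cor4.20} is stated there as a direct consequence of Lemma~\ref{lm4.17} and Lemma~\ref{lm4.19}, composed with the same bookkeeping of constants you carried out. Your remark that $f(\mathcal{C})$ need not be a partition but that the proof of Lemma~\ref{lm4.19} only uses the two-sided Hausdorff condition is a fair (and harmless) clarification of a point the paper leaves implicit.
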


Observe that, given a graph $X$ and a partition $\mathcal{C}$, leaves form a partition of $\mathcal{L}_{n}(X,\mathcal{C})$. Moreover, each leaf is itself a copy of $X$ and thus carries its own partition $\mathcal{C}$. Hence the collection of all pieces constituting all the leaves form a partition $\widehat{\mathcal{C}}$ of $\mathcal{L}_{n}(X,\mathcal{C})$, and one can consider the cone-off $\text{CO}(\mathcal{L}_{n}(X,\mathcal{C}), \widehat{\mathcal{C}})$. A crucial observation about this cone-off is the following:

\begin{lemma}\label{lm4.21}
Let $n\ge 2$. Let $X$ be a graph with a partition $\mathcal{C}$, and let $\widehat{\mathcal{C}}$ be the induced partition of $\mathcal{L}_{n}(X,\mathcal{C})$. Then there is a graph isomorphism 
\begin{equation*}
    \text{CO}(\mathcal{L}_{n}(X,\mathcal{C}), \widehat{\mathcal{C}}) \cong \mathcal{L}_{n}(\text{CO}(X,\mathcal{C}), \mathcal{C}).
\end{equation*}
\end{lemma}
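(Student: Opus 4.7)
The plan is to show that the identity on vertex sets realizes the desired graph isomorphism, and the argument reduces to unpacking the definitions on both sides.

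First, I would check that both graphs share the same vertex set. On the left, a vertex is a pair $(c,p)$ with $p\in V(X)$ and $c\in\Z_n^{(X,\mathcal{C})}$; the ambient cone-off operation does not touch vertices, so $V(\text{CO}(\mathcal{L}_n(X,\mathcal{C}),\widehat{\mathcal{C}}))=V(\mathcal{L}_n(X,\mathcal{C}))$. On the right, a vertex is a pair $(c,p)$ with $p\in V(\text{CO}(X,\mathcal{C}))=V(X)$ and with $c$ constant on the pieces of $\mathcal{C}$ and finitely supported; the notion of $\mathcal{C}$-admissible coloring is intrinsic to the partition, independent of whether $X$ or $\text{CO}(X,\mathcal{C})$ carries the edges. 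Hence the vertex sets coincide.

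Next, I would split the edges of each graph into \emph{vertical} edges (those changing the coloring while fixing $p$) and \emph{horizontal} edges (those fixing the coloring while moving $p$). By definition, the vertical edges in $\mathcal{L}_n(X,\mathcal{C})$ and in $\mathcal{L}_n(\text{CO}(X,\mathcal{C}),\mathcal{C})$ are identical: in both cases they connect $(c_1,p)$ to $(c_2,p)$ exactly when $c_1,c_2$ differ only on the piece of $\mathcal{C}$ containing $p$. Coning-off $\mathcal{L}_n(X,\mathcal{C})$ along $\widehat{\mathcal{C}}$ does not add any vertical edge, since the pieces of $\widehat{\mathcal{C}}$ sit inside leaves and therefore have constant coloring. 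So vertical edges agree on both sides.

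Finally, for the horizontal edges, on the right-hand side $\mathcal{L}_n(\text{CO}(X,\mathcal{C}),\mathcal{C})$ connects $(c,p_1)$ to $(c,p_2)$ whenever $p_1,p_2$ are adjacent in $\text{CO}(X,\mathcal{C})$; by definition of the cone-off, this means that either $p_1,p_2$ are adjacent in $X$, or $p_1,p_2$ belong to a common piece of $\mathcal{C}$. On the left-hand side, $\text{CO}(\mathcal{L}_n(X,\mathcal{C}),\widehat{\mathcal{C}})$ has as horizontal edges the original horizontal edges of $\mathcal{L}_n(X,\mathcal{C})$ (corresponding exactly to the case $p_1,p_2$ adjacent in $X$), together with the edges added by the cone-off, which connect $(c,p_1)$ and $(c,p_2)$ whenever they belong to a common piece of $\widehat{\mathcal{C}}$, i.e.\ whenever $p_1,p_2$ lie in a common piece of $\mathcal{C}$. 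The two descriptions match, so the identity map is an edge-preserving bijection, hence a graph isomorphism. There is no serious obstacle here: the only point requiring care is to notice that $\widehat{\mathcal{C}}$ is built precisely so that its pieces contain only vertices with the same coloring, which prevents the cone-off from introducing any spurious vertical edge.
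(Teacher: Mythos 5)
Your proposal is correct and follows essentially the same route as the paper: the isomorphism is the identity on vertices, and one checks that the edge relations match by unpacking the definitions of the cone-off and of the partitioned lamplighter. Your vertical/horizontal decomposition even makes the edge-set equality explicit in both directions (the paper only spells out one direction of adjacency preservation), and your key observation that pieces of $\widehat{\mathcal{C}}$ lie inside leaves, so the cone-off adds no coloring-changing edges, is exactly the point the paper's case analysis relies on.
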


\begin{proof}
Define the map 
\begin{align*}
    \varphi\colon \text{CO}(\mathcal{L}_{n}(X,\mathcal{C}), \widehat{\mathcal{C}}) &\longrightarrow \mathcal{L}_{n}(\text{CO}(X,\mathcal{C}), \mathcal{C}) \\
    (c,p)&\longmapsto (c,p)
\end{align*}
where, in the target space, $c$ is seen as a coloring of the partition $\mathcal{C}$ of the graph $\text{CO}(X,\mathcal{C})$. Clearly, $\varphi$ sends bijectively vertices of $\text{CO}(\mathcal{L}_{n}(X,\mathcal{C}), \widehat{\mathcal{C}})$ to vertices of $\mathcal{L}_{n}(\text{CO}(X,\mathcal{C}), \mathcal{C})$, so we only need to check that it sends adjacent vertices to adjacent vertices. Thus, let $a=(c_{1},p_{1}), b=(c_{2},p_{2})$ be adjacent vertices in $\text{CO}(\mathcal{L}_{n}(X,\mathcal{C}), \widehat{\mathcal{C}})$. There are three cases to consider.
\begin{itemize}
    \item Assume first that $c_{1}=c_{2}$ and that $p_{1}, p_{2}$ are in the same zone. Then $\varphi(a)=(c_{1},p_{1})$, $\varphi(b)=(c_{1},p_{2})$ are in the same leaf and they are adjacent in $\mathcal{L}_{n}(\text{CO}(X,\mathcal{C}), \mathcal{C})$ since $p_{1},p_{2}$ are adjacent in $\text{CO}(X,\mathcal{C})$;
    \item Assume next that $c_{1}=c_{2}$ and that $p_{1},p_{2}$ are in different zones. Then $d_{X}(p_{1},p_{2})=1$, so $\varphi(a),\varphi(b)$ are adjacent in $\mathcal{L}_{n}(\text{CO}(X,\mathcal{C}), \mathcal{C})$;
    \item Lastly, suppose that $c_{1}\neq c_{2}$ and $p_{1}=p_{2}$. Then $c_{1}$ and $c_{2}$ differ only on the zone containing $p_{1}$, and this remains true in $\mathcal{L}_{n}(\text{CO}(X,\mathcal{C}), \mathcal{C})$, so that $\varphi(a)$ and $\varphi(b)$ are adjacent in this case as well.
\end{itemize}
We conclude that $\varphi$ is a graph isomorphism, as claimed. 
\end{proof}

\subsection{Proof of Theorem \ref{theorem4.1}}\label{subsection4.5} We are finally ready to exploit our work in the above parts to deduce the main theorem of this section. 

\begin{proof}[Proof of Theorem~\ref{theorem4.1}]

Let $T$ (resp. $S$) be a finite generating set of $G$ (resp. $H$), and denote $\pi_{G}$ (resp. $\pi_{H}$) the natural projection of $G$ (resp. $H$) onto $G/M$ (resp. $H/N$).

\smallskip

Fix $q\colon E\wr_{G/M}G\longrightarrow F\wr_{H/N}H$ such an $(A,B)-$quasi-isometry. By Corollary~\ref{cor4.5}, we know that, up to finite distance, $q$ can be taken to be leaf-preserving, so we can write 
\begin{equation*}
    q(c,p)=(\alpha(c),\beta_{c}(p)), \: (c,p)\in E\wr_{G/M}G
\end{equation*}
for some bijection $\alpha\colon E^{(G,\mathcal{C}_{M})}\longrightarrow F^{(H,\mathcal{C}_{N})}$ and some quasi-isometries $\beta_{c}\colon G\longrightarrow H$, $c\in E^{(G,\mathcal{C}_{M})}$. Using Proposition \ref{prop4.12}, we also fix $Q\ge 0$ such that 
\begin{equation*}
   q\colon (E\wr_{G/M}G,M)\longrightarrow (F\wr_{H/N}H, N)
\end{equation*}
is an $(A,B,Q)-$quasi-isometry of pairs.

\smallskip

Now, the image under $q$ of the partition $\widehat{\mathcal{C}_{M}}$ of $E\wr_{G/M}G$ coarsely coincide with $\widehat{\mathcal{C}_{N}}$, according to Corollary~\ref{cor4.14}. Thus, from Corollary~\ref{cor4.20}, we deduce that $q$ induces a quasi-isometry 
\begin{equation*}
    q^{\text{in}}\colon \text{CO}(E\wr_{G/M}G, \widehat{\mathcal{C}_{M}}) \longrightarrow \text{CO}(F\wr_{H/N}H, \widehat{\mathcal{C}_{N}})
\end{equation*}
or equivalently, by Lemma~\ref{lm4.21}, a quasi-isometry
\begin{equation*}
    q^{\text{in}}\colon \mathcal{L}_{|E|}\left(\text{CO}(\text{Cay}(G,T),\mathcal{C}_{M}), \mathcal{C}_{M}\right) \longrightarrow \mathcal{L}_{|F|}\left(\text{CO}(\text{Cay}(H,S),\mathcal{C}_{N}), \mathcal{C}_{N}\right).
\end{equation*}
Now, pieces of $\text{CO}(\text{Cay}(G,T),\mathcal{C}_{M})$ (resp. $\text{CO}(\text{Cay}(H,S),\mathcal{C}_{N})$) are uniformly bounded, so we may apply Proposition~\ref{prop4.3} to deduce that there are graphs $Y_{M}$ and $Y_{N}$, quasi-isometric to $\text{CO}(\text{Cay}(G,T),\mathcal{C}_{M})$ and $\text{CO}(\text{Cay}(H,S),\mathcal{C}_{N})$ respectively, and a quasi-isometry
\begin{equation*}
    \mathcal{L}_{|E|}(Y_{M}) \longrightarrow \mathcal{L}_{|F|}(Y_{N}).
\end{equation*}
Additionally, it follows from the proof of Proposition~\ref{prop4.3} that $Y_{M}$ (resp. $Y_{N}$) is isomorphic to $\text{Cay}(G/M,\pi_{G}(T))$ (resp. $\text{Cay}(H/N,\pi_{H}(S))$), so that there is a quasi-isometry
\begin{equation*}
    \varphi\colon\mathcal{L}_{|E|}(\text{Cay}(G/M,\pi_{G}(T))) \longrightarrow \mathcal{L}_{|F|}(\text{Cay}(H/N,\pi_{H}(S)))
\end{equation*}
given by $(c,pM)\longmapsto (\alpha(\Tilde{c})^{\flat}, \overline{\beta_{\Tilde{c}}}(pM))$ (here the notations are the ones introduced in the proof of Proposition~\ref{prop4.3}). In such a situation, the proof of~\cite[Corollary~6.11]{GT24a} shows that all $\overline{\beta_{c}}\colon G/M\longrightarrow H/N$ lie at a uniform bounded distance from $\overline{\beta_{\ind}}$, hence $\varphi$ is at bounded distance from the aptolic quasi-isometry
\begin{align*}
    \mathcal{L}_{|E|}(\text{Cay}(G/M,\pi_{G}(T))) &\longrightarrow \mathcal{L}_{|F|}(\text{Cay}(H/N,\pi_{H}(S))) \\
    (c,pM) &\longmapsto (\alpha(\Tilde{c})^{\flat},\overline{\beta_{\ind}}(pM)).
\end{align*}
We deduce from~\cite[Proposition~3.3]{GT24b} that $|E|$ and $|F|$ must have the same prime divisors, which proves \textit{(i)} and, moreover, if $M$ is co-amenable in $G$, we conclude from~\cite[Theorem~3.8]{GT24b} that $|E|=n^{r}$, $|F|=n^{s}$ for some $n,r,s\ge 1$, and that $\overline{\beta_{\ind}}\colon G/M\longrightarrow H/N$ is quasi-$\frac{s}{r}$-to-one. 

\smallskip

To conclude, we have proved in Proposition~\ref{prop4.12} that $\beta_{\ind}$ is a quasi-isometry of pairs, and the existence of such a quasi-isometry implies that $G/M$ is quasi-isometric to $H/N$ by Proposition~\ref{prop2.9}. Lastly, the restriction of a quasi-isometry of pairs $(G,M)\longrightarrow (H,N)$ to $M$ provides a quasi-isometry $g\colon (M, d_{T})\longrightarrow (N, d_{S})$. If we fix a finite generating set $R$ (resp. $U$) for $M$ (resp. for $N$), we know that $\text{Id}_{M}\colon (M, d_{R})\longrightarrow (M,d_{T})$ and $\text{Id}_{N}\colon (N, d_{U})\longrightarrow (N, d_{S})$ are coarse equivalences, and thus 
\begin{equation*}
    \text{Id}_{N}\circ g\circ \text{Id}_{M}\colon (M,d_{R})\longrightarrow (N,d_{U})
\end{equation*}
is a coarse equivalence. Such a coarse equivalence is in fact a quasi-isometry, concluding the proof. 
\end{proof}

\bigskip
\section{Constructing quasi-isometries of permutational wreath products}\label{section5}

In this part, we focus on the converse of Theorem~\ref{theorem4.1}, and show how to construct quasi-isometries between permutational wreath products, given various assumptions on the cardinalities of lamp groups or on the geometry of the base groups. 

\subsection{Non-amenable quotients}\label{subsection5.1} We start with the non-amenable case, by showing that, if the subgroup under consideration is not co-amenable into the ambient group, then two permutational wreath products are quasi-isometric as soon as the cardinalities of the lamp groups have the same prime divisors, namely: 

\begin{proposition}\label{prop5.1}
Let $n,m\ge 2$. Let $H$ be a finitely generated group, and let $N\lhd H$ be a normal subgroup of $H$. If $N$ is not co-amenable in $H$ and if $n$ and $m$ have the same prime divisors, then there exists an aptolic quasi-isometry
\begin{equation*}
    \mathcal{L}_{n}(H, \mathcal{C}_{N}) \longrightarrow \mathcal{L}_{m}(H, \mathcal{C}_{N}).
\end{equation*}
\end{proposition}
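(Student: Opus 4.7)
The plan is to build the aptolic quasi-isometry by taking $\beta = \mathrm{Id}_H$ and constructing a bijection $\alpha \colon \Z_n^{(H/N)} \to \Z_m^{(H/N)}$ whose local behavior is controlled by the lamplighter metrics on both sides, so that $(c, p) \mapsto (\alpha(c), p)$ and its inverse are both Lipschitz; the only geometric input about $H$ beyond the identity map will be confined to $\alpha$.

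I would first reduce to the case where $n$ and $m$ are powers of a single prime. Writing $n = \prod_p p^{n_p}$ and $m = \prod_p p^{m_p}$ (with identical prime supports, by hypothesis), the Chinese Remainder Theorem identifies $\Z_n$ with $\prod_p \Z_{p^{n_p}}$. Passing to the finite generating sets that allow arbitrary lamp changes at a single zone, the Cayley graph $\mathcal{L}_n(H, \mathcal{C}_N)$ is biLipschitz equivalent to the Cayley graph of the "multi-wreath" $(\prod_p \Z_{p^{n_p}}) \wr_{H/N} H$ endowed with generators that change one $p$-primary component at a single zone, and similarly for $m$. Hence, if for each prime $p$ we construct an aptolic quasi-isometry $\alpha_p \colon \Z_{p^{n_p}}^{(H/N)} \to \Z_{p^{m_p}}^{(H/N)}$, taking the coordinatewise product $\alpha = \prod_p \alpha_p$ under the CRT identification produces the desired map, since $\beta = \mathrm{Id}_H$ is unaffected by this decomposition.

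For the prime-power case $n = p^a$, $m = p^b$, the key input is the non-amenability of $H/N$, which follows from $N$ being non co-amenable in $H$. Combining Whyte's theorem (Theorem~\ref{thm2.4}) with the observation that $H/N$ is quasi-isometric to its Cartesian product with a discrete set $\{1, \dots, k\}$ of any fixed size, we obtain a partition of $H/N$ into tiles of any prescribed size with uniformly bounded diameter by pulling back the second factor under a biLipschitz bijection. Applying this with $k = a$ and $k = b$ produces partitions $\mathcal{Q} = \{Q_j\}$ (tiles of size $a$) and $\mathcal{P} = \{P_i\}$ (tiles of size $b$). A second application of Theorem~\ref{thm2.4} to the resulting (non-amenable, bounded-degree) tile-graphs yields a bijection $\Phi \colon \mathcal{P} \to \mathcal{Q}$ with matched tiles at uniformly bounded Hausdorff distance. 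Since $|\Z_{p^a}^{P_i}| = p^{ab} = |\Z_{p^b}^{\Phi(P_i)}|$, I fix an arbitrary bijection between these sets on each matched pair (sending the zero coloring to the zero coloring, to preserve finite support) and assemble them tile-by-tile into $\alpha$.

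The main obstacle is verifying that $(c, p) \mapsto (\alpha(c), p)$ is genuinely a quasi-isometry with a quasi-inverse of aptolic form. Using the distance formula from subsection~\ref{sub4.1}, this reduces to the following local property: if $c, c'$ differ only at a single zone $z$, then $\alpha(c)$ and $\alpha(c')$ differ only on the tile $Q = \Phi(P_i)$, where $P_i \in \mathcal{P}$ is the unique tile containing $z$. Since $|Q| = a$ and $Q$ lies within uniformly bounded Hausdorff distance of $P_i$, which itself has bounded diameter, one obtains uniformly bounded displacement of $\alpha$ on adjacent colorings. Lemma~\ref{lm2.1} then upgrades this estimate to a global Lipschitz bound; the symmetric argument applied to $\alpha^{-1}$, constructed from the same tile-wise bijections read backwards, provides the required quasi-inverse of aptolic form, completing the construction.
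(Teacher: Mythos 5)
Your proof is correct, but it takes a genuinely different route from the paper's. The paper's argument for Proposition~\ref{prop5.1} reduces to the single step of changing one prime's exponent, constructing an aptolic quasi-isometry $\mathcal{L}_{mp}(H,\mathcal{C}_N)\to\mathcal{L}_{mp^n}(H,\mathcal{C}_N)$: it uses non-amenability of $H/N$ to produce an $n$-to-one self-map $\overline{f}$ at bounded distance from the identity (via Whyte's Theorem~\ref{thm2.4}, exactly as in the standard-lamplighter case from~\cite{GT24b}), and then builds the coloring bijection by ``spreading'' the $\Z_p$-component of a coloring across the fiber $\overline{f}^{-1}(x)$, keeping the $\Z_m$-component in place. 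Your proof instead does a CRT decomposition into prime powers up front, and for each prime realizes the change of exponent $p^a\to p^b$ through the partition machinery: two partitions of $H/N$ into tiles of sizes $b$ and $a$ respectively, a Whyte-produced bijection $\Phi$ between the tile sets at bounded Hausdorff distance, and then tile-wise cardinality-matching bijections (sending $0\mapsto 0$ to preserve finite support). This is essentially the method the paper uses in the \emph{amenable} case (Proposition~\ref{prop5.6}, via Theorem~\ref{thm2.3}), specialized to the non-amenable setting where arbitrary scaling factors are available. Both routes rest on Whyte's theorem; yours has the advantage of unifying the amenable and non-amenable constructions under a single tile-based mechanism, whereas the paper's is somewhat shorter and avoids setting up two partitions and a matching between them.

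Two small remarks on the details, which you handle correctly but somewhat tersely: (1) the bijection from Whyte's theorem is a priori only a bijective quasi-isometry, and one should say a word about why it is automatically biLipschitz when the graphs have bounded degree (otherwise ``pulling back the second factor'' does not immediately give tiles of uniformly bounded diameter); (2) the Hausdorff-distance control on $\Phi(P_i)$ versus $P_i$ is measured in the tile-graph and needs a line to translate back to Hausdorff distance in $H/N$, which goes through the uniform bound on tile diameters. Neither is a gap — both follow easily — but they are the kind of step worth making explicit.
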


We emphasize that, in this statement as well as in the other ones in this section, the subgroup $N$ needs not to be finitely generated. Notice also that no assumption on $H$ is required. 

\begin{proof}
Fix a finite generating set $S$ of $H$. Denote $\pi_{H}\colon H\longrightarrow H/N$ the canonical projection.

\smallskip

As a first reduction, we fix integers $m\ge 1$, $n\ge 2$ and a prime number $p$, and we show that there exists an aptolic quasi-isometry
\begin{equation*}
    \mathcal{L}_{mp}(H, \mathcal{C}_{N}) \longrightarrow \mathcal{L}_{mp^{n}}(H, \mathcal{C}_{N}).
\end{equation*}
This is sufficient to deduce the proposition. 

\smallskip

First of all, since $H/N$ is not amenable, there exists an $n$-to-one map $\overline{f}\colon H/N\longrightarrow H/N$ lying at bounded distance, from the identity $\text{Id}_{H/N}$. Indeed, recall the short argument from~\cite[Theorem~3.12]{GT24b}: as $H/N$ is not amenable, the embedding $\iota \colon H/N \hookrightarrow H/N\oplus \Z_{n}$ lies at finite distance, say $Q\ge 0$, from a bijection $g\colon H/N\longrightarrow H/N\oplus\Z_{n}$. If $p\colon H/N\oplus\Z_{n}\longrightarrow H/N$ denotes the natural projection, then $\overline{f}\defeq p\circ g$ is $n$-to-one and 
\begin{equation*}
    d_{H/N}(x,\overline{f}(x))=d_{H/N}(p(\iota(x)),p(g(x))) \le d_{H}(\iota(x),g(x))\le Q
\end{equation*}
for any $x\in H/N$, so $\overline{f}$ is at distance $\le Q$ from $\text{Id}_{H/N}$. 

\smallskip

Fix an enumeration of $H/N$ and identify $\Z_{mp}$ (resp. $\Z_{mp^{n}}$) with $\Z_{m}\oplus\Z_{p}$ (resp. $\Z_{m}\oplus\Z_{p}^{n}$). We denote by $\pi_{1}$ and $\pi_{2}$ the projections on the first and second coordinates in both $\Z_{m}\oplus\Z_{p}$ and $\Z_{m}\oplus\Z_{p}^{n}$.

\smallskip

Given a coloring $c\colon H/N \longrightarrow \Z_{m}\oplus\Z_{p}$, we define a new coloring $\Tilde{c}\colon H/N\longrightarrow \Z_{m}\oplus\Z_{p}^{n}$ as follows. Given a point $x\in H/N$, set 
\begin{itemize}
    \item $\pi_{1}(\Tilde{c}(x)) \defeq \pi_{1}(c(x))$;
    \item enumerate $\overline{f}^{-1}(\lbrace x\rbrace)$ as $\lbrace x_{1},\dots,x_{n}\rbrace$ following the order of our enumeration, and set $\pi_{2}(\Tilde{c}(x))\defeq (\pi_{2}(c(x_{1})),\dots,\pi_{2}(c(x_{n})))$.
\end{itemize}
We can then define 
\begin{align*}
    \varphi\colon \mathcal{L}_{mp}(H, \mathcal{C}_{N}) &\longrightarrow \mathcal{L}_{mp^{n}}(H, \mathcal{C}_{N}) \\
    (c,p)&\longmapsto (\Tilde{c},p)
\end{align*}
and we claim that $\varphi$ is a quasi-isometry. To prove this claim, fix two vertices $a=(c,p), b=(d,q)$ of $\mathcal{L}_{mp}(H,\mathcal{C}_{N})$.

\smallskip

Notice that if $\Tilde{c}(pN)\neq\Tilde{d}(pN)$ for some $pN\in H/N$, then either $\pi_{1}(\Tilde{c}(pN))\neq \pi_{1}(\Tilde{d}(pN))$, i.e. $\pi_{1}(c(pN))\neq \pi_{1}(d(pN))$, hence $pN\in \text{supp}(c^{-1}d)$; or $\pi_{2}(\Tilde{c}(pN))\neq \pi_{2}(\Tilde{d}(pN))$, i.e. there exists some $p'N\in \overline{f}^{-1}(pN)$ such that $\pi_{2}(c(p'N)) \neq \pi_{2}(d(p'N))$, whence $p'N\in \text{supp}(c^{-1}d)$. We deduce that
\begin{equation}\label{eq1}
    \text{supp}(\Tilde{c}^{-1}\Tilde{d}) \subset \text{supp}(c^{-1}d)\cup \overline{f}(\text{supp}(c^{-1}d))
\end{equation}

Next, if $c(pN)\neq d(pN)$, then either $\pi_{1}(c(pN))\neq \pi_{1}(d(pN))$ hence $\pi_{1}(\Tilde{c}(pN))\neq\pi_{1}(\Tilde{d}(pN))$; or $\pi_{2}(c(pN)) \neq\pi_{2}(d(pN))$, in which case $\Tilde{c}(\overline{f}(pN))\neq \Tilde{d}(\overline{f}(pN))$. Thus it follows that
\begin{equation}\label{eq2}
    \text{supp}(c^{-1}d)\subset \text{supp}(\Tilde{c}^{-1}\Tilde{d})\cup \overline{f}^{-1}(\text{supp}(\Tilde{c}^{-1}\Tilde{d})).
\end{equation}

\smallskip

Now, fix a path $\alpha$ in $\text{Cay}(H,S)$ starting from $p$, visiting all zones in $\text{supp}(c^{-1}d)$ and ending at $q$, such that 
\begin{equation*}
    \text{length}(\alpha)+|\text{supp}(c^{-1}d)|=d(a,b).
\end{equation*}
We write explicitly $\text{supp}(c^{-1}d)=\lbrace p_{1}N,\dots, p_{r}N\rbrace$. For any $1\le i\le r$, denote by $x_{i}\in p_{i}N$ the point where the color of the zone is modified. Since 
\begin{equation*}
    d_{H/N}(p_{i}N, \overline{f}(p_{i}N))\le Q
\end{equation*}
there is a loop $\overline{\gamma_{i}}$ in $\text{Cay}(H/N,\pi_{H}(S))$ based at $p_{i}N$ and passing through $\overline{f}(p_{i}N)$, of length $\le 2Q$. Thus there is in $\text{Cay}(H,S)$ a loop $\gamma_{i}$ based at $x_{i}$ and passing through the zone $\overline{f}(p_{i}N)$, of length $\le 2Q$. We concatenate this loop to $\alpha$. Doing this for any $1\le i\le r$, we get a path $\alpha'$ in $\text{Cay}(H,S)$ starting at $p$, visiting all zones in $\text{supp}(c^{-1}d)\cup \overline{f}(\text{supp}(c^{-1}d))$ and ending at $q$. A fortiori, $\alpha'$ visits all zones in $\text{supp}(\Tilde{c}^{-1}\Tilde{d})$, according to (\ref{eq1}). Observe also that 
\begin{equation*}
    \text{length}(\alpha')\le \text{length}(\alpha)+2Q|\text{supp}(c^{-1}d)| \le (2Q+1)\text{length}(\alpha)
\end{equation*}
and thus it follows that
\begin{align*}
    d(\varphi(a),\varphi(b))&=d((\Tilde{c},p), (\Tilde{d},q)) \le \text{length}(\alpha')+|\text{supp}(\Tilde{c}^{-1}\Tilde{d})| \\
    &\le 2\cdot\text{length}(\alpha') \\
    &\le 2(2Q+1)\text{length}(\alpha) \\
    &\le 2(2Q+1)d(a,b).
\end{align*}
Conversely, fix a path $\beta$ in $\text{Cay}(H,S)$ starting from $p$, visiting all zones in $\text{supp}(\Tilde{c}^{-1}\Tilde{d})$, ending at $q$, such that 
\begin{equation*}
    \text{length}(\beta)+|\text{supp}(\Tilde{c}^{-1}\Tilde{d})|=d(\varphi(a),\varphi(b)).
\end{equation*}
Here also, we write $\text{supp}(\Tilde{c}^{-1}\Tilde{d})=\lbrace q_{1}N,\dots, q_{s}N\rbrace$ and for any $1\le i\le s$, we let $y_{i}\in q_{i}N$ be the point where the color of the zone is changed. For any $1\le i\le s$, and for any $q'N\in \overline{f}^{-1}(\lbrace q_{i}N\rbrace)$, there is a loop $\overline{\eta_{i}}$ in $\text{Cay}(H/N,\pi_{H}(S))$ based at $q_{i}N$ and passing through $q'N$, of length $\le 6Q$, since 
\begin{equation*}
    d_{H/N}(q_{i}N, q'N)\le d_{H/N}(q_{i}N, \overline{f}(q_{i}N))+d_{H/N}(\overline{f}(q_{i}N), \overline{f}(q'N))+d_{H/N}(\overline{f}(q'N), q'N) \le 3Q.
\end{equation*}
Thus there is a loop $\eta_{i}$ in $\text{Cay}(H,S)$ based at $y_{i}$ and passing through the zone $q'N$, of length $\le 6Q$. We concatenate this loop to $\beta$. Doing this for any $1\le i\le s$ and for any pre-image of $q_{i}N$ under $\overline{f}$, we get a path $\beta'$ in $\text{Cay}(H,S)$ starting from $p$, visiting all zones in $\text{supp}(\Tilde{c}^{-1}\Tilde{d})\cup \overline{f}^{-1}(\text{supp}(\Tilde{c}^{-1}\Tilde{d}))$ and ending at $q$. A fortioti, $\beta'$ visits all zones in $\text{supp}(c^{-1}d)$ according to (\ref{eq2}). Moreover, one has 
\begin{equation*}
    \text{length}(\beta')\le \text{length}(\beta)+6nQ|\text{supp}(\Tilde{c}^{-1}\Tilde{d})| \le (6nQ+1)\text{length}(\beta).
\end{equation*}
Hence we deduce that
\begin{align*}
    d(a,b) &=d((c,p),(d,q)) \le \text{length}(\beta')+|\text{supp}(c^{-1}d)| \\
    &\le 2\cdot\text{length}(\beta') \\
    &\le 2(6nQ+1)\text{length}(\beta) \\
    &\le 2(6nQ+1)d(\varphi(a),\varphi(b)).
\end{align*}
This concludes the proof that $\varphi$ is a quasi-isometry. 
\end{proof}

\begin{remark}\label{rm5.2}
More generally, and as already noticed in~\cite{BGT24} for the standard case, the same strategy proves that if $A,B$ and $H$ are finitely generated groups and that $N\lhd H$ is not co-amenable in $H$, then there is a quasi-isometry 
\begin{equation*}
    (A\times B)\wr_{H/N}H \longrightarrow (A\times B^{n})\wr_{H/N}H
\end{equation*}
for any $n\ge 1$. 
\end{remark}

Next, we show that a quasi-isometry of pairs inducing a quasi-one-to-one quasi-isometry between the quotients allows us to construct aptolic quasi-isometries between permutational wreath products. 

\begin{proposition}\label{prop5.3}
Let $n\ge 2$. Let $G, H$ be finitely generated groups with normal subgroups $M\lhd G$, $N\lhd H$. Let $f\colon (G,M)\longrightarrow (H,N)$ be a quasi-isometry of pairs such that $\overline{f}\colon G/M\longrightarrow H/N$ is quasi-one-to-one. Then there exists an aptolic quasi-isometry
\begin{equation*}
    \mathcal{L}_{n}(G,\mathcal{C}_{M})\longrightarrow \mathcal{L}_{n}(H,\mathcal{C}_{N}).
\end{equation*}
\end{proposition}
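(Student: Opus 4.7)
The plan is to construct an explicit aptolic map by leveraging Whyte's theorem to convert the quasi-one-to-one quasi-isometry into a bijection on the quotient side. First, since $\overline{f}\colon G/M \longrightarrow H/N$ is a quasi-one-to-one quasi-isometry between bounded degree graphs, Theorem~\ref{thm2.4} yields a bijection $\overline{g}\colon G/M \longrightarrow H/N$ at bounded distance from $\overline{f}$. Using Remark~\ref{rm2.12}, we can also assume that $f$ itself sends each $M$-coset into an $N$-coset, so that $\overline{f}(pM) = f(p)N$ for every $p \in G$.

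We then define the candidate aptolic map
\begin{equation*}
    \Phi\colon \mathcal{L}_{n}(G,\mathcal{C}_{M}) \longrightarrow \mathcal{L}_{n}(H,\mathcal{C}_{N}), \quad (c,p) \longmapsto (\alpha(c), f(p)),
\end{equation*}
where $\alpha(c)$ is the coloring defined by $\alpha(c)(hN) \defeq c(\overline{g}^{-1}(hN))$. Since $\overline{g}$ is a bijection, $\alpha$ is a bijection $\Z_{n}^{(G,\mathcal{C}_{M})} \longrightarrow \Z_{n}^{(H,\mathcal{C}_{N})}$, and we have the key identity $\text{supp}(\alpha(c_{1})^{-1}\alpha(c_{2})) = \overline{g}(\text{supp}(c_{1}^{-1}c_{2}))$, so that the two supports have equal cardinality. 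A quasi-inverse of aptolic form is given symmetrically by $(d,q) \longmapsto (\alpha^{-1}(d), f'(q))$, where $f'$ is a quasi-inverse of $f$ (which is a quasi-isometry of pairs by Lemma~\ref{lm2.8}(iii), with induced quotient map at bounded distance from $\overline{g}^{-1}$).

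The core of the proof is then to establish that $\Phi$ is a quasi-isometry using the distance formula from Subsection~\ref{sub4.1}:
\begin{equation*}
    d((c_{1},p_{1}),(c_{2},p_{2})) = \text{length}(\gamma) + |\text{supp}(c_{1}^{-1}c_{2})|,
\end{equation*}
where $\gamma$ is a shortest path from $p_{1}$ to $p_{2}$ in $G$ visiting every zone of $\text{supp}(c_{1}^{-1}c_{2})$. Given such an optimal $\gamma$, one constructs a path in $H$ from $f(p_{1})$ to $f(p_{2})$ by concatenating geodesics in $H$ between consecutive $f$-images of vertices along $\gamma$, yielding total length linear in $\text{length}(\gamma)$ with multiplicative constants depending only on the parameters of $f$. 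For each $M$-coset $pM$ visited by $\gamma$, the image $f(p)N = \overline{f}(pM)$ lies at a uniformly bounded distance from $\overline{g}(pM) \in \text{supp}(\alpha(c_{1})^{-1}\alpha(c_{2}))$, so one inserts a bounded detour at each of these $N$-cosets in order to actually visit every target zone. This produces a path with length bounded by a constant multiple of $\text{length}(\gamma) + |\text{supp}(c_{1}^{-1}c_{2})|$, which together with the equality of supports gives one side of the quasi-isometry inequality. The reverse inequality is obtained by running exactly the same construction for the aptolic quasi-inverse.

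The main technical obstacle lies in this last step: converting \emph{closeness} to the target $N$-cosets into actual \emph{visits} to them, with a total length multiplicatively comparable to $\text{length}(\gamma) + |\text{supp}(c_{1}^{-1}c_{2})|$. The number of detours needed is exactly $|\text{supp}(c_{1}^{-1}c_{2})|$, and each such detour has length bounded by the distance between $\overline{f}$ and $\overline{g}$ (amplified by the quasi-isometry constants of $f$); since this cost is absorbed by the support term already present in the distance formula, a uniform bound on the size of each detour is enough to conclude.
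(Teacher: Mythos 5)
Your proof is correct and rests on the same core idea as the paper's: apply Whyte's theorem (Theorem~\ref{thm2.4}) to replace $\overline{f}$ by a nearby bijection $\overline{g}\colon G/M\longrightarrow H/N$, and let $\overline{g}$ drive the coloring bijection $\alpha(c)=c\circ\overline{g}^{-1}$. The difference is in how the pointer map is handled. The paper goes one step further and modifies $f$ to a map $h$ at bounded distance from $f$ with the property that $h$ sends each coset $pM$ \emph{into} $\overline{g}(pM)$, i.e.\ $\overline{h}=\overline{g}$ exactly; with this alignment, $\varphi(c,p)=(c\circ\overline{h}^{-1},h(p))$ sends a ``change-color'' edge to a genuine edge of $\mathcal{L}_n(H,\mathcal{C}_N)$, so the quasi-isometry property reduces to a purely local Lipschitz check (Lemma~\ref{lm2.1}) on both $\varphi$ and its explicit quasi-inverse $\psi$, with no path-surgery required. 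You keep $f$ itself as the pointer, accept the bounded mismatch between $f(p)N=\overline{f}(pM)$ and $\overline{g}(pM)$, and estimate distances globally via the travelling-salesman formula, inserting one bounded detour per element of $\text{supp}(c_1^{-1}c_2)$; this works because the number of detours equals the support term already present in the distance, and each detour's length is controlled by Lemma~\ref{lm4.6} applied to the $H/N$-distance between $\overline{f}(pM)$ and $\overline{g}(pM)$. The two candidate maps differ only by the Whyte constant in the second coordinate, hence lie at bounded distance from each other, so either one proves the proposition; the paper's alignment step simply trades your detour bookkeeping for a cleaner local argument.
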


Note that, according to Remark~\ref{rm2.12}, the assumption made on $\overline{f}$ does not depend on a specific choice of the induced quasi-isometry. 

\begin{proof} Since $\overline{f}$ is quasi-one-to-one, we know from Theorem~\ref{thm2.4} that it lies at finite distance, say $Q\ge 0$, from a bijection $g\colon G/M\longrightarrow H/N$. Given now $p\in G$, define $h(p)\in H$ as being a point of the coset $g(pM)$ that minimizes the distance to $f(p)$. Such a point exists, and as
\begin{equation*}
    d_{H/N}(g(pM), \overline{f}(pM)) \le Q
\end{equation*}
it is at distance at most $Q$ in $G$ from $f(p)$. The map $h\colon G\longrightarrow H$ thus defined lies at distance $\le Q$ from $f$, so it is itself a quasi-isometry, and by construction it sends an $M-$coset into an $N-$coset. Moreover, still by construction, $\overline{h}=g$. We thus have a quasi-isometry of pairs $h\colon (G,M)\longrightarrow (H,N)$ inducing a bijection at the level of quotients. We now claim that 
\begin{align*}
    \varphi\colon \mathcal{L}_{n}(G,\mathcal{C}_{M})&\longrightarrow \mathcal{L}_{n}(H,\mathcal{C}_{N}) \\
    (c,p)&\longmapsto (c\circ \overline{h}^{-1}, h(p))
\end{align*}
is the quasi-isometry we are looking for. 

\smallskip

Denote $T$ (resp. $S$) a finite generating set for $G$ (resp. $H$). Let $C\ge 1$, $K\ge 0$ be such that $h\colon G\longrightarrow H$ and a quasi-inverse $h'\colon H\longrightarrow G$ are $(C,K)-$quasi-isometries. Moreover, in this situation we may assume that $\overline{h'}=\overline{h}^{-1}$.

\smallskip

Notice first that $\varphi$ is $(C+K)-$Lipschitz. Indeed, if $a,b\in \mathcal{L}_{n}(G,\mathcal{C}_{M})$ are adjacent vertices, then 
\begin{itemize}
    \item either $a=(c,p)$ and $b=(c,ps)$ for some $s\in T$, so 
    \begin{equation*}
        d(\varphi(a),\varphi(b))=d_{H}(h(p),h(ps))\le Cd_{G}(p,ps)+K=C+K;
    \end{equation*}
    \item or $a=(c,p)$ and $b=(c',p)$, where $c,c'$ only differ on $pM$. In this case, we get 
    \begin{equation*}
        c\circ \overline{h}^{-1}(h(p)N)=c\circ \overline{h}^{-1}(\overline{h}(pM))=c(pM)
    \end{equation*}
    and likewise $c'\circ\overline{h}^{-1}(h(p)N)=c'(pM)$, so $c\circ \overline{h}^{-1}, c'\circ \overline{h}^{-1}$ differ on the coset containing $h(p)$; and if $yN\neq h(p)N$ is another coset on which these two colorings differ, then we may write $yN=\overline{h}(zM)$ for some $zM\in G/M$, and we get
    \begin{equation*}
         c\circ \overline{h}^{-1}(yN)=c(zM), \; c'\circ \overline{h}^{-1}(yN)=c'(zM)
    \end{equation*}
    so $c,c'$ differ also on $zM\neq pM$, a contradiction. Thus $c\circ \overline{h}^{-1}, c'\circ \overline{h}^{-1}$ differ only on $h(p)N$, and we conclude that $\varphi(a), \varphi(b)$ are adjacent in $\mathcal{L}_{n}(H,\mathcal{C}_{N})$.
\end{itemize}
In both cases, we conclude that $d(\varphi(a),\varphi(b))\le C+K$, and Lemma~\ref{lm2.1} now ensures that $\varphi$ is $(C+K)-$Lipschitz. 

\smallskip

Next, consider the map 
\begin{align*}
    \psi\colon \mathcal{L}_{n}(H,\mathcal{C}_{N}) &\longrightarrow \mathcal{L}_{n}(G,\mathcal{C}_{M}) \\
    (c,p)&\longrightarrow (c\circ \overline{h}, h'(p))
\end{align*}
and note that 
\begin{equation*}
    d(\psi\circ\varphi(c,p), (c,p))=d_{G}(h'(h(p)), p)\le K
\end{equation*}
for any $(c,p)\in \mathcal{L}_{n}(G,\mathcal{C}_{M})$, so $\psi\circ\varphi$ is at distance $\le K$ from $\text{Id}_{\mathcal{L}_{n}(G,\mathcal{C}_{M})}$; and similarly $\varphi\circ\psi$ is at distance $\le K$ from $\text{Id}_{\mathcal{L}_{n}(H,\mathcal{C}_{N})}$. 

\smallskip

Finally, let us prove that $\psi$ is also $(C+K)-$Lipschitz. Fix $a$ and $b$ two adjacent vertices in $\mathcal{L}_{n}(H,\mathcal{C}_{N})$, and consider two cases:
\begin{itemize}
    \item if $a=(c,p)$ and $b=(c,ps)$ for some $s\in S$, then 
    \begin{equation*}
        d(\psi(a), \psi(b))=d_{G}(h'(p),h'(ps)) \le Cd_{G}(p,ps)+K=C+K;
    \end{equation*}
    \item and if $a=(c,p)$, $b=(c',p)$ where $c,c'$ differ only on $pN$, then $c\circ\overline{h}$, $c'\circ \overline{h}$ differ on $h'(p)N$, since 
    \begin{equation*}
        c\circ\overline{h}(h'(p)M)=c\circ\overline{h}(\overline{h'}(pN))=c(pN), \; c'\circ\overline{h}(h'(p)M)=c'\circ\overline{h}(\overline{h'}(pN))=c'(pN)
    \end{equation*}
    and since $c,c'$ differ on $pN$ by assumption (here we use also that $\overline{h'}=\overline{h}^{-1}$); and if by contradiction $c\circ\overline{h}$, $c'\circ\overline{h}$ differ on another coset $yM\neq h'(p)M$, then we write this coset as $yM=\overline{h}^{-1}(zN)=\overline{h'}(zN)$ to get that $c,c'$ differ on the coset $zN \neq pN$, a contradiction. Thus $c\circ\overline{h}$ and $c'\circ\overline{h}$ differ only the coset $h'(p)M$, so $\psi(a)$ and $\psi(b)$ are adjacent in $\mathcal{L}_{n}(G,\mathcal{C}_{M})$.
\end{itemize}
Hence we conclude that $d(\psi(a),\psi(b))\le C+K$, so $\psi$ is $(C+K)-$Lipschitz by Lemma~\ref{lm2.1}. Thus we conclude that
\begin{align*}
     d(\varphi(a),\varphi(b)) &\ge \frac{1}{C+K}d(\psi(\varphi(a)),\psi(\varphi(b))) \\
     &\ge \frac{1}{C+K}\left(d(\psi(\varphi(a)), a)-d(a,b)-d(b,\psi(\varphi(b)))\right) \\
     &\ge \frac{1}{C+K}d(a,b)-\frac{2K}{C+K}
\end{align*}
for any $a,b\in\mathcal{L}_{n}(G,\mathcal{C}_{M})$, so $\varphi$ is indeed a quasi-isometry, with $\psi$ as a quasi-inverse.
\end{proof}

We can then deduce that:

\begin{corollary}\label{cor5.4}
Let $n, m\ge 2$. Let $G, H$ be finitely generated groups with normal subgroups $M\lhd G$, $N\lhd H$. Assume that $M\lhd G$ is not co-amenable in $G$. If there exists a quasi-isometry of pairs $f\colon (G,M)\longrightarrow (H,N)$ and if $n$ and $m$ have the same prime divisors, then there exists an aptolic quasi-isometry
\begin{equation*}
    \mathcal{L}_{n}(G,\mathcal{C}_{M})\longrightarrow \mathcal{L}_{m}(H,\mathcal{C}_{N}).
\end{equation*}
\end{corollary}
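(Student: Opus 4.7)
The plan is to combine Propositions~\ref{prop5.1} and~\ref{prop5.3} by observing that, in the non-co-amenable setting, the induced map on quotients is automatically quasi-one-to-one, and then by composing two aptolic quasi-isometries.

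First I would transfer the non-co-amenability from $G$ to $H$. The quasi-isometry of pairs $f\colon (G,M)\longrightarrow (H,N)$ induces, by Proposition~\ref{prop2.9}, a quasi-isometry $\overline{f}\colon G/M\longrightarrow H/N$. Since $M$ is not co-amenable in $G$, the quotient $G/M$ is non-amenable, and since amenability is a quasi-isometry invariant, $H/N$ is non-amenable as well, i.e. $N$ is not co-amenable in $H$.

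Next I would upgrade $\overline{f}$ to a quasi-one-to-one quasi-isometry. By Whyte's theorem (Theorem~\ref{thm2.4}), any quasi-isometry between non-amenable bounded degree graphs lies at finite distance from a bijection, and is in particular quasi-one-to-one. Hence $\overline{f}\colon G/M\longrightarrow H/N$ is quasi-one-to-one, so the hypotheses of Proposition~\ref{prop5.3} are fulfilled, and this gives an aptolic quasi-isometry
\begin{equation*}
    \varphi\colon \mathcal{L}_{n}(G,\mathcal{C}_{M})\longrightarrow \mathcal{L}_{n}(H,\mathcal{C}_{N}).
\end{equation*}

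Then I would apply Proposition~\ref{prop5.1} to $H$ and $N$: since $N$ is not co-amenable in $H$ and since $n,m$ have the same prime divisors by assumption, there exists an aptolic quasi-isometry
\begin{equation*}
    \psi\colon \mathcal{L}_{n}(H,\mathcal{C}_{N})\longrightarrow \mathcal{L}_{m}(H,\mathcal{C}_{N}).
\end{equation*}

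Finally I would conclude by composing. Writing $\varphi(c,p)=(\alpha_{1}(c),\beta_{1}(p))$ and $\psi(c',p')=(\alpha_{2}(c'),\beta_{2}(p'))$, one obtains
\begin{equation*}
    \psi\circ\varphi(c,p)=(\alpha_{2}\circ\alpha_{1}(c),\beta_{2}\circ\beta_{1}(p)),
\end{equation*}
which is visibly of aptolic form, and a quasi-inverse is produced by composing aptolic quasi-inverses of $\varphi$ and $\psi$ in the reverse order. Thus $\psi\circ\varphi\colon\mathcal{L}_{n}(G,\mathcal{C}_{M})\longrightarrow\mathcal{L}_{m}(H,\mathcal{C}_{N})$ is the desired aptolic quasi-isometry. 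There is no real obstacle here: the two substantive pieces of work (constructing the quasi-isometry associated to $\overline{f}$, and changing the size of the lamp group using non-amenability) have already been carried out in Propositions~\ref{prop5.3} and~\ref{prop5.1}; the only small thing to verify is the transport of non-co-amenability across the quasi-isometry of pairs, which is immediate from Proposition~\ref{prop2.9}.
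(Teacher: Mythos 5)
Your proposal is correct and follows essentially the same route as the paper: note that non-amenability of $G/M$ passes to $H/N$, invoke Whyte's theorem to see that $\overline{f}$ is quasi-one-to-one, apply Proposition~\ref{prop5.3}, then Proposition~\ref{prop5.1}, and compose. The paper's proof is just a terser version of the same argument, leaving the transfer of non-co-amenability and the aptolicity of the composition implicit.
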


\begin{proof}
As the quotient groups $G/M$, $H/N$ are not amenable, the induced quasi-isometry $\overline{f}\colon G/M\longrightarrow H/N$ is quasi-one-to-one, and Proposition~\ref{prop5.3} then shows that there is an aptolic quasi-isometry $\mathcal{L}_{n}(G,\mathcal{C}_{M})\longrightarrow \mathcal{L}_{n}(H,\mathcal{C}_{N})$. Additionally, we know that $n$ and $m$ have the same prime divisors, so we deduce from Proposition~\ref{prop5.1} the existence of an aptolic quasi-isometry
\begin{equation*}
    \mathcal{L}_{n}(H, \mathcal{C}_{N}) \longrightarrow \mathcal{L}_{m}(H, \mathcal{C}_{N}).
\end{equation*}
Composing these two quasi-isometries yields the desired conclusion.
\end{proof}

\subsection{Amenable quotients}\label{subsection5.2}

We now shift our attention to the amenable case. We start with a statement giving sufficient conditions to construct aptolic quasi-isometries in a general situation.

\begin{proposition}\label{prop5.5}
Let $n,m\ge 2$. Let $G,H$ be finitely generated groups with normal subgroups $M\lhd G$, $N\lhd H$. Suppose that $\alpha\colon \Z_{n}^{(G/M)} \longrightarrow \Z_{m}^{(H/N)}$, $\beta\colon G\longrightarrow H$ are two maps such that:
\begin{enumerate}[label=(\roman*)]
    \item $\alpha$ is a bijection;
    \item $\beta\colon (G,M)\longrightarrow (H,N)$ is a quasi-isometry of pairs;
    \item There exists a constant $Q\ge 0$ such that, for any $c_{1},c_{2}\in \Z_{n}^{(G/M)}$, the Hausdorff distance between $\overline{\beta}(\text{supp}(c_{1}^{-1}c_{2}))$ and $\text{supp}(\alpha(c_{1})^{-1}\alpha(c_{2}))$ is at most $Q$.  
\end{enumerate}
Then the map 
\begin{align*}
    q\colon \mathcal{L}_{n}(G,\mathcal{C}_{M}) &\longrightarrow \mathcal{L}_{m}(H,\mathcal{C}_{N})  \\
    (c,p)&\longmapsto (\alpha(c),\beta(p))
\end{align*}
is an aptolic quasi-isometry. 
\end{proposition}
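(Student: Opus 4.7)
The plan is to verify that $q$ is Lipschitz by computing its effect on adjacent vertices, construct an aptolic quasi-inverse $q'$ satisfying the same hypotheses, and conclude from the Lipschitzness of both maps together with the fact that $q \circ q'$ and $q' \circ q$ lie at bounded distance from the identities. By Remark~\ref{rm2.12}, I will first modify $\beta$ by a bounded amount so that it sends each $M$-coset into a single $N$-coset, so that $\overline{\beta}(pM) = \beta(p)N$ for every $p \in G$; this modification preserves hypotheses (ii) and (iii) at the cost of enlarging $Q$. Fix quasi-isometry constants $(A,B)$ for $\beta$, a quasi-inverse $\beta' \colon H \to G$, and set $K \defeq |B_{H/N}(Q)|$, which is finite since $H/N$ has bounded degree.

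For Lipschitzness, I will invoke Lemma~\ref{lm2.1} and check adjacent vertices in $\mathcal{L}_n(G, \mathcal{C}_M)$. A horizontal edge $(c,p)$-$(c,ps)$ is mapped to two points on the same leaf at distance $\le A+B$. For a vertical edge $(c,p)$-$(c',p)$, where $c, c'$ differ only on the coset $pM$, assumption (iii) forces $\text{supp}(\alpha(c)^{-1}\alpha(c'))$ to lie within Hausdorff distance $Q$ of the single coset $\beta(p)N$, hence to contain at most $K$ cosets, all located within distance $Q$ of $\beta(p)N$. A tour of these cosets based at $\beta(p)$ then has length at most $(2Q+1)K$, so the image is stretched by at most that constant. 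Lemma~\ref{lm2.1} then yields Lipschitzness of $q$ with constant $\max(A+B,\,(2Q+1)K)$.

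For the inverse, define $q'(c,p) \defeq (\alpha^{-1}(c), \beta'(p))$ and verify that $\alpha^{-1}$ and $\beta'$ satisfy hypotheses (i)-(iii) with some possibly larger constant $Q'$. Condition (i) is immediate, (ii) follows from Lemma~\ref{lm2.8}(iii), and for (iii) I apply the Lipschitz map $\overline{\beta'}$ to the Hausdorff bound of (iii) and then use Proposition~\ref{prop2.11}(ii) to replace $\overline{\beta'} \circ \overline{\beta}$ by $\text{Id}_{G/M}$ up to bounded distance. The argument of the previous paragraph then yields that $q'$ is Lipschitz as well. Since $q' \circ q(c,p) = (c, \beta'(\beta(p)))$ and $q \circ q'(c,p) = (c, \beta(\beta'(p)))$ lie at bounded distance from the identities, $q$ is a quasi-isometry whose quasi-inverse $q'$ is of aptolic form, hence $q$ is aptolic.

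The main difficulty will be the bookkeeping around hypothesis (iii): the Hausdorff approximation must be converted both into a ball-counting comparison of $|\text{supp}(\alpha(c_1)^{-1}\alpha(c_2))|$ and $|\text{supp}(c_1^{-1}c_2)|$ up to a multiplicative factor, and into a controlled rerouting of shortest tours through slightly displaced cosets. These estimates must plug uniformly into the two summands of the distance formula $d((c_1,p_1),(c_2,p_2)) = \text{length}(\gamma) + |\text{supp}(c_1^{-1}c_2)|$, which is where the finiteness of $K$ and of the fibres of $\overline{\beta}$ (a quasi-isometry between bounded degree graphs) is essential.
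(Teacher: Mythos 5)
Your proof follows essentially the same route as the paper's: reduce to $\beta$ sending $M$-cosets into $N$-cosets, verify Lipschitzness of $q$ on the two types of edges (for vertical edges using (iii) to trap $\text{supp}(\alpha(c)^{-1}\alpha(c'))$ in a ball of radius $Q$ about $\beta(p)N$ and bounding the tour), define $q' = (\alpha^{-1}, \beta')$, verify it satisfies the same three hypotheses (the delicate step being (iii), handled exactly as you describe by applying $\overline{\beta'}$ and replacing $\overline{\beta'}\circ\overline{\beta}$ with the identity), rerun the Lipschitz argument for $q'$, and close with $q'\circ q$, $q\circ q'$ being at bounded distance from the identities. The only cosmetic difference is the ball-size estimate: you use $K = |B_{H/N}(Q)|$ directly while the paper writes the looser bound $D^Q$ in terms of the maximal degree $D$ of $\text{Cay}(H/N,\pi_H(S))$; both give the same conclusion.
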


Notice that, as in the non-amenable case above (cf. subsection~\ref{subsection5.1}), we treat colorings here as colorings of the quotient groups. 

\begin{proof}
Let $T$ (resp. $S$) denote a finite generating set of $G$ (resp. $H$), and let $\pi_{G}$ (resp. $\pi_{H}$) be the canonical projection of $G$ (resp. $H$) onto $G/M$ (resp. $H/N$). Up to finite distance, we assume that $\beta$ sends $M-$cosets into $N-$cosets, and that a quasi-inverse $\beta^{\text{qi}}\colon H\longrightarrow G$ sends $N-$cosets into $M-$cosets. Fix also constants $C,K\ge 0$ such that $\beta$, $\beta^{\text{qi}}$ are $(C,K)-$quasi-isometries. Up to increasing those constants, we also assume that $\overline{\beta}$, $\overline{\beta^{\text{qi}}}$ are $(C,K)-$quasi-isometries. Lastly, recall from Proposition \ref{prop2.11}\textit{(ii)} that a quasi-inverse of $\overline{\beta}$ is precisely given by $\overline{\beta^{\text{qi}}}$, i.e. $\overline{\beta}^{\text{qi}}=\overline{\beta^{\text{qi}}}$.

\smallskip

To start, we prove that $q$ is Lipschitz. Fix $a,b\in \mathcal{L}_{n}(G,\mathcal{C}_{M})$ two adjacent vertices. We treat two cases:
\begin{itemize}
    \item Assume first that $a=(c,p)$ and $b=(c,ps)$ for some $s\in T$. Then one gets
    \begin{equation*}
        d(q(a),q(b))=d((\alpha(c),\beta(p)), (\alpha(c),\beta(ps)))=d_{H}(\beta(p),\beta(ps))\le C+K;
    \end{equation*}
    \item Assume now that $a=(c,p)$ and $b=(c',p)$, where $c,c'$ only differ on $pM$. Then $\overline{\beta}(\text{supp}(c^{-1}c'))$ is reduced to a point, and it follows from assumption \textit{(iii)} that 
    \begin{equation*}
        \text{supp}(\alpha(c)^{-1}\alpha(c')) \subset B_{H/N}(\overline{\beta}(pM),Q)=B_{H/N}(\beta(p)N,Q).
    \end{equation*}
    We deduce from this inclusion that
    \begin{align*}
        d(q(a),q(b)) &\le 2Q\cdot |\text{supp}(\alpha(c)^{-1}\alpha(c'))| + |\text{supp}(\alpha(c)^{-1}\alpha(c'))|\\
        &\le (2Q+1)\cdot D^{Q}
    \end{align*}
    where $D\ge 1$ is an integer larger than the maximal degree of a vertex in $\text{Cay}(H/N,\pi_{H}(S))$. 
\end{itemize}
We conclude from Lemma~\ref{lm2.1} that $q$ is $\max(C+K,(2Q+1)\cdot D^{Q})-$Lipschitz. 

\smallskip

Next, consider the map 
\begin{align*}
    q'\colon \mathcal{L}_{m}(H,\mathcal{C}_{N})  &\longrightarrow \mathcal{L}_{n}(G,\mathcal{C}_{M})  \\
    (c,p)&\longmapsto (\alpha^{-1}(c), \beta^{\text{qi}}(p)).
\end{align*}
Then we have 
\begin{equation*}
    d((c,p), q'\circ q(c,p))=d((c,p), (c,\beta^{\text{qi}}\circ\beta(p)))=d_{H}(p,\beta^{\text{qi}}\circ\beta(p)) \le K
\end{equation*}
for any $(c,p)\in \mathcal{L}_{n}(G,\mathcal{C}_{M})$, as well as 
\begin{equation*}
    d((c,p), q\circ q'(c,p))=d((c,p), (c,\beta\circ\beta^{\text{qi}}(p)))=d_{G}(p,\beta\circ\beta^{\text{qi}}(p)) \le K
\end{equation*}
for any $(c,p)\in \mathcal{L}_{m}(H,\mathcal{C}_{N}) $. 

\smallskip

Notice that the map $q'$ satisfies also points \textit{(i)-(iii)} of the statement. Points \textit{(i)}, \textit{(ii)} are clearly satisfied. To prove \textit{(iii)}, fix two colorings $c_{1},c_{2}\in \Z_{m}^{(H/N)}$. We know by assumption that 
\begin{equation*}
    d_{\text{Haus}}\left(\overline{\beta}(\text{supp}(\alpha^{-1}(c_{1})^{-1}\alpha^{-1}(c_{2}))), \text{supp}(c_{1}^{-1}c_{2})\right) \le Q
\end{equation*}
which implies that 
\begin{equation*}
d_{\text{Haus}}\left(\overline{\beta}^{\text{qi}}\circ\overline{\beta}(\text{supp}(\alpha^{-1}(c_{1})^{-1}\alpha^{-1}(c_{2}))), \overline{\beta}^{\text{qi}}(\text{supp}(c_{1}^{-1}c_{2}))\right) \le C\cdot Q+K.
\end{equation*}
On the other hand, the Hausdorff distance between $\overline{\beta}^{\text{qi}}\circ\overline{\beta}(\text{supp}(\alpha^{-1}(c_{1})^{-1}\alpha^{-1}(c_{2})))$ and $\text{supp}(\alpha^{-1}(c_{1})^{-1}\alpha^{-1}(c_{2}))$ is at most $K$, so we conclude that the Hausdorff distance between $\text{supp}(\alpha^{-1}(c_{1})^{-1}\alpha^{-1}(c_{2}))$ and $\overline{\beta}^{\text{qi}}(\text{supp}(c_{1}^{-1}c_{2}))$ is at most $C\cdot Q+2K$. Recalling that $\overline{\beta}^{\text{qi}}=\overline{\beta^{\text{qi}}}$ shows \textit{(iii)} for $q'$. Henceforth, we can reproduce the argument we did for $q$ to conclude that $q'$ is $\max(C+K, (2Q'+1)\cdot D'^{Q'})-$Lipschitz, where $Q'\defeq C\cdot Q+2K$ and where $D'\ge 1$ is an integer larger than the maximal degree of a vertex of $\text{Cay}(G/M, \pi_{G}(T))$. 

\smallskip

It follows that 
\begin{align*}
    d(q(c_{1},p_{1}),&q(c_{2},p_{2})) \ge \frac{1}{\max(C+K, (2Q'+1)\cdot D'^{Q'})}d(q'\circ q(c_{1},p_{1}), q'\circ q(c_{2},p_{2})) \\
    &\ge \frac{1}{\max(C+K, (2Q'+1)\cdot D'^{Q'})}d((c_{1},p_{1}),(c_{2},p_{2}))-\frac{2K}{\max(C+K, (2Q'+1)\cdot D'^{Q'})}
\end{align*}
for any $(c_{1},p_{1}), (c_{2},p_{2})\in \mathcal{L}_{n}(G,\mathcal{C}_{M})$; and that 
\begin{align*}
    d(q'(c_{1},p_{1}),&q'(c_{2},p_{2})) \ge \frac{1}{\max(C+K, (2Q+1)\cdot D^{Q})}d(q\circ q'(c_{1},p_{1}), q\circ q'(c_{2},p_{2})) \\
    &\ge \frac{1}{\max(C+K, (2Q+1)\cdot D^{Q})}d((c_{1},p_{1}),(c_{2},p_{2}))-\frac{2K}{\max(C+K, (2Q+1)\cdot D^{Q})}
\end{align*}
for any $(c_{1},p_{1}), (c_{2},p_{2})\in \mathcal{L}_{m}(H,\mathcal{C}_{N})$. Thus $q$ is an aptolic quasi-isometry, with $q'$ as a quasi-inverse.
\end{proof}

We can therefore deduce the following consequence.

\begin{proposition}\label{prop5.6}
Let $n,m\ge 2$. Let $G$ and $H$ be finitely generated groups with normal subgroups $M\lhd G$, $N\lhd H$. Suppose $n=k^{r}$, $m=k^{s}$ for some $k,r,s\ge 1$. If there exists a quasi-isometry of pairs $f\colon (G,M)\longrightarrow (H,N)$ such that $\overline{f}\colon G/M\longrightarrow H/N$ is quasi-$\frac{s}{r}$-to-one, then there exists an aptolic quasi-isometry
\begin{equation*}
    \mathcal{L}_{n}(G,\mathcal{C}_{M})\longrightarrow \mathcal{L}_{m}(H,\mathcal{C}_{N}).
\end{equation*}
\end{proposition}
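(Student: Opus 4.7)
The strategy is to reduce to Proposition~\ref{prop5.5} by setting $\beta\defeq f$ and constructing an appropriate bijection $\alpha$. In this way condition~\textit{(ii)} is automatic, and only \textit{(i)} and \textit{(iii)} need to be verified. The central idea is to exploit the numerical coincidence $n^{s}=k^{rs}=m^{r}$ together with the partitions provided by Theorem~\ref{thm2.3}.

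First, I would apply Theorem~\ref{thm2.3} to the quasi-$\frac{s}{r}$-to-one quasi-isometry $\overline{f}\colon G/M\longrightarrow H/N$. This yields partitions $\mathcal{P}_{G/M}$ of $G/M$ into pieces of size $s$ and $\mathcal{P}_{H/N}$ of $H/N$ into pieces of size $r$, both with uniformly bounded diameters $\le D$, together with a bijection $\psi\colon \mathcal{P}_{G/M}\longrightarrow\mathcal{P}_{H/N}$ and a constant $L\ge 0$ such that $\overline{f}$ lies at distance $\le L$ from a map $g$ satisfying $g(P)\subset\psi(P)$ for every $P\in\mathcal{P}_{G/M}$. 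Since $|\Z_{n}^{s}|=k^{rs}=|\Z_{m}^{r}|$, I can fix a set-theoretic bijection $\phi\colon \Z_{n}^{s}\longrightarrow\Z_{m}^{r}$ with $\phi(0)=0$, together with arbitrary orderings of the elements of each piece in both partitions.

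Next, I would define $\alpha\colon \Z_{n}^{(G/M)}\longrightarrow \Z_{m}^{(H/N)}$ by specifying its restriction to each $\psi(P)$: reading $c|_{P}$ as an element of $\Z_{n}^{s}$ via the chosen ordering of $P$, I set $\alpha(c)|_{\psi(P)}\defeq \phi(c|_{P})$. The condition $\phi(0)=0$ ensures that $\alpha(c)$ is finitely supported whenever $c$ is, and the bijectivity of $\phi$ and $\psi$ gives at once that $\alpha$ is a bijection, settling condition~\textit{(i)}. For condition~\textit{(iii)}, the key point is that, because $\phi$ is a bijection, $c_{1}|_{P}\neq c_{2}|_{P}$ if and only if $\alpha(c_{1})|_{\psi(P)}\neq\alpha(c_{2})|_{\psi(P)}$. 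Combining this equivalence with $\overline{f}(P)\subset \psi(P)^{+L}$ and the uniform diameter bound $D$, a direct computation shows that both sets $\overline{f}(\text{supp}(c_{1}^{-1}c_{2}))$ and $\text{supp}(\alpha(c_{1})^{-1}\alpha(c_{2}))$ sit inside the $(D+L)$-neighborhood of $\bigcup\{\psi(P) : c_{1}|_{P}\neq c_{2}|_{P}\}$ and intersect every $\psi(P)$ appearing in this union. This yields a Hausdorff bound independent of $c_{1},c_{2}$, so Proposition~\ref{prop5.5} applies and produces the desired aptolic quasi-isometry.

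The only genuine subtlety I anticipate is the need to impose $\phi(0)=0$ in order to preserve finite support; once this is arranged, the geometric estimates for \textit{(iii)} are routine bookkeeping with the diameter bounds $D$ and the constant $L$, and the reduction to Proposition~\ref{prop5.5} does the rest of the work.
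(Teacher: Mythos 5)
Your proposal is correct and follows essentially the same route as the paper: apply Theorem~\ref{thm2.3} to get the partitions $\mathcal{P}_{G/M}$, $\mathcal{P}_{H/N}$ and the bijection $\psi$, fix a bijection $\Z_{n}^{s}\to\Z_{m}^{r}$ preserving zero, define $\alpha$ piecewise, and invoke Proposition~\ref{prop5.5}. The only point where you diverge is that you take $\beta\defeq f$ directly and absorb the constant $L$ coming from the distance between $\overline{f}$ and the partition-compatible map $g$ into the Hausdorff bound for condition~\textit{(iii)}; the paper instead replaces $f$ by a modified map $h$ (sending $pM$ into a coset of the precise $\psi$-image) so that $\overline{h}$ sends each piece $P$ into $\psi(P)$ exactly, making the Hausdorff estimate for~\textit{(iii)} slightly cleaner. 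Both variants are valid, since Proposition~\ref{prop5.5} only demands a uniform bound, not an exact containment, so this is a cosmetic rather than a substantive difference.
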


We emphasize that, in this statement as well, the subgroups $N$ and $M$ need not to be finitely generated.

\begin{proof}
As usual, fix $T$ (resp. $S$) a finite generating set for $G$ (resp. for $H$) and let $\pi_{G}$ (resp. $\pi_{H}$) be the canonical projection onto $G/M$ (resp. $H/N$). Up to finite distance, we may assume that $f(pM)\subset f(p)N$ for any $p\in G$. 

\smallskip

As $\overline{f}\colon G/M\longrightarrow H/N$ is quasi-$\frac{s}{r}$-to-one, we know from Theorem~\ref{thm2.3} that there exists a partition $\mathcal{P}$ (resp. $\mathcal{Q}$) of $\text{Cay}(G/M, \pi_{G}(T))$ (resp. $\text{Cay}(H/N, \pi_{H}(S))$) with uniformly bounded pieces of size $s$ (resp. of size $r$), a bijection $\psi\colon \mathcal{P}\longrightarrow \mathcal{Q}$ and a map $\beta\colon G/M\longrightarrow H/N$ at finite distance, say $C\ge 0$, from $\overline{f}$ such that $\beta(P)\subset \psi(P)$ for any $P\in\mathcal{P}$. 

\smallskip

Now, given $p\in G$, define $h(p)\in H$ as a point of the coset $\beta(pM)\subset H$ minimizing the distance to $f(p)$. Such a point exists, and it is at distance $\le C$ from $f(p)$: indeed, as 
\begin{equation*}
    d_{H/N}(\beta(pM), \overline{f}(pM)) \le C
\end{equation*}
any point of $\overline{f}(pM)=f(p)N$ is at distance $\le C$ in $H$ from a point in $\beta(pM)\subset H$. In particular, the map $h\colon G\longrightarrow G$ thus defined is itself a quasi-isometry of pairs, and by construction it satisfies $\overline{h}=\beta$. 

\smallskip

Next, fix a bijection $\sigma\colon \Z_{n}^{s}\longrightarrow \Z_{m}^{r}$ such that $\sigma(0)=0$, and define a bijection $\alpha\colon \Z_{n}^{(G/M)}\longrightarrow \Z_{m}^{(H/N)}$ such that $\alpha$ sends $\mathcal{L}(P)$ to $\mathcal{L}(\psi(P))$ through $\sigma$, for any $P\in\mathcal{P}$. Define
\begin{align*}
    q\colon \mathcal{L}_{n}(G,\mathcal{C}_{M}) &\longrightarrow \mathcal{L}_{m}(H,\mathcal{C}_{N}) \\
    (c,p)&\longmapsto (\alpha(c), h(p)).
\end{align*}
We show that $q$ is an aptolic quasi-isometry by checking points \textit{(i)-(iii)} of Proposition~\ref{prop5.5}. Points \textit{(i)} and \textit{(ii)} are satisfied by construction. For \textit{(iii)}, let $c_{1}c_{2}\in \Z_{n}^{(G,\mathcal{C}_{M})}$, and denote $P_{1},\dots, P_{n}$ the pieces of $\mathcal{P}$ containing points of $\text{supp}(c_{1}^{-1}c_{2})$. Then the pieces
$\psi(P_{1}),\dots, \psi(P_{n})$ are the pieces of $\mathcal{Q}$ containing the points of $\text{supp}(\alpha(c_{1})^{-1}\alpha(c_{2}))$. Because the pieces of $\mathcal{Q}$ are uniformly bounded, we deduce that there is $D\ge 0$ (independent of $c_{1},c_{2}$) such that
\begin{equation*}
    d_{\text{Haus}}(\text{supp}(\alpha(c_{1})^{-1}\alpha(c_{2})), \psi(P_{1})\cup\dots\cup\psi(P_{n})) \le D.
\end{equation*}
Also, since $\beta$ sends each piece $P$ of $\mathcal{P}$ into $\psi(P)$, we see that
\begin{equation*}
    \overline{h}(\text{supp}(c_{1}^{-1}c_{2}))=\beta(\text{supp}(c_{1}^{-1}c_{2}))
\end{equation*}
lies in $\psi(P_{1})\cup\dots\cup\psi(P_{n})$ and has a point in each of these pieces. Once again, we deduce that 
\begin{equation*}
    d_{\text{Haus}}(\beta(\text{supp}(c_{1}^{-1}c_{2})), \psi(P_{1})\cup\dots\cup\psi(P_{n}))\le D'
\end{equation*}
for some $D'\ge 0$ independent of $c_{1}$ and $c_{2}$. We conclude that the Hausdorff distance between $\overline{h}(\text{supp}(c_{1}^{-1}c_{2}))$ and $\text{supp}(\alpha(c_{1})^{-1}\alpha(c_{2}))$ is bounded above by a constant, independently of $c_{1},c_{2}$. Thus \textit{(iii)} of Proposition~\ref{prop5.5} holds as well, and we deduce from the latter that $q\colon \mathcal{L}_{n}(G,\mathcal{C}_{M})\longrightarrow \mathcal{L}_{m}(H,\mathcal{C}_{N})$ is an aptolic quasi-isometry, as desired.
\end{proof}

\subsection{Proof of Theorem~\ref{thm1.3}}\label{subsection5.3} We can now deduce our main theorem from the introduction.

\begin{proof}[Proof of Theorem~\ref{thm1.3}]
Assume that $E\wr_{G/M}G$ and $F\wr_{H/N}H$ are quasi-isometric. By Theorem~\ref{theorem4.1}, we know that $|E|$ and $|F|$ have the same prime divisors and that there is a quasi-isometry of pairs $\beta\colon (G,M)\longrightarrow (H,N)$, which is the desired conclusion in the case where $M$ is not co-amenable in $G$. If $M$ is co-amenable in $G$, we know from Theorem \ref{theorem4.1} that $|E|=n^{s}$ and $|F|=n^{r}$ are powers of a common number and that $\overline{\beta}\colon G/M\longrightarrow H/N$ must be quasi-$\frac{s}{r}$-to-one.

\smallskip

Conversely, in the case where $M$ is co-amenable, the conclusion follows from Proposition~\ref{prop5.6}, while if $M$ is not co-amenable, the conclusion follows from Corollary~\ref{cor5.4}.
\end{proof}

\bigskip

\section{Consequences of Theorem~\ref{thm1.3}}\label{section6}

\subsection{Proof of Corollaries~\ref{cor1.4} and~\ref{cor1.5}}\label{subsection6.1} We start this part with some concrete applications of our criterion.

\begin{proof}[Proof of Corollary~\ref{cor1.4}]
The subgroup $M=\Z^{m-n}$ (resp. $N=\Z^{m'-n'}$) has degree growth $m-n\le m-2$ (resp. $m'-n'\le m'-2$), so that its coarsely embedded subspaces do not coarsely separate $G=\Z^m$ (resp. $H=\Z^{m'}$) by Theorem~\ref{thm2.6}. 
Additionally, notice that given any $k>0$, there is a quasi-isometry of pairs $(\Z^m, \Z^{m-n})\longrightarrow (\Z^m, \Z^{m-n})$ inducing a quasi-$k$-to-one quasi-isometry $\Z^n\longrightarrow \Z^n$. Indeed, since $\text{Sc}(\Z^n)=\R_{>0}$, fix any quasi-$k$-to-one quasi-isometry $f\colon \Z^{n}\longrightarrow \Z^n$, and extend it to $\Z^{m}$ by setting 
\begin{align*}
    g\colon \Z^m=\Z^{m-n}\times\Z^n &\longrightarrow   \Z^{m-n}\times\Z^n=\Z^{m}\\
    (x,y)&\longmapsto (x,f(y)).
\end{align*}
Then $g$ is a quasi-isometry of pairs and $\overline{g}=f$. Hence the conclusion follows from Theorem~\ref{thm1.3}.
\end{proof}

As highlighted in the introduction, since $\text{Sc}(\Z^{n})=\R_{>0}$ for any $n\ge 1$, this result is a particular case of Corollary~\ref{cor1.5}, that we prove now using the same argument.

\begin{proof}[Proof of Corollary~\ref{cor1.5}]
The left to right direction is the first point of Theorem~\ref{thm1.3}. 

\smallskip

Conversely, assume that $|E|=n^{r}$, $|F|=n^{s}$ for some integers $n,r,s\ge 1$, and that $\frac{s}{r}\in \text{Sc}(K)$. Fix then $f\colon K\longrightarrow K$ a quasi-$\frac{s}{r}$-to-one quasi-isometry, and extend it to $G=M\times K$ via 
\begin{align*}
    g\colon M\times K &\longrightarrow M\times K \\
    (m,k)&\longmapsto (m,f(k)).
\end{align*}
Then $g$ is a quasi-isometry of pairs $(G,M)\longrightarrow (G,M)$, and by construction $\overline{g}=f$. Thus we deduce from Theorem \ref{thm1.3} that $E\wr_{K}G$ and $F\wr_{K}G$ are quasi-isometric. 
\end{proof}

\subsection{BiLipschitz equivalences of permutational wreath products}\label{subsection6.2} Let us now shift our attention to Corollary~\ref{cor1.8}. It will be a straightforward consequence of the next result:

\begin{proposition}\label{prop6.1}
Let $E$, $F$ be two non-trivial finite groups. Let $G$, $H$ be finitely generated groups, with normal subgroups $M\lhd G$, $N\lhd H$. Suppose that $M$ is co-amenable in $G$.
If there exist $n,r,s\ge 1$ such that $|E|=n^{r}$, $|F|=n^{s}$ and there exists a quasi-one-to-one quasi-isometry of pairs $f\colon (G,M)\longrightarrow (H,N)$ such that $\overline{f}\colon G/M\longrightarrow H/N$ is quasi-$\frac{s}{r}$-to-one, then $E\wr_{G/M}G$ and $F\wr_{H/N}H$ are biLipschitz equivalent.
\end{proposition}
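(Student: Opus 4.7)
The plan is to upgrade the aptolic quasi-isometry produced by Proposition~\ref{prop5.6} into a genuine biLipschitz equivalence, by combining the extra quasi-one-to-one hypothesis on $f$ with Whyte's theorem.

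First, I would apply Proposition~\ref{prop5.6} to obtain an aptolic quasi-isometry
\[
q \colon \mathcal{L}_{|E|}(G, \mathcal{C}_{M}) \longrightarrow \mathcal{L}_{|F|}(H, \mathcal{C}_{N}), \quad (c, p) \longmapsto (\alpha(c), h(p)),
\]
where $\alpha$ is a bijection on the set of colorings built from a bijection $\sigma \colon E^{s} \longrightarrow F^{r}$ (which exists since $|E|^{s} = n^{rs} = |F|^{r}$) together with the bijection $\psi \colon \mathcal{P} \longrightarrow \mathcal{Q}$ of the partitions with uniformly bounded pieces provided by Theorem~\ref{thm2.3}, and where $h \colon G \longrightarrow H$ is a quasi-isometry lying at finite distance from $f$ by construction.

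Next, since $f$ is quasi-one-to-one by hypothesis and the scaling factor of a quasi-isometry is invariant under bounded perturbations, $h$ is also quasi-one-to-one. Whyte's theorem (Theorem~\ref{thm2.4}) then furnishes a bijection $\widetilde{h} \colon G \longrightarrow H$ lying at finite distance from $h$. I would then set
\[
\widetilde{q}(c, p) \defeq (\alpha(c), \widetilde{h}(p)).
\]
Since $\alpha$ and $\widetilde{h}$ are both bijections, $\widetilde{q}$ is a bijection from $E \wr_{G/M} G$ onto $F \wr_{H/N} H$, and by construction it lies at bounded distance from $q$, hence it remains a quasi-isometry.

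Finally, the conclusion follows from the general observation that any bijective quasi-isometry between bounded degree graphs is automatically a biLipschitz equivalence: if $\widetilde{q}$ is a $(C, K)$-quasi-isometry, then adjacent vertices in the source are sent to vertices at distance at most $C + K$, whence Lipschitz-ness via Lemma~\ref{lm2.1}, and the same argument applied to $\widetilde{q}^{-1}$ gives the reverse estimate. The main technical point to verify is that the replacement of $h$ by the bijection $\widetilde{h}$ does not destroy Lipschitz control along edges of $E \wr_{G/M} G$ that merely change the color of a single coset; but since such edges are mapped by $q$ at uniformly bounded distance and $\widetilde{q}$ differs from $q$ only in the second coordinate by a bounded amount, this control is preserved, concluding the argument.
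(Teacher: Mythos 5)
Your proof is correct and takes a genuinely different, and arguably cleaner, route than the paper. The paper's strategy is to first establish Proposition~\ref{prop6.2}, which shows that an aptolic quasi-isometry $q(c,p)=(\alpha(c),\beta(p))$ inherits the quasi-$k$-to-one property from $\beta$, and then to apply Whyte's theorem (Theorem~\ref{thm2.4}) directly to the full aptolic map $q\colon E\wr_{G/M}G\to F\wr_{H/N}H$ to obtain a bijection at bounded distance. Your approach instead applies Whyte's theorem only to the base map $h\colon G\to H$ (once you note $h$ is quasi-one-to-one, being at bounded distance from $f$), replaces $h$ by the resulting bijection $\widetilde h$, and observes that $\widetilde q=(\alpha,\widetilde h)$ is automatically a bijection because $\alpha$ is already a bijection on colorings by the construction in Proposition~\ref{prop5.6}. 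This bypasses Proposition~\ref{prop6.2} entirely. The remaining pieces — that $\widetilde q$ stays within bounded distance of $q$ because the two maps share the first coordinate and differ in the second by the uniform bound $d(h,\widetilde h)$, and that a bijective quasi-isometry between bounded degree graphs is biLipschitz — are standard and correctly invoked. The only minor redundancy is the final paragraph: once you have observed that $\widetilde q$ lies at bounded distance from the quasi-isometry $q$, the Lipschitz control on color-changing edges is already taken care of, so no separate verification is needed.
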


This proposition is proved using the same techniques as Theorem~\ref{thm1.3}, with the following additional observation:

\begin{proposition}\label{prop6.2}
Let $n,m\ge 2$. Let $G,H$ be finitely generated groups with normal subgroups $M\lhd G$, $N\lhd H$ of infinite index. Suppose that $\alpha\colon \Z_{n}^{(G/M)} \longrightarrow \Z_{m}^{(H/N)}$, $\beta\colon G\longrightarrow H$ are two maps such that
\begin{align*}
    q\colon \mathcal{L}_{n}(G,\mathcal{C}_{M}) &\longrightarrow \mathcal{L}_{m}(H,\mathcal{C}_{N}) \\
    (c,p)&\longrightarrow (\alpha(c),\beta(p))
\end{align*}
is an aptolic quasi-isometry. If $\beta$ is quasi-$k$-to-one for some $k>0$, then so is $q$. 
\end{proposition}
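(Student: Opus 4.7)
The plan is to verify Definition~\ref{def2.2} directly for $q$ with the same scaling factor $k$ and with constant $C_{q}=C_{\beta}$, where $C_{\beta}$ is a constant witnessing that $\beta$ is quasi-$k$-to-one. Fix a finite subset $A\subset \mathcal{L}_{m}(H,\mathcal{C}_{N})$ and, for each $d\in\Z_{m}^{(H/N)}$, define the slice $A_{d}\defeq\lbrace p\in H : (d,p)\in A\rbrace\subset H$. Then $A=\bigsqcup_{d}\lbrace d\rbrace\times A_{d}$, only finitely many $A_{d}$ are non-empty, and $|A|=\sum_{d}|A_{d}|$. The objective is to transport the hypothesis on $\beta$ slice-by-slice, and to compare the resulting boundary terms with $|\partial A|$.

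The first key step is to observe that $\alpha$ is necessarily a bijection. Let $q'(c,p)=(\alpha'(c),\beta'(p))$ be an aptolic quasi-inverse of $q$ and let $B\ge 0$ satisfy $d(q'\circ q,\text{Id})\le B$. For any $(c,p)$, the inequality $d((c,p),(\alpha'\alpha(c),\beta'\beta(p)))\le B$ forces the finite set $\text{supp}(c^{-1}\alpha'\alpha(c))$ to consist only of $M$-cosets intersecting the ball of radius $B$ around $p$. Since this must hold for every $p\in G$ and $M$ has infinite index in $G$, no finite union of $M$-cosets can be quasi-dense, whence $\alpha'\alpha(c)=c$; the symmetric argument yields $\alpha\alpha'=\text{Id}$. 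This is the same observation carried out in Claim~\ref{claim4.13}, adapted to the present setting, and constitutes the main (mild) obstacle of the proof.

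With $\alpha$ bijective, we can decompose
\begin{equation*}
    q^{-1}(A)=\bigsqcup_{c\in\Z_{n}^{(G/M)}}\lbrace c\rbrace\times\beta^{-1}(A_{\alpha(c)}),
\end{equation*}
and reindexing by $d=\alpha(c)$ yields $|q^{-1}(A)|=\sum_{d}|\beta^{-1}(A_{d})|$. Applying the quasi-$k$-to-one hypothesis for $\beta$ to each finite slice $A_{d}\subset H$ gives $\bigl|k|A_{d}|-|\beta^{-1}(A_{d})|\bigr|\le C_{\beta}|\partial_{H}A_{d}|$, and summing in $d$ via the triangle inequality produces
\begin{equation*}
    \bigl|k|A|-|q^{-1}(A)|\bigr|\le C_{\beta}\sum_{d}|\partial_{H}A_{d}|.
\end{equation*}

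The proof concludes by bounding $\sum_{d}|\partial_{H}A_{d}|$ by $|\partial A|$. If $p\in\partial_{H}A_{d}$, then $(d,p)\notin A$ and $(d,p)$ is adjacent in $\mathcal{L}_{m}(H,\mathcal{C}_{N})$ to some $(d,p')\in A$ via a horizontal edge (since the colorings coincide and $p,p'$ are adjacent in $H$); thus $(d,p)\in\partial A$. The assignment $(d,p)\mapsto (d,p)$ gives a well-defined injection $\bigsqcup_{d}\partial_{H}A_{d}\hookrightarrow\partial A$, and hence $\sum_{d}|\partial_{H}A_{d}|\le|\partial A|$. Combining this with the previous inequality gives $\bigl|k|A|-|q^{-1}(A)|\bigr|\le C_{\beta}|\partial A|$ for every finite $A$, which is exactly the quasi-$k$-to-one property for $q$.
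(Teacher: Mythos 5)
Your proof is correct and follows essentially the same route as the paper: slice $A$ by the coloring coordinate, apply the quasi-$k$-to-one hypothesis for $\beta$ to each slice $A_{d}\subset H$, and absorb the summed boundary terms into $|\partial A|$ via the injection $\bigsqcup_{d}\lbrace d\rbrace\times\partial_{H}A_{d}\hookrightarrow\partial A$. Your explicit verification that $\alpha$ is a bijection (the step where the infinite-index hypothesis actually enters, and which justifies the slicewise identity $|q^{-1}(\lbrace d\rbrace\times A_{d})|=|\beta^{-1}(A_{d})|$) is left implicit in the paper's argument, so spelling it out is a welcome point of rigor rather than a genuinely different approach.
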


\begin{proof}
Fix a finite set $A\subset \mathcal{L}_{m}(H,\mathcal{C}_{N})$. Let $\mathcal{S} \subset F^{(H,\mathcal{C}_{N})}$ denote the set of colorings appearing as first coordinates of elements of $A$, and for any $c\in \mathcal{S}$, let $A_{c}\subset A$ denote the subset of elements of $A$ having $c$ as first coordinate. Then 
\begin{equation*}
    |A|=\sum_{c\in\mathcal{S}}|A_{c}|, \; |q^{-1}(A)|=\sum_{c\in\mathcal{S}}|q^{-1}(A_{c})|
\end{equation*}
and letting $B_{c}\defeq \pi_{H}(A_{c})$, where $\pi_{H}\colon \mathcal{L}_{m}(H,\mathcal{C}_{N})\longrightarrow H$ is the canonical projection, we have $|q^{-1}(A_{c})|=|\beta^{-1}(B_{c})|$ for any $c\in\mathcal{S}$. Since $\beta$ is quasi-$k$-to-one, there exists a constant $C>0$ such that 
\begin{align*}
    \left|k|A|-|q^{-1}(A)|\right|&\le \sum_{c\in\mathcal{S}}\left|k|A_{c}|-|q^{-1}(A_{c})|\right| \\
    &=\sum_{c\in\mathcal{S}}\left|k|B_{c}|-|\beta^{-1}(B_{c})|\right| \\
    &\le\sum_{c\in\mathcal{S}} C\cdot|\partial_{H} B_{c}|. 
\end{align*}
It remains to notice that $\bigsqcup_{c\in\mathcal{S}}\lbrace c\rbrace\times \partial_{H}B_{c} \subset \partial A$ to deduce that 
\begin{equation*}
    \left|k|A|-|q^{-1}(A)|\right| \le C\cdot |\partial A|
\end{equation*}
for some constant $C>0$ independent of $A$. Thus $q$ is quasi-$k$-to-one and the proof is complete. 
\end{proof}

\begin{proof}[Proof of Proposition~\ref{prop6.1}]
Assume that $M$ is co-amenable in $G$, that $|E|=n^{r}$, $|F|=n^{s}$ for some $n,r,s\ge 1$ and that there is a quasi-one-to-one quasi-isometry of pairs 
\begin{equation*}
    f \colon (G,M)\longrightarrow (H,N)
\end{equation*}
such that $\overline{f}\colon G/M\longrightarrow H/N$ is quasi-$\frac{s}{r}$-to-one. Up to finite distance, we may assume that $f(pM)\subset f(p)N$ for any $p\in G$. We proceed exactly as in the proof of Proposition~\ref{prop5.6}: we use Theorem~\ref{thm2.3} to get partitions $\mathcal{P},\mathcal{Q}$ of $G/M$ and $H/N$ respectively, a bijection $\psi\colon \mathcal{P}\longrightarrow \mathcal{Q}$ and a map $\beta\colon G/M\longrightarrow H/N$, sending any piece $P\in\mathcal{P}$ into $\psi(\mathcal{P})$, that lies at finite distance from $\overline{f}$. Then, we can construct a map $h\colon G\longrightarrow H$ at finite distance from $f$ such that $\overline{h}=\beta$, and this map allows us to define an aptolic quasi-isometry 
\begin{equation*}
    q\colon E\wr_{G/M}G\longrightarrow F\wr_{H/N}H
\end{equation*}
as in the proof of Proposition~\ref{prop5.6}. Since now $h$ is at finite distance from $f$, it is quasi-one-to-one, so $q$ is also quasi-one-to-one according to Proposition~\ref{prop6.1}. We conclude from Theorem~\ref{thm2.4} that this quasi-isometry lies at finite distance from a bijection 
\begin{equation*}
    E\wr_{G/M}G\longrightarrow F\wr_{H/N}H
\end{equation*}
which is the desired biLipschitz equivalence.
\end{proof}

We are in position to deduce Corollary~\ref{cor1.8}. We emphasize here that, even though the conclusion is the same as Corollary \ref{cor1.4}, the same proof does not work: indeed, given $|E|=n^{r}$ and $|F|=n^{s}$, we do not only need a quasi-isometry of pairs inducing a quasi-$\frac{s}{r}$-to-one at the level of quotients, but we must require this quasi-isometry of pairs to be quasi-one-to-one. Proceeding as in the proof of Corollary~\ref{cor1.4} would only give us a quasi-$\frac{s}{r}$-to-one quasi-isometry of the base groups.

\smallskip

We must then explicitly construct such a quasi-isometry of pairs, in such a way that the induced map satisfies the required scaling condition. Let us first explain the proof below on a simple example. Consider $\Z^2$ with its $\Z-$cosets seen as vertical lines. Given a vertical line $\ell=(k,0)\Z \subset \Z^2$, the idea is to "split" it into two vertical lines $(2k,0)\Z$ and $(2k+1,0)\Z$, sending bijectively the "even part" of $\ell$ to $(2k,0)\Z$ and the "odd part" of $\ell$ to $(2k+1,0)\Z$. More formally, we define the bijection 
\begin{align*}
    \gamma\colon \Z^2 &\longrightarrow \Z^2 \\
                 (i,j) &\longmapsto \begin{cases}
                     (2i, \frac{j}{2}) &\mbox{if $j$ is even} \\
                     (2i+1, \frac{j+1}{2}) &\mbox{if $j$ is odd}
                 \end{cases}
\end{align*}
that sends the coset $(k,0)\Z$ at Hausdorff distance at most one from the coset $(2k,0)\Z$, so that the induced map $\overline{\gamma}(k)=2k$ is quasi-$\frac{1}{2}$-to-one. Thus there is a biLipschitz equivalence
\begin{equation*}
    \Z_{2}^{2}\wr_{\Z}\Z^2 \longrightarrow \Z_{2}\wr_{\Z}\Z^2.
\end{equation*}

In higher dimensions, the idea is the same, except that we must split a coset into more cosets if we have a bigger lamp group. 

\begin{proof}[Proof of Corollary~\ref{cor1.8}]
If $E\wr_{\Z^{n}}\Z^{m}$ and $F\wr_{\Z^{n}}\Z^{m}$ are biLipschitz equivalent, they are quasi-isometric and we know from Corollary~\ref{cor1.4} that $|E|,|F|$ are powers of a common number. 

\smallskip

Conversely, suppose that $|E|=k^{r}$ and $|F|=k^{s}$. Without loss of generality, we may assume that $E=\Z_{k}^{r}$ and $F=\Z_{k}^{s}$. As a first reduction, notice that it is enough to prove that there is a biLipschitz equivalence
\begin{equation*}
    \Z_{k}^{r}\wr_{\Z^{n}}\Z^{m}\longrightarrow \Z_{k}\wr_{\Z^{n}}\Z^{m}.
\end{equation*}
To prove this claim, following Proposition~\ref{prop6.1}, it is enough to exhibit a quasi-one-to-one quasi-isometry $\gamma\colon \Z^{m}\longrightarrow \Z^{m}$ quasi-preserving the cosets of $\Z^{m-n}$, such that the induced quasi-isometry $\overline{\gamma}\colon \Z^{n}\longrightarrow \Z^{n}$ is quasi-$\frac{1}{r}$-to-one. Consider the map 
\begin{align*}
    \gamma\colon \Z^{m}&\longrightarrow \Z^{m} \\
    (x_{1},\dots, x_{m})&\longmapsto 
    \begin{cases}
        \left(rx_{1},x_{2},\dots,x_{m-1},\frac{x_{m}}{r}\right)  &\mbox{if}\; x_{m}\equiv 0 \;[r] \\
        \left(rx_{1}+1,x_{2},\dots,x_{m-1},\frac{x_{m}+(r-1)}{r}\right) &\mbox{if}\; x_{m}\equiv 1 \;[r] \\
        \quad\quad\quad\quad\quad\quad\vdots & \quad\quad\vdots \\
        \left(rx_{1}+(r-1),x_{2},\dots,x_{m-1},\frac{x_{m}+1}{r}\right) &\mbox{if} \;x_{m}\equiv r-1 \;[r]
    \end{cases}
\end{align*}
and its inverse 
\begin{align*}
    \eta\colon \Z^{m}&\longrightarrow \Z^{m} \\
    (p_{1},\dots, p_{m})&\longmapsto \begin{cases}
        \left(\frac{p_{1}}{r},p_{2},\dots,p_{m-1},rp_{m}\right)  &\mbox{if}\; p_{1}\equiv 0 \;[r] \\
        \left(\frac{p_{1}-1}{r},p_{2},\dots,p_{m-1},rp_{m}-(r-1)\right)  &\mbox{if}\; p_{1}\equiv 1 \;[r] \\
        \quad\quad\quad\quad\quad\quad\vdots & \quad\quad\vdots \\
        \left(\frac{p_{1}-(r-1)}{r},p_{2},\dots,p_{m-1},rp_{m}-1\right)  &\mbox{if}\; p_{1}\equiv r-1 \;[r] \\
        \end{cases}.
\end{align*}
One directly checks with Lemma~\ref{lm2.1} that $\gamma$ and $\eta$ are both Lipschitz maps, inverses of each other, and thus are biLipschitz equivalences. In particular, they are quasi-one-to-one, and they induce two maps given by
\begin{align*}
    \overline{\gamma}\colon \Z^{n}&\longrightarrow \Z^{n} \\
    (x_{1},\dots, x_{n}) &\longmapsto (rx_{1},x_{2},\dots, x_{n})
\end{align*}
and
\begin{align*}
    \overline{\eta}\colon \Z^{n}&\longrightarrow \Z^{n} \\
    (p_{1},\dots, p_{n})&\longmapsto \begin{cases}
        \left(\frac{p_{1}}{r},p_{2},\dots,p_{n}\right)  &\mbox{if}\; p_{1}\equiv 0 \;[r] \\
        \left(\frac{p_{1}-1}{r},p_{2},\dots,p_{n}\right)  &\mbox{if}\; p_{1}\equiv 1 \;[r] \\
        \quad\quad\quad\quad\vdots  & \quad\quad\vdots \\
        \left(\frac{p_{1}-(r-1)}{r},p_{2},\dots,p_{n}\right)  &\mbox{if}\; p_{1}\equiv r-1 \;[r] \\
        \end{cases}.
\end{align*}
We know from Proposition~\ref{prop2.11}\textit{(ii)} that $\overline{\eta}$ is a quasi-inverse of $\overline{\gamma}$, and given any $(p_{1},\dots, p_{n})\in\Z^n$, we see that $\overline{\eta}^{-1}(\lbrace (p_{1},\dots, p_{n})\rbrace)$ equals
\begin{equation*}
    \lbrace (rp_{1},\dots,p_{n}), (rp_{1}+1,p_{2},\dots, p_{n}),\dots, (rp_{1}+(r-1),p_{2},\dots, p_{n}) \rbrace. 
\end{equation*}
In particular, $\overline{\eta}$ is $r$-to-one, a fortiori quasi-$r$-to-one, and it follows from~\cite[Proposition~3.6]{GT22} that $\overline{\gamma}$ is quasi-$\frac{1}{r}$-to-one, as desired. This concludes the proof. 
\end{proof}

\subsection{More wreath products}\label{subsection6.3} Let us now focus on Corollary~\ref{cor1.9}. As indicated in the introduction, we decompose its proof into several intermediate steps, which individually use less assumptions than in the statement. We start with the flexibility part: 

\begin{proposition}\label{prop6.3}
Let $n,m,p,q\ge 2$. Let $H$ be a finitely generated with a normal subgroup $N\lhd H$ of infinite index. Suppose that $N$ is not co-amenable in $H$. If $n$ and $m$ have the same prime divisors, and $p$ and $q$ have the same prime divisors, then there exists a quasi-isometry 
\begin{equation*}
    \Z_{n}\wr(\Z_{p}\wr_{H/N}H) \longrightarrow \Z_{m}\wr(\Z_{q}\wr_{H/N}H).
\end{equation*}
\end{proposition}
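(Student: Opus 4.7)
The plan is to realize the desired quasi-isometry as a composition of intermediate quasi-isometries obtained from the flexibility results of Section~\ref{section5}. First note that, since $N$ is not co-amenable in $H$, the quotient $H/N$ is non-amenable; consequently, both $\Z_p\wr_{H/N}H$ and $\Z_q\wr_{H/N}H$ surject onto $H/N$ and are therefore themselves non-amenable.

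Applying Proposition~\ref{prop5.1} to the non-co-amenable normal subgroup $N\lhd H$ and to the integers $p,q$ (which share the same prime divisors), we obtain an aptolic quasi-isometry
\[
f\colon \Z_p\wr_{H/N}H \longrightarrow \Z_q\wr_{H/N}H.
\]
Since both source and target are non-amenable, Whyte's theorem (Theorem~\ref{thm2.4}) guarantees that $f$ is quasi-one-to-one. Regarding $f$ as a quasi-isometry of pairs with respect to the trivial subgroups (for which cosets are singletons, so the pair condition is automatic), Proposition~\ref{prop5.3} applied with lamp group $\Z_m$ and $M=N=\{1\}$ yields an aptolic quasi-isometry
\[
\Z_m\wr\bigl(\Z_p\wr_{H/N}H\bigr) \longrightarrow \Z_m\wr\bigl(\Z_q\wr_{H/N}H\bigr).
\]

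It remains to bridge the outer lamp-group change from $\Z_n$ to $\Z_m$. Since $\Z_p\wr_{H/N}H$ is non-amenable, its trivial subgroup is not co-amenable in it; applying Proposition~\ref{prop5.1} with this group as the base and the integers $n,m$ (which share the same prime divisors) produces a quasi-isometry
\[
\Z_n\wr\bigl(\Z_p\wr_{H/N}H\bigr) \longrightarrow \Z_m\wr\bigl(\Z_p\wr_{H/N}H\bigr).
\]
Composing this with the quasi-isometry built above gives the conclusion. I do not anticipate any substantive obstacle: the only points requiring a moment of care are verifying that the groups $\Z_p\wr_{H/N}H$ and $\Z_q\wr_{H/N}H$ are finitely generated (immediate from the finite generation of $H$ together with the finiteness of the lamp groups), and that the trivial subgroup of a non-amenable group is not co-amenable, so that Propositions~\ref{prop5.1} and~\ref{prop5.3} legitimately apply with these permutational lamplighters playing the role of the base group.
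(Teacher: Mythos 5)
Your proof is correct, and it follows a genuinely different route from the paper's. The paper obtains the quasi-isometry $\Z_p\wr_{H/N}H\to\Z_q\wr_{H/N}H$ the same way you do (via the construction behind the second point of Theorem~\ref{thm1.3}, i.e.\ essentially Corollary~\ref{cor5.4}), but then cites Theorem~\ref{thm1.2} as a black box to pass to the outer wreath products. You instead re-derive the needed ``if'' direction of Theorem~\ref{thm1.2} internally: you observe that $\Z_p\wr_{H/N}H$ and $\Z_q\wr_{H/N}H$ are non-amenable (they surject onto $H/N$), so any quasi-isometry between them is quasi-one-to-one by Whyte's theorem, and then you run Proposition~\ref{prop5.3} with trivial subgroups to transport the outer lamp structure, and Proposition~\ref{prop5.1} with the trivial (hence non-co-amenable) subgroup to change the outer lamp group from $\Z_n$ to $\Z_m$. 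This is arguably the more careful route: the inner wreath product $\Z_p\wr_{H/N}H$ is infinitely presented, so the hypotheses of Theorem~\ref{thm1.2} as stated (finitely presented, one-ended base) are not literally met, and the paper's citation implicitly relies on the fact that the relevant direction of that theorem does not actually need those hypotheses. Your argument makes that step explicit using only the paper's own machinery, at the cost of a slightly longer composition of three quasi-isometries instead of two.
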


\begin{proof}
Assume that $n$ and $m$ have the same prime divisors, and $p$ and $q$ have the same prime divisors. The latter implies that there is a quasi-isometry 
\begin{equation*}
    \Z_{p}\wr_{H/N}H\longrightarrow \Z_{q}\wr_{H/N}H
\end{equation*}
using the second point of Theorem~\ref{thm1.3}. We can then use Theorem~\ref{thm1.2} to deduce that there is an aptolic quasi-isometry
\begin{equation*}
    \Z_{n}\wr(\Z_{p}\wr_{H/N}H) \longrightarrow \Z_{m}\wr(\Z_{q}\wr_{H/N}H)
\end{equation*}
as claimed. 
\end{proof}

\begin{proposition}\label{prop6.4}
Let $n,m,p,q\ge 2$. Let $H$ be a one-ended finitely presented group with a finitely generated infinite normal subgroup $N\lhd H$ of infinite index. Suppose that there is no subspace of $H$ that coarsely separates $H$ and that coarsely embeds into $N$. Suppose that $\Z_{n}\wr(\Z_{p}\wr_{H/N}H)$ and $\Z_{m}\wr(\Z_{q}\wr_{H/N}H)$ are quasi-isometric. The following claims hold.
\begin{enumerate}[label=(\roman*)]
    \item If $N$ is co-amenable in $H$, then $n$ and $m$ are powers of a common number, $p=k^{r}$ and $q=k^{s}$ are powers of a common number and there exists a quasi-isometry of pairs $(H,N)\longrightarrow (H,N)$ inducing a quasi-$\frac{s}{r}$-to-one quasi-isometry $H/N\longrightarrow H/N$;
    \item If $N$ is not co-amenable in $H$, then $n$ and $m$ have the same prime divisors, and $p$ and $q$ have the same prime divisors.
\end{enumerate}
\end{proposition}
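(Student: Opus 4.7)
Write $B_{1}\defeq \Z_{p}\wr_{H/N}H$ and $B_{2}\defeq \Z_{q}\wr_{H/N}H$, so that the putative quasi-isometry is $q\colon \Z_{n}\wr B_{1}\longrightarrow \Z_{m}\wr B_{2}$. My plan has three steps: \textit{(a)} show $q$ is leaf-preserving with respect to $B_{i}$-cosets; \textit{(b)} apply Theorem~\ref{thm1.3} to the induced quasi-isometry $B_{1}\to B_{2}$ to obtain the constraints on $p,q$ and on $(H,N)$; \textit{(c)} derive the constraints on $n,m$ via halo-product techniques from~\cite{GT24a}.

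For step~\textit{(a)}, first observe that $B_{1}$ and $B_{2}$ are one-ended, since $H$ is one-ended and $H/N$ is infinite so that no bounded subset can disconnect the Cayley graphs of $B_{i}$. Applying Theorem~\ref{thm3.1} to $\Z_{n}\wr B_{i}=\Z_{n}\wr_{B_{i}}B_{i}$ in the degenerate case of a trivial base-subgroup --- where the ``non-coarse-separation by subspaces of the subgroup'' hypothesis reduces to the one-endedness we just verified --- then produces, exactly as in the proof of Corollary~\ref{cor4.5}, the conclusion that $q$ sends each $B_{1}$-coset into a bounded neighborhood of a $B_{2}$-coset. Restricting $q$ to a single leaf yields an induced quasi-isometry $\beta\colon B_{1}\to B_{2}$.

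For step~\textit{(b)}, the pair $(H,N)$ satisfies all the hypotheses of Theorem~\ref{thm1.3}: $H$ is finitely presented, $N$ is finitely generated, normal, infinite, of infinite index in $H$, and $H$ is not coarsely separated by subspaces that coarsely embed into $N$. Applying Theorem~\ref{thm1.3} to $\beta$ immediately yields, in the non-coamenable case, that $p$ and $q$ share the same prime divisors together with a quasi-isometry of pairs $(H,N)\to(H,N)$; and in the coamenable case, that $p=k^{r}$, $q=k^{s}$ with the quasi-isometry of pairs inducing a quasi-$\frac{s}{r}$-to-one quasi-isometry $H/N\to H/N$.

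Step~\textit{(c)} is the main obstacle: $B_{1}$ and $B_{2}$ are not finitely presented, so the aptolic rigidity of~\cite{GT24b} does not apply directly to $\Z_{n}\wr B_{i}$. I would resolve this by viewing each $B_{i}$ as a halo product of $H$ with fiber $\Z_{p}$ (resp.\ $\Z_{q}$) in the sense of~\cite{GT24a} --- a viewpoint made concrete by the graph $F\square_{\Gamma}H$ introduced in Subsection~\ref{subsection3.2} --- so that $\Z_{n}\wr B_{i}$ becomes a genuine target for the halo-product classification theorems established there. These results, which hinge on the coarse simple connectedness and one-endedness of $H$ rather than of $B_{i}$, should then produce: $n$ and $m$ have the same prime divisors when $H/N$ is non-amenable, and $n,m$ are powers of a common number when $H/N$ is amenable (the amenable conclusion being available because the relevant cone-off, performed simultaneously over $B_{i}$-cosets and over the $H$-cosets inside each leaf as in Subsection~\ref{subsection4.4}, reduces to a wreath-product over the amenable group $H/N$ via Lemma~\ref{lm4.21} and Proposition~\ref{prop4.3}). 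The delicate point is verifying that $B_{i}$'s halo structure meets the exact hypotheses required by~\cite{GT24a}, and that the outer cone-off procedure interfaces correctly with the inner permutational lamp structure.
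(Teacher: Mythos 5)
There is a genuine gap, and it sits in your step \textit{(a)}. To get leaf-preservingness of $q\colon \Z_{n}\wr B_{1}\longrightarrow \Z_{m}\wr B_{2}$ from Theorem~\ref{thm3.1} (as in Corollary~\ref{cor4.5}), the coarse embedding you feed into the embedding theorem is the restriction of $q$ to a leaf, i.e.\ to a copy of the base group $B_{1}=\Z_{p}\wr_{H/N}H$, and the theorem requires the \emph{source} graph $A$ to be coarsely simply connected --- equivalently, $B_{1}$ to be finitely presented. But $B_{1}$ is infinitely presented precisely because $N$ has infinite index in $H$ (this is Lemma~3.3 of the paper, and it is also what makes the approximation step of Theorem~\ref{thm3.1} non-trivial). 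Reading $\Z_{n}\wr B_{1}$ as $\Z_{n}\wr_{B_{1}/\{1\}}B_{1}$ with trivial subgroup only degenerates the non-coarse-separation hypothesis into one-endedness of the image; it does nothing to supply the missing coarse simple connectedness of the leaf. So Theorem~\ref{thm3.1} simply does not apply in your setting, and you never legitimately obtain the induced quasi-isometry $\beta\colon B_{1}\to B_{2}$ on which step \textit{(b)} rests. Your step \textit{(c)} then openly defers the verification of the halo-product hypotheses, which is exactly the point that needs an argument.

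The paper's proof avoids this trap entirely: it does not try to run the embedding theorem of Section~\ref{section3} with $B_{i}$ as base, but instead invokes the results of~\cite{GT24a} --- Theorem~8.6 in the co-amenable case and Theorem~7.23 in the non-co-amenable case --- which replace finite presentability of the base by the \emph{thick bigon property}, a property that $\Z_{p}\wr_{H/N}H$ and $\Z_{q}\wr_{H/N}H$ enjoy precisely because $N$ is infinite (\cite[Corollary~3.15]{GT24a}; see also the discussion in Section~\ref{section7}). These theorems deliver, in one stroke, both the arithmetic constraint on $n,m$ (powers of a common number, resp.\ same prime divisors) and a quasi-isometry (measure-scaling, resp.\ extracted from an aptolic one) $\Z_{p}\wr_{H/N}H\longrightarrow \Z_{q}\wr_{H/N}H$; only then is Theorem~\ref{thm1.3} applied to that quasi-isometry to constrain $p,q$ and produce the quasi-isometry of pairs $(H,N)\to(H,N)$. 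Your step \textit{(b)} coincides with this last application and is fine as far as it goes, but the bridge to it --- your steps \textit{(a)} and \textit{(c)} --- must be replaced by the thick-bigon machinery of~\cite{GT24a}, not by Theorem~\ref{thm3.1} or by an ad hoc cone-off of the iterated structure.
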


\begin{proof}
\textit{(i)} By~\cite[Theorem~8.6]{GT24a}, which applies since $\Z_{p}\wr_{H/N}H$, $\Z_{q}\wr_{H/N}H$ have the thick bigon property (see~\cite[Section~3]{GT24a} and~\cite[Corollary~3.15]{GT24a}) and are both amenable, we deduce that $n$ and $m$ are powers of a common number and that there exists a (measure-scaling) quasi-isometry 
\begin{equation*}
    \Z_{p}\wr_{H/N}H\longrightarrow \Z_{q}\wr_{H/N}H.
\end{equation*}
Hence the conclusion follows from the first point of Theorem~\ref{thm1.3}. 

\smallskip

\noindent \textit{(ii)} Assume that there is a quasi-isometry 
\begin{equation}\label{eq6.1}
    \Z_{n}\wr(\Z_{p}\wr_{H/N}H) \longrightarrow \Z_{m}\wr(\Z_{q}\wr_{H/N}H).
\end{equation}
In this case, we start rather with~\cite[Theorem 7.23]{GT24a}, which applies since $\Z_{p}\wr_{H/N}H$, $\Z_{q}\wr_{H/N}H$ have the thick bigon property (see~\cite[Section~3]{GT24a} and~\cite[Corollary~3.15]{GT24a}), to deduce that $n$ and $m$ have the same prime divisors. Additionally, as explained in the proof of~\cite[Theorem~7.23]{GT24a}, the quasi-isometry (\ref{eq6.1}) can be assumed to be aptolic (in the sense of Definition~\ref{def1.12}), and thus provides a quasi-isometry $\Z_{p}\wr_{H/N}H \longrightarrow \Z_{q}\wr_{H/N}H$. Now Theorem~\ref{thm1.3} implies that $p$ and $q$ have the same prime divisors, and the proof is complete. 
\end{proof}

\begin{proof}[Proof of Corollary~\ref{cor1.9}]
Combine Proposition~\ref{prop6.3} and point \textit{(ii)} of Proposition~\ref{prop6.4}.
\end{proof}

In the situation of an amenable permutational wreath product, Theorem~\ref{thm1.3} imposes an additional scaling condition on the quasi-isometry between the base groups, so we must determine scaling groups of such permutational wreath products in order to prove Proposition~\ref{prop1.10}. We can compute some of them thanks to the next observation:

\begin{lemma}\label{lm6.5}
Let $G$ and $H$ be finitely generated groups. Then $\text{Sc}(G)$ and $\text{Sc}(H)$ are subgroups of $\text{Sc}(G\times H)$.
\end{lemma}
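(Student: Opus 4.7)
The plan is to show directly that any scaling factor achievable by a self-quasi-isometry of $G$ is also achievable by a self-quasi-isometry of $G \times H$, via the obvious extension by the identity on the second factor. By symmetry the same works for $H$, and since $\text{Sc}(\cdot)$ is a subgroup of $\R_{>0}$, this yields both inclusions.

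More precisely, let $k \in \text{Sc}(G)$ and pick a quasi-$k$-to-one quasi-isometry $f \colon G \longrightarrow G$. Define
\begin{equation*}
    F\colon G\times H\longrightarrow G\times H,\quad (g,h)\longmapsto (f(g),h).
\end{equation*}
Equip $G\times H$ with a word metric arising from $S_G\cup S_H$, which up to bilipschitz equivalence agrees with $d_G+d_H$. The first step is to verify that $F$ is a quasi-isometry: the upper and lower quasi-isometric inequalities for $f$ immediately give them for $F$ in the product metric, and coarse surjectivity transfers from $f$ to $F$ since $F$ is the identity on the $H$-factor.

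The core step is the measure-scaling estimate. Given a finite subset $A\subset G\times H$, slice it as $A=\bigsqcup_{h\in H}A_h\times\{h\}$ with $A_h\defeq\{g\in G : (g,h)\in A\}$. Since $F$ acts only on the first coordinate, one has $F^{-1}(A)=\bigsqcup_{h\in H}f^{-1}(A_h)\times\{h\}$, and therefore
\begin{equation*}
    \bigl|\,k|A|-|F^{-1}(A)|\,\bigr|\;\le\;\sum_{h\in H}\bigl|\,k|A_h|-|f^{-1}(A_h)|\,\bigr|\;\le\;C\sum_{h\in H}|\partial_G A_h|
\end{equation*}
where $C$ is the measure-scaling constant of $f$. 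It then suffices to observe that each boundary point of $A_h$ in $G$ contributes a boundary point of $A$ in $G\times H$: if $g\notin A_h$ is adjacent in $G$ to some $g'\in A_h$, then $(g,h)\notin A$ is adjacent in $G\times H$ to $(g',h)\in A$. Hence $\bigsqcup_{h\in H}\partial_G A_h\times\{h\}\subset\partial A$, which gives $\sum_{h\in H}|\partial_G A_h|\le|\partial A|$ and concludes the estimate.

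This shows $F$ is quasi-$k$-to-one, so $k\in\text{Sc}(G\times H)$; thus $\text{Sc}(G)\subset\text{Sc}(G\times H)$, and symmetrically $\text{Sc}(H)\subset\text{Sc}(G\times H)$. There is no real obstacle here beyond the bookkeeping of the slicing argument, which is a direct analogue of the computation appearing in the proof of Proposition~\ref{prop6.2}.
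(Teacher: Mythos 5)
Your proof is correct and follows essentially the same approach as the paper: extend $f$ by the identity on the $H$-factor, slice $A$ along $G$-cosets (the paper indexes these slices by $i=1,\dots,r$, you index them by $h\in H$, but these are the same decomposition), apply the quasi-$k$-to-one estimate for $f$ slice by slice, and use the boundary inclusion $\bigsqcup_h \partial_G A_h\times\{h\}\subset\partial_{G\times H}A$ to conclude.
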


\begin{proof}
The claim is symmetric in $G$ and $H$ so we do the proof for $G$ only. 

Let $k\in\text{Sc}(G)$ and let $f\colon G\longrightarrow G$ be a $(C,K)-$quasi-isometry which is quasi-$k$-to-one. We define then 
\begin{align*}
    \Tilde{f}\colon G\times H&\longrightarrow G\times H \\
    (g,h)&\longmapsto (f(g), h).
\end{align*}
One directly checks that $\Tilde{f}$ is a quasi-isometry, and we claim it is also quasi-$k$-to-one. Indeed, fix a finite subset $A\subset G\times H$. Then $A$ intersects finitely many (say $r\ge 1$) $G-$cosets in $G\times H$, and we can decompose $A=\dis\bigsqcup_{i=1}^{r}A_{i}$ where, for any $1\le i\le r$, $A_{i}$ is contained in a single $G-$coset. Moreover, denoting $\pi_{G}\colon G\times H \longrightarrow G$ the projection on the first factor, one has 
\begin{equation*}
    |A_{i}|=|\pi_{G}(A_{i})| \; \text{and}\; \Tilde{f}^{-1}(A_{i})=f^{-1}(\pi_{G}(A_{i}))
\end{equation*}
for any $1\le i\le r$. Hence it follows that
\begin{align*}
    \left|k|A|-|\Tilde{f}^{-1}(A)|\right| &\le \sum_{i=1}^{r}\left|k|A_{i}|-|\Tilde{f}^{-1}(A_{i})|\right| \\
    &\le \sum_{i=1}^{r}\left|k|\pi_{G}(A_{i})|-|f^{-1}(\pi_{G}(A_{i}))|\right| \\
    &\le C\cdot \sum_{i=1}^{r}|\partial_{G} \pi_{G}(A_{i})| \\
    &\le C\cdot |\partial_{G\times H} A|
\end{align*}
for some constant $C>0$, using that $f$ is quasi-$k$-to-one and the inclusion $\dis\bigsqcup_{i=1}^{r}\partial_{G} \pi_{G}(A_{i}) \subset \partial_{G\times H} A$ for the last inequality. This proves that $k\in\text{Sc}(G\times H)$ as claimed. 
\end{proof}

\begin{corollary}\label{cor6.6}
Let $F$, $N$ and $K$ be finitely generated groups. Then $\text{Sc}(N)$ is a subgroup of $\text{Sc}(F\wr_{K}(N\times K))$.
\end{corollary}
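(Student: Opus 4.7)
My plan is to reduce the claim to Propositions~\ref{prop5.3} and~\ref{prop6.2}. Identifying the Cayley graph of $F\wr_K(N\times K)$ with the lamplighter graph $\mathcal{L}_{|F|}(N\times K,\mathcal{C}_N)$, where $\mathcal{C}_N$ is the partition of $N\times K$ into cosets $N\times\{\kappa\}$, $\kappa\in K$, the task amounts to showing that any scaling factor $k\in\text{Sc}(N)$ is realized by a quasi-isometry of this lamplighter graph.

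Concretely, I would fix $k\in\text{Sc}(N)$ together with a quasi-$k$-to-one quasi-isometry $f\colon N\to N$, and extend it to $\tilde f\colon N\times K\to N\times K$ by $\tilde f(n,\kappa)\defeq (f(n),\kappa)$. The argument of Lemma~\ref{lm6.5} already shows that $\tilde f$ is itself quasi-$k$-to-one. Moreover, since $\tilde f$ sends each coset $N\times\{\kappa\}$ into itself, it is a quasi-isometry of pairs $(N\times K,N)\to(N\times K,N)$ whose induced map on the quotient $(N\times K)/N\cong K$ is $\mathrm{Id}_K$, in particular quasi-one-to-one. Proposition~\ref{prop5.3} then produces an aptolic quasi-isometry $\varphi$ of $\mathcal{L}_{|F|}(N\times K,\mathcal{C}_N)$ of the form $\varphi(c,p)=(\alpha(c),h(p))$, where $h$ lies at finite distance from $\tilde f$. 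Inspecting the construction of $h$ in the proof of Proposition~\ref{prop5.3}, since $\overline{\tilde f}=\mathrm{Id}_K$ one may take the bijection $g$ of the quotients to be $\mathrm{Id}_K$; then $\tilde f(p)$ already belongs to the coset $g(pM)=pM$, so one can choose $h=\tilde f$ (and correspondingly $\alpha=\mathrm{Id}$). In any case $h$, being at bounded distance from $\tilde f$, has the same scaling factor $k$.

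To finish, I would invoke Proposition~\ref{prop6.2}: an aptolic quasi-isometry whose base-component is quasi-$k$-to-one is itself quasi-$k$-to-one. Therefore $\varphi$ is quasi-$k$-to-one, which yields $k\in\text{Sc}(\mathcal{L}_{|F|}(N\times K,\mathcal{C}_N))=\text{Sc}(F\wr_K(N\times K))$, as claimed. There is no genuine obstacle in this argument; the only point needing care is verifying that the lift $h$ of $\tilde f$ furnished by Proposition~\ref{prop5.3} inherits the quasi-$k$-to-one property, which is immediate here since, after the natural choice above in the construction, $h$ coincides with $\tilde f$ itself.
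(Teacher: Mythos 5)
Your argument is essentially sound for the case it covers, but it is a much heavier route than the paper's and, as written, it does not prove the statement in full generality. The paper's proof is a one-liner: since $N$ acts trivially on the lamp set $K=(N\times K)/N$, there is a group isomorphism $F\wr_{K}(N\times K)\cong N\times(F\wr K)$, and Lemma~\ref{lm6.5} applied to this direct product immediately gives $\text{Sc}(N)\leqslant \text{Sc}(F\wr_{K}(N\times K))$, for arbitrary finitely generated $F$ and $K$. Your construction — extend a quasi-$k$-to-one $f\colon N\to N$ to $\tilde f=(f,\mathrm{Id}_{K})$, feed it into Proposition~\ref{prop5.3} (and indeed, since $\overline{\tilde f}=\mathrm{Id}_{K}$ is already a bijection, one may take $h=\tilde f$ and $\alpha=\mathrm{Id}$), then conclude via Proposition~\ref{prop6.2} — is correct, and it even produces an explicit aptolic quasi-$k$-to-one self-quasi-isometry; but the tools you invoke are set up for finite lamp groups: the identification of $\text{Cay}(F\wr_{K}(N\times K))$ with $\mathcal{L}_{|F|}(N\times K,\mathcal{C}_{N})$ and Propositions~\ref{prop5.3} and~\ref{prop6.2} are stated for lamps $\Z_{n}$ with $n\ge 2$ an integer, whereas Corollary~\ref{cor6.6} only assumes $F$ finitely generated (possibly infinite). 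Moreover, Proposition~\ref{prop6.2} is stated for subgroups of infinite index, so your argument also implicitly assumes $K$ infinite. For the paper's applications ($F$ finite, $K=\Z^{k}$) your proof goes through, but to obtain the corollary as stated you would either have to extend the lamplighter-graph machinery to arbitrary finitely generated lamp groups, or simply notice the splitting $F\wr_{K}(N\times K)\cong N\times(F\wr K)$ and quote Lemma~\ref{lm6.5}, which is all the paper does.
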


\begin{proof}
This follows from Lemma~\ref{lm6.5} and the fact that $F\wr_{K}(N\times K) \cong N\times (F\wr K)$.
\end{proof}

Thus, for instance, if $F$ is finite and $m>n\ge 1$, $\text{Sc}(F\wr_{\Z^n}\Z^m)=\R_{>0}$ as it contains $\text{Sc}(\Z^{m-n})=\R_{>0}$. This leads us to a proof of the next statement:

\begin{proposition}\label{prop6.7}
Let $n,m,p\ge 2$. Let $N$ and $K$ be finitely presented infinite amenable groups, and let $G\defeq N\times K$. If $\text{Sc}(N)=\R_{>0}$, then $\Z_{n}\wr(\Z_{p}\wr_{K}G)$ and $\Z_{m}\wr(\Z_{p}\wr_{K}G)$ are quasi-isometric if and only if $n$ and $m$ are powers of a common number.
\end{proposition}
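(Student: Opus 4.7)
The plan is to set $H\defeq\Z_{p}\wr_{K}G$ and reduce the problem to the quasi-isometric classification of standard lamplighters over $H$. I would first verify that $H$ is amenable: since $G=N\times K$ is amenable as a direct product of amenable groups, and $H$ is an extension of $G$ by the abelian group $\bigoplus_{G/K}\Z_{p}$, it is itself amenable. Moreover, $H$ has the thick bigon property by~\cite[Corollary~3.15]{GT24a}. This places us within the hypotheses of~\cite[Theorem~8.6]{GT24a}, which classifies quasi-isometries between standard lamplighters over amenable base groups with the thick bigon property in terms of measure-scaling quasi-isometries of the base.

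For the forward implication, assuming $\Z_{n}\wr H$ and $\Z_{m}\wr H$ are quasi-isometric, applying~\cite[Theorem~8.6]{GT24a} directly yields that $n$ and $m$ must be powers of a common number.

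For the converse, assume $n=k^{r}$ and $m=k^{s}$. The same theorem reduces the construction of a quasi-isometry $\Z_{n}\wr H\longrightarrow\Z_{m}\wr H$ to producing a quasi-$\frac{s}{r}$-to-one self-quasi-isometry of $H$. Here the key input is Corollary~\ref{cor6.6}, which yields $\text{Sc}(N)\subset\text{Sc}(H)$; combined with the assumption $\text{Sc}(N)=\R_{>0}$, this places $\frac{s}{r}$ in $\text{Sc}(H)$, so the desired self-quasi-isometry exists. Feeding this back through~\cite[Theorem~8.6]{GT24a} produces the quasi-isometry we want.

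The main subtle point is that $H$ is typically infinitely presented, since $K$ has infinite index in $G$, so one cannot appeal directly to Theorem~\ref{thm1.2}. The flexibility of~\cite[Theorem~8.6]{GT24a}, already exploited in the same manner in the proof of Proposition~\ref{prop6.4}\textit{(i)}, is precisely what bypasses this issue. Beyond that verification, the argument is a routine assembly of the preceding ingredients.
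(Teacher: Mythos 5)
Your proof is correct and takes essentially the same approach as the paper: both directions reduce to~\cite[Theorem~8.6]{GT24a} applied to the base $H=\Z_{p}\wr_{K}G$, and the only nontrivial input is Corollary~\ref{cor6.6}, which gives $\text{Sc}(N)\subset\text{Sc}(H)$ and hence $\text{Sc}(H)=\R_{>0}$. Your additional verification of amenability of $H$ and the thick bigon property is a helpful spelling-out of the hypotheses of~\cite[Theorem~8.6]{GT24a}, but the core argument matches the paper's.
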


\begin{proof}
By~\cite[Theorem~8.6]{GT24a}, our two groups are quasi-isometric if and only if there exist $k,r,s\ge 1$ such that $n=k^{r}$, $m=k^{s}$ and $\frac{s}{r}\in\text{Sc}(\Z_{p}\wr_{K}G)$. By Corollary \ref{cor6.6}, $\text{Sc}(\Z_{p}\wr_{K}G)$ contains $\text{Sc}(N)=\R_{>0}$, so that $\text{Sc}(\Z_{p}\wr_{K}G)=\R_{>0}$. Hence we conclude that $\Z_{n}\wr(\Z_{p}\wr_{K}G)$ and $\Z_{m}\wr(\Z_{p}\wr_{K}G)$ are quasi-isometric if and only if $n$ and $m$ are powers of a common number.
\end{proof}

With the same techniques, we may now deduce a proof of Proposition~\ref{prop1.10}.

\begin{proof}[Proof of Proposition~\ref{prop1.10}]
Suppose first that $\Z_{n}\wr(\Z_{p}\wr_{\Z^{k}}\Z^{d})$ and $\Z_{m}\wr(\Z_{q}\wr_{\Z^{k}}\Z^{d})$ are quasi-isometric. Then~\cite[Theorem~8.6]{GT24a} implies that $n=a^{r}$, $m=a^{s}$ for some $a,r,s\ge 1$ and there exists a quasi-$\frac{s}{r}$-to-one quasi-isometry $\Z_{p}\wr_{\Z^{k}}\Z^d \longrightarrow \Z_{q}\wr_{\Z^{k}}\Z^d$. Then Corollary~\ref{cor1.4} shows that $p$ and $q$ must be powers of a common number, as claimed. 

\smallskip 

Conversely, suppose that $n=a^{r}$ and $m=a^{s}$ are powers of a common number, and likewise that $p$ and $q$ are powers of a common number. This assumption implies, according to Corollary~\ref{cor1.8}, that there is a biLipschitz equivalence 
\begin{equation*}
    \Z_{p}\wr_{\Z^{k}}\Z^{d} \longrightarrow \Z_{q}\wr_{\Z^{k}}\Z^{d}.
\end{equation*}
We also know that $\text{Sc}(\Z_{q}\wr_{\Z^{k}}\Z^{d})=\R_{>0}$, so $\frac{s}{r}\in \text{Sc}(\Z_{q}\wr_{\Z^{k}}\Z^{d})$, and thus there exists a quasi-$\frac{s}{r}$-to-one quasi-isometry $\Z_{q}\wr_{\Z^{k}}\Z^{d} \longrightarrow \Z_{q}\wr_{\Z^{k}}\Z^{d}$. Composing this quasi-isometry with the above biLipschitz equivalence provides a quasi-$\frac{s}{r}$-to-one quasi-isometry
\begin{equation*}
    \Z_{p}\wr_{\Z^{k}}\Z^{d} \longrightarrow \Z_{q}\wr_{\Z^{k}}\Z^{d}.
\end{equation*}
Since we know that $n=a^{r}$ and $m=a^{s}$,~\cite[Proposition~3.11]{GT24b} implies that $\Z_{n}\wr(\Z_{p}\wr_{\Z^{k}}\Z^{d})$ and $\Z_{m}\wr(\Z_{q}\wr_{\Z^{k}}\Z^{d})$ are quasi-isometric. The proof is complete. 
\end{proof}

In~\cite[Corollary~1.15]{GT24b} are also recorded the first examples of amenable groups having all their self-quasi-isometries at a bounded distance from a bijection, namely $F\wr H$ where $F$ is finite and $H$ is amenable finitely presented and one-ended. In particular, the scaling group of such wreath products is always trivial. In contrast, permutational wreath products have many measure-scaling quasi-isometries that do not lie within bounded distance from a bijection. An explicit example is the following: let $G\defeq \Z_{2}\wr_{\Z^2}\Z^3$, and let $\pi\colon G\longrightarrow \Z^3$ be the natural surjection. Then $K_{2}\defeq \pi^{-1}(\Z\times \Z\times 2\Z)$ is a proper subgroup of $G$, of index $2$. Thus the natural inclusion $\iota\colon K_{2}\hookrightarrow G$ is quasi-$\frac{1}{2}$-to-one, and since $G\cong K_{2}$, composing this isomorphism with $\iota$ provides a quasi-$\frac{1}{2}$-to-one quasi-isometry $G\longrightarrow G$. 

\smallskip

This example also allows us to exclude the natural analog of~\cite[Corollary~1.16]{GT24b} in the permutational case: $K_{2}$ and $K_{3}\defeq \pi^{-1}(\Z\times \Z\times 3\Z)$ are biLipschitz equivalent (even isomorphic) finite index subgroups of $G=\Z_{2}\wr_{\Z^2}\Z^3$, but they do not have the same index in $G$. 

\bigskip

\section{Concluding remarks and questions}\label{section7}

We conclude the article with a list of questions related to the coarse geometry of permutational wreath products. 

\smallskip

First of all, the proof of Theorem~\ref{thm1.3} requires strong assumptions, and it is natural to ask what happens if we drop some of these assumptions. For instance:

\begin{question}
Let $n,m\ge 2$. Let $H$ be a finitely presented group with a finitely generated subgroup $N\leqslant H$ of infinite index. Assume that there is no subspace of $G$ (resp. $H$) that coarsely separates $G$ and that coarsely embeds into $M$ (resp. $N$). When are $\Z_{n}\wr_{H/N}H$ and $\Z_{m}\wr_{H/N}H$ quasi-isometric? biLipschitz equivalent?
\end{question}

In the same vein, notice that in~\cite{GT24a} the authors extended their embedding theorem from~\cite{GT24b} to include some standard lamplighters with infinitely presented base groups. Namely, they introduced a new quasi-isometry invariant, referred to as the \textit{thick bigon property}, and proved a version of the embedding theorem for geodesic metric spaces satisfying this property~\cite[Corollary~4.17]{GT24a}. Therefore, one might wonder whether there is a version of this embedding theorem for permutational halo products. As an application, this could provide an answer to:

\begin{question}
Let $n,m,p,q\ge 2$, and let $H$ be a finitely generated group with $N$ a normal subgroup of infinite index. Letting $\Z_{p}\wr_{H/N}H$ (resp. $\Z_{q}\wr_{H/N}H$) act on its quotient $H$, when are $\Z_{n}\wr_{H}(\Z_{p}\wr_{H/N}H)$ and $\Z_{m}\wr_{H}(\Z_{q}\wr_{H/N}H)$ quasi-isometric? 
\end{question}

Some partial answers can be deduced from our previous results. For instance, in the case where $N$ is not co-amenable in $H$, if $p$ and $q$ have the same prime divisors, then by Proposition~\ref{prop5.1} there exists a quasi-isometry of pairs 
\begin{equation*}
    \left(\Z_{p}\wr_{H/N}H, \;\bigoplus_{H/N}\Z_{p}\right) \longrightarrow \left(\Z_{q}\wr_{H/N}H,\;\bigoplus_{H/N}\Z_{q}\right)
\end{equation*}
and if additionally $n$ and $m$ have the same prime divisors, there exists a quasi-isometry
\begin{equation*}
    \Z_{n}\wr_{H}(\Z_{p}\wr_{H/N}H) \longrightarrow \Z_{m}\wr_{H}(\Z_{q}\wr_{H/N}H).
\end{equation*}
still by Proposition~\ref{prop5.1}. A possible refinment of the previous question is then:

\begin{question}
Let $n,m,p,q\ge 2$, and let $H$ be a finitely generated group with $N$ a normal subgroup of infinite index. Assume that $N$ is not co-amenable in $H$. If $\Z_{n}\wr_{H}(\Z_{p}\wr_{H/N}H)$ and $\Z_{m}\wr_{H}(\Z_{q}\wr_{H/N}H)$ are quasi-isometric, must $n$ and $m$ (resp. $p$ and $q$) have the same prime divisors?
\end{question}

In the context of iterated wreath products, it is worth to notice that the assumption of $N$ being infinite in Corollary~\ref{cor1.9} is crucial to apply results from~\cite{GT24a}. Indeed, as explained in the latter, permutational wreath products of the form $E\wr_{G/M}G$ with $M$ infinite satisfy the thick bigon property, which allows to use the embedding theorem from~\cite{GT24a} to get informations on the quasi-isometries between wreath products of the form $\Z_{n}\wr(E\wr_{G/M}G)$. On the other hand, standard wreath products of the form $(\text{finite})\wr G$ usually do not have the thick bigon property, even if $G$ has it. Hence, the following question remains:

\begin{question}
Let $n,m,p,q\ge 2$ be four integers. Let $G$ and $H$ be one-ended finitely presented groups. If $\Z_{n}\wr(\Z_{p}\wr G)$ and $\Z_{m}\wr(\Z_{q}\wr H)$ are quasi-isometric, must $n$ and $m$ (resp. $p$ and $q$) have the same prime divisors? Must $G$ and $H$ be quasi-isometric?
\end{question}

In another direction, it is natural to wonder what happens when the lamp groups are infinite, in the same spirit as~\cite[Theorem~6.33 and Corollary~6.36]{BGT24}. For instance:

\begin{question}
Let $A_{1}$, $A_{2}$ be two nilpotent groups, and $H_{1}$, $H_{2}$ two finitely presented groups, with normal infinite finitely generated subgroups $N_{1}\lhd H_{1}$, $N_{2}\lhd H_{2}$. When are $A_{1}\wr_{H_{1}/N_{1}}H_{1}$ and $A_{2}\wr_{H_{2}/N_{2}}H_{2}$ quasi-isometric?
\end{question}

The case of free abelian groups would be interesting already. For instance, if $\Z^{n}\wr_{\Z^{k}}\Z^{d}$ and $\Z^{n'}\wr_{\Z^{k'}}\Z^{d'}$ are quasi-isometric, does it follow that $n=n'$, $d=d'$ and $k=k'$? Or does one have more flexibility? Note that when the subgroup is not co-amenable into the ambient group, Remark~\ref{rm5.2} provides some flexibility in the lamp groups. 

\smallskip

Lastly, Theorem~\ref{thm1.3} requires a strong assumption of non-coarse separation, and it is natural to ask what happens in the remaining cases. For instance:

\begin{question}\label{question7.6}
If $E$ and $F$ are two non-trivial finite groups and $m\ge 2$, and if $E\wr_{\Z}\Z^{m}$ and $F\wr_{\Z}\Z^{m}$ are quasi-isometric, must $|E|$ and $|F|$ be powers of a common number?
\end{question}

We conjecture a positive answer. Note that Example~\ref{example4.8} provides an example of a non-leaf-preserving quasi-isometry $\Z_{2}\wr_{\Z}\Z^2\longrightarrow \Z_{2}\wr_{\Z}\Z^2$, hence the need of another strategy. When looking at quasi-isometries between spaces that are direct products, such as $E\wr_{\Z}\Z^{m} \cong \Z^{m-1}\times (E\wr\Z)$, another natural form of rigidity that can occur is the one of "product quasi-isometries" (see for instance~\cite{KL97, EFW12, EFW13}), namely those quasi-isometries that are given by a quasi-isometry of the first factor in the first component and a quasi-isometry of the second factor in the second component. Moreover, in our case, such a rigidity would allow us to deduce a positive answer to Question~\ref{question7.6}, thanks to~\cite[Theorem~1.2]{EFW13}. 

\smallskip

However, it is not hard to construct automorphisms (and thus quasi-isometries) of such products that do not lie at bounded distance from a 
product quasi-isometry. One such example is given by $\Z\times (E\wr\Z) \longrightarrow \Z\times (E\wr\Z)$, $(k,(c,p))\longmapsto (k+p, (c,p))$. Hence, for now, the quasi-isometric classification of direct products of $\Z$ with lamplighters over $\Z$ stays out of reach.  

\smallskip

Next, regarding biLipschitz equivalences between permutational wreath products, it is natural to ask whether the sufficient condition given by Corollary~\ref{cor1.8} is also necessary, at least under the assumptions of Theorem~\ref{thm1.3}.
\begin{question}
Let $n,m\ge 2$ and let $G,H$ be finitely presented groups with finitely generated infinite normal subgroups $M\lhd G$, $N\lhd H$ of infinite index. Suppose that there is no subspace of $G$ (resp. $H$) that coarsely separates $G$ and that coarsely embeds into $M$ (resp. $N$). If $\Z_{n}\wr_{G/M}G$ and $\Z_{m}\wr_{H/N}H$ are biLipschitz equivalent, does there exist $k,r,s\ge 1$ such that $n=k^r$, $m=k^s$ and a quasi-one-to-one quasi-isometry of pairs $(G,M)\longrightarrow (H,N)$ inducing a quasi-$\frac{s}{r}$-to-one quasi-isometry $G/M\longrightarrow H/N$?
\end{question}

We showed in Corollary~\ref{cor6.6} that $\text{Sc}(F\wr_{G/N}G)$ always contains $\text{Sc}(N)$ if $G$ splits as a direct product $N\times K$. It is therefore natural to ask:

\begin{question}
Let $F$ and $G$ be finitely generated groups, and let $N\leqslant G$ be finitely generated. Is is true that $\text{Sc}(N)$ injects into $\text{Sc}(F\wr_{G/N}G)$?
\end{question}

In fact, we showed in Lemma~\ref{lm6.5} that the scaling group of a direct product always contains scaling groups of the factors. More generally:

\begin{question}
Let $G$ and $H$ be finitely generated groups, and let $\varphi\colon H\longrightarrow \text{Aut}(G)$ be a group morphism. Is it true that $\text{Sc}(G\rtimes_{\varphi}H)$ contains $\text{Sc}(G)$?
\end{question}

On the other hand, the scaling group of $G\rtimes_{\varphi}H$ usually does not contain $\text{Sc}(H)$. For instance, $\text{Sc}(\Z_{2}\wr\Z^2)=\lbrace 1\rbrace$ by~\cite[Corollary~1.15]{GT24b}, while $\text{Sc}(\Z^2)=\R_{>0}$.

\bigskip

\begin{spacing}{1.3}
\printbibliography
\end{spacing}

{\bigskip
		\footnotesize
		
		\noindent V.~Dumoncel, \textsc{Université Paris Cité, Institut de Mathématiques de Jussieu-Paris Rive Gauche, 75013 Paris, France}\par\nopagebreak\noindent
		\textit{E-mail address: }\texttt{vincent.dumoncel@imj-prg.fr}}

\end{titlepage}
\end{document}